\newcommand{\R}{\mathbb{R}}
\newcommand{\N}{\mathbb{N}}
\newcommand{\E}{\mathbb{E}}
\newcommand{\id}{\operatorname{id}}
\renewcommand{\P}{\mathbb{P}}
\newcommand\numberthis{\addtocounter{equation}{1}\tag{\theequation}}
\newtheorem{theorem}{Theorem}[section]
\newtheorem{lemma}[theorem]{Lemma}
\newtheorem{cor}[theorem]{Corollary}
\newtheorem{definition}[theorem]{Definition}
\newtheorem{prop}[theorem]{Proposition}
\newtheorem{example}[theorem]{Example}
\newcommand{\smallsum}{\textstyle\sum}
\newcommand{\ANNs}{\mathbf{N}}
\newcommand{\activation}{a}
\newcommand{\activationDim}[1]{\mathfrak{M}_{\activation,#1}}
\newcommand{\functionANNgeneral}{\mathcal{R}_{\activation}}
\newcommand{\functionANN}{\mathcal{R}_{\mathfrak{r}}}
\newcommand{\paramANN}{\mathcal{P}}
\newcommand{\lengthANN}{\mathcal{L}}
\newcommand{\inDimANN}{\mathcal{I}}
\newcommand{\compANN}[2]{{#1 \bullet #2}}
\newcommand{\concANN}[2]{{#1 \odot_\mathbb{I} #2}}
\newcommand{\concPsiANN}[2]{{#1 \odot_{\Psi} #2}}
\newcommand{\outDimANN}{\mathcal{O}}
\newcommand{\dims}{\mathcal{D}}
\newcommand{\deltaIndex}{\min\{(T/N)^{1/2},1\}}
\newcommand{\deltaIndexProof}{\min\{\delta,1\}}
\newcommand{\downFixed}[1]{\lfloor #1\rfloor}
\newcommand{\upFixed}[1]{\lceil #1\rceil}
\newcommand{\indicator}[1]{\mathds{1}_{#1}}
\newcommand{\affineProcess}{Y}
\newcommand{\zigZagProcess}{\mathcal{Y}}
\newcommand{\covariance}{\operatorname{Cov}\!}
\newcommand{\cov}{\operatorname{Cov}}
\newcommand{\norm}[1]{ \left\| #1 \right\| }
\newcommand{\Norm}[1]{ \| #1 \| }
\newcommand{\qandq}{\qquad\text{and}\qquad}
\newcommand{\andq}{\text{and}\qquad}
\newcommand{\eps}{\varepsilon}
\newcommand{\LogBin}{\log_2}
\newcommand{\is}{\leftarrow}
\newcommand{\timeGrid}{\tau}
\newcommand{\dnnFunction}{F}
\newcommand{\drift}{\mathfrak{f}^1}
\newcommand{\initial}{\mathfrak{f}^0}
\newcommand{\vertiii}[1]{{\left\vert\kern-0.25ex\left\vert\kern-0.25ex\left\vert {#1} 
		\right\vert\kern-0.25ex\right\vert\kern-0.25ex\right\vert}}
\tikzset{
	font={\fontsize{9pt}{12}\selectfont}}
\crefname{figure}{Figure}{Figures}
\begin{document}
	\title{Space-time deep neural network\\ approximations for high-dimensional\\ partial differential equations}

\author{Fabian Hornung$^{1}$,
	Arnulf Jentzen$^{2,3,4}$, \\ and Diyora Salimova$^{4,5,6}$
	\bigskip
	\\
	\small{$^1$Faculty of Mathematics, Karlsruhe Institute of Technology, }\\
	\small{Germany, e-mail: fabianhornung89@gmail.com} 
	\\
	\small{$^2$School of Data Science and Shenzhen Research Institute of Big Data,}\\
	\small{The Chinese University of Hong Kong, Shenzhen (CUHK-Shenzhen),}\\
		\small{China,  e-mail: ajentzen@cuhk.edu.cn}
	\\
	\small{$^3$Applied Mathematics: Institute for Analysis and Numerics,}\\
	\small{Faculty of Mathematics and Computer Science, University of M\"unster, }\\
	\small{Germany, e-mail: ajentzen@uni-muenster.de}
	\\
	\small{$^4$Seminar for Applied Mathematics, Department}\\
	\small{ of Mathematics, ETH Zurich, Switzerland}
	\\
	\small{$^5$Department of Information Technology and}\\
    \small{Electrical Engineering, ETH Zurich, Switzerland}
  \\
\small{$^6$Department for Applied Mathematics, University of Freiburg,}\\
\small{Germany, e-mail: diyora.salimova@mathematik.uni-freiburg.de}
}

\maketitle

\begin{abstract}
It is one of the most challenging issues in applied mathematics to approximately solve high-dimensional partial differential equations (PDEs) and most of the numerical approximation methods for PDEs in the scientific literature suffer from the so-called curse of dimensionality in the sense that the number of computational operations employed in the corresponding approximation scheme to obtain an  approximation precision $\varepsilon >0$ grows exponentially in the PDE dimension and/or the reciprocal of $\varepsilon$. Recently, certain deep learning based  methods for PDEs have been proposed  and various numerical simulations for such methods suggest that deep  artificial neural network (ANN) approximations might have the capacity to indeed overcome the curse of dimensionality in the sense that  the number of real parameters used to describe the approximating deep ANNs  grows at most polynomially in both the PDE dimension $d \in \N$ and the reciprocal of the prescribed approximation accuracy $\varepsilon >0$. There are now also a few
rigorous mathematical results in the scientific literature which  substantiate this conjecture by proving that   deep ANNs overcome the curse of dimensionality in approximating solutions of PDEs.  Each of these results establishes that deep ANNs overcome the curse of dimensionality in approximating suitable PDE solutions 
at a fixed time point $T >0$ and on a compact cube $[a, b]^d$ in space but none of these results provides an answer to the question whether the entire PDE solution on $[0, T] \times [a, b]^d$ can be approximated by deep ANNs without the curse of  dimensionality. 
It is precisely the subject of this article to overcome this issue. More specifically, the main result of this work in particular proves for every $a \in \R$, $ b \in (a, \infty)$ that solutions of  certain Kolmogorov PDEs can be approximated by deep ANNs on the space-time region $[0, T] \times [a, b]^d$ without the curse of dimensionality. 
\end{abstract}

\begin{center}
	\emph{Mathematics Subject Classification:} 65M75, 68T05, 65C30
\end{center}

\begin{center}
	\emph{Keywords:}
	deep artificial neural network, 
 curse  of dimensionality,
\\ 	approximation, partial differential equation, PDE, stochastic differential\\ equation, SDE, Monte Carlo Euler,  Feynman--Kac formula, ANN
\end{center}

\tableofcontents

\section{Introduction}
\label{sec:intro}

It is one of the most challenging issues in applied mathematics to approximately solve high-dimensional partial differential equations (PDEs) and most of the numerical approximation methods for PDEs in the scientific literature suffer from the so-called \emph{curse of dimensionality} in the sense that the number of computational operations  employed in the corresponding approximation scheme to obtain an  approximation precision $\varepsilon >0$ grows exponentially in the PDE dimension and/or the reciprocal of $\varepsilon$ (for such concepts cf., e.g., Bellman~\cite{Bellman1957}, Novak \& Ritter~\cite{NovakRitter1997}, Novak \& Wo\'{z}niakowski~\cite[Chapter~1]{Novak2008} and Novak \& Wo\'{z}niakowski~\cite[Chapter~9]{Novak2010}  and
for methods which do not suffer from the curse of dimensionality in the case of some special classes of nonlinear PDEs
cf., e.g., \cite{Henry-Labordere2014,Henry-Labordere2012,HenryOudjaneEtAl2019,EHutzenthaler2021,EHutzenthaler2019,HutzenthalerJentzenKruse2020},
\cite[Section~4]{BeckHutzenthalerEtAL2023},
\cite[Sections~2 and 6]{EHanJentzen2022}, and the references therein).

Recently, certain artificial neural networks (ANNs) based approximation methods for PDEs have been proposed  and various numerical simulations for such methods suggest (cf., e.g., \cite{weinan2017deep,Han2018PNAS,Raissi2018DeepHP,PhamWarin2021,Magill2018NeuralNT,LyeMishraRay2020,LongLuMaDong2018,JacquierOumgari2023,HurePhamWarin2020,HanLong2020,GoudenegeEtAl2020,FujiiTakahashi2019,Berg2018AUD,ChanMikaelWarin2019, weinan2018deep,Farahmand2017DeepRL,   henry2017deep,Sirignano2018,NueskenRichter2021,KhooLuYing2020,
	DissanayakePhan1994,Lagaris1998ArtificialNN,LiLuo2003,LiYing2022,ZhouHanLu2021} and the references mentioned therein) that deep ANNs  might have the capacity to indeed overcome the curse of dimensionality in the sense that  the number of real parameters used to describe the approximating deep ANNs  grows at most polynomially in both the PDE dimension $d \in \N = \{1, 2, \ldots\}$ and the reciprocal of the prescribed approximation accuracy $\varepsilon >0$.

 There are now also a few
 rigorous mathematical results in the scientific literature which substantiate this conjecture by proving that   deep ANNs overcome the curse of dimensionality in approximating solutions of PDEs; cf., e.g., \cite{ElbraechterEtAl2022,GrohsWurstemberger2023,JentzenSalimovaWelti2018,KutyniokEtAl2022,ReisingerZhang2020,GrohsJentzenSalimova2022,GononGrohsEtAl2022}.
Each of the references mentioned in the previous sentence establishes that deep ANNs overcome the curse of dimensionality in approximating suitable PDE solutions 
at a fixed time point $T >0$ and on a compact cube $[a, b]^d$ in space but none of the results in these references provides an answer to the question whether the entire PDE solution on $[0, T] \times [a, b]^d$ can be approximated by deep ANNs without the curse of  dimensionality.

It is precisely the subject of this article to overcome this issue. More specifically, the main result of this work, \Cref{theorem:DNNerrorEstimate} in Subsection~\ref{sub:cost} below, in particular proves for every $a \in \R$, $ b \in (a, \infty)$ that solutions of  certain Kolmogorov PDEs can be approximated by deep ANNs on the space-time region $[0, T] \times [a, b]^d$ without the curse of dimensionality. To illustrate the findings of this work in more details we now present in \Cref{thm:DNNerrorEstimateLaplace} below a special case of \Cref{theorem:DNNerrorEstimate}.

\begin{theorem}
	\label{thm:DNNerrorEstimateLaplace}
	Let 
	$	\ANNs
	=
	\cup_{L \in \N}
	\cup_{ l_0,l_1,\ldots, l_L \in \N }
	(
	\times_{k = 1}^L (\R^{l_k \times l_{k-1}} \times \R^{l_k})
)$,  let 	$A \colon (\cup_{d \in \N} \R^d) \to (\cup_{d \in \N} \R^d)$ satisfy   for all $d \in \N$, $x = (x_1,  \ldots, x_d) \in \R^d$ that
\begin{align}
	\label{eq:intro:activ}
A(x) = (\max\{x_1, 0\},  \ldots, \max\{x_d, 0\}),
\end{align}
let $\paramANN \colon \ANNs \to \N$  and $\mathcal{R}  \colon \ANNs \to( \cup_{k,l\in\N}\,C(\R^k,\R^l))$ 
satisfy
for all $ L\in\N$, $l_0,l_1,\ldots, l_L \in \N$, 
$
\Phi  
=
((W_1, B_1),\allowbreak \ldots, (W_L,\allowbreak B_L))
\in  \allowbreak
( \times_{k = 1}^L\allowbreak(\R^{l_k \times l_{k-1}} \times \R^{l_k}))
$,
$x_0 \in \R^{l_0}, x_1 \in \R^{l_1}, \ldots, x_{L} \in \R^{l_{L}}$ 
with $\forall \, k \in \N \cap (0,L) \colon x_k =A (W_k x_{k-1} + B_k)$  
that 
\begin{align}
\textstyle	\paramANN (\Phi)
	=
	\smallsum_{k = 1}^L l_k(l_{k-1} + 1), \quad \mathcal{R}(\Phi) \in C(\R^{l_0},\R^{l_L}), \quad \text{and} \quad 
(\mathcal{R}(\Phi)) (x_0) = W_L x_{L-1} + B_L,
\end{align}
for every $d \in \N$ let
	$ f_d \colon \R^d \to \R^d $  and 
	$ g_d \colon \R^d \to \R$
	be functions,
	let 
	$ T, \kappa, p \in (0,\infty) $,   \allowbreak
	assume for all
	$ d \in \N $, 
	$ \varepsilon \in (0,1] $ that there exist $\mathfrak{f}, \mathfrak{g} \in \ANNs$ such that for all 
	$ 
	x,y \in \R^d
	$ it holds
	that 
	\vspace{-1ex}
	\begin{gather}
	\label{eq:intro:ass:1}
	  	\mathcal{R}( \mathfrak{f})
	\in 
	C( \R^d,  \R^{ d }), \qquad  \mathcal{R}( \mathfrak{g})
	\in 
	C( \R^d,  \R),  \qquad 	\| 
	f_d( x ) 
	- 
	f_d( y )
	\|
	\leq 
	\kappa 
	\| x - y \|
	, 
		\\[0.5ex]
	\label{thm_intro:diff}
	\varepsilon|
	g_d( x )
	| 
	+\| f_d(x) 
	- 
	(\mathcal{R}(\mathfrak{f})) (x)
	\|
	+| g_d(x) 
	- 
	(\mathcal{R}(\mathfrak{g})) (x)
	|
	\leq 
	\varepsilon \kappa d^{ \kappa }
	(
	1 + \| x \|^{ \kappa }
	), \\[0.5ex]
	\label{eq:intro:ass:last}
	| (\mathcal{R}(\mathfrak{g})) (x) - (\mathcal{R}(\mathfrak{g})) (y)| \leq \kappa d^{\kappa} (1 + \|x\|^{\kappa} + \|y \|^{\kappa})\|x-y\|, \\[0.5ex]
	\text{and} \qquad 	\varepsilon^{\kappa} [\mathcal{P}( \mathfrak{f}) + \mathcal{P}( \mathfrak{g} ) ]+
	\|
	(\mathcal{R}(\mathfrak{f}))(x)    
	\|	
	\leq 
	\kappa ( d^{ \kappa } + \| x \| ),
	\end{gather}
and	for every $d \in \N$	let
$u_d \in C([0, T] \times \R^d, \R) $ be an at most polynomially growing viscosity solution of
	\begin{equation}
	\label{eq:intro:PDE}
	\begin{split}
	( \tfrac{ \partial }{\partial t} u_d )( t, x ) 
	& = 
	(\Delta_x u_d )( t, x ) +
	( \tfrac{ \partial }{\partial x} u_d )( t, x )
	\,
	f_d( x )
	\end{split}
	\end{equation}
	with 	$ u_d( 0, x ) = g_d( x )$
	for $ ( t, x ) \in (0,T) \times \R^d $.
	Then  there exists $c\in \R$ such that 	for all 
	$d\in\N$, $\varepsilon\in (0,1]$  there exists $\mathfrak{u} \in  \ANNs$ such that
	$\mathcal{R}(\mathfrak{u})\in C(\R^{d+1},\R)$,
	$\paramANN(\mathfrak{u})\le c \varepsilon^{-c}  d^{c}$, and 
	\begin{equation}
	\label{eq:intro:result}
	\begin{split}
	& \textstyle \left[
	\int_{[0,T]\times[0,1]^d}
	\left\vert
	u_d(y) 
	- 
	(\mathcal{R}(\mathfrak{u}))(y)
	\right\vert^p
	d y
	\right]^{\!\nicefrac{1}{p}} 
	\le
	\varepsilon.
	\end{split}
	\end{equation}
\end{theorem}

 \Cref{thm:DNNerrorEstimateLaplace} follows from \Cref{cor:DNNerrorEstimateLaplace} in Subsection~\ref{sub:cost} below. \Cref{cor:DNNerrorEstimateLaplace}, in turn, is a consequence of \Cref{theorem:DNNerrorEstimate} which is the main result of this article. 
In the following we add a few comments on some of the mathematical objects appearing in \Cref{thm:DNNerrorEstimateLaplace} above. 

Note in \Cref{thm:DNNerrorEstimateLaplace} that $\norm{\cdot} \colon (\cup_{d\in\N} \R^d) \to [0,\infty)$ is the function which satisfies for all $d \in \N$, $x = (x_1, \ldots, x_d) \in \R^d$ that
\begin{align}
\textstyle \norm{x} = \big[ \!\sum_{j=1}^d |x_j|^2\big]^{\nicefrac{1}{2}}
\end{align}
(standard norm, cf.\ \Cref{Def:euclideanNorm} below). In \Cref{thm:DNNerrorEstimateLaplace}  we approximate the solution functions $u_d \colon [0, T] \times \R^d \to \R$, $d \in \N$, of the PDEs in \eqref{eq:intro:PDE} by deep ANNs. We assume that the solution functions $u_d \colon [0, T] \times \R^d \to \R$, $d \in \N$, are at most polynomially growing which means that for every $d \in \N$ there exists $q\in (0, \infty)$ such that for all $t\in [0, T]$, $x \in   \R^d$ it holds that
\begin{align}
|u_d(t, x)| \leq q(1 + \|x\|^q).
\end{align}
This polynomial growth assumption ensures uniqueness of the solution functions $u_d \colon [0, T] \times \R^d \to \R$, $d \in \N$, of  the PDEs in \eqref{eq:intro:PDE}.

The set $\ANNs$ in \Cref{thm:DNNerrorEstimateLaplace} is a set of tuples of  pairs of real matrices and real vectors and we think of $\ANNs$  as the set of all  ANNs (cf.~\Cref{Def:ANN} below). Observe that \Cref{thm:DNNerrorEstimateLaplace} is an approximation result for  ANNs with the rectifier activation function and the corresponding rectifier functions are described through the function $A \colon (\cup_{d \in \N} \R^d) \to (\cup_{d \in \N} \R^d)$  appearing in \Cref{thm:DNNerrorEstimateLaplace}.

For every $\Phi \in \ANNs$ the number $\paramANN(\Phi) \in \N$ in \Cref{thm:DNNerrorEstimateLaplace}  corresponds to the number of real parameters employed to describe the ANN $\Phi$ (cf.~\Cref{Def:ANN} below).
For every $\Phi \in \ANNs$ 
the function 
\begin{align}
\mathcal{R}(\Phi) \in( \cup_{k,l\in\N}\,C(\R^k,\R^l))
\end{align}
corresponds to the realization function associated to the ANN  $\Phi$ (cf.~\Cref{Definition:ANNrealization} below). We also refer to \Cref{figure_intro}  for a graphical illustration of the architecture of the ANN $\Phi$ and its realization function $\mathcal{R}(\Phi)$. The functions 	$ f_d \colon \R^d \to \R^d $, $d \in \N $, describe the drift coefficient functions and the functions 	$ g_d \colon \R^d \to \R$, $d \in \N $,  describe the initial value functions  of the PDEs whose solutions we intend to approximate in \Cref{thm:DNNerrorEstimateLaplace} (see \eqref{eq:intro:PDE} above).

\def\layersep{2.5cm}
\begin{figure}[h]
	\centering
	\begin{adjustbox}{width=\textwidth}
		\begin{tikzpicture}[shorten >=1pt,->,draw=black!50, node distance=\layersep]
			\tikzstyle{every pin edge}=[<-,shorten <=1pt]
			\tikzstyle{output neuron}=[very thick, circle,draw=black, fill=red!30, minimum size=40pt,inner sep=0pt]
			\tikzstyle{input neuron}=[very thick, circle, draw=black,fill=-red!80,minimum size=40pt,inner sep=0pt]
			\tikzstyle{hidden neuron}=[very thick, circle,draw=black,fill={rgb:black,1;white,5},minimum size=30pt,inner sep=0pt]
			\tikzstyle{annot} = [text width=9em, text centered]
			\tikzstyle{annot2} = [text width=4em, text centered]
			
			\node[input neuron] (I-1) at (-3,-0.cm) {\large $1$};
			\node[input neuron] (I-2) at (-3,-2.7cm) {\large $2$};
			\node(I-dots) at (-3,-4.2cm) {\vdots};
			\node[input neuron] (I-3) at (-3,-5.8cm) {\large $l_0$};

			\path[yshift = 2cm]
			node[hidden neuron](H0-1) at (0*\layersep, -1 cm) { $1$};
			\path[yshift = 2cm]
			node[hidden neuron](H0-2) at (0*\layersep, -3.5 cm) { $2$};
			\path[yshift = 2cm]
			node[hidden neuron](H0-3) at (0*\layersep, -6 cm) { $3$};
			\path[yshift = 2cm]
			node(H0-dots) at (0*\layersep, -7.4 cm) {\vdots};
			\path[yshift = 2cm]
			node[hidden neuron](H0-4) at (0*\layersep, -9 cm) { $l_1$};
%
			\path[yshift = 2cm]
			node[hidden neuron](H1-1) at (1.5*\layersep, -1 cm) {$1$};
			\path[yshift = 2cm]
			node[hidden neuron](H1-2) at (1.5*\layersep, -3.5 cm) {$2$};
			\path[yshift = 2cm]
			node[hidden neuron](H1-3) at (1.5*\layersep, -6 cm) {$3$};
			\path[yshift = 2cm]
			node(H1-dots) at (1.5*\layersep, -7.4 cm) {\vdots};
			\path[yshift = 2cm]
			node[hidden neuron](H1-4) at (1.5*\layersep, -9 cm) {$l_2$};
			
	\path[yshift = 1cm]
	node(Hdot-1) at (2.7*\layersep, -1 cm) {$\cdots$};
	\path[yshift = 0.5cm]
	node(Hdot-2) at (2.7*\layersep, -3.4 cm) {$\cdots$};
	\path[yshift = 0.5cm]
	node(Hdot-dots) at (2.7*\layersep, -4.7 cm) {$\vdots$};
	\path[yshift = 0.5cm]
	node(Hdot-3) at (2.7*\layersep, -6.5 cm) {$\cdots$};
			
			\path[yshift = 2cm]
			node[hidden neuron](H2-1) at (3.9*\layersep, -1 cm) {$1$};
			\path[yshift = 2cm]
			node[hidden neuron](H2-2) at (3.9*\layersep, -3.5 cm) {$2$};
			\path[yshift = 2cm]
			node[hidden neuron](H2-3) at (3.9*\layersep, -6 cm) {$3$};
			\path[yshift = 2cm]
			node(H2-dots) at (3.9*\layersep, -7.4 cm) {\vdots};
			\path[yshift = 2cm]
			node[hidden neuron](H2-4) at (3.9*\layersep, -9 cm) {$l_{L-1}$};

			\path[yshift = 1.5cm]
			node[output neuron](O-1) at (5.4*\layersep,-1.5 cm) {\large $1$}; 
			\path[yshift = 1.5cm]
			node[output neuron](O-2) at (5.4*\layersep,-4.2 cm) {\large $2$}; 
			\path[yshift = 1.5cm]
			node(O-dots) at (5.4*\layersep, -5.7 cm) {\vdots};
			\path[yshift = 1.5cm]
			node[output neuron](O-3) at (5.4*\layersep,-7.2 cm) {\large $l_L$};
			\foreach \source in {1,2,3}
			\foreach \dest in {1,2,3,4}
			\path[-{Latex[length=2mm, width=4mm]}, line width = 0.8, draw=black] (I-\source) -- (H0-\dest);

			\foreach \source in {1,2,3,4}
			\foreach \dest in {1,2,3,4}
			\path[-{Latex[length=2mm, width=4mm]}, line width = 0.8, draw=black] (H0-\source) -- (H1-\dest);

			\foreach \source in {1,2,3,4}
			\foreach \dest in {1,2,3}
			\draw[-{Latex[length=2mm, width=4mm]}, line width = 0.8, draw=black] (H1-\source) -- (Hdot-\dest);
			
			\foreach \source in {1,2,3}
			\foreach \dest in {1,2,3,4}
			\draw[-{Latex[length=2mm, width=4mm]}, line width = 0.8, draw=black] (Hdot-\source) -- (H2-\dest);
			
			\foreach \source in {1,2,3,4}
			\foreach \dest in {1,2,3}
			\path[-{Latex[length=2mm, width=4mm]}, line width = 0.8, draw=black] (H2-\source) -- (O-\dest);

			\node[annot,above of=H0-1, node distance=1.4cm, align=center] (hl) {$1^{\text{st}}$ hidden layer };
			\node[annot,above of=H1-1, node distance=1.4cm, align=center] (hl) {$2^{\text{nd}}$ hidden layer};
			\node[annot,above of=H2-1, node distance=1.4cm, align=center] (hl2) {${(L - 1)}^{\text{th}}$ hidden layer};
			\node[annot,above of=I-1, node distance=1.2cm, align=center] {Input layer};
			\node[annot,above of=O-1, node distance=1.4cm, align=center] {Output layer};
			
			\node[annot,below of=H0-4, node distance=1.4cm, align=center] (sl) {$x_1 = A(W_1 x_0$\\ 
				$ +B_1) \in \R^{l_1}$};
			\node[annot,below of=H1-4, node distance=1.4cm, align=center] (sl) { $x_2= A(W_2 x_1 $\\
				$+B_2) \in \R^{l_2}$};
			\node[annot,below of=H2-4, node distance=1.4cm, align=center] (sl2) {$x_{L-1}= A(W_{L-1} x_{L-2}$\\$ +B_{L-1}) \in \R^{l_{L-1}}$};
			\node[annot2,below of=I-3, node distance=1.2cm, align=center] {$x_0 \in \R^{l_0}$};
			\node[annot,below of=O-3, node distance=1.7cm, align=center] {$(\mathcal{R}(\Phi)) (x_0)$ \\$= W_L x_{L-1}$ \\$+ B_L
				\in \R^{l_L}$};
		\end{tikzpicture}
	\end{adjustbox}
	\caption{Graphical illustration for the realization function and the architecture of an ANN $ \Phi=
		((W_1, B_1),\allowbreak \ldots, (W_L,\allowbreak B_L))
		\in  \allowbreak
		( \times_{k = 1}^L\allowbreak(\R^{l_k \times l_{k-1}} \times \R^{l_k})) \subseteq \ANNs
		$ (see \Cref{thm:DNNerrorEstimateLaplace}) where $ L\in\N$ describes the number of affine linear transformations, where $l_0,l_1,\ldots, l_L \in \N$ describe the dimensions of the layers of the ANN, and where the function $A  \colon (\cup_{d \in \N} \R^d) \to (\cup_{d \in \N} \R^d)$ represents the activation function (see \eqref{eq:intro:activ}).}
	\label{figure_intro}
\end{figure}

The real number $T \in (0, \infty)$ denotes the time horizon of the PDEs whose solutions we intend to approximate.
The real number $\kappa \in (0, \infty)$  is a constant which we employ to formulate the assumptions on the drift coefficient functions $ f_d \colon \R^d \to \R^d $, $d \in \N $,  and the initial value functions 	$ g_d \colon \R^d \to \R$, $d \in \N $,   of the PDEs whose solutions we intend to approximate in \Cref{thm:DNNerrorEstimateLaplace} (see \eqref{eq:intro:ass:1}--\eqref{eq:intro:ass:last} above). The real number $p \in (0, \infty)$ is used to describe the way how we measure the error between the exact solutions of the PDEs in~\eqref{eq:intro:PDE} and the corresponding deep ANN approximations in the sense that we measure the error in the strong  $L^p$-sense (see~\eqref{eq:intro:result} above).

 We assume in \Cref{thm:DNNerrorEstimateLaplace}  that the drift coefficient functions $ f_d \colon \R^d \to \R^d $, $d \in \N $,  and the initial value functions 	$ g_d \colon \R^d \to \R$, $d \in \N $, of the PDEs whose solutions we intend to approximate 
can be approximated by ANNs  
without the curse of dimensionality
 (see \eqref{eq:intro:ass:1} and \eqref{thm_intro:diff} above). We note that according to \emph{the universal approximation type theorems}  for every $d \in \N$ and every compact set $K \subseteq \R^d$  one can uniformly approximate the functions $ f_d \colon \R^d \to \R^d $ and 	$ g_d \colon \R^d \to \R$ on the set $K$ through ANNs with  an arbitrary prescribed  precision $\varepsilon >0$ (see, e.g., Kidger \& Lyons~\cite[Theorem~3.2]{KidgerLyons2020}). However, the universal approximation type theorems do not guarantee that the number of  parameters of the approximating ANNs  grows at most polynomially in both the PDE dimension $d \in \N$ and the reciprocal of the prescribed approximation accuracy $\varepsilon >0$, i.e., the universal approximation type theorems do not guarantee that
 the approximating ANNs do not suffer from the curse of dimensionality.
 
The functions  $u_d\colon [0,T] \times \R^{d} \to \R$, $d\in\N$,  in \Cref{thm:DNNerrorEstimateLaplace}  denote the PDE solutions which we intend to approximate by means of deep ANNs.  In particular, in \eqref{eq:intro:result} in \Cref{thm:DNNerrorEstimateLaplace} we show that there exists a constant $c \in \R$ such that for any dimension $d \in \N$ and any approximation accuracy $\varepsilon \in (0, 1]$ there exists an  ANN $\mathfrak{u} \in  \ANNs$ such that the number of parameters $\paramANN(\mathfrak{u})$ of the ANN is bounded by  $c \varepsilon^{-c}  d^{c}$ and such that the	 realization function $\mathcal{R}(\mathfrak{u}) \colon \R^{d+1} \to \R$ of the ANN approximates the PDE solution $u_d \colon [0,T] \times \R^{d} \to \R$  in the $L^p([0, T] \times [0,1]^d; \R)$-sense  with the precision $\varepsilon$.  

Our proofs of  \Cref{thm:DNNerrorEstimateLaplace} above and \Cref{theorem:DNNerrorEstimate} below, respectively, are based on an application of Proposition~3.10 in Grohs et al.~\cite{GrohsHornungEtAl2023} (see   \eqref{ProofApproxOfEulerWithGronwall:Function}--\eqref{ProofApproxOfEulerWithGronwall:Adaptedness} in the proof of \Cref{Thm:ApproxOfEulerWithGronwall} in Subsection~\ref{subsec:DNN:MCE} below for details). More specifically, Proposition~3.10 in Grohs et al.~\cite{GrohsHornungEtAl2023}   allows us to obtain space-time deep ANN approximations for Euler approximations of deterministic equations and Monte-Carlo Euler approximations of stochastic differential equations (SDEs) with desired complexity bounds. 
Combing this approximation result with the famous \emph{Feynman--Kac theorem} (see \Cref{thm:feynman} below for a special case) and the approximation error estimates for  Monte Carlo Euler approximations in \Cref{prop:PDE_approx_Lp} below
 enables us construct space-time deep ANN approximations of certain Kolmogorov PDEs with desired approximation capabilities.

In the following we present concrete  examples of PDEs whose coefficient functions  satisfy the assumptions of \Cref{thm:DNNerrorEstimateLaplace} above. For further families of coefficient functions satisfying such kind of regularity assumptions we refer to the arguments, e.g., in Bach~\cite{bach2017breaking}, E \& Wang \cite{EWang2018}, Cheridito et al.~\cite{CheriditoEtAl2022}, Beneventano et al.~\cite{BeneventanoEtAl2021}, and the references  therein.
\begin{example}
For every $d \in \N$ let $f_d \colon \R^d \to \R^d$ and $g_d \colon \R^d \to \R$ satisfy for all $x = (x_1, \ldots, x_d) \in \R^d$ that
\begin{align}
	\label{eq:example:intro:1}
	f_d(x) =
	\begin{cases}
		|x_1| &\colon \quad d=1\\
	(|x_d|, |x_1|, |x_2|, \ldots, |x_{d-1}|) &\colon \quad  d >1
	\end{cases} 
\end{align}
and
\begin{align}
	\label{eq:example:intro:2}
 g_d(x) = \max\{|x_1|, |x_2|, \ldots, |x_d|\}.
\end{align}
Note that \eqref{eq:example:intro:1} and \eqref{eq:example:intro:2}  ensure that for all $d \in \N$, $x, y \in \R^d$ it holds that
\begin{align}
\|f_d(x) - f_d(y)\| \leq \|x-y\|, \qquad |g_d(x)| \leq \|x\|, \qquad  \text{and} \qquad |g_d(x) - g_d(y)| \leq \|x-y\|
\end{align}
(cf., e.g., Beneventano et al.~\cite[Lemma~5.3]{BeneventanoEtAl2021}).
\Cref{thm:feynman} in Section~\ref{sec:feynman:kac} below therefore implies that for every $d \in \N$ 	there exists a unique viscosity solution
$u_d \in  \{v \in C([0, T] \times \R^d, \R) \colon \allowbreak \inf_{ q \in (0,\infty) }
\allowbreak \sup_{ (t,x) \in [0,T] \times \R^d } \allowbreak
\frac{ | v(t,x) | }{
	1 + \| x \|^q
}
< \infty \}$ of
\begin{equation}
	\begin{split}
		( \tfrac{ \partial }{\partial t} u_d )( t, x ) 
	& = 
	(\Delta_x u_d )( t, x ) +
	( \tfrac{ \partial }{\partial x} u_d )( t, x )
	\,
	f_d( x )
	\end{split}
\end{equation}
with  $ u_d( 0, x ) = g_d( x )$
for $ ( t, x ) \in (0,T) \times \R^d $.
Moreover, we observe that the families $(f_d)_{d \in \N}$ and $(g_d)_{d \in \N}$ can  be exactly represented as realizations of  ANNs with  the number of parameters growing at most quadratically in the input dimension. More specifically, using the notation of  \Cref{thm:DNNerrorEstimateLaplace} we note that there exists $\kappa \in (0, \infty)$ such that for all $d \in \N$ there exist $ \mathfrak{f},  \mathfrak{g} \in \ANNs$ such that
\begin{align}
\mathcal{P}(\mathfrak{f}) + \mathcal{P}(\mathfrak{g}) \leq \kappa d^2, \qquad \mathcal{R}(\mathfrak{f}) = f_d, \qquad \text{and} \qquad \mathcal{R}(\mathfrak{g}) = g_d
\end{align}
(cf., e.g., Beneventano et al.~\cite[Proposition 5.4]{BeneventanoEtAl2021}).
\Cref{thm:DNNerrorEstimateLaplace}  hence proves that  there exist $c\in \R$ such that
for all 
$d\in\N$, $\varepsilon\in (0,1]$  there exists $\mathfrak{u} \in \ANNs$ such
that
$\mathcal{R}(\mathfrak{u})\in C(\R^{d+1},\R)$,
$\paramANN(\mathfrak{u})\le c \varepsilon^{-c}  d^{c}$, and 
\begin{equation}
	\begin{split}
		& \left[
		\int_{[0,T]\times[0,1]^d}
		\left\vert
		u_d(y) 
		- 
		(\mathcal{R}(\mathfrak{u}))(y)
		\right\vert^p
		d y
		\right]^{\!\nicefrac{1}{p}} 
		\le
		\varepsilon.
	\end{split}
\end{equation}
\end{example}

\Cref{thm:DNNerrorEstimateLaplace}  is a purely theoretical result which asserts the existence of ANNs that can approximate the solutions of the PDEs without the curse of dimensionality. However, the proof of \Cref{thm:DNNerrorEstimateLaplace}  and the earlier results, e.g., in   \cite{GrohsHornungEtAl2023,JentzenSalimovaWelti2018, GrohsWurstemberger2023}  on which this work is partially based on,  respectively, in some way suggest a concrete class of algorithms, specifically,  a concrete class of ANN architectures with which the PDEs could be solved numerically and in this regard we refer to Becker et al.~\cite{BeckerEtAl2024} for details and concrete numerical simulations. For standard feedforward fully connected  ANN architectures used to approximatively solve Kolmogorov PDEs we refer, e.g., to Beck et al.~\cite{BeckBeckerEtAl2021} and Berner et al.\ \cite{BernerDablanderGrohs2020}.

The remainder of this article is structured in the following way. In Section~\ref{sec:SDEs}
we establish in \Cref{prop:perturbation_PDE_2} and \Cref{lem:Euler} suitable weak and strong error estimates for  Euler--Maruyama approximations for a certain class of SDEs.  In Section~\ref{sec:PDEs}  we use these weak and strong error estimates for  Euler--Maruyama approximations to establish in \Cref{prop:PDE_approx_Lp} below suitable error estimates for Monte Carlo Euler approximations for a class of SDEs with perturbed drift coefficient functions. In  Section~\ref{sec:DNN_PDEs} we use these error estimates for Monte Carlo Euler approximations to establish in \Cref{theorem:DNNerrorEstimate} below that for every $T \in (0, \infty)$, $a \in \R$, $ b \in (a, \infty)$ it holds that solutions of  certain Kolmogorov PDEs can be approximated by deep ANNs on the space-time region $[0, T] \times [a, b]^d$ without the curse of dimensionality.

\section{Numerical approximations for stochastic differential equations (SDEs)}	
\label{sec:SDEs}

In this section
we establish in \Cref{prop:perturbation_PDE_2} and \Cref{lem:Euler} below suitable weak and strong error estimates for  Euler--Maruyama approximations for a certain class of SDEs (see, e.g., Kloeden \& Platen~\cite{kp92} for an extensive introduction  to  numerical approximations for SDEs). Our proofs of  \Cref{prop:perturbation_PDE_2} and \Cref{lem:Euler}  are based on the elementary a priori moment estimates in Lemmas~\ref{lem:momentsGauss}--\ref{lem:sde-lp-bound} below.
\Cref{lem:momentsGauss}  is, e.g.,  proved as Gonon et al.~\cite[Lemma~3.1]{GononGrohsEtAl2022} (see also, e.g.,   Jentzen et al.~\cite[Lemma 4.2]{JentzenSalimovaWelti2018}) and a slightly modified version of
 \Cref{lem:sde-lp-bound} 
 is, e.g., proved as  Gonon et al.~\cite[Lemma~3.4]{GononGrohsEtAl2022} (see also, e.g., Jentzen et al.~\cite[Lemma 4.1]{JentzenSalimovaWelti2018}).

\subsection{A priori moment bounds for Gaussian random variables}

\begin{definition}[Standard norms]
	\label{Def:euclideanNorm}
	We denote by $\norm{\cdot} \colon (\cup_{d\in\N} \R^d) \to [0,\infty)$ the function which satisfies for all $d\in\N$, $ x = ( x_1, x_2,\dots, x_{d} ) \in \R^{d} $ that
	\begin{equation}\label{euclideanNorm:Equation}
\norm{x}=\big[\smallsum\nolimits_{j=1}^d \vert x_j\vert^2\big]^{\nicefrac{1}{2}}.
	\end{equation}
\end{definition}

\begin{lemma}
	\label{lem:momentsGauss}
	Let
	$ d \in \N$,
	$ p \in (0,\infty)$,
	let
	$
	( \Omega, \mathcal{F}, \P ) 
	$
	be a probability space,
	and let
	$
	X \colon \Omega \to \R^d
	$
	be a centered Gaussian random variable. 
	Then 
	\begin{equation}
	\label{eq:bm-lp}
	\begin{split}
	\big( \E\big[ \| X \|^p \big] \big)^{ \nicefrac{1}{p} }
	& \leq 
	\sqrt{ \max\{1,p-1\} \operatorname{Trace}(\cov(X))} 
	\end{split}
	\end{equation}
	(cf.\ Definition~\ref{Def:euclideanNorm}).
\end{lemma}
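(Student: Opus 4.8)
The plan is to split the estimate according to whether $p \le 2$ or $p > 2$, using the identity $\E[\norm{X}^2] = \operatorname{Trace}(\cov(X))$ — immediate from $\norm{X}^2 = \smallsum_{j=1}^d X_j^2$, linearity of the expectation, and the fact that $X$ is centered — as the anchor. First I would handle the case $p \in (0,2]$: since $[0,\infty) \ni t \mapsto t^{\nicefrac{p}{2}}$ is concave, Jensen's inequality gives $\E[\norm{X}^p] = \E[(\norm{X}^2)^{\nicefrac{p}{2}}] \le (\E[\norm{X}^2])^{\nicefrac{p}{2}} = (\operatorname{Trace}(\cov(X)))^{\nicefrac{p}{2}}$, and this is the assertion because $\max\{1,p-1\} = 1$ for such $p$.

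For $p \in (2,\infty)$ I would first reduce to one dimension. By the spectral theorem there exist an orthogonal matrix $Q$ and $\lambda_1, \ldots, \lambda_d \in [0,\infty)$ with $\cov(X) = Q \operatorname{diag}(\lambda_1, \ldots, \lambda_d) Q^{-1}$; the vector $Q^{-1}X$ is again a centered Gaussian, its components are independent and distributed as $\sqrt{\lambda_1}\, N_1, \ldots, \sqrt{\lambda_d}\, N_d$ for i.i.d.\ standard normal $N_1, \ldots, N_d$, and $\norm{X} = \norm{Q^{-1}X}$ since $Q$ is orthogonal. Hence $\norm{X}^2$ has the same distribution as $\smallsum_{i=1}^d \lambda_i N_i^2$, and Minkowski's inequality in $L^{\nicefrac{p}{2}}(\P)$ (admissible since $\nicefrac{p}{2} \ge 1$) yields $(\E[\norm{X}^p])^{\nicefrac{2}{p}} \le \smallsum_{i=1}^d \lambda_i (\E[|N|^p])^{\nicefrac{2}{p}} = \operatorname{Trace}(\cov(X))\, (\E[|N|^p])^{\nicefrac{2}{p}}$, where $N$ denotes a standard normal random variable and where I have used $\operatorname{Trace}(\cov(X)) = \smallsum_{i=1}^d \lambda_i$. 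It therefore remains to prove the scalar estimate $\E[|N|^p] \le (p-1)^{\nicefrac{p}{2}}$ for every $p \in [2,\infty)$.

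For this scalar bound I would use Gaussian integration by parts: for $p \in (1,\infty)$ the map $\R \ni x \mapsto x|x|^{p-2}$ is continuously differentiable with derivative $(p-1)|x|^{p-2}$, and integrating this identity against the standard normal density (the boundary terms vanish) gives the recursion $\E[|N|^p] = \E[N \cdot (N|N|^{p-2})] = (p-1)\E[|N|^{p-2}]$. For $p \in (2,4]$ a further application of Jensen's inequality gives $\E[|N|^{p-2}] \le (\E[N^2])^{(p-2)/2} = 1$, so $\E[|N|^p] \le p - 1 \le (p-1)^{\nicefrac{p}{2}}$; for $p > 4$ the bound follows by induction on the index $k \in \N$ with $p \in (2k, 2k+2]$, using the recursion together with $(p-3)^{(p-2)/2} \le (p-1)^{(p-2)/2}$. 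Combining this with the reduction of the previous paragraph produces $(\E[\norm{X}^p])^{\nicefrac{1}{p}} \le \sqrt{(p-1)\operatorname{Trace}(\cov(X))}$, which together with the case $p \le 2$ is the claim. The only step requiring care is the justification of the integration-by-parts identity — in particular the integrability of $|x|^{p-2}$ against the Gaussian density, which is exactly what $p > 1$ guarantees; apart from that the argument is routine and I do not anticipate a real obstacle.
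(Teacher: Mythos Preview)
The paper does not supply its own proof of this lemma; it is stated and then attributed to Gonon et al.\ \cite[Lemma~3.1]{GononGrohsEtAl2019} (see also Jentzen et al.\ \cite[Lemma~4.2]{JentzenSalimovaWelti2018}). Your argument is correct and self-contained: the split into $p \le 2$ (Jensen) and $p > 2$ (diagonalise the covariance, apply Minkowski in $L^{p/2}$, then the Stein recursion $\E[|N|^p] = (p-1)\,\E[|N|^{p-2}]$ together with an induction in steps of $2$) is the standard approach and matches what one finds in those references. One cosmetic slip: the map $x \mapsto x|x|^{p-2}$ is \emph{not} continuously differentiable on all of $\R$ for $1 < p < 2$ (the derivative blows up at $0$), but since you only invoke the recursion for $p > 2$ this has no bearing on the argument.
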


\subsection{A priori moment bounds for solutions of SDEs}

\begin{lemma}
	\label{lem:sde-lp-bound}
	Let $ d \in \N $, $ \xi \in \R^d $, $ p \in [1,\infty) $, 
	$ c, C, T \in [0,\infty) $, 
	let $ (\Omega, \mathcal{F},\P) $ be a probability space, 
	let $\mu \colon \R^d \to \R^d$ and $\chi \colon  [0,T] \to [0,T]$
	be measurable functions, assume for all $x \in \R^d$, $t \in
	[0,T]$ that
	$\|\mu(x)\| \leq C+c\|x\|$ and $\chi(t) \leq t$, 
	and let $X, \beta  \colon [0,T]\times \Omega \to \R^d$ be  stochastic
	processes with continuous sample paths which satisfy for all 
	$ t \in [0,T] $ that
	\begin{equation}
	\label{eq:apriori1-ass1}
	\P\!\left(X_t = \xi + \int_0^t \mu\!\left(X_{\chi(s)}\right) ds +
	\beta_t\right) = 1
	\end{equation}
	(cf.\ Definition~\ref{Def:euclideanNorm}).
	Then 
	\begin{equation}
	\label{eq:lemma:apriori}
	\begin{split}
	\sup_{t \in [0, T]} \big(\E\big [\|X_t\|^p\big]\big)^{\nicefrac{1}{p}} 
	& \leq
	\Big(
	\|\xi\| + C T 
	+ 
	\sup_{t\in[0,T]}\big(
	\E\big[ 
	\| \beta_t \|^p
	\big]
	\big)^{ \nicefrac{ 1 }{ p } }
	\Big)
	\,
	e^{ c T } .
	\end{split}
	\end{equation}
\end{lemma}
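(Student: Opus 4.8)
The plan is to apply Gronwall's inequality to the map $[0,T]\ni t\mapsto (\E[\norm{X_t}^p])^{1/p}$. We may assume without loss of generality that $B:=\sup_{t\in[0,T]}(\E[\norm{\beta_t}^p])^{1/p}<\infty$, since otherwise the claimed bound \eqref{eq:lemma:apriori} holds trivially. First I would take norms in \eqref{eq:apriori1-ass1} and combine the triangle inequality, the sublinear growth assumption $\norm{\mu(x)}\le C+c\norm{x}$, and $\chi(s)\le s\le T$ to obtain for $\P$-a.e.\ $\omega\in\Omega$ and all $t\in[0,T]$ that $\norm{X_t}\le \norm{\xi}+CT+c\int_0^t\norm{X_{\chi(s)}}\,ds+\norm{\beta_t}$. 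Taking $L^p(\P)$-norms, using the triangle inequality in $L^p(\P)$ together with Minkowski's integral inequality for the integral term (joint measurability of $(s,\omega)\mapsto X_{\chi(s)}(\omega)$ being guaranteed by continuity of the sample paths and measurability of $\chi$), and writing $v(t):=(\E[\norm{X_t}^p])^{1/p}\in[0,\infty]$, this yields for all $t\in[0,T]$ that
\[
v(t)\le \norm{\xi}+CT+B+c\textstyle\int_0^t v(\chi(s))\,ds.
\]

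Next I would use $\chi(s)\le s$ to pass to a genuine Gronwall setup: since $v(\chi(s))\le\sup_{r\in[0,s]}v(r)$, the nondecreasing (hence Borel measurable) function $\bar v(t):=\sup_{r\in[0,t]}v(r)$ satisfies $\bar v(t)\le \norm{\xi}+CT+B+c\int_0^t\bar v(s)\,ds$ for all $t\in[0,T]$. Granting that $\bar v(T)<\infty$, Gronwall's inequality then gives $\bar v(t)\le(\norm{\xi}+CT+B)\,e^{ct}$ for all $t\in[0,T]$, and taking the supremum over $t\in[0,T]$ together with $e^{ct}\le e^{cT}$ establishes \eqref{eq:lemma:apriori}.

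The step I expect to be the real obstacle is the a priori finiteness $\bar v(T)=\sup_{t\in[0,T]}(\E[\norm{X_t}^p])^{1/p}<\infty$: we are only given a bound on $\sup_t\E[\norm{\beta_t}^p]$, not on $\E[\sup_t\norm{\beta_t}^p]$, so one cannot simply pass to pathwise suprema. A clean way around this is to iterate the pathwise bound for $\norm{X_t}$ itself. Since $\chi$ is deterministic, every instance of $\beta$ produced by the iteration is evaluated at a deterministic time, so after $k$ iterations one arrives at $\norm{X_t}\le(\norm{\xi}+CT)\sum_{j=0}^{k-1}\tfrac{(ct)^j}{j!}+(\text{a sum of }k\text{ iterated integrals of }\norm{\beta})+R_k$, where the remainder obeys $R_k\le\tfrac{(cT)^k}{k!}\sup_{r\in[0,T]}\norm{X_r}$ and hence tends to $0$ $\P$-a.s.\ as $k\to\infty$, because the sample paths of $X$ are continuous on the compact interval $[0,T]$ and thus bounded. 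Passing to the limit in this pathwise inequality and then taking $L^p(\P)$-norms — the series of iterated integrals of $\norm{\beta}$ being summable in $L^p(\P)$ with $L^p$-norm at most $\sum_{j\ge0}\tfrac{(cT)^j}{j!}B=Be^{cT}$ by repeated use of Minkowski's integral inequality over regions of volume $\le t^j/j!$ — simultaneously yields the finiteness and the explicit bound $v(t)\le(\norm{\xi}+CT+B)e^{ct}$, which is exactly \eqref{eq:lemma:apriori}. Alternatively, finiteness could be secured by a localization argument with the stopping times $\tau_n:=\inf(\{t\in[0,T]:\norm{X_t}\ge n\}\cup\{T\})$, which increase to $T$ $\P$-a.s., by running the Gronwall argument above for the bounded stopped processes $X_{\cdot\wedge\tau_n}$ and passing to the limit via Fatou's lemma.
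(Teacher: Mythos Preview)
The paper does not actually prove this lemma; it is stated without proof, with a reference to Gonon et~al.\ and Jentzen et~al.\ for (slight variants of) the argument. Your approach---pathwise triangle inequality, then Minkowski's integral inequality to pass to $v(t)=(\E[\|X_t\|^p])^{1/p}$, then Gronwall for $\bar v(t)=\sup_{s\le t}v(s)$---is the standard one and is correct. The iteration you spell out to settle the finiteness issue is sound and in fact already delivers the sharp bound $v(t)\le(\|\xi\|+CT+B)e^{ct}$ directly, so the separate Gronwall step becomes redundant once the iteration is carried through.

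One caveat on the alternative localization sketch: stopping $X$ at $\tau_n$ forces $\beta$ to be evaluated at the \emph{random} time $t\wedge\tau_n$, whereas the hypothesis only controls $(\E[\|\beta_t\|^p])^{1/p}$ for deterministic~$t$. Hence ``the Gronwall argument above'' does not transfer verbatim to the stopped process; one would need either a bound on $\E[\sup_t\|\beta_t\|^p]$ (which, as you note, is not assumed) or a different device. This does not affect the validity of your proof, since the iteration argument is self-contained and complete.
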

	\subsection{Weak error  estimates for Euler--Maruyama approximations}
		\begin{prop}
	\label{prop:perturbation_PDE_2}
	Let $ d, m \in \N $, $ \xi \in \R^d $, 
	$ T \in (0,\infty) $,
	$ c, C, \varepsilon_0, \varepsilon_1, \varepsilon_2, \varsigma_0, \varsigma_1, \varsigma_2, L_0, L_1, $ $\ell \in [0,\infty) $, 
	$ h \in [0,T] $,
	$p \in [2, \infty)$,
	$q \in (1, 2]$
	satisfy
	$
	\nicefrac{ 1 }{ p } + \nicefrac{ 1 }{ q } = 1
	$,
		let 	$ B \in \R^{ d \times m } $, $ (\varpi_r)_{r \in (0,\infty)} \subseteq \R $
		satisfy for all $ r \in (0,\infty) $ that
		\begin{equation}\label{perturbation_PDE_2:varpi}
					\varpi_r = 
					\max\!\big\{1,\sqrt{\max\{1,r-1\}  \operatorname{Trace}(B^* B)}\big\},
		\end{equation}
	let $ ( \Omega, \mathcal{F}, \P ) $ be a probability space, 
	let $ W \colon [0,T] \times \Omega \to \R^m $ be a standard Brownian motion, 
	let 
	$ \dnnFunction_0 \colon \R^d \to \R $, 
	$ f_1 \colon \R^d \to \R^d $, 
	$ \dnnFunction_2 \colon \R^d \to \R^d $, 
	and
	$ \chi \colon [0,T] \to [0,T] $ be functions,
	let 
	$ f_0 \colon \R^d \to \R $
and
	$ \dnnFunction_1 \colon \R^d \to \R^d $
	be measurable functions, 
	assume for all 
	$ t \in [0,T] $, $ x, y \in \R^d $ that
	\begin{equation}
	|  f_0( x ) - \dnnFunction_0( x ) |
	\leq 
	\varepsilon_0 
	( 1 + \| x \|^{ \varsigma_0 } )
	,
	\qquad
	\|  f_1( x ) - \dnnFunction_1( x ) \|
	\leq 
	\varepsilon_1
	( 1 + \| x \|^{ \varsigma_1 } )
	,
	\end{equation}
	\begin{equation}
	| \dnnFunction_0( x ) - \dnnFunction_0( y ) | 
	\leq 
	L_0
	\left[
	1 
	+ 
	\int_0^1
	\big[
	r \| x \| + ( 1 - r ) \| y \|
	\big]^{ \ell }
	\,
	dr
	\right]
	\| x - y \| ,
	\end{equation}
	\begin{equation}
	\label{eq:Prop:weak:f1}
	\| f_1( x ) - f_1( y ) \| \leq L_1 \| x - y \| 
	,
	\quad
	\| \dnnFunction_1( x ) \|
	\leq 
	C + c \| x \|, \quad 	\| \xi - \dnnFunction_2( \xi ) \|
	\leq
	\varepsilon_2 ( 1 + \| \xi \|^{ \varsigma_2 } ),
	\end{equation}
	and 
	$\chi( t ) 
	= 
	\max\!\left(
	\{ 0, h, 2 h , \dots \}
	\cap [0,t] 
	\right)$,
	and 
	let $ X, Y \colon [0,T] \times \Omega \to \R^d $ be stochastic processes with 
	continuous sample paths which satisfy
	for all $ t \in [0,T] $ that 
	\begin{equation}
	X_t = \xi + \int_0^t f_1( X_s ) \, ds + B W_t \qandq Y_t = \dnnFunction_2( \xi ) + \int_0^t \dnnFunction_1\big( Y_{ \chi( s ) } \big) \, ds + B W_t
	\end{equation}
	(cf.\ Definition~\ref{Def:euclideanNorm}).
	Then it holds for all $t\in[0,T]$ that
\begin{align*}
&
\big| \E\big[ f_0( X_t ) \big] - \E\big[ \dnnFunction_0( Y_t ) \big] \big|
\le 
\left(\varepsilon_2 ( 1 + \| \xi \|^{ \varsigma_2 } )+\varepsilon_0+\varepsilon_1  +h+h^{ \nicefrac{ 1 }{ 2 } }\right)
\\&\cdot e^{\left[\max\{\varsigma_0,1\}L_1+1 - \nicefrac{1}{p}+\ell \max\{L_1,c\}+\max\{\varsigma_1,1\} c\right]
	T}
(\varpi_{\max\{\varsigma_0,\ell q,p \varsigma_1, p\}})^{\max\{\varsigma_0,\ell+\max\{1,\varsigma_1\}\}} \numberthis \\&\cdot(\max\{T,1\})^{\max\{\varsigma_0,\ell+\max\{\varsigma_1,1\}+ \nicefrac{ 1 }{ p }\}}
\max\{L_0,1\}\, \max\{L_1,1\}\,
2^{ \max\{ \ell - 1, 0 \} } 
\\&\cdot
\left[
\max\{C,1\}+5\max\{C,c,1\}
\big(
\| \xi\|+
\varepsilon_2 ( 1 + \| \xi \|^{ \varsigma_2 } )
+ 
2 \max\{\| f_1(0) \| ,C,1\}\big)^{\max\{\varsigma_0,\ell+\max\{\varsigma_1,1\}\}}
\right]\!.
\end{align*}
\end{prop}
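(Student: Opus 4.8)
The plan is to exploit that $X$ and $Y$ are driven by the same Brownian motion $W$, so that the difference $X_t-Y_t$ is amenable to a pathwise estimate, and to split the weak error into the contribution of replacing $f_0$ by $\dnnFunction_0$ and the contribution of the Euler--Maruyama time discretisation together with the perturbations of the drift (from $f_1$ to $\dnnFunction_1$) and of the initial value (from $\xi$ to $\dnnFunction_2(\xi)$). Thus for $t\in[0,T]$ I would start from
\begin{equation*}
\big|\E[f_0(X_t)]-\E[\dnnFunction_0(Y_t)]\big|\le\E\big[|f_0(X_t)-\dnnFunction_0(X_t)|\big]+\E\big[|\dnnFunction_0(X_t)-\dnnFunction_0(Y_t)|\big],
\end{equation*}
bound the first summand by $\varepsilon_0\,\E[1+\|X_t\|^{\varsigma_0}]$ using the assumed approximation property of $\dnnFunction_0$, and treat the second summand via the assumed local Lipschitz property of $\dnnFunction_0$, H\"older's inequality with exponents $q$ and $p$, and a strong $L^p$-estimate for $X_t-Y_t$; everything then reduces to $L^r$-moment bounds for $X$ and $Y$ and to that strong error estimate.

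For the moment bounds I would note that $BW_t$ and $B(W_s-W_{\chi(s)})$ are centered Gaussian, so \Cref{lem:momentsGauss} gives, for all $r\in(0,\infty)$, that $(\E[\|BW_t\|^r])^{\nicefrac{1}{r}}\le\varpi_r(\max\{T,1\})^{\nicefrac{1}{2}}$ and $(\E[\|B(W_s-W_{\chi(s)})\|^r])^{\nicefrac{1}{r}}\le\varpi_r h^{\nicefrac{1}{2}}$, and then apply \Cref{lem:sde-lp-bound} (with $\beta=BW$, drift $f_1$, $\chi=\id$ for $X$, and $\beta=BW$, drift $\dnnFunction_1$, the given $\chi$ for $Y$), using $\|f_1(x)\|\le\|f_1(0)\|+L_1\|x\|$, $\|\dnnFunction_1(x)\|\le C+c\|x\|$ and $\|\dnnFunction_2(\xi)\|\le\|\xi\|+\varepsilon_2(1+\|\xi\|^{\varsigma_2})$, to obtain $\sup_{t\in[0,T]}(\E[\|X_t\|^r])^{\nicefrac{1}{r}}\le\Theta\,e^{L_1 T}$ and $\sup_{t\in[0,T]}(\E[\|Y_t\|^r])^{\nicefrac{1}{r}}\le\Theta\,e^{cT}$ for every $r\le m:=\max\{\varsigma_0,\ell q,p\varsigma_1,p\}$ (lower moments being controlled by Jensen's inequality), where $\Theta$ is comparable to the bracketed expression in the last line of the claimed bound. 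For the strong error I would use $X_t-Y_t=(\xi-\dnnFunction_2(\xi))+\int_0^t[f_1(X_s)-\dnnFunction_1(Y_{\chi(s)})]\,ds$, decompose $f_1(X_s)-\dnnFunction_1(Y_{\chi(s)})=[f_1(X_s)-f_1(Y_s)]+[f_1(Y_s)-f_1(Y_{\chi(s)})]+[f_1(Y_{\chi(s)})-\dnnFunction_1(Y_{\chi(s)})]$, and invoke the Lipschitz continuity of $f_1$, the growth bound on $\dnnFunction_1$, and $\|f_1-\dnnFunction_1\|\le\varepsilon_1(1+\|\cdot\|^{\varsigma_1})$, so that after taking $L^p$-norms
\begin{align*}
(\E[\|X_t-Y_t\|^p])^{\nicefrac{1}{p}}&\le\varepsilon_2(1+\|\xi\|^{\varsigma_2})+L_1\int_0^t(\E[\|X_s-Y_s\|^p])^{\nicefrac{1}{p}}\,ds\\
&\quad+L_1\int_0^t(\E[\|Y_s-Y_{\chi(s)}\|^p])^{\nicefrac{1}{p}}\,ds+\varepsilon_1\int_0^t(\E[(1+\|Y_{\chi(s)}\|^{\varsigma_1})^p])^{\nicefrac{1}{p}}\,ds.
\end{align*}
Here the one-step term is handled by $Y_s-Y_{\chi(s)}=\int_{\chi(s)}^s\dnnFunction_1(Y_{\chi(r)})\,dr+B(W_s-W_{\chi(s)})$ with $s-\chi(s)\le h$, giving $(\E[\|Y_s-Y_{\chi(s)}\|^p])^{\nicefrac{1}{p}}\le(\text{const})(h+h^{\nicefrac{1}{2}})$, and the last integral by the $p\varsigma_1$-moment bound; Gr\"onwall's inequality then yields a bound of the form $(\E[\|X_t-Y_t\|^p])^{\nicefrac{1}{p}}\le(\varepsilon_2(1+\|\xi\|^{\varsigma_2})+\varepsilon_1+h+h^{\nicefrac{1}{2}})\cdot(\text{const})\cdot e^{(L_1+\cdots)T}$.

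Finally I would combine the pieces: the $\varepsilon_0$-term in the decomposition is controlled by the $\varsigma_0$-moment bound, which produces the factors $\varepsilon_0$ and $e^{\max\{\varsigma_0,1\}L_1T}$ (the $\max$ accounting for $\varsigma_0<1$); and for the other term, local Lipschitzness of $\dnnFunction_0$ and H\"older give
\begin{equation*}
\E[|\dnnFunction_0(X_t)-\dnnFunction_0(Y_t)|]\le L_0\Big(\E\Big[\big(1+{\textstyle\int_0^1}[r\|X_t\|+(1-r)\|Y_t\|]^{\ell}\,dr\big)^{q}\Big]\Big)^{\nicefrac{1}{q}}(\E[\|X_t-Y_t\|^p])^{\nicefrac{1}{p}},
\end{equation*}
where the first factor is bounded, via $[r\|X_t\|+(1-r)\|Y_t\|]^{\ell}\le 2^{\max\{\ell-1,0\}}(\|X_t\|^{\ell}+\|Y_t\|^{\ell})$ and the $\ell q$-moment bounds, by a constant multiple of $e^{\ell\max\{L_1,c\}T}$ (this is where $\varpi_{\ell q}$ and the term $\ell\max\{L_1,c\}$ in the exponent originate) and the second factor is the strong error from the previous paragraph. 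Multiplying out and carefully bookkeeping the constants, exponents, and $\max$'s --- in particular that raising an $e^{L_1T}$- or $e^{cT}$-type moment bound to a power $\varsigma_0$, $\varsigma_1$ or $\ell$ multiplies the respective rate, and that $\varpi$ must be taken at the largest occurring exponent $\max\{\varsigma_0,\ell q,p\varsigma_1,p\}$ --- should yield exactly the asserted estimate. I expect the only non-routine steps to be the $h+h^{\nicefrac{1}{2}}$ one-step bound and this final constant-tracking; there is no deeper conceptual obstacle, the argument being a quantitative version of the classical strong error analysis of the Euler--Maruyama scheme with perturbed coefficients and initial data.
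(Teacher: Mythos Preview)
Your approach is correct and is essentially the same as the paper's: the paper invokes \cite[Lemma~4.5]{JentzenSalimovaWelti2018} as a black box for exactly the decomposition, H\"older step, and strong $L^p$-error estimate for $X-Y$ that you re-derive from scratch, and then substitutes the same moment bounds from \Cref{lem:momentsGauss,lem:sde-lp-bound} and simplifies algebraically. The only minor discrepancy is that the cited lemma organizes the Gr\"onwall step slightly differently (which is the origin of the $1-\nicefrac{1}{p}$ term in the exponential), so your direct $L^p$-Gr\"onwall would produce a marginally sharper but not literally identical constant.
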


\begin{proof}[Proof of Proposition~\ref{prop:perturbation_PDE_2}]

Observe that \eqref{perturbation_PDE_2:varpi}, 
Lemma~\ref{lem:momentsGauss},  the fact that for all $t\in[0,T]$ it holds that $BW_t$ is a centered Gaussian random variable, and the fact   that for all $t\in[0,T]$ it holds that  $\cov(BW_t)=B B^\ast\, t$
assure that for all $r \in (0, \infty)$, $t\in[0,T]$ it holds that
\begin{equation}\label{perturbation_SDE:BrownianMoments}
\begin{split}
\big(	\E\big[
	\| 
	B W_t 
	\|^r
	\big]
	\big)^{ \nicefrac{ 1 }{ r } }
	&\le 
	\sqrt{ \max\{1,r-1\} \operatorname{Trace}(\cov(BW_t))}
	\\& = 
	\sqrt{ \max\{1,r-1\} \operatorname{Trace}(B B^\ast) t}
	\\&\le \max\!\big\{t^{\nicefrac{1}{2}},\sqrt{\max\{1,r-1\}  \operatorname{Trace}(B^* B)t}\big\}
	= \varpi_r t^{\nicefrac{1}{2}}.
\end{split}
\end{equation}
In addition, note that \eqref{eq:Prop:weak:f1} shows that for all $ x \in \R^d $ it holds that
\begin{equation}
\label{eq:f1_linear_growth}
\left\| f_1( x ) \right\|
\leq
\left\| f_1( x ) - f_1( 0 ) \right\|
+
\left\| f_1( 0 ) \right\|
\leq
\left\| f_1( 0 ) \right\|
+
L_1 \| x \|
.
\end{equation}
H\"older's inequality, \eqref{eq:Prop:weak:f1}, 
Lemma~\ref{lem:sde-lp-bound},
 and \eqref{perturbation_SDE:BrownianMoments}
hence demonstrate that 
for all $ r \in (0,\infty) $, $t\in [0,T]$ it holds that
\begin{equation}\label{perturbation_SDE:EulerMoments}
\begin{split}
\sup_{ s\in [0,t] }
\big(
\E\big[ 
\| Y_s \|^{r}
\big]
\big)^{ \nicefrac{ 1 }{r } }
&\le 
\sup_{ s\in [0,t] }
\big(
\E\big[ 
\| Y_s \|^{\max\{r,1\}}
\big]
\big)^{ \nicefrac{ 1 }{ \max\{r,1\} } }
\\&\leq
\bigg[
\| \dnnFunction_2( \xi ) \| 
+ 
C t 
+ \sup_{ s\in [0,t] }
\big( 
\E\big[ \| B W_s \|^{\max\{r,1\}} \big]
\big)^{ \nicefrac{ 1 }{ \max\{r,1\} } }
\bigg]
e^{ c t } 
\\ &\le
\left[
\| \dnnFunction_2( \xi ) \| 
+ 
C t 
+ 
\varpi_{ \max\{r,1\} } t^{\nicefrac{1}{2}}\right]
e^{ c t } 
\end{split}
\end{equation}
and 
\begin{equation}\label{perturbation_SDE:SDEMoments}
\begin{split}
\sup_{ s\in [0,t] }
\big(
\E\big[ 
\| X_s \|^{r}
\big]
\big)^{ \nicefrac{ 1 }{r } }
&\le 
\sup_{ s\in [0,t] }
\big(
\E\big[ 
\| X_s \|^{\max\{r,1\}}
\big]
\big)^{ \nicefrac{ 1 }{ \max\{r,1\} } }
\\& \leq
\bigg[
\| \xi \| 
+ 
\| f_1(0) \| t 
+  \sup_{ s\in [0,t] }
\big( 
\E\big[ \| B W_s \|^{\max\{r,1\}} \big]
\big)^{ \nicefrac{ 1 }{ \max\{r,1\} } }
\bigg]
e^{ L_1 t }
\\ & \le
\left[
\| \xi \| 
+ 
\| f_1(0) \| t 
+ 
\varpi_{ \max\{r,1\} } t^{\nicefrac{1}{2}}
\right]
e^{ L_1 t }
.
\end{split}
\end{equation}
H\"older's inequality, \eqref{eq:Prop:weak:f1}, and the triangle inequality therefore imply that for all $ r \in (0,\infty) $, $t \in [0, T]$ it holds that
\begin{equation}
\label{eq:phi_1:Y}
\begin{split}
\sup_{ s\in [0,t] }
\big( 
\E\big[
\|
\dnnFunction_1( Y_s )
\|^{r}
\big]
\big)^{ \nicefrac{ 1 }{ r } }
	&\le\sup_{ s\in [0,t] }
\big( 
\E\big[
\|
\dnnFunction_1( Y_s )
\|^{\max\{r,1\}}
\big]
\big)^{ \nicefrac{ 1 }{ \max\{r,1\} } }
\\&\le C+c	\sup_{ s\in [0,t] }
\big( 
\E\big[
\| Y_s \|^{\max\{r,1\}}
\big]
\big)^{ \nicefrac{ 1 }{ \max\{r,1\} } }
\\&\le C+c
\left(
\| \dnnFunction_2( \xi ) \| 
+ 
C t 
+ 
\varpi_{ \max\{r,1\} } t^{\nicefrac{1}{2}}\right)
e^{ c t }. 
\end{split}
\end{equation}
Moreover, note that \cite[Lemma~4.3]{JentzenSalimovaWelti2018} assures for all $t\in [0,T]$ that 
\begin{equation}
\begin{split}
&
\big| \E\big[ f_0( X_t ) \big] - \E\big[ \dnnFunction_0( Y_t ) \big] \big|
\\ & 
\leq
\varepsilon_0 
\left(
1 
+
\E\!\left[ 
\| X_t \|^{ \varsigma_0 }
\right]
\right)
+
L_0
\,
2^{ \max\{ \ell - 1, 0 \} }
\,
e^{
	\left[ 
	L_1 
	+
	1 - \nicefrac{1}{p} 
	\right]
	t
}
\left[
1 
+
\left(
\E\big[
\| X_t \|^{ \ell q }
\big]
\right)^{ \nicefrac{ 1 }{ q } }
+
\left(
\E\big[
\| Y_t \|^{ \ell q }
\big]
\right)^{ \nicefrac{ 1 }{ q } }
\right]
\\ &  \cdot 
\bigg[
\left\| \xi - \dnnFunction_2( \xi ) \right\|
+
\varepsilon_1 
t^{ \nicefrac{ 1 }{ p } }
\!
\bigg[
1
+
\sup_{s \in [0,t] }
\big(
\E\big[
\| Y_s \|^{ p \varsigma_1 }
\big]
\big)^{ \nicefrac{ 1 }{ p } }
\bigg] +
h
t^{ \nicefrac{ 1 }{ p } }
L_1 
\bigg[
\sup_{s \in [0,t] }
\big( 
\E\big[
\|
\dnnFunction_1( Y_s )
\|^p
\big]
\big)^{ \nicefrac{ 1 }{ p } }
\bigg]
\\ & 
+
t^{ \nicefrac{ 1 }{ p } }
L_1 
\big(
\E\big[
\| 
B W_h 
\|^p
\big]
\big)^{ \nicefrac{ 1 }{ p } }
\bigg].
\end{split}
\end{equation}
This, \eqref{perturbation_SDE:BrownianMoments}, \eqref{perturbation_SDE:EulerMoments},  \eqref{perturbation_SDE:SDEMoments}, and \eqref{eq:phi_1:Y} prove that for all $t\in [0,T]$ it holds that
\begin{equation}
\label{eq:diff:F0}
\begin{split}
	&
	\big| \E\big[ f_0( X_t ) \big] - \E\big[ \dnnFunction_0( Y_t ) \big] \big|
	\\ & 
	\leq
	\varepsilon_0 
	\left[
	1 
	+
	\left(
	\| \xi \| 
	+ 
	\| f_1(0) \| t 
	+ 
	\varpi_{\max\{\varsigma_0,1\}} t^{\nicefrac{1}{2}}
	\right)^{\varsigma_0}
	e^{ \varsigma_0 L_1 t }
	\right]
	+
	L_0
	\,
	2^{ \max\{ \ell - 1, 0 \} }
	\,
	e^{
		\left[ 
		L_1 
		+
		1 - \nicefrac{1}{p} 
		\right]
		t
	}
	\\&\cdot\left[
	1 
	+
	\left(
	\| \xi \| 
	+ 
	\| f_1(0) \| t 
	+ 
	\varpi_{ \max\{\ell q,1\}} t^{ \nicefrac{1}{2}}
	\right)^{\ell}
	e^{  \ell  L_1 t }
	+
	\left(
	\| \dnnFunction_2( \xi ) \| 
	+ 
	C t 
	+ 
	\varpi_{\max\{\ell q,1\}}   t^{ \nicefrac{1}{2}}
	\right)^\ell
	e^{ \ell c t } 
	\right]
	\\ & \cdot 
	\Big[
	\left\| \xi - \dnnFunction_2( \xi ) \right\|
	+
	\varepsilon_1 
	t^{  \nicefrac{ 1 }{ p } }
	\!
	\left[
	1
	+
	\left(
	\| \dnnFunction_2( \xi ) \| 
	+ 
	C t 
	+ 
	\varpi_{\max\{p \varsigma_1,1\}}   t^{ \nicefrac{1}{2}}
	\right)^{\varsigma_1}
	e^{ \varsigma_1 c t } 
	\right]
	\\ &
	+
	h
	t^{  \nicefrac{ 1 }{ p } }
	L_1 
	\!
	\left[
	C+c
	\left(
	\| \dnnFunction_2( \xi ) \| 
	+ 
	C t 
	+ 
	\varpi_p t^{ \nicefrac{1}{2}}\right)
	e^{ c t }
	\right]
	+
	h^{ \nicefrac{ 1 }{ 2 } }t^{  \nicefrac{ 1 }{ p } }
	L_1 
	\varpi_p
	\Big].
\end{split}
\end{equation}
In addition, observe that the fact that $(0, \infty) \ni r \mapsto \varpi_r \in (0, \infty)$ is non-decreasing and the hypothesis that $p \in [2, \infty)$ imply that 
\begin{equation}
\varpi_{\max\{\varsigma_0,\ell q,p \varsigma_1, p\}}\ge \max\{\varpi_{\max\{\varsigma_0,1\}}, \varpi_{ \max\{\ell q,1\}}, \varpi_{\max\{p \varsigma_1,1\}}, \varpi_p\}.
\end{equation}
This and \eqref{eq:diff:F0}  ensure that for all $t\in [0,T]$ it holds that
\begin{equation}
\begin{split}
&
\big| \E\big[ f_0( X_t ) \big] - \E\big[ \dnnFunction_0( Y_t ) \big] \big|
\\&
\leq
\varepsilon_0 e^{ \varsigma_0 L_1 T }
\left[
1 
+
\left(
\| \xi \| 
+ 
\| f_1(0) \| T 
+ 
\varpi_{\max\{\varsigma_0,\ell q,p \varsigma_1, p\}} T^{\nicefrac{1}{2}}
\right)^{\varsigma_0}
\right]
\\ & 
+
L_0
\,
2^{ \max\{ \ell - 1, 0 \} }
\,
e^{
	\left[ 
	L_1 
	+
	1 - \nicefrac{1}{p} 
	+\ell \max\{L_1,c\}+\max\{\varsigma_1,1\} c
	\right]
	T
}
\\&\cdot\left[
1 
+
\left(
\| \xi \| 
+ 
\| f_1(0) \| T 
+ 
\varpi_{\max\{\varsigma_0,\ell q,p \varsigma_1, p\}} T^{\nicefrac{1}{2}}
\right)^{\ell}
+
\left(
\| \dnnFunction_2( \xi ) \| 
+ 
C T 
+ 
\varpi_{\max\{\varsigma_0,\ell q,p \varsigma_1, p\}}   T^{\nicefrac{1}{2}}
\right)^\ell
\right]
\\ & \cdot 
\Big[
\left\| \xi - \dnnFunction_2( \xi ) \right\|
+
\varepsilon_1 
T^{ \nicefrac{ 1 }{ p } }
\!
\left[
1
+
\left(
\| \dnnFunction_2( \xi ) \| 
+ 
C T 
+ 
\varpi_{\max\{\varsigma_0,\ell q,p \varsigma_1, p\}}  T^{\nicefrac{1}{2}}
\right)^{\varsigma_1}
\right]  
\\ &
\quad+
h
T^{ \nicefrac{ 1 }{ p } }
L_1 
\left[
C+c
\left(
\| \dnnFunction_2( \xi ) \| 
+ 
C T 
+ 
\varpi_{\max\{\varsigma_0,\ell q,p \varsigma_1, p\}} T^{\nicefrac{1}{2}}\right)
\right]
+
h^{ \nicefrac{ 1 }{ 2 } }T^{ \nicefrac{ 1 }{ p } }
L_1 
\varpi_{\max\{\varsigma_0,\ell q,p \varsigma_1, p\}}
\Big].
\end{split}
\end{equation}
Combining this with the fact that $\varpi_{\max\{\varsigma_0,\ell q,p \varsigma_1, p\}}\ge 1$ and the fact that $T^{\nicefrac{1}{2}}\le (\max\{T,1\})^{\nicefrac{1}{2}}\le \max\{T,1\} $ assures that for all $t\in [0,T]$ it holds that
\begin{equation}
\begin{split}
&
\big| \E\big[ f_0( X_t ) \big] - \E\big[ \dnnFunction_0( Y_t ) \big] \big|
\\ & 
\leq
\varepsilon_0 e^{ \varsigma_0 L_1 T } (\varpi_{\max\{\varsigma_0,\ell q,p \varsigma_1, p\}})^{\varsigma_0}
\left[
1 
+
\left(
\| \xi \| 
+ 
\| f_1(0) \| T 
+ 
T^{\nicefrac{1}{2}}
\right)^{\varsigma_0}
\right]
\\ & 
+
L_0
\,
2^{ \max\{ \ell - 1, 0 \} }
\,
e^{
	\left[ 
	L_1 
	+
	1 - \nicefrac{1}{p} 
	+\ell \max\{L_1,c\}+\max\{\varsigma_1,1\} c
	\right]
	T
}
(\varpi_{\max\{\varsigma_0,\ell q,p \varsigma_1, p\}})^{\ell+\max\{1,\varsigma_1\}}
\\&\cdot\left[
1 
+
\left(
\| \xi \| 
+ 
\| f_1(0) \| T 
+ 
T^{\nicefrac{1}{2}}
\right)^{\ell}
+
\left(
\| \dnnFunction_2( \xi ) \| 
+ 
C T 
+ 
T^{\nicefrac{1}{2}}
\right)^\ell
\right]
\\ & \cdot 
\Big[
\left\| \xi - \dnnFunction_2( \xi ) \right\|
+
\varepsilon_1 
T^{ \nicefrac{ 1 }{ p } }
\!
\left[
1
+
\left(
\| \dnnFunction_2( \xi ) \| 
+ 
C T 
+ 
T^{\nicefrac{1}{2}}
\right)^{\varsigma_1}
\right]  
\\ &
+
h
T^{ \nicefrac{ 1 }{ p } }
L_1 
\left[
C+c
\left(
\| \dnnFunction_2( \xi ) \| 
+ 
C T 
+ 
T^{\nicefrac{1}{2}}\right)
\right]
+
h^{ \nicefrac{ 1 }{ 2 } }T^{ \nicefrac{ 1 }{ p } }
L_1 
\Big].
\end{split}
\end{equation}
Therefore, we obtain that for all $t\in [0,T]$ it holds that
\begin{equation}
\begin{split}
&
\big| \E\big[ f_0( X_t ) \big] - \E\big[ \dnnFunction_0( Y_t ) \big] \big|
\\ & 
\leq
\varepsilon_0 e^{\varsigma_0 L_1 T} (\varpi_{\max\{\varsigma_0,\ell q,p \varsigma_1, p\}})^{\varsigma_0}\left(
1 
+ (\max\{T,1\})^{\varsigma_0}
\left(
\| \xi \| 
+ 
\| f_1(0) \| 
+ 
1
\right)^{\varsigma_0}
\right)
\\ & 
+
L_0
\,
2^{ \max\{ \ell - 1, 0 \} }
\,
e^{
	\left[ 
	L_1 
	+
	1 - \nicefrac{1}{p} 
	+\ell \max\{L_1,c\}+\max\{\varsigma_1,1\} c
	\right]
	T
}
(\varpi_{\max\{\varsigma_0,\ell q,p \varsigma_1, p\}})^{\ell+\max\{1,\varsigma_1\}}
\\&\cdot\left[
1 
+
 (\max\{T,1\})^{\ell}\left[\left(
\| \xi \| 
+ 
\| f_1(0) \| 
+ 
1
\right)^{\ell}
+
\left(
\| \dnnFunction_2( \xi ) \| 
+ 
C  
+ 
1
\right)^\ell
\right]
\right]
\\ & \cdot 
\Big[
\left\| \xi - \dnnFunction_2( \xi ) \right\|
+
\varepsilon_1 
T^{ \nicefrac{ 1 }{ p } }
\!
\left[
1
+  (\max\{T,1\})^{\varsigma_1}
\left(
\| \dnnFunction_2( \xi ) \| 
+ 
C  
+ 1
\right)^{\varsigma_1}
\right]  
\\ &
+
h
T^{ \nicefrac{ 1 }{ p } } \max\{T,1\}
L_1 
\left[
C+c
\left(
\| \dnnFunction_2( \xi ) \| 
+ 
C  
+ 
1\right)
\right]
+
h^{ \nicefrac{ 1 }{ 2 } }T^{ \nicefrac{ 1 }{ p } }
L_1 
\Big].
\end{split}
\end{equation}
Hence, we obtain that for all $t\in [0,T]$ it holds that
\begin{equation}
\begin{split}
&
\big| \E\big[ f_0( X_t ) \big] - \E\big[ \dnnFunction_0( Y_t ) \big] \big|
\\ & 
\leq
\varepsilon_0 e^{\varsigma_0 L_1 T}(\varpi_{\max\{\varsigma_0,\ell q,p \varsigma_1, p\}})^{\varsigma_0}
(\max\{T,1\})^{\varsigma_0}
\left(
1 
+ 
\left(
\| \xi \| 
+ 
\| f_1(0) \| 
+ 
1
\right)^{\varsigma_0}
\right)
\\ & 
+
L_0
\,
2^{ \max\{ \ell - 1, 0 \} }
\,
e^{
	\left[ 
	L_1 
	+
	1 - \nicefrac{1}{p} 
	+\ell \max\{L_1,c\}+\max\{\varsigma_1,1\} c
	\right]
	T
}
(\varpi_{\max\{\varsigma_0,\ell q,p \varsigma_1, p\}})^{\ell+\max\{1,\varsigma_1\}}
\\&\cdot (\max\{T,1\})^{\ell+\max\{\varsigma_1,1\}+ \nicefrac{ 1 }{ p }} \left[
1 
+
\left(
\| \xi \| 
+ 
\| f_1(0) \| 
+ 
1
\right)^{\ell}
+
\left(
\| \dnnFunction_2( \xi ) \| 
+ 
C  
+ 
1
\right)^\ell
\right]
\\ & \cdot 
\Big[
\left\| \xi - \dnnFunction_2( \xi ) \right\|
+
\varepsilon_1  
\!
\left[
1
+  
\left(
\| \dnnFunction_2( \xi ) \| 
+ 
C  
+ 1
\right)^{\varsigma_1}
\right]  
+
h
L_1 
\left[
C+c
\left(
\| \dnnFunction_2( \xi ) \| 
+ 
C  
+ 
1\right)
\right]
+
h^{ \nicefrac{ 1 }{ 2 } }
L_1 
\Big].
\end{split}
\end{equation}
This implies that for all $t\in [0,T]$ it holds that
\begin{equation}
\begin{split}
&
\big| \E\big[ f_0( X_t ) \big] - \E\big[ \dnnFunction_0( Y_t ) \big] \big|
\\ & 
\leq
\varepsilon_0 e^{\varsigma_0 L_1 T} (\varpi_{\max\{\varsigma_0,\ell q,p \varsigma_1, p\}})^{\varsigma_0} (\max\{T,1\})^{\varsigma_0}
\\&\cdot\left(
1 
+ 
\big(
\max\{\| \dnnFunction_2( \xi ) \|,\|\xi\|\}
+ 
2 \,\max\{\| f_1(0) \| ,C,1\}
\big)^{\varsigma_0}
\right)
\\ & 
+
L_0
\,
2^{ \max\{ \ell - 1, 0 \} }
\,
e^{
	\left[ 
	L_1 
	+
	1 - \nicefrac{1}{p} 
	+\ell \max\{L_1,c\}+\max\{\varsigma_1,1\} c
	\right]
	T
}
(\varpi_{\max\{\varsigma_0,\ell q,p \varsigma_1, p\}})^{\ell+\max\{1,\varsigma_1\}}
\\&\cdot (\max\{T,1\})^{\ell+\max\{\varsigma_1,1\}+ \nicefrac{ 1 }{ p }} \left[
1 
+
2 \big(
\max\{\| \dnnFunction_2( \xi ) \|,\|\xi\|\}
+ 
2 \,\max\{\| f_1(0) \| ,C,1\}
\big)^{\ell}
\right]
\\ & \cdot 
\Big[
\left\| \xi - \dnnFunction_2( \xi ) \right\|
+
\varepsilon_1  
\!
\left[
1
+  
\left(
\max\{\| \dnnFunction_2( \xi ) \|,\|\xi\|\} 
+ 
2 \max\{\| f_1(0) \| ,C,1\}
\right)^{\varsigma_1}
\right]  
\\&+
h
L_1 
\left[
C+c
\left(
\max\{\| \dnnFunction_2( \xi ) \|,\|\xi\|\} 
+ 
2 \max\{\| f_1(0) \| ,C,1\}\right)
\right]
+
h^{ \nicefrac{ 1 }{ 2 } }
L_1 
\Big].
\end{split}
\end{equation}
Therefore, we obtain that for all $t\in [0,T]$ it holds that
\begin{equation}
\begin{split}
&
\big| \E\big[ f_0( X_t ) \big] - \E\big[ \dnnFunction_0( Y_t ) \big] \big|
\\ & 
\leq
\varepsilon_0 e^{\varsigma_0 L_1 T} (\varpi_{\max\{\varsigma_0,\ell q,p \varsigma_1, p\}})^{\varsigma_0} (\max\{T,1\})^{\varsigma_0}
\\&\cdot\left(
1 
+ 
\big(
\max\{\| \dnnFunction_2( \xi ) \|,\|\xi\|\}
+ 
2 \,\max\{\| f_1(0) \| ,C,1\}
\big)^{\varsigma_0}
\right)
\\ & 
+
L_0
\,
2^{ \max\{ \ell - 1, 0 \} }
\,
e^{
	\left[ 
	L_1 
	+
	1 - \nicefrac{1}{p} 
	+\ell \max\{L_1,c\}+\max\{\varsigma_1,1\} c
	\right]
	T
}
(\varpi_{\max\{\varsigma_0,\ell q,p \varsigma_1, p\}})^{\ell+\max\{1,\varsigma_1\}}
\\&\cdot (\max\{T,1\})^{\ell+\max\{\varsigma_1,1\}+ \nicefrac{ 1 }{ p }} \left[
1 
+
2 \big(
\max\{\| \dnnFunction_2( \xi ) \|,\|\xi\|\}
+ 
2 \,\max\{\| f_1(0) \| ,C,1\}
\big)^{\ell}
\right]
\\ & \cdot 
\left(\left\| \xi - \dnnFunction_2( \xi ) \right\|+\varepsilon_1  +h+h^{ \nicefrac{ 1 }{ 2 } }\right)
\max\{L_1,1\} 
\\&\cdot
\left[
\max\{C,1\}+\max\{c,1\}
\big(
\max\{\| \dnnFunction_2( \xi ) \|,\|\xi\|\} 
+ 
2 \max\{\| f_1(0) \| ,C,1\}\big)^{\max\{\varsigma_1,1\}}
\right]
.
\end{split}
\end{equation}
Combining this with the fact that
  for all $a,b \in [0, \infty)$, $z\in [1,\infty)$ it holds that
\begin{equation}\label{perturbation_SDE:FirstFact}
\begin{split}
\big(1+2z^\ell\big) \big(a+b z^{\max\{\varsigma_1,1\}}\big)
&=a+b z^{\max\{\varsigma_1,1\}}+2az^\ell+2b z^{\ell+\max\{\varsigma_1,1\}}
\\&\le a+(3b+2a)z^{\ell+\max\{\varsigma_1,1\}}
\\&\le a+5 \max\{a,b\}z^{\ell+\max\{\varsigma_1,1\}}
\end{split}
\end{equation}
demonstrates that for all $t\in [0,T]$ it holds that
\begin{equation}
\begin{split}
&
\big| \E\big[ f_0( X_t ) \big] - \E\big[ \dnnFunction_0( Y_t ) \big] \big|
\\ & 
\leq
\varepsilon_0 e^{\varsigma_0 L_1 T} (\varpi_{\max\{\varsigma_0,\ell q,p \varsigma_1, p\}})^{\varsigma_0} (\max\{T,1\})^{\varsigma_0}
\\&\cdot \left(
1 
+ 
\big(
\max\{\| \dnnFunction_2( \xi ) \|,\|\xi\|\}
+ 
2 \,\max\{\| f_1(0) \| ,C,1\}
\big)^{\varsigma_0}
\right)
\\ & 
+
L_0
\,
2^{ \max\{ \ell - 1, 0 \} }
\,
e^{
	\left[ 
	L_1 
	+
	1 - \nicefrac{1}{p} 
	+\ell \max\{L_1,c\}+\max\{\varsigma_1,1\} c
	\right]
	T
}
(\varpi_{\max\{\varsigma_0,\ell q,p \varsigma_1, p\}})^{\ell+\max\{1,\varsigma_1\}}
\\&\cdot (\max\{T,1\})^{\ell+\max\{\varsigma_1,1\}+ \nicefrac{ 1 }{ p }} 
\left(\left\| \xi - \dnnFunction_2( \xi ) \right\|+\varepsilon_1  +h+h^{ \nicefrac{ 1 }{ 2 } }\right)
\max\{L_1,1\} 
\\&\cdot
\left[
\max\{C,1\}+5\max\{C,c,1\}
\big(
\max\{\| \dnnFunction_2( \xi ) \|,\|\xi\|\} 
+ 
2 \max\{\| f_1(0) \| ,C,1\}\big)^{\ell+\max\{\varsigma_1,1\}}
\right]
.
\end{split}
\end{equation}
Therefore, we obtain that for all $t\in [0,T]$ it holds that
\begin{align*}
&
\big| \E\big[ f_0( X_t ) \big] - \E\big[ \dnnFunction_0( Y_t ) \big] \big|
\le 
\left(\left\| \xi - \dnnFunction_2( \xi ) \right\|+\varepsilon_0+\varepsilon_1  +h+h^{ \nicefrac{ 1 }{ 2 } }\right)
\\&\cdot e^{\left[\max\{\varsigma_0,1\}L_1+1 - \nicefrac{1}{p}+\ell \max\{L_1,c\}+\max\{\varsigma_1,1\} c\right]
	T}
(\varpi_{\max\{\varsigma_0,\ell q,p \varsigma_1, p\}})^{\max\{\varsigma_0,\ell+\max\{1,\varsigma_1\}\}} \numberthis \\&\cdot(\max\{T,1\})^{\max\{\varsigma_0,\ell+\max\{\varsigma_1,1\}+ \nicefrac{ 1 }{ p }\}}
\max\{L_0,1\}\,\max\{L_1,1\}\,
2^{ \max\{ \ell - 1, 0 \} } 
\\&\cdot
\left[
\max\{C,1\}+5\max\{C,c,1\}
\big(
\max\{\| \dnnFunction_2( \xi ) \|,\|\xi\|\} 
+ 
2 \max\{\| f_1(0) \| ,C,1\}\big)^{\max\{\varsigma_0,\ell+\max\{\varsigma_1,1\}\}}
\right].
\end{align*}
The hypothesis that $	\| \dnnFunction_2( \xi )-\xi \|\le \varepsilon_2 ( 1 + \| \xi \|^{ \varsigma_2 } )$ and the fact that
\begin{equation}
	\max\{\| \xi\|,\|\dnnFunction_2( \xi ) \|\}\le \| \xi\|+	\| \dnnFunction_2( \xi )-\xi \|
	\leq  \| \xi\|+
	\varepsilon_2 ( 1 + \| \xi \|^{ \varsigma_2 } )
\end{equation}
hence imply that for all $t\in[0,T]$ it holds that
\begin{align*}
&
\big| \E\big[ f_0( X_t ) \big] - \E\big[ \dnnFunction_0( Y_t ) \big] \big|
\le 
\left(\varepsilon_2 ( 1 + \| \xi \|^{ \varsigma_2 } )+\varepsilon_0+\varepsilon_1  +h+h^{ \nicefrac{ 1 }{ 2 } }\right)
\\&\cdot e^{\left[\max\{\varsigma_0,1\}L_1+1 - \nicefrac{1}{p}+\ell \max\{L_1,c\}+\max\{\varsigma_1,1\} c\right]
	T}
(\varpi_{\max\{\varsigma_0,\ell q,p \varsigma_1, p\}})^{\max\{\varsigma_0,\ell+\max\{1,\varsigma_1\}\}} \numberthis \\&\cdot(\max\{T,1\})^{\max\{\varsigma_0,\ell+\max\{\varsigma_1,1\}+ \nicefrac{ 1 }{ p }\}}
\max\{L_0,1\}\, \max\{L_1,1\}\,
2^{ \max\{ \ell - 1, 0 \} } 
\\&\cdot
\left[
\max\{C,1\}+5\max\{C,c,1\}
\big(
\| \xi\|+
\varepsilon_2 ( 1 + \| \xi \|^{ \varsigma_2 } )
+ 
2 \max\{\| f_1(0) \| ,C,1\}\big)^{\max\{\varsigma_0,\ell+\max\{\varsigma_1,1\}\}}
\right]\!.
\end{align*}
This completes the proof of Proposition~\ref{prop:perturbation_PDE_2}.
\end{proof}
	\subsection{Strong error  estimates for linearly interpolated Euler--Maruyama approximations}

\begin{lemma}\label{lem:Euler}
	Let $d,m,N\in\N$, $T, p\in (0,\infty)$, $C, c \in [0, \infty)$,  $q\in [1,\infty)$,  $x\in\R^d$, $B\in\R^{d\times m}$, $\timeGrid_0,\timeGrid_1,\dots, \timeGrid_N\in [0,T]$ satisfy that $0=\timeGrid_0<\timeGrid_1<\ldots <\timeGrid_{N-1}<\timeGrid_N=T$,
 let $\mu\colon \R^d\to \R^d$ be a 
measurable function, assume for all $x \in \R^d$ that
	$\|\mu(x)\| \leq C+c\|x\|$,
			let $ \downFixed{\cdot} \colon [0,T] \to [0,T] $ 
		satisfy for all 
			$ t \in [0,T] $ that $			\downFixed{t}
			=
			\max\!\left(
			\left\{ 
			\timeGrid_0, \timeGrid_1, \dots,\timeGrid_N
			\right\}
			\cap 
			[0,t]
			\right)$,
	let $ ( \Omega, \mathcal{F}, \P ) $ be a probability space, 
	let $ W \colon [0,T] \times \Omega \to \R^m $ be a standard Brownian motion,
	let 
	$ \zigZagProcess \colon [0,T] \times \Omega \to \R^d $
satisfy for all 
	$ t \in [0,T] $
	that
	\begin{equation}
	\label{eq:zigZag}
	\zigZagProcess_t 
	=
	x
	+
	\int_0^t
	\mu( 
	\zigZagProcess_{ \downFixed{s} } 
	)
	\,
	ds
	+
	B
	W_t
	,
	\end{equation}
and let 
$ \affineProcess \colon [0,T] \times \Omega \to \R^d $
satisfy for all 
$n\in\{0,1,\dots,N-1\}$,
$ t \in [\timeGrid_n,\timeGrid_{n+1}]$  
that $\affineProcess_0=x$ and
\begin{equation}
\label{eq:affine}
\affineProcess_t 
=
\affineProcess_{\timeGrid_n}+ \tfrac{t-\timeGrid_n}{\timeGrid_{n+1}-\timeGrid_n}\left[\mu( 
\affineProcess_{\timeGrid_n} 
) (\timeGrid_{n+1}-\timeGrid_n)
+B
(W_{\timeGrid_{n+1}}-W_{\timeGrid_n})\right]
\end{equation}
(cf.\ Definition~\ref{Def:euclideanNorm}).
Then 
\begin{enumerate}[(i)]
	\item\label{item:stochastic} it holds that $\zigZagProcess$ and $\affineProcess$ are stochastic processes,
	\item\label{item:EulerZero}
	it holds for all 
	$n\in\{0,1,\dots,N\}$ that $\affineProcess_{\timeGrid_n}=\zigZagProcess_{\timeGrid_n}$,
	\item 
		\label{item:EulerII}
	 it holds for all 
	 $n\in\{0,1,\dots,N-1\}$,
	 $ t \in (\timeGrid_n,\timeGrid_{n+1})$  
	 that
	\begin{equation}
	\begin{split}
	\big( \E\big[ \| \affineProcess_t-\zigZagProcess_t \|^p \big] \big)^{ \nicefrac{1}{p} }
\le \tfrac{1}{2} \sqrt{ \max\{1,p-1\} (\timeGrid_{n+1}-\timeGrid_n) \operatorname{Trace}(B B^{ \ast } )}, 
	\end{split}
	\end{equation}
	and
		\item 
	\label{item:EulerIII}
	it holds for all $t\in[0,T]$ that 
	\begin{equation}
	\begin{split}
	&\max\!\left\{ \big( \E\big[ \| \zigZagProcess_t \|^q \big] \big)^{ \nicefrac{1}{q} },\big( \E\big[ \| \affineProcess_t \|^q \big] \big)^{ \nicefrac{1}{q} }\right\}
	 \\&\leq
\Big[
\|x\| + C T 
+ 
 \sqrt{ \max\{1,q-1\} T \operatorname{Trace}(B B^{ \ast } )}
\Big]
\,
e^{ c T }.
	\end{split}
	\end{equation}
\end{enumerate}
\end{lemma}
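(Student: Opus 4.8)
The plan is to reduce all four assertions to the single observation that, on each grid interval $[\timeGrid_n,\timeGrid_{n+1}]$, the process $\zigZagProcess$ performs exactly one Euler step. More precisely, using \eqref{eq:zigZag} together with the fact that $\downFixed{s}=\timeGrid_n$ for every $s\in[\timeGrid_n,\timeGrid_{n+1})$, I would first show that for all $n\in\{0,1,\dots,N-1\}$ and all $t\in[\timeGrid_n,\timeGrid_{n+1}]$ it holds that
\[
\zigZagProcess_t=\zigZagProcess_{\timeGrid_n}+\mu(\zigZagProcess_{\timeGrid_n})\,(t-\timeGrid_n)+B\,(W_t-W_{\timeGrid_n}),
\]
by subtracting the identity \eqref{eq:zigZag} at time $\timeGrid_n$ from the identity \eqref{eq:zigZag} at time $t$. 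Likewise, \eqref{eq:affine} shows, with $\lambda_t:=\tfrac{t-\timeGrid_n}{\timeGrid_{n+1}-\timeGrid_n}\in[0,1]$, that $\affineProcess_{\timeGrid_{n+1}}=\affineProcess_{\timeGrid_n}+\mu(\affineProcess_{\timeGrid_n})(\timeGrid_{n+1}-\timeGrid_n)+B(W_{\timeGrid_{n+1}}-W_{\timeGrid_n})$ and $\affineProcess_t=(1-\lambda_t)\affineProcess_{\timeGrid_n}+\lambda_t\affineProcess_{\timeGrid_{n+1}}$ for $t\in[\timeGrid_n,\timeGrid_{n+1}]$.

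For the first assertion I would argue by a straightforward induction on $n$, using the two displayed Euler--step identities, that each $\zigZagProcess_{\timeGrid_n}$ and each $\affineProcess_{\timeGrid_n}$ is $\mathcal{F}$-measurable; the displayed representations then exhibit $\zigZagProcess$ and $\affineProcess$ on $[\timeGrid_n,\timeGrid_{n+1}]\times\Omega$ as functions which are continuous in $t$ and $\mathcal{F}$-measurable in $\omega$ (being a sum of an $\mathcal{F}$-measurable random variable, the product of an $\mathcal{F}$-measurable random variable with a continuous function of $t$, and the jointly measurable process $(t,\omega)\mapsto B(W_t-W_{\timeGrid_n})(\omega)$), hence are product-measurable on each $[\timeGrid_n,\timeGrid_{n+1}]\times\Omega$; gluing the finitely many pieces shows that $\zigZagProcess$ and $\affineProcess$ are stochastic processes. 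The second assertion, $\affineProcess_{\timeGrid_n}=\zigZagProcess_{\timeGrid_n}$, is then proved by induction on $n$: the base case is $\affineProcess_0=x=\zigZagProcess_0$, and in the induction step the two Euler--step identities at $t=\timeGrid_{n+1}$ reduce the claim to the induction hypothesis.

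For the third assertion I would substitute $\affineProcess_t=(1-\lambda_t)\affineProcess_{\timeGrid_n}+\lambda_t\affineProcess_{\timeGrid_{n+1}}=(1-\lambda_t)\zigZagProcess_{\timeGrid_n}+\lambda_t\zigZagProcess_{\timeGrid_{n+1}}$ (using the second assertion), expand $\affineProcess_{\timeGrid_{n+1}}$ and $\zigZagProcess_t$ via the Euler--step identities, and split $W_{\timeGrid_{n+1}}-W_{\timeGrid_n}=(W_{\timeGrid_{n+1}}-W_t)+(W_t-W_{\timeGrid_n})$ to obtain
\[
\affineProcess_t-\zigZagProcess_t=B\big[\lambda_t(W_{\timeGrid_{n+1}}-W_t)-(1-\lambda_t)(W_t-W_{\timeGrid_n})\big].
\]
Since $W_{\timeGrid_{n+1}}-W_t$ and $W_t-W_{\timeGrid_n}$ are independent centered Gaussian random variables with covariance matrices $(\timeGrid_{n+1}-t)\idMatrix$ and $(t-\timeGrid_n)\idMatrix$, the right-hand side is a centered Gaussian random variable with covariance $\big[\lambda_t^2(\timeGrid_{n+1}-t)+(1-\lambda_t)^2(t-\timeGrid_n)\big]BB^{\ast}$, and the elementary identity $\lambda_t^2(\timeGrid_{n+1}-t)+(1-\lambda_t)^2(t-\timeGrid_n)=\tfrac{(t-\timeGrid_n)(\timeGrid_{n+1}-t)}{\timeGrid_{n+1}-\timeGrid_n}\le\tfrac{\timeGrid_{n+1}-\timeGrid_n}{4}$ together with \Cref{lem:momentsGauss} yields the claimed bound $\tfrac12\sqrt{\max\{1,p-1\}(\timeGrid_{n+1}-\timeGrid_n)\operatorname{Trace}(BB^{\ast})}$.

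For the fourth assertion I would bound the moments of $\zigZagProcess$ by \Cref{lem:sde-lp-bound}, applied with $\xi=x$, drift $\mu$, delay function $\chi=\downFixed{\cdot}$ (which satisfies $\downFixed{t}\le t$), noise $\beta_t=BW_t$, and exponent $q$; here \Cref{lem:momentsGauss} together with $\cov(BW_t)=BB^{\ast}t$ gives $\sup_{t\in[0,T]}(\E[\|BW_t\|^q])^{1/q}\le\sqrt{\max\{1,q-1\}T\operatorname{Trace}(BB^{\ast})}$, which produces exactly the asserted right-hand side. For $\affineProcess$ I would use the second assertion and the convex--combination representation $\affineProcess_t=(1-\lambda_t)\zigZagProcess_{\timeGrid_n}+\lambda_t\zigZagProcess_{\timeGrid_{n+1}}$ together with Minkowski's inequality (valid since $q\ge1$) to deduce $(\E[\|\affineProcess_t\|^q])^{1/q}\le\sup_{s\in[0,T]}(\E[\|\zigZagProcess_s\|^q])^{1/q}$, which has already been bounded. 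None of these steps presents a genuine obstacle; the only points requiring care are the product-measurability bookkeeping in the first assertion and the Gaussian--covariance computation in the third, both of which are routine.
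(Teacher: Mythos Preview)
Your proof is correct and, for items~(i)--(iii), follows essentially the same route as the paper: induction for measurability and for the grid-point identity $\affineProcess_{\timeGrid_n}=\zigZagProcess_{\timeGrid_n}$, and the same Gaussian computation for the difference $\affineProcess_t-\zigZagProcess_t$ (the paper writes your $\lambda_t$ as $\rho(t)$ and arrives at the identical covariance bound $\tfrac{(t-\timeGrid_n)(\timeGrid_{n+1}-t)}{\timeGrid_{n+1}-\timeGrid_n}\le\tfrac{\timeGrid_{n+1}-\timeGrid_n}{4}$ before invoking \Cref{lem:momentsGauss}).

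The one genuine difference is in item~(iv) for the process $\affineProcess$. The paper first establishes, by induction, an integral representation
\[
\affineProcess_t=x+\int_0^t\mu(\affineProcess_{\downFixed{s}})\,ds+BW_{\downFixed{t}}+\rho(t)\,B(W_{\upFixed{t}}-W_{\downFixed{t}}),
\]
then applies \Cref{lem:sde-lp-bound} directly to $\affineProcess$ with the noise term $\beta_t=BW_{\downFixed{t}}+\rho(t)\,B(W_{\upFixed{t}}-W_{\downFixed{t}})$, after checking that this is centered Gaussian with $\operatorname{Trace}(\cov(\beta_t))\le\operatorname{Trace}(BB^{\ast})\,\upFixed{t}$. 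Your argument instead exploits the convex-combination identity $\affineProcess_t=(1-\lambda_t)\zigZagProcess_{\timeGrid_n}+\lambda_t\zigZagProcess_{\timeGrid_{n+1}}$ together with Minkowski's inequality to reduce immediately to the already-established bound for $\zigZagProcess$. Your route is shorter and avoids the auxiliary integral representation altogether; the paper's route has the minor structural advantage that the integral representation simultaneously delivers item~(ii), but you handle that directly anyway. Both approaches yield the same final bound.
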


\begin{proof}[Proof of Lemma~\ref{lem:Euler}]
	Throughout this proof 
	for every $\mathfrak{d}\in \N$ let $\operatorname{I}_{\mathfrak{d}}\in \R^{\mathfrak{d}\times \mathfrak{d}}$ be the identity matrix in $\R^{\mathfrak{d}\times \mathfrak{d}}$, 
	let 
	$ \upFixed{\cdot} \colon [0,T] \to [0,T] $ 
satisfy for all 
	$ t \in [0,T] $ that $			\upFixed{t}
	=
	\min\!\left(
	\left\{ 
	\timeGrid_0, \timeGrid_1, \dots,\timeGrid_N
	\right\}
	\cap 
	[t,T]
	\right)$, 	and
let $\rho\colon [0,T]\to [0,1]$ satisfy for all $t \in [0, T]$ that
\begin{equation}\label{EulerAuxiliaryFunction}
 \rho(t)=  \begin{cases}
\frac{t-\downFixed{t}}{\upFixed{t}-\downFixed{t}} &\colon \quad t \notin \{\timeGrid_0,\timeGrid_1,\dots,\timeGrid_N\}\\
0 & \colon  \quad t \in \{\timeGrid_0,\timeGrid_1,\dots,\timeGrid_N\}
\end{cases}.
\end{equation}
Observe that \eqref{eq:zigZag}, the fact that for all  $t \in [0, T]$ it holds that $ \Omega \ni \omega \mapsto  W_t (\omega) \in \R^m$ is measurable, and induction imply that for all $t \in [0, T]$ it holds that $\Omega \ni \omega \mapsto \zigZagProcess_t (\omega) \in \R^d$ is measurable.  Moreover, note that \eqref{eq:affine}, the fact that for all  $t \in [0, T]$ it holds that $ \Omega \ni \omega \mapsto  W_t (\omega) \in \R^m$ is measurable, and induction prove that  for all $t \in [0, T]$ it holds that $\Omega \ni \omega \mapsto \affineProcess_t (\omega) \in \R^d$ is measurable. Combining this with the fact that for all $t \in [0, T]$ it holds that $\Omega \ni \omega \mapsto \zigZagProcess_t (\omega) \in \R^d$ is measurable establishes item~\eqref{item:stochastic}.
Next we claim that for all $n\in\{0,1,\dots, N\}$, $t\in [\timeGrid_{\max\{n-1, 0\}}, \timeGrid_n]$ it holds that $\affineProcess_{\timeGrid_n}=\zigZagProcess_{\timeGrid_n}$ and 
\begin{equation}\label{affineProcessIntegralRepresentation}
	\affineProcess_{t}=x+\int_0^{t}\mu(\affineProcess_{\downFixed{s}})\,ds+B W_{\downFixed{t}} +\rho(t) B(W_{\upFixed{t}}-W_{\downFixed{t}}).
\end{equation}
We prove \eqref{affineProcessIntegralRepresentation} by induction on $n\in\{0,1,\dots,N\}$. 
Note that the fact that $\affineProcess_{\timeGrid_0}=x$, the fact that $\rho(\timeGrid_0)=0$, and the fact that $W_{\timeGrid_0}=0$ demonstrate that
\begin{equation}
	\affineProcess_{\timeGrid_0}=x=x+\int_0^{\timeGrid_0}\mu(\affineProcess_{\downFixed{s}})\,ds+B W_{\downFixed{\timeGrid_0}} +\rho(\timeGrid_0) B(W_{\upFixed{\timeGrid_0}}-W_{\downFixed{\timeGrid_0}}).
\end{equation}
This and the fact that $\zigZagProcess_{\timeGrid_0}=x$ prove \eqref{affineProcessIntegralRepresentation} in the base case $n=0$.
For the induction step $\{0,1,\dots,N-1\}\ni n\to n+1\in \{1,2,\dots,N\}$ assume that there exists $n\in\{0,1,\dots,N-1\}$ which satisfies that for all 
$m\in\{0,1,\dots, n\}$, $t\in [\timeGrid_{\max\{n-1, 0\}}, \timeGrid_n]$ it holds that $\affineProcess_{\timeGrid_m}=\zigZagProcess_{\timeGrid_m}$
and
\begin{equation}\label{affineProcessIntegralRepresentationInduction}
\affineProcess_{t}=x+\int_0^{t}\mu(\affineProcess_{\downFixed{s}})\,ds+B W_{\downFixed{t}} +
\rho(t) B(W_{\upFixed{t}}-W_{\downFixed{t}}).
\end{equation}
Note that  \eqref{eq:zigZag} and \eqref{affineProcessIntegralRepresentationInduction} imply that 
\begin{equation}\label{affineProcessIntegralRepresentationInductionTwo}
\begin{split}
\affineProcess_{\timeGrid_n}
&=\zigZagProcess_{\timeGrid_n}=x+\int_0^{\timeGrid_n}\mu(\zigZagProcess_{\downFixed{s}})\,ds+B W_{\timeGrid_n}
=x+\int_0^{\timeGrid_n}\mu(\affineProcess_{\downFixed{s}})\,ds+B W_{\timeGrid_n}.
\end{split}
\end{equation}
Combining this with  \eqref{eq:affine} ensures that for all $t\in [\timeGrid_{n}, \timeGrid_{n+1}] = [\timeGrid_{\max\{n, 0\}}, \timeGrid_{n+1}]$ it holds that
\begin{equation}
\begin{split}
\affineProcess_t 
&=
\affineProcess_{\timeGrid_n}+ (t-\timeGrid_n)\mu( 
\affineProcess_{\timeGrid_n} 
)
+ \tfrac{t-\timeGrid_n}{\timeGrid_{n+1}-\timeGrid_n} B
(W_{\timeGrid_{n+1}}-W_{\timeGrid_n})
\\&=x+\int_0^{\timeGrid_n}\mu(\affineProcess_{\downFixed{s}})\,ds
 + BW_{\timeGrid_n}+ (t-\timeGrid_n)\mu( 
\affineProcess_{\timeGrid_n} )
+ \tfrac{t-\timeGrid_n}{\timeGrid_{n+1}-\timeGrid_n} B
(W_{\timeGrid_{n+1}}-W_{\timeGrid_n})
\\&=x+\int_0^{t}\mu(\affineProcess_{\downFixed{s}})\,ds+B W_{\timeGrid_n} +
 \tfrac{t-\timeGrid_n}{\timeGrid_{n+1}-\timeGrid_n} B
(W_{\timeGrid_{n+1}}-W_{\timeGrid_n}).
\end{split}
\end{equation}
Therefore, we obtain that for all $t\in [\timeGrid_{\max\{n, 0\}}, \timeGrid_{n+1}]$ it holds that 
\begin{equation}
\label{eq:induction:last}
\affineProcess_{t}=x+\int_0^{t}\mu(\affineProcess_{\downFixed{s}})\,ds+B W_{\downFixed{t}} +
\rho(t) B(W_{\upFixed{t}}-W_{\downFixed{t}}).
\end{equation}
This, \eqref{affineProcessIntegralRepresentationInduction}, and \eqref{eq:zigZag}  assure that 
\begin{equation}
\begin{split}
\affineProcess_{\timeGrid_{n+1}} =x+\int_0^{\timeGrid_{n+1}}\mu(\affineProcess_{\downFixed{s}})\,ds+B W_{\timeGrid_{n+1}}=x+\int_0^{\timeGrid_{n+1}}\mu(\zigZagProcess_{\downFixed{s}})\,ds+B W_{\timeGrid_{n+1}}
= \zigZagProcess_{\timeGrid_{n+1}}.
\end{split}
\end{equation}
Combining this with  \eqref{eq:induction:last} implies that for all $t\in [\timeGrid_{\max\{n, 0\}}, \timeGrid_{n+1}]$ it holds that $\affineProcess_{\timeGrid_{n+1}}=\zigZagProcess_{\timeGrid_{n+1}}$ and 
\begin{equation}
\affineProcess_{t}=x+\int_0^{t}\mu(\affineProcess_{\downFixed{s}})\,ds+B W_{\downFixed{t}} +
\rho(t) B(W_{\upFixed{t}}-W_{\downFixed{t}}).
\end{equation}
Induction thus proves \eqref{affineProcessIntegralRepresentation}. Next observe that \eqref{affineProcessIntegralRepresentation} establishes item~\eqref{item:EulerZero}. 
Moreover, note that  \eqref{eq:zigZag}, \eqref{eq:affine}, \eqref{EulerAuxiliaryFunction}, and \eqref{affineProcessIntegralRepresentation} demonstrate that for all $n\in\{0,1,\dots,N-1\}$,
$ t \in (\timeGrid_n,\timeGrid_{n+1})$   it holds that
	\begin{equation}
	\begin{split}
		&\affineProcess_t-\zigZagProcess_t\\
		&=\affineProcess_{\timeGrid_n}+ \rho(t)\!\left[(\timeGrid_{n+1}-\timeGrid_n)\mu( 
		\affineProcess_{\timeGrid_n} 
		)
		+B
		(W_{\timeGrid_{n+1}}-W_{\timeGrid_n})\right]
		-
		\left[x
		+
		\int_0^t
		\mu( 
		\zigZagProcess_{ \downFixed{s} } 
		)
		\,
		ds
		+
		B
		W_t\right]
				\\&=\affineProcess_{\timeGrid_n}+ (t-\timeGrid_n)\mu( 
		\affineProcess_{\timeGrid_n} 
		)
		+ \rho(t)B
		(W_{\timeGrid_{n+1}}-W_{\timeGrid_n})
	-
		\left[\zigZagProcess_{\timeGrid_n}
		+
		\int_{\timeGrid_n}^t
		\mu( 
		\zigZagProcess_{ \downFixed{s} } 
		)
		\,
		ds
		+
		B
		(W_t-W_{\timeGrid_n})\right]
						\\&= (t-\timeGrid_n)\mu( 
		\affineProcess_{\timeGrid_n} 
		)-(t-\timeGrid_n)
		\mu( 
		\zigZagProcess_{\timeGrid_n} 
		)
		+ \rho(t)B
		(W_{\timeGrid_{n+1}}-W_{\timeGrid_n})	
		-
		B
		(W_t-W_{\timeGrid_n}).
			\end{split}
		\end{equation}
This and \eqref{affineProcessIntegralRepresentation} prove that for all $n\in\{0,1,\dots,N-1\}$,
		$ t \in (\timeGrid_n,\timeGrid_{n+1})$   it holds that
		\begin{equation}\label{EulerCalculationOne}
		\begin{split}
			\affineProcess_t-\zigZagProcess_t
		&= \rho(t) B\big(W_{\timeGrid_{n+1}}-W_{\timeGrid_{n}}\big)
		+B
		W_{\timeGrid_{n}}-BW_t
		\\&= -[\rho(t)-1] BW_{\timeGrid_{n}}+\big[(\rho(t)-1)-\rho(t)\big] BW_t
		+\rho(t) B W_{\timeGrid_{n+1}}
		\\&= [\rho(t)-1] B\big(W_t-W_{\timeGrid_{n}}\big)+\rho(t)B\big( W_{\timeGrid_{n+1}}-W_t\big).
	\end{split}
	\end{equation}
In addition, note that
the hypothesis that $ W \colon [0,T] \times \Omega \to \R^m $ is a standard Brownian motion ensures that
\begin{enumerate}[(A)]
	\item it holds  for all  $a,\mathfrak{a}\in\R$, $r,s,t\in[0,T]$ with $r\le s\le t$  that $a B(W_t-W_s)+\mathfrak{a}B( W_{s}-W_r)$	
	is a centered Gaussian random variable  and
	\item it holds  for all  $a,\mathfrak{a}\in\R$, $r,s,t\in[0,T]$ with $r\le s\le t$ that
\begin{equation}\label{EulerCovDifferenceGeneral}
\covariance\big(a B(W_t-W_s)+\mathfrak{a}B( W_{s}-W_r)\big)=\left[a^2(t-s)+\mathfrak{a}^2(s-r)\right] B B^\ast.
\end{equation}		
\end{enumerate}
	Combining this with \eqref{EulerCalculationOne} ensures that for all $n\in \{0,1,\dots,N-1\}$, $t\in (\timeGrid_n,\timeGrid_{n+1})$ it holds that $\affineProcess_t-\zigZagProcess_t$ is a centered Gaussian random  variable. Moreover, note that \eqref{EulerCalculationOne} and \eqref{EulerCovDifferenceGeneral} demonstrate that  for all $n\in \{0,1,\dots,N-1\}$, $t\in (\timeGrid_n,\timeGrid_{n+1})$ it holds that
		\begin{equation}\label{EulerCovDifference}
	\covariance\big(\affineProcess_t-\zigZagProcess_t\big)=\left([\rho(t)-1]^2(t-\timeGrid_{n})+[\rho(t)]^2(\timeGrid_{n+1}-t)\right)B B^\ast.
	\end{equation}
	In addition, observe that \eqref{EulerAuxiliaryFunction} implies that for all $n\in \{0,1,\dots,N-1\}$, $t\in (\timeGrid_n,\timeGrid_{n+1})$  it holds that 
	\begin{equation}
	\begin{split}
	&[\rho(t)-1]^2(t-\timeGrid_{n})+[\rho(t)]^2(\timeGrid_{n+1}-t)
	\\&= [\rho(t)]^2 (\timeGrid_{n+1}-\timeGrid_{n})+\big[1-2\rho(t)\big] (t-\timeGrid_{n})
	\\&=
		\frac{(t-\timeGrid_{n})^2}{(\timeGrid_{n+1}-\timeGrid_{n})}+(t-\timeGrid_{n})-\frac{2(t-\timeGrid_{n})^2}{(\timeGrid_{n+1}-\timeGrid_{n})}
		= (t-\timeGrid_{n})
		\left[1-\frac{(t-\timeGrid_{n})}{(\timeGrid_{n+1}-\timeGrid_{n})}\right]
		\\&= 
		\frac{(t-\timeGrid_{n})(\timeGrid_{n+1}-t)}{(\timeGrid_{n+1}-\timeGrid_{n})}
		.
	\end{split}
	\end{equation}
	This and the fact that for all $a\in \R$, $b\in (a,\infty)$, $r\in [a,b]$ it holds that 
	\begin{equation}
		(r-a)(b-r)\le \left(\tfrac{1}{2}(b+a)-a\right)\left(b-\tfrac{1}{2}(b+a)\right)
		=\tfrac{1}{4} (b-a)^2
	\end{equation}
	show that for all $n\in \{0,1,\dots,N-1\}$, $t\in (\timeGrid_n,\timeGrid_{n+1})$ it holds that 
		\begin{equation}
	[\rho(t)-1]^2(t-\timeGrid_{n})+[\rho(t)]^2(\timeGrid_{n+1}-t)
	\le \tfrac{1}{4} \big(\timeGrid_{n+1}-\timeGrid_{n}\big)
	.
	\end{equation}		
	The fact that $B B^\ast$ is a symmetric positive semidefinite matrix and \eqref{EulerCovDifference} therefore imply that for all $t\in[0,T]$ it holds that 
	\begin{equation}
		\operatorname{Trace}\!\big(\!\covariance\big(\affineProcess_t-\zigZagProcess_t\big)\big)\le 
		\tfrac{1}{4} \big(\timeGrid_{n+1}-\timeGrid_{n}\big) \operatorname{Trace}(B B^\ast) .
	\end{equation}
	Lemma~\ref{lem:momentsGauss} hence demonstrates that for all $n\in \{0,1,\dots,N-1\}$, $t\in (\timeGrid_n,\timeGrid_{n+1})$ it holds that 
	\begin{equation}
	\begin{split}
	\big( \E\big[ \| \affineProcess_t-\zigZagProcess_t \|^p \big] \big)^{ \nicefrac{1}{p} }
	&\le \sqrt{ \max\{1,p-1\}  \operatorname{Trace}\!\big(\!\covariance\big(\affineProcess_t-\zigZagProcess_t\big)\big)} 
\\&\le \tfrac{1}{2} \sqrt{ \max\{1,p-1\} \big(\timeGrid_{n+1}-\timeGrid_{n}\big) \operatorname{Trace}(B B^\ast)}. 
	\end{split}
	\end{equation}
	This establishes item \eqref{item:EulerII}.
	Next note that  Lemma~\ref{lem:momentsGauss}, Lemma~\ref{lem:sde-lp-bound},  the fact that for all $t\in[0,T]$ it holds that $BW_t$ is a centered Gaussian random variable, and  the fact that for all $t\in[0,T]$ it holds that $\cov(BW_t)=B B^\ast\, t$ ensure that for all $t\in [0,T]$ it holds that 
	\begin{equation}\label{EulerMomentsZigZag}
\begin{split}
 \big(\E\! \left[\|\zigZagProcess_t\|^q\right]\big)^{\nicefrac{1}{q}} 
 &\leq
\Big[
\|x\| + C T 
+ 
\sup_{t\in[0,T]}\big(
\E\big[ 
\| B W_t \|^q
\big]
\big)^{ \nicefrac{ 1 }{ q } }
\Big]
\,
e^{ c T } 
\\& \leq
\Big[
\|x\| + C T 
+ 
\sup_{t\in[0,T]}\sqrt{ \max\{1,q-1\} \operatorname{Trace}(B B^\ast)t}
\Big]
\,
e^{ c T }
\\&=
\Big[
\|x\| + C T 
+ 
\sqrt{ \max\{1,q-1\} \operatorname{Trace}(B B^\ast)T}
\Big]
\,
e^{ c T }.
\end{split}
\end{equation}
	Next note that \eqref{EulerCovDifferenceGeneral}, the fact that $W_0=0$,  the fact that $B B^\ast$ is a symmetric positive semidefinite matrix, and the fact that $\forall \, t \in [0, T] \colon 0 \leq \rho(t) \leq 1$ imply that
\begin{enumerate}[a)]
	\item it holds for all $t\in [0,T]$ that $B
	W_{\downFixed{t}}
	+ \rho(t) B\big(W_{\upFixed{t}}-W_{\downFixed{t}}\big)$ is a centered Gaussian random variable and
	\item  it holds for all $t\in [0,T]$ that 
	\begin{equation}
	\begin{split}
	&\operatorname{Trace}\!\left(\covariance\left(B W_{\downFixed{t}}
	+ \rho(t) B\big(W_{\upFixed{t}}-W_{\downFixed{t}}\big)\right)\right)
	\\&=\operatorname{Trace}(B B^\ast) \left[\downFixed{t}+[\rho(t)]^2\big(\upFixed{t}-\downFixed{t}\big)\right]
	\le \operatorname{Trace}(B B^\ast) \upFixed{t}.
	\end{split}
	\end{equation}
\end{enumerate}	
Combining this with \eqref{affineProcessIntegralRepresentation}, Lemma~\ref{lem:momentsGauss}, and Lemma~\ref{lem:sde-lp-bound} demonstrates that for all $t\in [0,T]$ it holds that
\begin{equation}
\begin{split}
	 \big(\E\! \left[\|\affineProcess_t\|^q\right]\big)^{\nicefrac{1}{q}} 
	&\leq
	\Big[
	\|x\| + C T 
	+ 
	\sup_{t\in[0,T]}\big(
	\E\big[ 
	\| B W_{\downFixed{t}}
	+ \rho(t) B\big(W_{\upFixed{t}}-W_{\downFixed{t}}\big) \|^q
	\big]
	\big)^{ \nicefrac{ 1 }{ q } }
	\Big]
	\,
	e^{ c T } 
	\\& \leq
	\Big[
	\|x\| + C T 
	+ 
	\sup_{t\in[0,T]}\sqrt{ \max\{1,q-1\} \operatorname{Trace}(B B^\ast)\upFixed{t}}
	\Big]
	\,
	e^{ c T }
	\\&=
	\Big[
	\|x\| + C T 
	+ 
	\sqrt{ \max\{1,q-1\} \operatorname{Trace}(B B^\ast)T}
	\Big]
	\,
	e^{ c T }.
\end{split}
\end{equation}
This and \eqref{EulerMomentsZigZag} establish item~\eqref{item:EulerIII}.	This completes the proof of Lemma~\ref{lem:Euler}.
\end{proof}

\section{Numerical approximations for partial differential equations (PDEs) }
\label{sec:PDEs}

  In this section  we use the weak and strong error estimates which we have presented 
   in \Cref{prop:perturbation_PDE_2} and \Cref{lem:Euler} in Section~\ref{sec:SDEs} above
  to establish in \Cref{prop:PDE_approx_Lp} below suitable error estimates for Monte Carlo Euler approximations for a class of SDEs with perturbed drift coefficient functions.

 Besides \Cref{prop:perturbation_PDE_2} and \Cref{lem:Euler},
our proof of  \Cref{prop:PDE_approx_Lp} also  employs a special case of the famous
Feynman--Kac formula, which provides 
a connection between
solutions of SDEs and solutions of deterministic Kolmogorov PDEs.
For completeness we briefly recall in \Cref{thm:feynman} below this special case of the
Feynman--Kac formula.
\Cref{thm:feynman} is well-known in the literature, cf., e.g., Hairer et al.~\cite[Subsection 4.4]{HairerHutzenthalerJentzenAOP}, Jentzen et al.~\cite[Theorem~3.1]{JentzenSalimovaWelti2018}, and Beck et al.~\cite[Theorem~1.1]{BeckHutzenthalerJentzen2021}.

\subsection{On the Feynman--Kac formula for additive noise driven SDEs}
\label{sec:feynman:kac}

\begin{prop}
\label{thm:feynman}
Let $ ( \Omega, \mathcal{F}, \P ) $ be a probability space, 
let $ T \in (0,\infty) $, $ d, m \in \N $, $ B \in \R^{ d \times m } $, 
$\varphi \in C(\R^d, \R)$,
let $ W \colon [0,T] \times \Omega \to \R^m $ be a standard Brownian motion, 
let $ \left< \cdot , \cdot \right> \colon \R^d \times \R^d \to \R $
be the standard scalar product on $\R^d$, 
let $ \mu \colon \R^d \to \R^d $ be a locally Lipschitz continuous function, 
and assume that
\begin{equation}
\inf_{ p \in (0,\infty) }
\sup_{ x \in \R^d }
\left[
\frac{ | \varphi(x) | }{
(
1 + \| x \|^p
)
}
+
\frac{
\left\| \mu(x) \right\|
}{
(
1 + \| x \|
)
}
\right]
< \infty
\end{equation}
(cf.\ Definition~\ref{Def:euclideanNorm}).
Then 
\begin{enumerate}[(i)]
\item 
\label{item1:feynmann}
there exist unique stochastic processes
$ X^x \colon [0,T] \times \Omega \to \R^d $, $ x \in \R^d $, 
with continuous sample paths 
which satisfy for all $ x \in \R^d $, $ t \in [0,T] $ that
\begin{equation}
X^x_t = x + \int_0^t \mu( X^x_s ) \, ds + B W_t 
,
\end{equation}
\item 
there exists a unique viscosity solution $u \in \{v \in C([0, T] \times \R^d, \R) \colon \allowbreak \inf_{ p \in (0,\infty) }
\allowbreak \sup_{ (t,x) \in [0,T] \times \R^d } \allowbreak
\frac{ | v(t,x) | }{
	1 + \| x \|^p
}
< \infty \}
$
of
\begin{equation}
\label{eq:CD.F4}
( \tfrac{ \partial }{ \partial t } 
u )(t,x)
=
\big\langle 
(\nabla_x u)(t,x),
\mu(x) 
\big\rangle 
+
\tfrac{1}{2}
\operatorname{Trace}\!\big(
B 
B^{*}       
(\textup{Hess}_x u)(t,x) 
\big)
\end{equation}
with $
u( 0, x ) = \varphi(x)
$
for $ (t,x) \in (0,T) \times \R^d $, and
\item 
it holds for all $ t \in [0,T] $, $ x \in \R^d $ that 
$
\E[ 
| \varphi( X^x_t ) | 
] < \infty
$
and $u(t,x) = \E[ \varphi( X^x_t ) ]$.
\end{enumerate}
\end{prop}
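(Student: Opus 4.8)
The plan is to derive the statement as a special case of standard facts about additive-noise stochastic differential equations and degenerate Kolmogorov equations; it is in fact precisely Theorem~1.1 in Beck et al.~\cite{BeckHutzenthalerJentzen2020}, so in the paper itself the natural move is to cite that result, but I sketch a direct route below. For the existence and uniqueness of the processes $X^x$, the key structural facts are that $\mu$ is locally Lipschitz continuous and, by the displayed hypothesis, of at most linear growth, i.e.\ $\norm{\mu(x)} \le C(1+\norm{x})$ for some $C \in (0,\infty)$. Since the noise is additive, for every $\omega \in \Omega$ the substitution $Z^x_t = X^x_t - B W_t(\omega)$ turns the equation into the pathwise random ordinary differential equation $\ddt Z^x_t = \mu(Z^x_t + B W_t(\omega))$, $Z^x_0 = x$, whose right-hand side is continuous in $t$, locally Lipschitz in the state, and of at most linear growth in the state uniformly for $t \in [0,T]$ (as $t \mapsto W_t(\omega)$ is bounded on $[0,T]$). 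Picard--Lindel\"of then gives a unique maximal solution, the linear growth together with a Gronwall estimate excludes explosion before time $T$, and re-adding $B W$ and verifying adaptedness and sample-path continuity yields the processes $X^x$, with uniqueness in the stated class following from pathwise uniqueness.

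For the PDE part I would set $u(t,x) := \E[\varphi(X^x_t)]$. Finiteness of $\E[|\varphi(X^x_t)|]$ and at-most-polynomial growth of $u$ follow from the growth bound on $\varphi$ and $L^q$-moment estimates for $X^x_t$ of exactly the kind furnished by \Cref{lem:sde-lp-bound} and \Cref{lem:momentsGauss} (applied with $\beta = B W$ and $\cov(B W_t) = B B^{\ast} t$), which also gives the representation $u(t,x) = \E[\varphi(X^x_t)]$ once the PDE claim is established. Joint continuity of $u$ on $[0,T] \times \R^d$ I would deduce from stability of the SDE in its initial condition: the Lipschitz-in-state structure of the pathwise ODE yields, on each ball, a locally uniform estimate $\sup_{t \in [0,T]} (\E[\norm{X^x_t - X^y_t}^q])^{\nicefrac{1}{q}} \le K_R \norm{x-y}$, and dominated convergence (using the polynomial moment bounds and continuity of sample paths) then gives continuity in $(t,x)$. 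To see that $u$ is a viscosity solution of \eqref{eq:CD.F4} with $u(0,\cdot) = \varphi$, I would use the Markov property of $(X^x)_{x \in \R^d}$: for $0 \le r \le t$ one has $\E[\varphi(X^x_t) \mid \mathcal{F}_r] = u(t-r, X^x_r)$ a.s., so $r \mapsto u(t-r, X^x_r)$ is a martingale; testing against a $C^{1,2}$ function touching $u$ from above (resp.\ below) at an interior point, expanding via It\^o's formula, dividing by the time increment and letting it tend to $0$ produces the viscosity subsolution (resp.\ supersolution) inequality, with the It\^o remainder controlled by the local Lipschitz continuity of $\mu$ and the Gaussian moment bounds. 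A cleaner variant is to mollify $\mu$ and $\varphi$ into smooth, uniformly linearly growing approximations, solve the smoothed Kolmogorov equations classically (so that $u_n = \E[\varphi_n(X^{x,n}_t)]$ by It\^o's formula), and pass to the limit using the moment and stability estimates together with stability of viscosity solutions under locally uniform convergence.

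Uniqueness of $u$ in the class $\{v \in C([0,T] \times \R^d, \R) \colon \inf_{p \in (0,\infty)} \sup_{(t,x)} |v(t,x)|/(1+\norm{x}^p) < \infty\}$ requires a comparison principle for a possibly degenerate linear parabolic equation among functions of polynomial growth, which I would obtain by the Crandall--Ishii--Lions doubling-of-variables method with a polynomially growing auxiliary weight tailored to the linear growth of $\mu$, as carried out in Hairer et al.~\cite{HairerHutzenthalerJentzenAOP} and Beck et al.~\cite{BeckHutzenthalerJentzen2020}. The main obstacle is precisely this uniqueness step (and the quantitative end of the viscosity verification in the mollification route): because $B B^{\ast}$ need not be invertible the equation is degenerate parabolic, there is no regularizing effect, and the admissible class permits arbitrary polynomial growth, so the comparison argument must be run with growth weights carefully matched to the structure of $B B^{\ast}$ and the linear growth of $\mu$ — which is exactly what \cite[Theorem~1.1]{BeckHutzenthalerJentzen2020} does, and hence what one should cite here.
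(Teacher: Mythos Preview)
Your proposal is correct and matches the paper's approach exactly: the paper does not give a proof at all but simply states that \Cref{thm:feynman} ``is, e.g., proved as Beck et al.~\cite[Theorem~1.1]{BeckHutzenthalerJentzen2020}'' (with further cross-references to \cite{HairerHutzenthalerJentzenAOP} and \cite{JentzenSalimovaWelti2018}), which is precisely what you identify at the outset. Your additional sketch of a direct argument is extra content beyond what the paper provides.
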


	\subsection{Approximation error estimates for  Monte Carlo Euler approximations}

\begin{prop}
\label{prop:PDE_approx_Lp}
Let 
$ T, \kappa \in (0,\infty) $, $\eta \in [1, \infty)$,  $p \in [2, \infty)$,
let 
$
A_d = ( a_{ d, i, j } )_{ (i, j) \in \{ 1, 2,\dots, d \}^2 } $ $ \in \R^{ d \times d }
$,
$ d \in \N $,
be symmetric positive semidefinite matrices, 
 let $\nu_d  \colon \mathcal{B}([0,T]\times\R^d) \to [0,\infty)$, $d\in\N$, be finite measures which satisfy for all $d\in\N$ that 
\begin{equation}\label{PDE_approx_Lp:MeasureAssumption}
	\left[\int_{[0,T]\times \R^d} 
	\|x \|^{2p \max\{2\kappa, 3\}}
	\, \nu_d (d t, d x) \right]^{\!\nicefrac{1}{p}}\leq \eta d^{\eta},
\end{equation}
let
$f^m_d \in C( \R^d, \R^{md-m+1})$, $m\in\{0,1\}$, $d \in \N $,
and
  $\dnnFunction^m_{d, \varepsilon}\in C( \R^d,\R^{md-m+1})$, $m\in\{0,1\}$, $d\in\N$, $\varepsilon\in (0,1]$,
	satisfy for all
	$ d \in \N $, 
	$ \varepsilon \in (0,1] $, 
	$m\in\{0,1\}$, 
	$ 
	x,y \in \R^d
	$
	that 
	\vspace{-1ex}
	\begin{gather}
	\label{corDNNerrorEstimate:growthPhiOne}
		|
	f^0_d( x )
	| 
	+
	\operatorname{Trace}(A_d) 	\leq 
	\kappa d^{ \kappa }
	(
	1 + \| x \|^{ \kappa }
	), \qquad
	\| 
	f^1_d( x ) 
	- 
	f^1_d( y )
	\|
	\leq 
	\kappa 
	\| x - y \|, 
	\\
	\label{eq:appr:coef:diff}
	\| f^m_d(x) 
	- 
	\dnnFunction^m_{d, \varepsilon} (x)
	\|
	\leq 
	\varepsilon \kappa d^{ \kappa }
	(
	1 + \| x \|^{ \kappa }
	), \qquad \|
	\dnnFunction^1_{d, \varepsilon}(x)    
	\|	
	\leq 
	\kappa ( d^{ \kappa } + \| x \| ),
	\\
\label{approximationLocallyLipschitz} \andq  | \dnnFunction^0_{d, \varepsilon} (x) - \dnnFunction^0_{d, \varepsilon} (y)| \leq \kappa d^{\kappa} (1 + \|x\|^{\kappa} + \|y \|^{\kappa})\|x-y\|,
	\end{gather}
let $ ( \Omega, \mathcal{F}, \P ) $ be a probability space, 
let $ W^{ d, m } \colon [0,T] \times \Omega \to \R^d $, $ d, m \in \N $, 
be independent standard Brownian motions, 
and let 
$ \affineProcess^{ N, d, m, x } \colon [0,T] \times \Omega \to \R^d $, $ x \in \R^d $,
$ N,d, m \in \N $,
be stochastic processes 
which satisfy for all 
$N, d, m \in \N $,
$ x \in \R^d $,
$n\in\{0,1,\dots,N-1\}$,
$ t \in \big[\tfrac{nT}{N},\tfrac{(n+1)T}{N}\big]$  
that $\affineProcess^{ N, d, m, x }_0=x$ and
\begin{equation}
\begin{split}
\affineProcess^{ N, d, m, x }_t 
=
\affineProcess^{ N, d, m, x }_{\frac{nT}{N}}+ \left(\tfrac{tN}{T}-n\right)\!
\Big[\tfrac{T}{N}\dnnFunction^1_{d, \deltaIndex} \big( 
\affineProcess^{ N, d, m, x }_{\frac{nT}{N}} 
\big)
+\sqrt{ 2 A_d }
\big(W^{ d, m }_{\frac{(n+1)T}{N}}-W^{ d, m }_{\frac{nT}{N}}\big)\Big]
\end{split}
\end{equation}
(cf.\ Definition~\ref{Def:euclideanNorm}).
Then 
\begin{enumerate}[(i)]
\item 
\label{item:existence_vis}
for every $d \in \N$		there exists a unique viscosity solution
$u_d \in  \{v \in C([0, T] \times \R^d, \R) \colon \allowbreak \inf_{ q \in (0,\infty) }
\allowbreak \sup_{ (t,x) \in [0,T] \times \R^d } \allowbreak
\frac{ | v(t,x) | }{
	1 + \| x \|^q
}
< \infty \}$ of
\begin{equation}
\begin{split}
( \tfrac{ \partial }{\partial t} u_d )( t, x ) 
& = 
( \tfrac{ \partial }{\partial x} u_d )( t, x )
\,
f^1_d( x )
+
\sum_{ i, j = 1 }^d
a_{ d, i, j }
\,
( \tfrac{ \partial^2 }{ \partial x_i \partial x_j } u_d )( t, x )
\end{split}
\end{equation}
with $ u_d( 0, x ) = f^0_d( x )$
for $ ( t, x ) \in (0,T) \times \R^d $
and 
\item
\label{item:PDE_approxMainStatement}
there exists $\mathcal{C}\in \R$ such that for all 
$d,N,M\in\N$  
it holds that 
\begin{equation}
\begin{split}
& \left(
\E\bigg[\int_{[0,T]\times\R^d}
\big|
u_d(t,x) 
- 
\tfrac{ 1 }{ M } 
\big[ 
\textstyle
\sum_{ m = 1 }^{M}
\dnnFunction^0_{d, \deltaIndex}\big(
\affineProcess^{N, d, m, x }_t
\big)
\big]
\big|^p
\,
\nu_d (d t, d x)
\bigg]
\right)^{\!\nicefrac{1}{p}} 
\\&\le 
\mathcal{C} \!
\left[ \frac{d^{\kappa(\kappa+4)+\max\{\eta,\kappa(2\kappa+1)\}}}{N^{\nicefrac{1}{2}}}
+ \frac{d^{\kappa+\max\{\eta,\kappa^2\}}}{M^{\nicefrac{1}{2}}} \right] \left[\max \!\big\{1,\nu_d([0,T]\times\R^d)\big\}\right]^{\!\nicefrac{1}{p}}\!.
\end{split}
\end{equation}
\end{enumerate}
\end{prop}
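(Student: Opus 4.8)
Item~\eqref{item:existence_vis} will follow immediately from the Feynman--Kac formula. Since each $A_d$ is symmetric positive semidefinite there exists $B_d\in\R^{d\times d}$ with $B_dB_d^{\ast}=2A_d$, and \eqref{corDNNerrorEstimate:growthPhiOne} shows that $f^1_d$ is globally (hence locally) Lipschitz continuous with at most linear growth and that $f^0_d$ grows at most polynomially. Applying \Cref{thm:feynman} with $\varphi\is f^0_d$, $\mu\is f^1_d$, $B\is B_d$, $W\is W^{d,1}$ then produces, for every $x\in\R^d$, a unique continuous process $X^{d,x}$ with $X^{d,x}_t=x+\int_0^tf^1_d(X^{d,x}_s)\,ds+B_dW^{d,1}_t$, the unique viscosity solution $u_d$ claimed in~\eqref{item:existence_vis} (note that $\tfrac12\operatorname{Trace}(B_dB_d^{\ast}(\operatorname{Hess}_xu_d))=\sum_{i,j=1}^da_{d,i,j}(\partial^2_{x_ix_j}u_d)$), together with the representation $u_d(t,x)=\E[f^0_d(X^{d,x}_t)]$ for all $(t,x)\in[0,T]\times\R^d$.

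For item~\eqref{item:PDE_approxMainStatement} I would fix $d,N,M\in\N$, abbreviate $\delta=\deltaIndex\in(0,1]$, and introduce, for $m\in\N$, $x\in\R^d$, the (recursively constructed) unique continuous process $\zigZagProcess^{N,d,m,x}$ solving $\zigZagProcess^{N,d,m,x}_t=x+\int_0^t\dnnFunction^1_{d,\delta}(\zigZagProcess^{N,d,m,x}_{\downFixed{s}})\,ds+B_dW^{d,m}_t$ for $t\in[0,T]$, where $\downFixed{\cdot}$ rounds down to the grid $\{0,\tfrac TN,\dots,T\}$. By \Cref{lem:Euler}\eqref{item:EulerZero} the processes $\zigZagProcess^{N,d,m,x}$ and $\affineProcess^{N,d,m,x}$ coincide at the grid points, and for fixed $d,N,t,x$ the random variables $(\dnnFunction^0_{d,\delta}(\affineProcess^{N,d,m,x}_t))_{m\in\N}$ are i.i.d.\ (the $W^{d,m}$ being independent). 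Using $u_d(t,x)=\E[f^0_d(X^{d,x}_t)]$ I would decompose, for all $t\in[0,T]$, $x\in\R^d$,
\begin{equation}
u_d(t,x)-\tfrac1M\sum_{m=1}^M\dnnFunction^0_{d,\delta}(\affineProcess^{N,d,m,x}_t)=\mathrm{(I)}_{t,x}+\mathrm{(II)}_{t,x}+\mathrm{(III)}_{t,x},
\end{equation}
with $\mathrm{(I)}_{t,x}=\E[f^0_d(X^{d,x}_t)]-\E[\dnnFunction^0_{d,\delta}(\zigZagProcess^{N,d,1,x}_t)]$ (bias/weak error), $\mathrm{(II)}_{t,x}=\E[\dnnFunction^0_{d,\delta}(\zigZagProcess^{N,d,1,x}_t)]-\E[\dnnFunction^0_{d,\delta}(\affineProcess^{N,d,1,x}_t)]$ (interpolation error), and $\mathrm{(III)}_{t,x}=\E[\dnnFunction^0_{d,\delta}(\affineProcess^{N,d,1,x}_t)]-\tfrac1M\sum_{m=1}^M\dnnFunction^0_{d,\delta}(\affineProcess^{N,d,m,x}_t)$ (statistical error). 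By the triangle inequality in $L^p(\P\otimes\nu_d)$ it then suffices to bound the $L^p(\nu_d)$-norm of each of the three summands.

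For $\mathrm{(I)}$ I would invoke \Cref{prop:perturbation_PDE_2} with $X\is X^{d,x}$, $Y\is\zigZagProcess^{N,d,1,x}$, $f_0\is f^0_d$, $\dnnFunction_0\is\dnnFunction^0_{d,\delta}$, $f_1\is f^1_d$, $\dnnFunction_1\is\dnnFunction^1_{d,\delta}$, $\dnnFunction_2\is\id$, $\xi\is x$, $B\is B_d$, $h\is\tfrac TN$, $\varepsilon_0=\varepsilon_1\is\delta\kappa d^\kappa$, $\varepsilon_2\is0$, $\varsigma_0=\varsigma_1=\ell\is\kappa$, $L_1=c\is\kappa$, $C\is\kappa d^\kappa$, $q\is\tfrac p{p-1}$, and $L_0\is\kappa d^\kappa c_\kappa^{-1}$, where $c_\kappa\in(0,\infty)$ depends only on $\kappa$ and satisfies $1+\|x\|^\kappa+\|y\|^\kappa\le c_\kappa^{-1}[1+\int_0^1(r\|x\|+(1-r)\|y\|)^\kappa\,dr]$ for all $x,y\in\R^d$ (this recasts \eqref{approximationLocallyLipschitz} into the form demanded by \Cref{prop:perturbation_PDE_2}, and holds by concavity of $t\mapsto t^\kappa$ when $\kappa\le1$ and by restricting the integral to $[\tfrac12,1]$ resp.\ $[0,\tfrac12]$ when $\kappa\ge1$); the remaining hypotheses of \Cref{prop:perturbation_PDE_2} are immediate from \eqref{corDNNerrorEstimate:growthPhiOne}--\eqref{approximationLocallyLipschitz} and from $\operatorname{Trace}(B_dB_d^{\ast})=2\operatorname{Trace}(A_d)\le2\kappa d^\kappa$. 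Combined with $\delta\le(\tfrac TN)^{1/2}$ and $\max\{\tfrac TN,(\tfrac TN)^{1/2}\}\le\max\{T,1\}N^{-1/2}$, the explicit conclusion of \Cref{prop:perturbation_PDE_2} yields $|\mathrm{(I)}_{t,x}|\le\mathfrak{C}\,N^{-1/2}(d^{\alpha}\|x\|^{\varrho}+d^{\beta})$ for constants $\mathfrak{C},\alpha,\beta,\varrho\in(0,\infty)$ depending only on $\kappa,T,p$ with $\varrho\le2\max\{2\kappa,3\}$, $\alpha\le\kappa(\kappa+4)$, and $\beta\le\kappa(\kappa+4)+\kappa(2\kappa+1)$. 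For $\mathrm{(II)}$ I would combine \eqref{approximationLocallyLipschitz}, the Cauchy--Schwarz inequality, \Cref{lem:Euler}\eqref{item:EulerII} (which gives $\|\affineProcess^{N,d,1,x}_t-\zigZagProcess^{N,d,1,x}_t\|_{L^2}\le\tfrac12\sqrt{(\tfrac TN)\operatorname{Trace}(B_dB_d^{\ast})}$) and \Cref{lem:Euler}\eqref{item:EulerIII} (bounding the moments of $\affineProcess^{N,d,1,x}_t$, $\zigZagProcess^{N,d,1,x}_t$) to get a bound of the same shape with $N^{-1/2}$ and strictly smaller $d$-exponents. For $\mathrm{(III)}$ the i.i.d.\ property and the standard $L^p$-estimate for sums of i.i.d.\ centered random variables ($p\ge2$) give $\|\mathrm{(III)}_{t,x}\|_{L^p}\le 2c_pM^{-1/2}\|\dnnFunction^0_{d,\delta}(\affineProcess^{N,d,1,x}_t)\|_{L^p}$, and bounding $|\dnnFunction^0_{d,\delta}(y)|\le|\dnnFunction^0_{d,\delta}(0)|+\kappa d^\kappa(1+\|y\|^\kappa)\|y\|$ with $|\dnnFunction^0_{d,\delta}(0)|\le|f^0_d(0)|+\delta\kappa d^\kappa\le2\kappa d^\kappa$ together with \Cref{lem:Euler}\eqref{item:EulerIII} gives $\|\mathrm{(III)}_{t,x}\|_{L^p}\le\mathfrak{C}'M^{-1/2}(d^{\kappa}\|x\|^{\kappa+1}+d^{\kappa(\kappa+2)})$. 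Finally, raising these pointwise bounds to the $p$-th power, integrating against $\nu_d$, and estimating the occurring moments $[\int\|x\|^{p\varrho}\,\nu_d(dt,dx)]^{1/p}$ (with $\varrho\le2\max\{2\kappa,3\}$) by $\eta d^\eta[\max\{1,\nu_d([0,T]\times\R^d)\}]^{1/p}$ via \eqref{PDE_approx_Lp:MeasureAssumption} and Jensen's inequality (applied with the concave exponent $\le\tfrac12$ to the normalized measure) would yield the asserted estimate: the $N^{-1/2}$-term acquires the factor $d^{\max\{\alpha+\eta,\beta\}}\le d^{\kappa(\kappa+4)+\max\{\eta,\kappa(2\kappa+1)\}}$ and the $M^{-1/2}$-term the factor $d^{\kappa+\max\{\eta,\kappa^2\}}$, and $\mathcal{C}$ is obtained by absorbing the finitely many $\kappa,T,p,\eta$-dependent constants.

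I expect the bias term $\mathrm{(I)}$ to be the main obstacle: one must match the structural hypotheses \eqref{corDNNerrorEstimate:growthPhiOne}--\eqref{approximationLocallyLipschitz} to the numerous parameters of \Cref{prop:perturbation_PDE_2} — in particular rewriting the local Lipschitz bound \eqref{approximationLocallyLipschitz} in the integral form used there — and then carefully propagate the explicit $d$-dependence of $L_0$, $C$, $\operatorname{Trace}(A_d)$, $\|f^1_d(0)\|$ and of the quantity $\varpi$ in \eqref{perturbation_PDE_2:varpi} through the (rather lengthy) bound of \Cref{prop:perturbation_PDE_2} and through the subsequent integration against $\nu_d$, keeping the exponent of $d$ below $\kappa(\kappa+4)+\max\{\eta,\kappa(2\kappa+1)\}$; in particular one has to split the $x$-dependent and the $x$-independent parts of the bound before integrating, since that is what produces the $\max\{\eta,\kappa(2\kappa+1)\}$ rather than $\eta+\kappa(2\kappa+1)$. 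By comparison, the interpolation term and the statistical term are routine consequences of \Cref{lem:Euler} and of a Monte Carlo variance estimate.
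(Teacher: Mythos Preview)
Your overall architecture is exactly that of the paper: item~\eqref{item:existence_vis} via \Cref{thm:feynman}, and for item~\eqref{item:PDE_approxMainStatement} the same three-way split into a weak error $\mathrm{(I)}$ handled by \Cref{prop:perturbation_PDE_2}, an interpolation error $\mathrm{(II)}$ handled by \Cref{lem:Euler} plus Cauchy--Schwarz and \eqref{approximationLocallyLipschitz}, and a Monte Carlo term $\mathrm{(III)}$. Two remarks, one cosmetic and one a genuine (if small) gap.

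First, the paper applies \Cref{prop:perturbation_PDE_2} with the \emph{fixed} choice $p\is 2$, $q\is 2$ (and $L_0\is 2\kappa(\kappa+1)d^\kappa$), not with the H\"older conjugate of the outer $p$; your choice also satisfies the hypotheses, but the fixed $p=q=2$ keeps the arithmetic simpler and matches the exponents you are aiming for without extra case distinctions.

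Second, your treatment of $\mathrm{(III)}$ does not give the stated $d$-exponent. Bounding $|\dnnFunction^0_{d,\delta}(y)|$ from its value at $0$ via the local Lipschitz condition \eqref{approximationLocallyLipschitz} produces a term $\kappa d^\kappa(1+\|y\|^\kappa)\|y\|$, i.e.\ a $\|y\|^{\kappa+1}$ moment. Feeding this through \Cref{lem:Euler}\eqref{item:EulerIII} and then integrating against $\nu_d$ yields, as you wrote, a contribution of order $d^{\kappa(\kappa+2)}$ for the $x$-independent part, hence a total factor $d^{\kappa+\max\{\eta,\kappa^2+\kappa\}}$ rather than $d^{\kappa+\max\{\eta,\kappa^2\}}$; since $\kappa^2+\kappa>\kappa^2$ and $\eta$ is merely $\ge 1$, your claimed exponent does not follow. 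The fix is to use the \emph{approximation} and \emph{growth} hypotheses directly instead of the local Lipschitz bound:
\[
|\dnnFunction^0_{d,\delta}(y)|\le |\dnnFunction^0_{d,\delta}(y)-f^0_d(y)|+|f^0_d(y)|\le \delta\kappa d^\kappa(1+\|y\|^\kappa)+\kappa d^\kappa(1+\|y\|^\kappa)\le 2\kappa d^\kappa(1+\|y\|^\kappa),
\]
which is precisely what the paper does in \eqref{eq:Phi_0_d_estimate}. This requires only a $\|y\|^\kappa$ moment and, after the same bookkeeping, delivers the exponent $\kappa+\max\{\eta,\kappa^2\}$.
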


\begin{proof}[Proof of Proposition~\ref{prop:PDE_approx_Lp}]
Throughout this proof 
let $ \iota \in \R $ satisfy that
$ \iota = \max\{ \kappa, 1 \} $, 
let $C, \mathcal{C}_1, \mathcal{C}_2, \mathcal{C} \in (0,\infty)$ satisfy that
\begin{equation}
C=e^{
	\kappa^2 T
} 2^{\max\{0,\kappa-1\}}
\left(\eta
+ \left[\kappa  T 
+ 
\max\!\left\{1,\sqrt{2 (p\iota-1)  \kappa }\right\} T^{\nicefrac{1}{2}}\right]^{\!\kappa}\right),
\end{equation}
\begin{equation}\label{eq:defMathcalC}
\begin{split}
\mathcal{C}_1&=\iota^2
2^{\iota}(\kappa +1)\left[\max\!\left\{1,\sqrt{2 \max\{1,2\kappa-1\}  \kappa }\right\}\right]^{\!2\iota} e^{[3 \iota^2+\nicefrac{1}{2}]
	T}
(\max\{T,1\})^{\kappa + \iota + \nicefrac{3}{2}}
\\&\cdot \big[\!\max\{2 \kappa (\kappa +1 ) ,1\} \big]
\big[
1+5
\eta \, 2^{\kappa+\iota-1}
+5 (4\iota)^{\kappa+\iota} \, 2^{\kappa+\iota-1}
\big],
\end{split}
\end{equation}
\begin{equation}\label{eq:defMathcalCTwo}
\begin{split}
\mathcal{C}_2&=\tfrac{1}{\sqrt{2}} \kappa^{\nicefrac{3}{2}} e^{\kappa^2 T} 2^{\iota}  (\max\{T,1\})^{\kappa+\nicefrac{1}{2}} 
\left(\eta+1+	\left[\kappa 
+ 
\max\!\left\{1,\sqrt{2 \max\{1,2\kappa-1\}  \kappa }\right\} 
\right]^{\!\kappa}\right),
\end{split}
\end{equation}
and
\begin{equation}\label{eq:defFinalConstant}
	\mathcal{C}=\max\{\mathcal{C}_1+\mathcal{C}_2,8  \kappa  (1+C) \sqrt{p-1}\},
\end{equation} 
let $N,M\in\N$, $ \delta\in (0,\infty) $ satisfy that $\delta=\sqrt{T/N}$,
let $ \mathcal{A}_d \in \R^{ d \times d } $, $ d \in \N $, satisfy 
for all $ d \in \N $ that
$
\mathcal{A}_d = \sqrt{ 2 A_d }
$,
let $ \varpi_{ d, q } \in \R $, $ d \in \N $, $ q \in (0,\infty) $, satisfy 
for all $ q \in (0,\infty) $, $ d \in \N $  that
\begin{equation}\label{eq:PDE_approx_Lp_Varpi}
\begin{split}
\varpi_{ d, q } &= 
\max\!\Big\{1,\sqrt{\max\{1,q-1\}  \operatorname{Trace}( (\mathcal{A}_d)^*  \mathcal{A}_d)}\Big\},
\end{split}
\end{equation}
let $ X^{ d, x } \colon [0,T] \times \Omega \to \R^d $, $ x \in \R^d $,  $ d \in \N $, 
be stochastic processes with continuous sample paths 
which satisfy for all  $ d \in \N $, $ x \in \R^d $,  $ t \in [0,T] $ that
\begin{equation}
\label{eq:X_processes}
X^{ d, x }_t 
= x + \int_0^t f^1_d( X^{ d, x }_s ) \, ds 
+ 
\mathcal{A}_d
W^{ d, 1 }_t 
\end{equation} 
(cf.~item~\eqref{item1:feynmann} in \Cref{thm:feynman}), 
let $ \downFixed{\cdot} \colon [0,T] \to [0,T] $  satisfy for all
 $ t \in [0,T] $ that
 \begin{equation}
 	\downFixed{t}
 	=
 	\max\!\left(
 	\big\{0,\delta^2,2 \delta^2, \dots\big\}
 	\cap 
 	[0,t]
 	\right)\!,
 \end{equation}
let $\upFixed{\cdot} \colon [0,T] \to [0,T] $  satisfy for all
$ t \in [0,T] $ that
\begin{equation}
\upFixed{t}
=
\min\!\left(
\big\{0,\delta^2,2 \delta^2, \dots \big\}
\cap 
[t,T]
\right)\!,
\end{equation}
and let $\zigZagProcess^{d, x } \colon [0,T] \times \Omega \to \R^d $,
$ x \in \R^d $,
$ d \in \N $,
be stochastic processes with continuous sample paths 
which satisfy for all 
$ d \in \N $,
$ x \in \R^d $,
$ t \in [0,T]$  
that
\begin{equation}
\zigZagProcess^{d, x }_t 
=
x
+
\int_0^t
\dnnFunction^1_{d, \deltaIndexProof}\big( 
\zigZagProcess^{d, x }_{ \downFixed{s} } 
\big)
\,
ds
+
\mathcal{A}_d
W^{ d, 1 }_t.
\end{equation}
Note that  H\"older's inequality and \eqref{PDE_approx_Lp:MeasureAssumption} imply that for all $d \in \N$, $r\in (0,2\max\{2\kappa,3\})$ it holds that
\begin{equation}\label{PDE_approx_Lp:Hoelder}
\begin{split}
&\left[\int_{[0,T]\times\R^d} 
\|x\|^{ pr}
\, \nu_d (d t, d x)\right]^{\!\nicefrac{1}{p}}
\\&\leq 
\left[ \int_{[0,T]\times\R^d} 
\|x\|^{ 2p\max\{2\kappa,3\}}
\, \nu_d (d t, d x)  \right]^{\!\nicefrac{r}{(2p\max\{2\kappa,3\})}}
\big[\nu_d([0,T]\times\R^d)\big]^{\nicefrac{(1-\nicefrac{r}{(2\max\{2\kappa,3\})})}{p}}
\\&\leq 
(\eta d^\eta)^{\nicefrac{r}{(2\max\{2\kappa,3\})}}
\max\!\left\{1,[\nu_d([0,T]\times\R^d)]^{\nicefrac{1}{p}}\right\}
\\&\leq 
\eta d^\eta
\max\!\left\{1,[\nu_d([0,T]\times\R^d)]^{\nicefrac{1}{p}}\right\}
.
\end{split}
\end{equation}
Furthermore, observe that
\eqref{corDNNerrorEstimate:growthPhiOne} and
\Cref{thm:feynman} establish item~\eqref{item:existence_vis}. 
It thus remains to prove item~\eqref{item:PDE_approxMainStatement}.
For this note that 
the triangle inequality
and 
\Cref{thm:feynman}
ensure that 
for all $ d \in \N $
it holds that
\begin{equation}
\label{eq:apply_feynman}
\begin{split}
&
\bigg[\int_{[0,T]\times\R^d}
\E\Big[
\big|
u_d(t,x) 
- 
\tfrac{ 1 }{ M } 
\big[ 
\textstyle
\sum\nolimits_{ m = 1 }^M
\dnnFunction^0_{d, \deltaIndexProof}\big(\affineProcess_t^{N, d, m, x } \big)
\big]
\big|^p
\Big]
\,
\nu_d (d t, d x)\bigg]^{\!\nicefrac{1}{p}}
\\ &
\leq 
 \bigg[\int_{[0,T]\times\R^d}
\E\Big[
\big|
u_d(t,x) 
- 
\E\big[
\dnnFunction^0_{d, \deltaIndexProof}\big(
\affineProcess_t^{N, d, 1, x }
\big)
\big]
\big|^p
\Big]
\,
\nu_d (d t, d x)\bigg]^{\!\nicefrac{1}{p}}
\\ &
+
 \bigg[\int_{[0,T]\times\R^d}
\E\Big[
\big|
\E\big[
\dnnFunction^0_{d, \deltaIndexProof}(
\affineProcess_t^{N, d, 1, x }
)
\big]
- 
\tfrac{ 1 }{ M } 
\big[ 
\textstyle
\sum_{ m = 1 }^M
\dnnFunction^0_{d, \deltaIndexProof}(
\affineProcess_t^{N, d, m, x }
)
\big]
\big|^p
\Big]
\,
\nu_d (d t, d x)\bigg]^{\!\nicefrac{1}{p}}
\\ & =
 \bigg[\int_{[0,T]\times\R^d}
\big|
\E\big[ 
f^0_d( X^{ d, x }_t )
\big]
- 
\E\big[
\dnnFunction^0_{d, \deltaIndexProof}(
\affineProcess_t^{N, d, 1, x }
)
\big]
\big|^p
\,
\nu_d (d t, d x)\bigg]^{\!\nicefrac{1}{p}}
\\ &
+
 \bigg[\int_{[0,T]\times\R^d}
\E\Big[
\big|
\E\big[
\dnnFunction^0_{d, \deltaIndexProof}(
\affineProcess_t^{N, d, 1, x }
)
\big]
- 
\tfrac{ 1 }{ M } 
\big[ 
\textstyle
\sum_{ m = 1 }^M
\dnnFunction^0_{d, \deltaIndexProof}(
\affineProcess_t^{N, d, m, x }
)
\big]
\big|^p
\Big]
\,
\nu_d (d t, d x)\bigg]^{\!\nicefrac{1}{p}}
.
\end{split}
\end{equation}
This and, e.g., \cite[Corollary~2.5]{GrohsWurstemberger2023}
 prove that
for all $d \in \N $
it holds that
\begin{equation}\label{eq:apply_MonteCarlo}
\begin{split}
&
\bigg[\int_{[0,T]\times\R^d}
\E\Big[
\big|
u_d(t,x) 
- 
\tfrac{ 1 }{ M } 
\big[ 
\textstyle
\sum_{ m = 1 }^M
\dnnFunction^0_{d, \deltaIndexProof}(
\affineProcess_t^{N, d, m, x }
)
\big]
\big|^p
\Big]
\,
\nu_d (d t, d x)\bigg]^{\!\nicefrac{1}{p}}
\\ & \leq
 \bigg[\int_{[0,T]\times\R^d}
\big|
\E\big[ 
f^0_d( X^{ d, x }_t )
\big]
- 
\E\big[
\dnnFunction^0_{d, \deltaIndexProof}(
\affineProcess_t^{N, d, 1, x }
)
\big]
\big|^p
\,
\nu_d (d t, d x)\bigg]^{\!\nicefrac{1}{p}}
\\ &
+
\frac{  2 \sqrt{p-1} }{ M^{\nicefrac{1}{2}} } 
\bigg[\int_{[0,T]\times\R^d}
\E\Big[
\big|
\dnnFunction^0_{d, \deltaIndexProof}(
\affineProcess_t^{N, d, 1, x }
)
-
\E\big[
\dnnFunction^0_{d, \deltaIndexProof}(
\affineProcess_t^{N, d, 1, x }
)
\big]
\big|^p
\Big]
\,
\nu_d (d t, d x)\bigg]^{\!\nicefrac{1}{p}}
.
\end{split}
\end{equation}
Next note that the triangle inequality and H\"older's inequality imply  for all $ d \in \N $ that
\begin{equation}
\begin{split}
&\bigg[\int_{[0,T]\times\R^d}
\E\Big[
\big|
\dnnFunction^0_{d, \deltaIndexProof}(
\affineProcess_t^{N, d, 1, x }
)
-
\E\big[
\dnnFunction^0_{d, \deltaIndexProof}(
\affineProcess_t^{N, d, 1, x }
)
\big]
\big|^p
\Big]
\,
\nu_d (d t, d x)\bigg]^{\!\nicefrac{1}{p}}
\\&\le 
\bigg[\int_{[0,T]\times\R^d}
\E\Big[
\big|
\dnnFunction^0_{d, \deltaIndexProof}(
\affineProcess_t^{N, d, 1, x }
)
\big|^p
\Big]
\,
\nu_d (d t, d x)\bigg]^{\!\nicefrac{1}{p}}
\\&+
\bigg[\int_{[0,T]\times\R^d}
\big|
\E\big[
\dnnFunction^0_{d, \deltaIndexProof}(
\affineProcess_t^{N, d, 1, x }
)
\big]
\big|^p
\,
\nu_d (d t, d x)\bigg]^{\!\nicefrac{1}{p}}
\\&\le 
2\bigg[\int_{[0,T]\times\R^d}
\E\Big[
\big|
\dnnFunction^0_{d, \deltaIndexProof}(
\affineProcess_t^{N, d, 1, x }
)
\big|^p
\Big]
\,
\nu_d (d t, d x)\bigg]^{\!\nicefrac{1}{p}}.
\end{split}
\end{equation}
Combining this and \eqref{eq:apply_MonteCarlo} demonstrates for all $ d\in \N$ that
\begin{equation}
\label{eq:weak:RHS}
\begin{split}
&
\bigg[\int_{[0,T]\times\R^d}
\E\Big[
\big|
u_d(t,x) 
- 
\tfrac{ 1 }{ M } 
\big[ 
\textstyle
\sum_{ m = 1 }^M
\dnnFunction^0_{d, \deltaIndexProof}(
\affineProcess_t^{N, d, m, x }
)
\big]
\big|^p
\Big]
\,
\nu_d (d t, d x)\bigg]^{\!\nicefrac{1}{p}}
\\ & \leq
\bigg[\int_{[0,T]\times\R^d}
\big|
\E\big[ 
f^0_d( X^{ d, x }_t )
\big]
- 
\E\big[
\dnnFunction^0_{d, \deltaIndexProof}(
\affineProcess_t^{N, d, 1, x }
)
\big]
\big|^p
\,
\nu_d (d t, d x)\bigg]^{\!\nicefrac{1}{p}}
\\ &
 +
\frac{ 4 \sqrt{p-1} }{ M^{\nicefrac{1}{2}} } 
\left[\int_{[0,T]\times\R^d}
\E\big[
|
\dnnFunction^0_{d, \deltaIndexProof}(
\affineProcess_t^{N, d, 1, x }
)
|^p
\big]
\,
\nu_d (d t, d x)\right]^{\!\nicefrac{1}{p}}.
\end{split}
\end{equation}
Next observe that the fact that  $\forall \, a,b, q\in [0,\infty) \colon a^q+b^q\le (a+b)^q + (a+b)^q = 2 (a+b)^q $ proves that
for all $d \in \N$, $x, y \in \R^d$ it holds that
\begin{equation}
\begin{split}
&2 \int_0^1 \big[r \|x\| + (1-r) \|y\|\big]^{\kappa} \, dr \geq  \int_0^1 \big[ r^{\kappa} \|x\|^{\kappa} + (1-r)^{\kappa} \|y\|^{\kappa} \big] \, dr \\
&= \big[ \|x\|^{\kappa} + \|y\|^{\kappa} \big] \int_0^1 r^{\kappa} \, dr = \frac{\big[ \|x\|^{\kappa} + \|y\|^{\kappa} \big]}{\kappa +1}.
\end{split}
\end{equation}
This and  \eqref{approximationLocallyLipschitz}
 show that for all $ d \in \N$, $x, y \in \R^d$ it holds that
\begin{equation}\label{eq:weak:error:PhiZeroLocalLipschitzInt}
\begin{split}
&|\dnnFunction^0_{d, \deltaIndexProof} (x) - \dnnFunction^0_{d, \deltaIndexProof}(y)|   \leq \kappa d^{\kappa} (1 +  \|x\|^{\kappa} + \|y \|^{\kappa} )\|x-y\| \\
& \leq \kappa d^{\kappa} \left[ 1 + 2 (\kappa +1) \int_0^1 \big[r \|x\| + (1-r) \|y\|\big]^{\kappa} \, dr \right] \|x-y\|\\
& \leq 2 \kappa (\kappa +1 ) d^{\kappa} \left[ 1 +  \int_0^1 \big[r \|x\| + (1-r) \|y\|\big]^{\kappa} \, dr \right] \|x-y\|.
\end{split}
\end{equation}
Proposition~\ref{prop:perturbation_PDE_2} (applied with 
$d \is d$, $m \is d $, $ \xi \is x $, 
$ T \is T $, $c \is \kappa$, $ C \is \kappa d^{\kappa}$, $\varepsilon_0 \is \deltaIndexProof \kappa d^{\kappa}$, $\varepsilon_1 \is \deltaIndexProof \kappa d^{\kappa}$, $\varepsilon_2 \is 0$, $\varsigma_0 \is \kappa$, $\varsigma_1 \is \kappa$, $\varsigma_2 \is 0$, $L_0  \is  2 \kappa (\kappa +1 ) d^{\kappa}$, $L_1 \is \kappa$, $\ell \is \kappa$,
$ h \is \delta^2 $, $ p \is 2$, $ q \is 2$,
$ B \is \mathcal{A}_d$, 
$ (\varpi_r)_{r \in (0,\infty)} \is (\varpi_{d,r})_{r \in (0,\infty)} $, 
$ \left\| \cdot \right\|  \is \left\| \cdot \right\|$, 
$ ( \Omega, \mathcal{F}, \P ) \is ( \Omega, \mathcal{F}, \P ) $, 
$ W \is W^{d,1} $, 
$ \dnnFunction_0 \is \dnnFunction^0_{d, \deltaIndexProof}$, 
$ f_1 \is f^1_d $, 
$ \dnnFunction_2 \is \id_{\R^d}$, 
$ \chi(s) \is \downFixed{s}$, 
$ f_0 \is f^0_d $,
$ \dnnFunction_1 \is \dnnFunction^1_{d, \deltaIndexProof} $,
$X \is X^{d,x}$, $Y \is \zigZagProcess^{d, x }$
for $d \in \N$, $x \in \R^d$, $s\in [0,T]$
in the notation of Proposition~\ref{prop:perturbation_PDE_2}) hence ensures that for all $d \in \N$,  $x \in \R^d$, $t\in [0,T]$ it holds that
	\begin{equation}
\begin{split}\label{eq:weak:error:pointwiseStart}
&
\big| \E\big[ f^0_d( X_t^{d,x}) \big] - \E\big[ \dnnFunction^0_{d, \deltaIndexProof}( \zigZagProcess^{d, x }_t ) \big] \big|
\le 
\left(\deltaIndexProof \kappa d^{\kappa}+\deltaIndexProof \kappa d^{\kappa} +\delta^2+\delta\right)
\\&\cdot e^{\left[\max\{\kappa ,1\}\kappa+ 1 - \nicefrac{1}{2}+\kappa \max\{\kappa,\kappa\}+\max\{\kappa ,1\} \kappa\right]
	T}
(\varpi_{d,\max\{\kappa ,2\kappa ,2 \kappa , 2\}})^{\max\{\kappa, \kappa+\max\{1,\kappa \}\} } \\&\cdot(\max\{T,1\})^{\max\{\kappa ,\kappa+\max\{\kappa, 1 \} + \nicefrac{ 1 }{ 2 }\}}
\max\{2 \kappa (\kappa +1 ) d^{\kappa},1\}\, \max\{\kappa,1\}\,
2^{ \max\{ \kappa - 1, 0 \} } 
\\&\cdot
\left[
\max\{\kappa d^\kappa,1\}+5\max\{\kappa d^\kappa,\kappa,1\}
\big(
\| x\|
+ 
2 \max\{\| f^1_d(0) \| ,\kappa d^\kappa,1\}\big)^{\max\{\kappa,\kappa+\max\{\kappa ,1\}\}}
\right]\!.
\end{split}
\end{equation}
Next note that \eqref{corDNNerrorEstimate:growthPhiOne} demonstrates that for all $d\in\N$, $q\in (0,\infty)$ it holds that
\begin{equation}
\begin{split}\label{estimateVarPiD}
\varpi_{ d, q } &= 
\max\!\left\{1,\sqrt{\max\{1,q-1\}  \operatorname{Trace}( (\mathcal{A}_d)^{\ast}  \mathcal{A}_d)}\right\}
\\&=\max\!\left\{1,\sqrt{2 \max\{1,q-1\}  \operatorname{Trace}( A_d)}\right\}
\\&\le \max\!\left\{1,\sqrt{2 \max\{1,q-1\}  \kappa d^\kappa}\right\}
\\&\le d^{\nicefrac{\kappa}{2}} \max\!\left\{1,\sqrt{2 \max\{1,q-1\}  \kappa }\right\}.
\end{split}
\end{equation}
Moreover, observe that
 \eqref{eq:appr:coef:diff}  implies that for all $d\in\N$ it holds that
\begin{equation}
\begin{split}
\| 
f^1_d(0) 
\|
&\le \| 
f^1_d(0) 
- 
\dnnFunction^1_{d, \deltaIndexProof} (0)
\|+\| 
\dnnFunction^1_{d, \deltaIndexProof} (0)
\|
\\&\le \min\{\delta, 1\} \kappa d^{ \kappa }+
\kappa  d^{ \kappa }
\le 2 \kappa d^\kappa.
\end{split}
\end{equation}
This, \eqref{eq:weak:error:pointwiseStart}, \eqref{estimateVarPiD} and the fact that $\iota=\max\{\kappa,1\}$ ensure that for all $d \in \N$,  $x \in \R^d$, $t\in [0,T]$ it holds that
	\begin{equation}\label{eq:weak:error:pointwise}
\begin{split}
&
\big| \E\big[ f^0_d( X_t^{d,x}) \big] - \E\big[ \dnnFunction^0_{d, \deltaIndexProof}( \zigZagProcess^{d, x }_t ) \big] \big|
\\&\le 
\left(2\delta \kappa d^{\kappa}+ \delta^2 + \delta\right) e^{[3 \iota^2+\nicefrac{1}{2}]
	T}
\left[d^{\nicefrac{\kappa}{2}}\max\!\left\{1,\sqrt{2 \max\{1,2\kappa-1\}  \kappa }\right\}\right]^{\!2\iota} 
\\&\cdot(\max\{T,1\})^{\kappa + \iota + \nicefrac{ 1 }{ 2 }}
\max\{2 \kappa (\kappa +1 ) d^{\kappa},1\}\,\iota\,
2^{ \iota-1 } 
\\&\cdot
\left[
\max\{\kappa d^\kappa,1\}+5\max\{\kappa d^\kappa,1\}
\big(
\| x\|
+ 
2 \max\{2 \kappa d^\kappa,1\}\big)^{\kappa+\iota}
\right]\!.
\end{split}
\end{equation}
The triangle inequality hence ensures that for all  $d \in \N$ it holds that
\begin{equation}\label{eq:weak:error:integrated}
\begin{split}
&\left[\int_{[0,T]\times \R^d }
\big|
\E\big[ 
f^0_d( X^{ d, x }_t )
\big]
- 
\E\big[
\dnnFunction^0_{d, \deltaIndexProof}(
\zigZagProcess^{d, x }_t
)
\big]
\big|^p
\,
\nu_d (d t, d x)\right]^{\!\nicefrac{1}{p}} \\
& \leq 
 \left(2\delta \kappa d^{\kappa}+ \delta^2 +\delta\right)
e^{[3 \iota^2+\nicefrac{1}{2}]
	T}
\left[d^{\nicefrac{\kappa}{2}}\max\!\left\{1,\sqrt{2 \max\{1,2\kappa-1\}  \kappa }\right\}\right]^{\!2\iota} 
\\&\cdot(\max\{T,1\})^{\kappa + \iota+ \nicefrac{ 1 }{ 2 }}
\max\{2 \kappa (\kappa +1 ) d^{\kappa},1\}\,\iota
2^{ \iota-1 } 
\Bigg[
\max\{\kappa d^\kappa,1\}[\nu_d ([0,T]\times \R^d)]^{\nicefrac{1}{p}} \\& + 5\max\{\kappa d^\kappa,1\} 
\left[\int_{[0,T]\times \R^d }
\big(
\| x\|
+ 
2 \max\{2 \kappa d^\kappa,1\}\big)^{p(\kappa+\iota)}
\,
\nu_d (d t, d x)\right]^{\!\nicefrac{1}{p}}\Bigg].
\end{split}
\end{equation}
Moreover, observe that the fact that $ \forall \, y, z \in \R, \alpha \in [1, \infty) \colon |y + z|^{\alpha}  \leq 2^{\alpha -1}(|y|^{\alpha} + |z|^{\alpha})$ and the triangle inequality demonstrate that for all  $d \in \N$ it holds that
\begin{align*}
	&\left[\int_{[0,T]\times \R^d }
	\big(
	\| x\|
	+ 
	2 \max\{2 \kappa d^\kappa,1\}\big)^{p(\kappa+\iota)}
	\,
	\nu_d (d t, d x)\right]^{\!\nicefrac{1}{p}}
		\\&\le 2^{\kappa+\iota-1} \left[\int_{[0,T]\times \R^d }
	\Big[
	\| x\|^{\kappa+\iota}
	+ 
	(2 \max\{2 \kappa d^\kappa,1\})^{\kappa+\iota}\Big]^{p}
	\,
	\nu_d (d t, d x)\right]^{\!\nicefrac{1}{p}} \numberthis
	\\&\le  2^{\kappa+\iota-1}
	\left[\int_{[0,T]\times \R^d }
	\| x\|^{p(\kappa+\iota)}
	\,
	\nu_d (d t, d x)\right]^{\!\nicefrac{1}{p}}
+
	 2^{\kappa+\iota-1} [\nu_d ([0,T]\times \R^d)]^{\nicefrac{1}{p}}
	(2 \max\{2 \kappa d^\kappa,1\})^{\kappa+\iota}.
\end{align*}
This, the fact that $\kappa+\iota=\max\{2\kappa,\kappa+1\}<2\max\{2\kappa,3\}$, and 
\eqref{PDE_approx_Lp:Hoelder}
ensure that for all  $d \in \N$ it holds that
\begin{equation}
\label{eq:integral:x}
\begin{split}
&\left[\int_{[0,T]\times \R^d }
\big(
\| x\|
+ 
2 \max\{2 \kappa d^\kappa,1\}\big)^{p(\kappa+\iota)}
\,
\nu_d (d t, d x)\right]^{\!\nicefrac{1}{p}}
\\&\le  2^{\kappa+\iota-1}
\eta d^\eta
\max\!\left\{1,[\nu_d([0,T]\times\R^d)]^{\nicefrac{1}{p}}\right\}
\\&+
2^{\kappa+\iota-1} [\nu_d ([0,T]\times \R^d)]^{\nicefrac{1}{p}}
(2 \max\{2 \kappa d^\kappa,1\})^{\kappa+\iota}
\\&\le  2^{\kappa+\iota-1}
\max\!\left\{1,[\nu_d([0,T]\times\R^d)]^{\nicefrac{1}{p}}\right\}
\left[\eta d^\eta+(2 \max\{2 \kappa d^\kappa,1\})^{\kappa+\iota}\right].
\end{split}
\end{equation} 
In addition, note that the fact that $\delta=\sqrt{T/N}$ proves that for all $d \in \N$ it holds that
\begin{equation}
\begin{split}
&2 \delta \kappa d^{\kappa} + \delta^2 + \delta \le 2 \delta \kappa d^{\kappa} + \delta \sqrt{T} + \delta\\
& \leq 2 \delta \kappa d^{\kappa} + 2 \max\{\sqrt{T}, 1\} \delta \le 2 \max\{\sqrt{T}, 1\} (\delta \kappa d^{\kappa} + \delta).
\end{split}
\end{equation}
Combining this, \eqref{eq:weak:error:integrated}, and \eqref{eq:integral:x} shows that for all  $d \in \N$ it holds that
\begin{equation}
\begin{split}
&\left[\int_{[0,T]\times \R^d }
\big|
\E\big[ 
f^0_d( X^{ d, x }_t )
\big]
- 
\E\big[
\dnnFunction^0_{d, \deltaIndexProof}(
\zigZagProcess^{d, x }_t
)
\big]
\big|^p
\,
\nu_d (d t, d x)\right]^{\!\nicefrac{1}{p}} 
\\& \leq 
\iota
2^{\iota} \max\{\sqrt{T}, 1\} \left(\delta \kappa d^{\kappa}+\delta\right)\left[d^{\nicefrac{\kappa}{2}} \max\!\left\{1,\sqrt{2 \max\{1,2\kappa-1\}  \kappa }\right\}\right]^{\!2\iota} 
\\&\cdot e^{[3 \iota^2+\nicefrac{1}{2}]
	T}
(\max\{T,1\})^{\kappa + \iota + \nicefrac{ 1 }{ 2 }}
\max\{2 \kappa (\kappa +1 ) d^{\kappa},1\}
 \max\{\kappa d^\kappa,1\} 
 \\&\cdot\max\!\left\{1,[\nu_d ([0,T]\times \R^d)]^{\nicefrac{1}{p}}\right\} 
\Big(
1+5 \!\left[\eta d^\eta+
(2 \max\{2 \kappa d^\kappa,1\})^{\kappa+\iota}\right] 2^{\kappa+\iota-1}
\Big)
.
\end{split}
\end{equation}
Hence, we obtain that for all  $d \in \N$ it holds that 
\begin{equation}
\begin{split}
&\left[\int_{[0,T]\times \R^d }
\big|
\E\big[ 
f^0_d( X^{ d, x }_t )
\big]
- 
\E\big[
\dnnFunction^0_{d, \deltaIndexProof}(
\zigZagProcess^{d, x }_t
)
\big]
\big|^p
\,
\nu_d (d t, d x)\right]^{\!\nicefrac{1}{p}} 
\\& \leq 
\iota
2^{\iota} d^{3 \kappa+\kappa\iota} \delta (\kappa +1)\left[\max\!\left\{1,\sqrt{2 \max\{1,2\kappa-1\}  \kappa }\right\}\right]^{\!2\iota} 
\\&\cdot e^{[3 \iota^2+\nicefrac{1}{2}]
	T}
(\max\{T,1\})^{\kappa + \iota + 1}
\max\{2 \kappa (\kappa +1 ),1\}
\max\{\kappa ,1\} 
\\&\cdot \max\!\left\{1,[\nu_d ([0,T]\times \R^d)]^{\nicefrac{1}{p}}\right\}
\Big(
1+5 \!
\left[\eta d^{\eta}
+
(2 \max\{2 \kappa d^\kappa,1\})^{\kappa+\iota}\right]
 2^{\kappa+\iota-1}
\Big)
\\& \leq 
\iota^2
2^{\iota} d^{3 \kappa+\kappa\iota+\max\{\eta,\kappa(\kappa+\iota)\}} \delta (\kappa +1)\left[\max\!\left\{1,\sqrt{2 \max\{1,2\kappa-1\}  \kappa }\right\}\right]^{\!2\iota} 
\\&\cdot e^{[3 \iota^2+\nicefrac{1}{2}]
	T}
(\max\{T,1\})^{\kappa + \iota + 1}
\max\{2 \kappa (\kappa +1 ),1\} \max\!\left\{1,[\nu_d ([0,T]\times \R^d)]^{\nicefrac{1}{p}}\right\}
\\&\cdot 
\big[
1+5
\eta \, 2^{\kappa+\iota-1}
+5 
(4\iota)^{\kappa+\iota} \,
2^{\kappa+\iota-1} 
\big]
.
\end{split}
\end{equation}
The fact that $\delta=\sqrt{T/N}$ and \eqref{eq:defMathcalC} hence prove that for all  $d \in \N$ it holds that 
\begin{equation}\label{SDEerror}
\begin{split}
&\left[\int_{[0,T]\times \R^d }
\big|
\E\big[ 
f^0_d( X^{ d, x }_t )
\big]
- 
\E\big[
\dnnFunction^0_{d, \deltaIndexProof}(
\zigZagProcess^{d, x }_t
)
\big]
\big|^p
\,
\nu_d (d t, d x)\right]^{\!\nicefrac{1}{p}} 
\\& \leq 
N^{-\nicefrac{1}{2}} \iota^2
2^{\iota} d^{3\kappa+\kappa\iota+\max\{\eta,\kappa(\kappa+\iota)\}} (\kappa +1)\left[\max\!\left\{1,\sqrt{2 \max\{1,2\kappa-1\}  \kappa }\right\}\right]^{\!2\iota} 
\\&\cdot e^{[3 \iota^2+\nicefrac{1}{2}]
	T}
(\max\{T,1\})^{\kappa + \iota +\nicefrac{3}{2}}
\max\{2 \kappa (\kappa +1 ) ,1\}
\\&\cdot
\big[
1+5
\eta \, 2^{\kappa+\iota-1}
+5
(4\iota)^{\kappa+\iota} \, 2^{\kappa+\iota-1}
\big]\max\!\left\{1, [\nu_d ([0,T]\times \R^d)]^{\nicefrac{1}{p}}\right\}
\\& = 
N^{-\nicefrac{1}{2}} \,\mathcal{C}_1\, d^{3\kappa+\kappa\iota+\max\{\eta,\kappa(\kappa+\iota)\}}  \max\!\left\{1, [\nu_d ([0,T]\times \R^d)]^{\nicefrac{1}{p}}\right\}.
\end{split}
\end{equation}
Furthermore, observe that \eqref{approximationLocallyLipschitz}, the Cauchy-Schwarz inequality, and the triangle inequality ensure that for all $d \in \N$,  $x \in \R^d$, $t\in [0,T]$ it holds that
	\begin{equation}
\begin{split}\label{perturbation_PDE:YZdifferenceStart}
&
\big| \E\big[ \dnnFunction^0_{d, \deltaIndexProof}( \zigZagProcess^{d, x }_t ) \big]-\E\big[ \dnnFunction^0_{d, \deltaIndexProof}( \affineProcess_t^{N, d, 1,x} ) \big] \big|
\\&
\le 
\E\big[ \big| \dnnFunction^0_{d, \deltaIndexProof}( \zigZagProcess^{d, x }_t ) -\dnnFunction^0_{d, \deltaIndexProof}( \affineProcess_t^{N, d, 1,x} ) \big| \big]
\\&
\leq \kappa d^{\kappa} \E \big[\big(1 + \Norm{\zigZagProcess^{d, x }_t}^{\kappa} + \Norm{\affineProcess_t^{N, d, 1,x}}^{\kappa}\big)\Norm{\zigZagProcess^{d, x }_t-\affineProcess_t^{N, d, 1,x}} \big]
\\&
\leq \kappa d^{\kappa} \big(\E \big[\big(1 + \Norm{\zigZagProcess^{d, x }_t}^{\kappa} + \Norm{\affineProcess_t^{N, d, 1,x}}^{\kappa}\big)^2\big]\big)^{\!\nicefrac{1}{2}} 
\big(\E \big[\Norm{\zigZagProcess^{d, x }_t-\affineProcess_t^{N, d, 1,x}}^2 \big]\big)^{\!\nicefrac{1}{2}}
\\&
\leq \kappa d^{\kappa} \left[1+\big(\E \big[\Norm{\zigZagProcess^{d, x }_t}^{2\kappa} \big]\big)^{\!\nicefrac{1}{2}}
+\big(\E \big[\Norm{\affineProcess_t^{N, d, 1,x}}^{2\kappa} \big]\big)^{\!\nicefrac{1}{2}}\right]  \big(\E\big[\Norm{\zigZagProcess^{d, x }_t-\affineProcess_t^{N, d, 1,x}}^2 \big]\big)^{\!\nicefrac{1}{2}}.
\end{split}
\end{equation}
Next note that \eqref{eq:appr:coef:diff} and items~\eqref{item:EulerZero}--\eqref{item:EulerII} in Lemma~\ref{lem:Euler} 
prove that for all $d \in \N$,  $x \in \R^d$, $t\in [0,T]$ it holds that
	\begin{equation}\label{perturbation_PDE:EulerMomentsDifference}
\begin{split}
\big(\E\big[\Norm{\zigZagProcess^{d, x }_t-\affineProcess_t^{N, d, 1,x}}^2 \big]\big)^{\!\nicefrac{1}{2}}
&\le \tfrac{1}{2} \sqrt{  \big(\upFixed{t}-\downFixed{t}\big) \operatorname{Trace}( \mathcal{A}_d \mathcal{A}_d^{ \ast } )}
\\&= \tfrac{1}{2} \sqrt{ 2 \big(\upFixed{t}-\downFixed{t}\big) \operatorname{Trace}(A_d)}
\le \tfrac{1}{\sqrt{2}} \delta \sqrt{ \kappa d^\kappa}. 
\end{split}
\end{equation}
Moreover, observe that  H\"older's inequality, \eqref{eq:appr:coef:diff},
and item~\eqref{item:EulerIII} in Lemma~\ref{lem:Euler}
show that for all 
$ q \in (0,\infty) $, 
$ d, m \in \N $,
$ x \in \R^d $,
$t\in[0,T]$
it holds that
\begin{equation}
\begin{split}
&
\max\!\left\{\big(\E\big[ 
\| \zigZagProcess^{d, x }_t \|^{q}
\big]
\big)^{\! \nicefrac{ 1 }{q } },\big(\E\big[ 
\| \affineProcess_t^{N, d, m, x } \|^{q}
\big]
\big)^{ \!\nicefrac{ 1 }{q } }\right\}
\\&\le 
\max\!\left\{
\big(
\E\big[ 
\| \zigZagProcess^{d, x }_t \|^{\max\{q,1\}}
\big]
\big)^{ \!\nicefrac{ 1 }{ \max\{q,1\} } },
\big(
\E\big[ 
\| \affineProcess_t^{N, d, m, x } \|^{\max\{q,1\}}
\big]
\big)^{\! \nicefrac{ 1 }{ \max\{q,1\} } }
\right\}
\\& \leq
\Big[
\|x\| + \kappa d^\kappa T 
+ 
\sqrt{ \max\{1,\max\{1,q\}-1\} T \operatorname{Trace}( \mathcal{A}_d \mathcal{A}_d^{ \ast })}
\Big]
\,
e^{ \kappa T }.
\end{split}
\end{equation}
Combining this with \eqref{eq:PDE_approx_Lp_Varpi} ensures that for all 
$ q \in (0,\infty) $, 
$ d, m \in \N $,
$ x \in \R^d $,
$t\in[0,T]$
it holds that
\begin{equation}\label{perturbation_PDE:EulerMoments}
\begin{split}
&
\max\!\left\{\big(\E\big[ 
\| \zigZagProcess^{d, x }_t \|^{q}
\big]
\big)^{\! \nicefrac{ 1 }{q } },\big(\E\big[ 
\| \affineProcess_t^{N, d, m, x } \|^{q}
\big]
\big)^{\! \nicefrac{ 1 }{q } }\right\}
 \le
\Big[
\| x \|
+
\kappa d^{ \kappa } T
+ 
\varpi_{d,\max\{q,1\}} T^{\nicefrac{1}{2}}
\Big]
\,
e^{
	\kappa T
}.
\end{split}
\end{equation}
The fact that $ \forall \, y, z \in \R$, $\alpha \in (0, \infty) \colon |y + z|^{\alpha}  \leq 2^{\max\{0,\alpha -1\}}\allowbreak(|y|^{\alpha} + |z|^{\alpha})$, \eqref{perturbation_PDE:YZdifferenceStart}, and \eqref{perturbation_PDE:EulerMomentsDifference} hence demonstrate that for all $d \in \N$,  $x \in \R^d$, $t\in [0,T]$ it holds that 
	\begin{equation}
\begin{split}\label{perturbation_PDE:YZdifferenceContinued}
&
\big| \E\big[ \dnnFunction^0_{d, \deltaIndexProof}( \zigZagProcess^{d, x }_t ) \big]-\E\big[ \dnnFunction^0_{d, \deltaIndexProof}( \affineProcess_t^{N, d, 1,x} ) \big] \big|
\\&\le 
\kappa d^{\kappa} \left[1+2 \,\Big(
\| x \|
+
\kappa d^{ \kappa } T
+ 
\varpi_{d,\max\{2\kappa,1\}} T^{\nicefrac{1}{2}}
\Big)^\kappa
\,
e^{
	\kappa^2 T
}
\right]
\tfrac{1}{\sqrt{2}} \delta \sqrt{ \kappa d^\kappa}
\\&\le \tfrac{1}{\sqrt{2}} \delta
\big(\kappa d^{\kappa}\big)^{\!\nicefrac{3}{2}} e^{
	\kappa^2 T
} \left[1+ 2^{1+\max\{0,\kappa -1\}}\Big(
\| x \|^\kappa
+
\big[\kappa d^{ \kappa } T
+ 
\varpi_{d,\max\{2\kappa,1\}} T^{\nicefrac{1}{2}}
\big]^\kappa\Big)
\,
\right]
\\&\le \tfrac{1}{\sqrt{2}} \delta
\big(\kappa d^{\kappa}\big)^{\!\nicefrac{3}{2}} e^{
	\kappa^2 T
} 2^{\max\{1,\kappa \}} \left[\| x \|^\kappa+ \Big(
1
+
\big[\kappa d^{ \kappa } T
+ 
\varpi_{d,\max\{2\kappa,1\}} T^{\nicefrac{1}{2}}
\big]^\kappa\Big)
\,
\right]
\!.
\end{split}
\end{equation}
Combining this with the triangle inequality
assures that for all $d \in \N$ it holds that 
\begin{align*}\label{perturbation_PDE:YZdifferenceIntegrated}
	&\left[\int_{[0,T]\times \R^d }
	\big|
	\E\big[ \dnnFunction^0_{d, \deltaIndexProof}( \zigZagProcess^{d, x }_t ) \big]-\E\big[ \dnnFunction^0_{d, \deltaIndexProof}( \affineProcess_t^{N, d, 1,x} ) \big]
	\big|^p
	\,
	\nu_d (d t, d x)\right]^{\!\nicefrac{1}{p}} \numberthis
	\\&\le  \tfrac{1}{\sqrt{2}} \delta
\big(\kappa d^{\kappa}\big)^{\!\nicefrac{3}{2}} e^{
	\kappa^2 T
} 2^{\iota} 
\left[\int_{[0,T]\times \R^d }
\left[\| x \|^\kappa+ \Big(
1
+
\big[\kappa d^{ \kappa } T
+ 
\varpi_{d,\max\{2\kappa,1\}} T^{\nicefrac{1}{2}}
\big]^\kappa\Big)
\,
\right]^p
\nu_d (d t, d x)\right]^{\!\nicefrac{1}{p}}
	\\&\le  \tfrac{1}{\sqrt{2}} \delta
\big(\kappa d^{\kappa}\big)^{\!\nicefrac{3}{2}} e^{
	\kappa^2 T
} 2^{\iota} \left[\int_{[0,T]\times \R^d }
\| x \|^{p\kappa}
\,
\nu_d (d t, d x)\right]^{\!\nicefrac{1}{p}}
\\&+ \tfrac{1}{\sqrt{2}} \delta
\big(\kappa d^{\kappa}\big)^{\!\nicefrac{3}{2}} e^{
	\kappa^2 T
} 2^{\iota}
\Big(
1
+
\big[\kappa d^{ \kappa } T
+ 
\varpi_{d,\max\{2\kappa,1\}} T^{\nicefrac{1}{2}}
\big]^\kappa\Big) [\nu_d([0,T]\times\R^d)]^{\nicefrac{1}{p}}.
\end{align*}
Next note that \eqref{estimateVarPiD} ensures for all $d\in\N$ that 
\begin{equation}
\begin{split}
&1+\big[\kappa d^{ \kappa } T
+ 
\varpi_{d,\max\{2\kappa,1\}} T^{\nicefrac{1}{2}}
\big]^\kappa
\\&\le 	1+\left[\kappa d^{ \kappa } T
+ 
d^{\nicefrac{\kappa}{2}} \max\!\left\{1,\sqrt{2 \max\{1,2\kappa-1\}  \kappa }\right\} T^{\nicefrac{1}{2}}
\right]^{\!\kappa}
\\&\le 1+d^{(\kappa^2)}(\max\{T,1\})^\kappa	\left[\kappa 
+ 
\max\!\left\{1,\sqrt{2 \max\{1,2\kappa-1\}  \kappa }\right\} 
\right]^{\!\kappa}.
\end{split}
\end{equation}
Hence, we obtain for all $d\in\N$ that 
\begin{equation}\label{perturbation_PDE:YZdifferenceAuxiliary}
\begin{split}
&1+\big[\kappa d^{ \kappa } T
+ 
\varpi_{d,\max\{2\kappa,1\}} T^{\nicefrac{1}{2}}
\big]^\kappa
\\&\le d^{(\kappa^2)}(\max\{T,1\})^\kappa \left(1+	\left[\kappa 
+ 
\max\!\left\{1,\sqrt{2 \max\{1,2\kappa-1\}  \kappa }\right\} 
\right]^{\!\kappa}\right).
\end{split}
\end{equation}
This, \eqref{perturbation_PDE:YZdifferenceIntegrated}, \eqref{PDE_approx_Lp:Hoelder},
and the fact that $\kappa<2\max\{2\kappa,3\}$
 establish that for all $d \in \N$ it holds that 
\begin{equation}
\begin{split}
&\left[\int_{[0,T]\times \R^d }
\big|
\E\big[ \dnnFunction^0_{d, \deltaIndexProof}( \zigZagProcess^{d, x }_t ) \big]-\E\big[ \dnnFunction^0_{d, \deltaIndexProof}( \affineProcess_t^{N, d, 1,x} ) \big]
\big|^p
\,
\nu_d (d t, d x)\right]^{\!\nicefrac{1}{p}}
\\&\le  \tfrac{1}{\sqrt{2}} \delta
\big(\kappa d^{\kappa}\big)^{\!\nicefrac{3}{2}} e^{
	\kappa^2 T
} 2^{\iota} \eta d^\eta
\max\!\left\{1,[\nu_d([0,T]\times\R^d)]^{\nicefrac{1}{p}}\right\}
\\&+ \tfrac{1}{\sqrt{2}} \delta
\big(\kappa d^{\kappa}\big)^{\!\nicefrac{3}{2}} e^{
	\kappa^2 T
} 2^{\iota}
d^{(\kappa^2)}(\max\{T,1\})^\kappa  [\nu_d([0,T]\times\R^d)]^{\nicefrac{1}{p}}
\\&\cdot \left(1+	\left[\kappa 
+ 
\max\!\left\{1,\sqrt{2 \max\{1,2\kappa-1\}  \kappa }\right\} 
\right]^{\!\kappa}\right).
\end{split}
\end{equation}
Combining this with \eqref{eq:defMathcalCTwo} and the fact that $\delta=\sqrt{T/N}$ implies that
 for all $d \in \N$ it holds that 
\begin{equation}\label{perturbation_PDE:YZdifferenceIntegratedContinued}
\begin{split}
&\left[\int_{[0,T]\times \R^d }
\big|
\E\big[ \dnnFunction^0_{d, \deltaIndexProof}( \zigZagProcess^{d, x }_t ) \big]-\E\big[ \dnnFunction^0_{d, \deltaIndexProof}( \affineProcess_t^{N, d, 1,x} ) \big]
\big|^p
\,
\nu_d (d t, d x)\right]^{\!\nicefrac{1}{p}}
\\&\le \delta d^{\nicefrac{3\kappa}{2}+\eta} \tfrac{1}{\sqrt{2}} \kappa^{\nicefrac{3}{2}} e^{\kappa^2 T} 2^{\iota}  \eta
\max\!\left\{1,[\nu_d([0,T]\times\R^d)]^{\nicefrac{1}{p}}\right\}
\\&+ 
\delta d^{\nicefrac{3\kappa}{2}+\kappa^2} \tfrac{1}{\sqrt{2}} \kappa^{\nicefrac{3}{2}} e^{\kappa^2 T} 2^{\iota}
[\nu_d([0,T]\times\R^d)]^{\nicefrac{1}{p}}
\\&\cdot (\max\{T,1\})^\kappa \left(1+	\left[\kappa 
+ 
\max\!\left\{1,\sqrt{2 \max\{1,2\kappa-1\}  \kappa }\right\} 
\right]^{\!\kappa}\right)
\\&\le \delta d^{\nicefrac{3\kappa}{2}+\max\{\eta,\kappa^2\}} \tfrac{1}{\sqrt{2}} \kappa^{\nicefrac{3}{2}} e^{\kappa^2 T} 2^{\iota}  
\max\!\left\{1,[\nu_d([0,T]\times\R^d)]^{\nicefrac{1}{p}}\right\}
\\&\cdot (\max\{T,1\})^\kappa \left(\eta+1+	\left[\kappa 
+ 
\max\!\left\{1,\sqrt{2 \max\{1,2\kappa-1\}  \kappa }\right\} 
\right]^{\!\kappa}\right)
\\&\le N^{-\nicefrac{1}{2}} d^{\nicefrac{3\kappa}{2}+\max\{\eta,\kappa^2\}} \tfrac{1}{\sqrt{2}} \kappa^{\nicefrac{3}{2}} e^{\kappa^2 T} 2^{\iota}  
\max\!\left\{1,[\nu_d([0,T]\times\R^d)]^{\nicefrac{1}{p}}\right\}
\\&\cdot (\max\{T,1\})^{\kappa+\nicefrac{1}{2}} \left(\eta+1+	\left[\kappa 
+ 
\max\!\left\{1,\sqrt{2 \max\{1,2\kappa-1\}  \kappa }\right\} 
\right]^{\!\kappa}\right)
\\&= N^{-\nicefrac{1}{2}} d^{\nicefrac{3\kappa}{2}+\max\{\eta,\kappa^2\}} \mathcal{C}_2
\max\!\left\{1,[\nu_d([0,T]\times\R^d)]^{\nicefrac{1}{p}}\right\}.
\end{split}
\end{equation}
Next note that the triangle inequality proves that for all $d \in \N$ it holds that 
\begin{equation}
\begin{split}
&\left[\int_{[0,T]\times \R^d }
\big|
\E\big[ 
f^0_d( X^{ d, x }_t )
\big]
- 
\E\big[
\dnnFunction^0_{d, \deltaIndexProof}(
\affineProcess_t^{N, d, 1, x }
)
\big]
\big|^p
\,
\nu_d (d t, d x)\right]^{\!\nicefrac{1}{p}}
\\&\le\left[\int_{[0,T]\times \R^d }
\big|
\E\big[ 
f^0_d( X^{ d, x }_t )
\big]
- 
\E\big[
\dnnFunction^0_{d, \deltaIndexProof}(
\zigZagProcess^{d, x }_t
)
\big]
\big|^p
\,
\nu_d (d t, d x)\right]^{\!\nicefrac{1}{p}}
\\&+\left[\int_{[0,T]\times \R^d }
\big|
\E\big[ \dnnFunction^0_{d, \deltaIndexProof}( \zigZagProcess^{d, x }_t ) \big]-\E\big[ \dnnFunction^0_{d, \deltaIndexProof}( \affineProcess_t^{N, d, 1,x} ) \big]
\big|^p
\,
\nu_d (d t, d x)\right]^{\!\nicefrac{1}{p}}.
\end{split}
\end{equation}
This, \eqref{SDEerror}, and \eqref{perturbation_PDE:YZdifferenceIntegratedContinued} ensure that  for all $d \in \N$ it holds that 
\begin{equation}
\begin{split}
&\left[\int_{[0,T]\times \R^d }
\big|
\E\big[ 
f^0_d( X^{ d, x }_t )
\big]
- 
\E\big[
\dnnFunction^0_{d, \deltaIndexProof}(
\affineProcess_t^{N, d, 1, x }
)
\big]
\big|^p
\,
\nu_d (d t, d x)\right]^{\!\nicefrac{1}{p}}
\\&\le
N^{-\nicefrac{1}{2}} \,\mathcal{C}_1\, d^{3\kappa+\kappa\iota+\max\{\eta,\kappa(\kappa+\iota)\}}  \max\!\left\{1, [\nu_d ([0,T]\times \R^d)]^{\nicefrac{1}{p}}\right\}
\\&+
N^{-\nicefrac{1}{2}} d^{\nicefrac{3\kappa}{2}+\max\{\eta,\kappa^2\}} \mathcal{C}_2
\max\!\left\{1,[\nu_d([0,T]\times\R^d)]^{\nicefrac{1}{p}}\right\}.
\end{split}
\end{equation}
The fact that $\iota\le \kappa+1$, the fact that ${\nicefrac{3\kappa}{2}+\max\{\eta,\kappa^2\}}\le \kappa(\kappa+4)+\max\{\eta,\kappa(2\kappa+1)\}$, and \eqref{eq:defFinalConstant} hence demonstrate that for all $d \in \N$ it holds that 
\begin{equation}\label{perturbation_PDE:SDEtoZerror}
\begin{split}
&\left[\int_{[0,T]\times \R^d }
\big|
\E\big[ 
f^0_d( X^{ d, x }_t )
\big]
- 
\E\big[
\dnnFunction^0_{d, \deltaIndexProof}(
\affineProcess_t^{N, d, 1, x }
)
\big]
\big|^p
\,
\nu_d (d t, d x)\right]^{\!\nicefrac{1}{p}}
\\&\le N^{-\nicefrac{1}{2}} d^{\kappa(\kappa+4)+\max\{\eta,\kappa(2\kappa+1)\}} \left[\mathcal{C}_1+ \mathcal{C}_2\right] \max\!\left\{1, [\nu_d ([0,T]\times \R^d)]^{\nicefrac{1}{p}}\right\}
\\&\le N^{-\nicefrac{1}{2}} d^{\kappa(\kappa+4)+\max\{\eta,\kappa(2\kappa+1)\}} \mathcal{C} \max\!\left\{1, [\nu_d ([0,T]\times \R^d)]^{\nicefrac{1}{p}}\right\}\!.
\end{split}
\end{equation}
Next observe that  \eqref{corDNNerrorEstimate:growthPhiOne} and \eqref{eq:appr:coef:diff} prove  for all 
$ d \in \N $,  $ x \in \R^d $
that
\begin{equation}
\label{eq:Phi_0_d_estimate}
\begin{split}
| 
\dnnFunction^0_{d, \deltaIndexProof}( x ) 
|
& \leq
| 
\dnnFunction^0_{d, \deltaIndexProof}( x ) 
-
f^0_d( x )
|
+
| 
f^0_d( x ) 
|
\\ &
\leq
\deltaIndexProof \kappa d^{ \kappa } 
( 1 + \| x \|^{ \kappa } )
+
\kappa d^{ \kappa } 
( 1 + \| x \|^{ \kappa } )
\leq
2 \kappa d^{ \kappa } 
( 1 + \| x \|^{ \kappa } )
.
\end{split}
\end{equation}
Moreover, note that  \eqref{perturbation_PDE:EulerMoments} and the fact that $ \forall \, y, z \in \R, \alpha \in (0, \infty) \colon |y + z|^{\alpha}  \leq 2^{\max\{0,\alpha -1\}}(|y|^{\alpha} + |z|^{\alpha})$ imply that for all  $d\in\N$ it holds that
\begin{equation}
	\begin{split}
	&\left[\int_{[0,T]\times \R^d }\E  \big[   \|
	\affineProcess_t^{N, d, 1, x }
	\|^{ p\kappa }
	\big]
	\, \nu_d (d t, d x)\right]^{\!\nicefrac{1}{p}}
	\\&\le 	
	e^{
		\kappa^2 T
	} \left[  \int_{[0,T]\times \R^d }  \big[
	\|x \|
	+
	\kappa d^{ \kappa } T 
	+ 
	\varpi_{d,\max\{p\kappa,1\}} T^{\nicefrac{1}{2}}
	\big]^{p \kappa}
	\, \nu_d (d t, d x) \right]^{\!\nicefrac{1}{p}}
		\\&\le 	
	e^{
		\kappa^2 T
	} 2^{\max\{0,\kappa-1\}}\left[\int_{[0,T]\times \R^d }  \big[
	\|x \|^\kappa
	+
	(\kappa d^{ \kappa } T 
	+ 
	\varpi_{d,\max\{p\kappa,1\}} T^{\nicefrac{1}{2}})^\kappa
	\big]^{p}
	\, \nu_d (d t, d x) \right]^{\!\nicefrac{1}{p}}.
	\end{split}
\end{equation}
Combining this with the triangle inequality ensures that for all  $d\in\N$ it holds that 
\begin{equation}
\begin{split}
&\left[\int_{[0,T]\times \R^d }\E  \big[   \|
\affineProcess_t^{N, d, 1, x }
\|^{ p\kappa }
\big]
\, \nu_d (d t, d x)\right]^{\!\nicefrac{1}{p}}
\\&\le 	
e^{
	\kappa^2 T
} 2^{\max\{0,\kappa-1\}}\left[  \int_{[0,T]\times \R^d } 
\|x \|^{p\kappa} \, \nu_d (d t, d x) \right]^{\!\nicefrac{1}{p}}
\\& +e^{
	\kappa^2 T
} 2^{\max\{0,\kappa-1\}} (\kappa d^{ \kappa } T 
+ 
\varpi_{d,\max\{p\kappa,1\}} T^{\nicefrac{1}{2}})^{\kappa}
[\nu_d ([0,T]\times \R^d)]^{\nicefrac{1}{p}}.
\end{split}
\end{equation}
This, \eqref{PDE_approx_Lp:Hoelder},   and  the fact that $\kappa<2\max\{2\kappa,3\}$  demonstrate that for all  $d\in\N$ it holds that 
\begin{equation}
\label{eq:prop:norm:bound:1}
\begin{split}
&\left[\int_{[0,T]\times \R^d }\E  \big[   \|
\affineProcess_t^{N, d, 1, x }
\|^{ p\kappa } 
\big]
\, \nu_d (d t, d x)\right]^{\!\nicefrac{1}{p}}
\\&\le 	
e^{
	\kappa^2 T
} 2^{\max\{0,\kappa-1\}}\left(\eta d^\eta
+ (\kappa d^{ \kappa } T 
+ 
\varpi_{d,\max\{p\kappa,1\}} T^{\nicefrac{1}{2}})^{\kappa}\right)
\max\!\left\{1,[\nu_d([0,T]\times\R^d)]^{\nicefrac{1}{p}}\right\}.
\end{split}
\end{equation}
In addition, observe that \eqref{estimateVarPiD} and the fact that $\max\{p\kappa,1\}\le p\iota$ prove that for all $d \in \N$ it holds that
\begin{equation}
\begin{split}
\varpi_{d,\max\{p\kappa,1\}}  &\leq d^{\nicefrac{\kappa}{2}} \max\!\left\{1,\sqrt{2 \max\{1,\max\{p\kappa,1\}-1\}  \kappa }\right\}\\& \leq d^{\nicefrac{\kappa}{2}} \max\!\left\{1,\sqrt{2 \max\{1, p\iota-1\}  \kappa }\right\}\\
& = d^{\nicefrac{\kappa}{2}} \max\!\left\{1,\sqrt{2 ( p\iota-1)  \kappa }\right\}.
\end{split}
\end{equation}
This and \eqref{eq:prop:norm:bound:1} ensure that for all  $d\in\N$ it holds that 
\begin{equation}
\begin{split}
&\left[\int_{[0,T]\times \R^d }\E  \big[   \|
\affineProcess_t^{N, d, 1, x }
\|^{ p\kappa } 
\big]
\, \nu_d (d t, d x)\right]^{\!\nicefrac{1}{p}}
\\&\le 	e^{
	\kappa^2 T
} 2^{\max\{0,\kappa-1\}}
\left(\eta d^\eta
+ \left[\kappa d^{ \kappa } T 
+ 
 d^{\nicefrac{\kappa}{2}}\max\!\left\{1,\sqrt{2 (p\iota-1) \kappa }\right\} T^{\nicefrac{1}{2}}\right]^{\!\kappa}\right)
\\&\cdot\max\!\left\{1,[\nu_d([0,T]\times\R^d)]^{\nicefrac{1}{p}}\right\}
\\&\le 	e^{
	\kappa^2 T
} 2^{\max\{0,\kappa-1\}}
d^{\max\{\eta,\kappa^2\}} \left(\eta
+ \left[\kappa  T 
+ 
\max\!\left\{1,\sqrt{2 (p\iota-1)  \kappa }\right\} T^{\nicefrac{1}{2}}\right]^{\!\kappa}\right)
\\&\cdot\max\!\left\{1,[\nu_d([0,T]\times\R^d)]^{\nicefrac{1}{p}}\right\}
\\&= 	
C d^{\max\{\eta,\kappa^2\}}  \max\!\left\{1,[\nu_d([0,T]\times\R^d)]^{\nicefrac{1}{p}}\right\}.
\end{split}
\end{equation}

This, the triangle inequality, and \eqref{eq:Phi_0_d_estimate} assure that for all  $d\in\N$ it holds that
\begin{equation}
	\begin{split}
	&\left[\int_{[0,T]\times\R^d}
	\E\big[
	|
	\dnnFunction^0_{d, \deltaIndexProof}(
	\affineProcess_t^{N, d, 1, x }
	)
	|^p
	\big]
	\,
	\nu_d (d t, d x)\right]^{\!\nicefrac{1}{p}}
	\\&\le 2 \kappa d^\kappa \left([\nu_d([0,T]\times\R^d)]^{\nicefrac{1}{p}}+C d^{\max\{\eta,\kappa^2\}}  \max\!\left\{1,[\nu_d([0,T]\times\R^d)]^{\nicefrac{1}{p}}\right\}\right)
	\\&\le  2 \kappa d^{\kappa+\max\{\eta,\kappa^2\}} \big(1+C   \big) \max\!\left\{1,[\nu_d([0,T]\times\R^d)]^{\nicefrac{1}{p}}\right\}.
	\end{split}
\end{equation}
Combining this, Fubini's theorem, \eqref{perturbation_PDE:SDEtoZerror},  \eqref{eq:defFinalConstant}, and \eqref{eq:weak:RHS} proves that for all  $d\in\N$ it holds that 
\begin{equation}
\label{eq:controlTotalError}
\begin{split}
& \left(
\E\bigg[\int_{[0,T]\times\R^d}
\big|
u_d(t,x) 
- 
\tfrac{ 1 }{ M } 
\big[ 
\textstyle
\sum_{ m = 1 }^{M}
\dnnFunction^0_{d, \deltaIndexProof}\big(
\affineProcess_t^{N, d, m, x }
\big)
\big]
\big|^p
\,
\nu_d (d t, d x)
\bigg]
\right)^{\!\nicefrac{1}{p}} 
\\&= N^{-\nicefrac{1}{2}} \mathcal{C} d^{\kappa(\kappa+4)+\max\{\eta,\kappa(2\kappa+1)\}}  \max\!\left\{1, [\nu_d ([0,T]\times \R^d)]^{\nicefrac{1}{p}}\right\}
\\&+ \frac{4 \sqrt{p-1}}{M^{\nicefrac{1}{2}}} 2 \kappa d^{\kappa+\max\{\eta,\kappa^2\}} (1+C   ) \max\!\left\{1,[\nu_d([0,T]\times\R^d)]^{\nicefrac{1}{p}}\right\}
\\&\le \mathcal{C} \!
\left[ \frac{d^{\kappa(\kappa+4)+\max\{\eta,\kappa(2\kappa+1)\}}}{N^{\nicefrac{1}{2}}}
+ \frac{d^{\kappa+\max\{\eta,\kappa^2\}}}{M^{\nicefrac{1}{2}}} \right] \left[\max\!\big\{1,\nu_d([0,T]\times\R^d)\big\}\right]^{\!\nicefrac{1}{p}}\!.
\end{split}
\end{equation}
This establishes  item~\eqref{item:PDE_approxMainStatement}.
This completes the proof of Proposition~\ref{prop:PDE_approx_Lp}.
\end{proof}

\section{Deep artificial neural network (ANN) approximations for PDEs}
\label{sec:DNN_PDEs}

In this section we  establish in \Cref{theorem:DNNerrorEstimate} in Subsection~\ref{sub:cost} below the main result of this article. 

 \Cref{theorem:DNNerrorEstimate}, in particular,
proves that for every $T \in (0, \infty)$, $a \in \R$, $ b \in (a, \infty)$  it holds that solutions of  certain Kolmogorov PDEs can be approximated by deep ANNs on the space-time region $[0, T] \times [a, b]^d$ without the curse of dimensionality.
 In our proof of \Cref{theorem:DNNerrorEstimate} we employ the auxiliary intermediate result in \Cref{prop:DNNerrorEstimate}
 in Subsection~\ref{subsec:Appr_DNNs} below. Our proof of \Cref{prop:DNNerrorEstimate},
 in turn, uses the error estimates for Monte Carlo Euler approximations which we have presented in 
 \Cref{prop:PDE_approx_Lp}  in Section~\ref{sec:PDEs} above as well as the ANN approximation result for Monte Carlo Euler approximations in 
\Cref{Cor:MCeuler} in
  Subsection~\ref{subsec:DNN:MCE} below.

  Our proof of \Cref{Cor:MCeuler}  employs the auxiliary results in \Cref{Cor:ApproxOfMCSum} and \Cref{Lemma:LpBoundsGronwall} in
  Subsection~\ref{subsec:DNN:MCE} below. Our proof of \Cref{Cor:ApproxOfMCSum}, in turn, uses the ANN approximation result for Monte Carlo Euler approximations in \Cref{Thm:ApproxOfEulerWithGronwall}  in
  Subsection~\ref{subsec:DNN:MCE} below.
  Our proof of \Cref{Thm:ApproxOfEulerWithGronwall}  is  based on an application of  \cite[ Proposition~3.10]{GrohsHornungEtAl2023}  and is  very similar to the proof of  \cite[Theorem~3.12]{GrohsHornungEtAl2023}.

Our proof of \Cref{theorem:DNNerrorEstimate} in Subsection~\ref{sub:cost} below  also employs several  well-known concepts and results from an appropriate calculus for ANNs from the scientific literature which we briefly recall in Subsections~\ref{subsec:DNNs}--\ref{subsec:DNN:composition} below.
In particular, \Cref{Def:ANN} is, e.g., \cite[Definition~2.1]{GrohsHornungEtAl2023},
\Cref{Def:multidim_version} is, e.g., \cite[Definition~2.2]{GrohsHornungEtAl2023},
\Cref{Definition:ANNrealization} is, e.g., \cite[Definition~2.3]{GrohsHornungEtAl2023},
\Cref{Definition:ANNcomposition} is, e.g., \cite[Definition~2.5]{GrohsHornungEtAl2023},
Lemma~\ref{Lemma:CompositionAssociative} is, e.g.,  \cite[Lemma 2.8]{GrohsHornungEtAl2023},
and
\Cref{Definition:ANNconcatenation} is, e.g., \cite[Definition~2.15]{GrohsHornungEtAl2023}.

\subsection{ANNs}
\label{subsec:DNNs}

\begin{definition}[ANNs]
	\label{Def:ANN}
	We denote by $\ANNs$ the set given by 
		\begin{equation}
	\begin{split}
	\ANNs
	&=
	\cup_{L \in \N}
	\cup_{ l_0,l_1,\ldots, l_L \in \N}
	\left(
	\times_{k = 1}^L (\R^{l_k \times l_{k-1}} \times \R^{l_k})
	\right)
	\end{split}
	\end{equation}
	and we denote by 	$
	\paramANN, 
	\lengthANN, \inDimANN, \outDimANN \colon \ANNs \to \N
	$ and
	$\dims\colon\ANNs\to  \cup_{L=2}^\infty\, \N^{L}$
	the functions which satisfy
	for all $ L\in\N$, $l_0,l_1,\ldots, l_L \in \N$, 
	$
	\Phi 
	\in  \allowbreak
	( \times_{k = 1}^L\allowbreak(\R^{l_k \times l_{k-1}} \times \R^{l_k}))$
	that
	\begin{align}
	\paramANN(\Phi)
	=
	\textstyle\sum_{k = 1}^L l_k(l_{k-1} + 1), \qquad \lengthANN(\Phi)=L, \qquad \inDimANN(\Phi)=l_0, \qquad \outDimANN(\Phi)=l_L,
	\end{align}
and $\dims(\Phi)= (l_0,l_1,\ldots, l_L)$. 
\end{definition}

\subsection{Realizations of ANNs}

\begin{definition}[Multidimensional versions]
	\label{Def:multidim_version}
	Let $d \in \N$ and let $\psi \colon \R \to \R$ be a function.
	Then we denote by $\mathfrak{M}_{\psi, d} \colon \R^d \to \R^d$ the function which satisfies for all $ x = ( x_1, x_2, \dots, x_{d} ) \in \R^{d} $ that
	\begin{equation}\label{multidim_version:Equation}
	\mathfrak{M}_{\psi, d}( x ) 
	=
	\left(
	\psi(x_1)
	, \psi(x_2),
	\ldots
	,
	\psi(x_d)
	\right).
	\end{equation}
\end{definition}

\begin{definition}[Realizations associated to ANNs]
	\label{Definition:ANNrealization}
	Let $a\in C(\R,\R)$.
	Then we denote by 
	$
	\functionANNgeneral \colon \ANNs \to (\cup_{k,l\in\N}\,C(\R^k,\R^l))
	$
	the function which satisfies
	for all  $ L\in\N$, $l_0,l_1,\ldots, l_L \in \N$, 
	$
	\Phi 
	=
	((W_1, B_1), (W_2, B_2), \allowbreak \ldots, (W_L,\allowbreak B_L))
	\in  \allowbreak
	( \times_{k = 1}^L\allowbreak(\R^{l_k \times l_{k-1}} \times \R^{l_k}))
	$,
	$x_0 \in \R^{l_0}, x_1 \in \R^{l_1}, \ldots, x_{L} \in \R^{l_{L}}$ 
	with $\forall \, k \in \N \cap (0,L) \colon x_k =\activationDim{l_k}(W_k x_{k-1} + B_k)$  
	that
	\begin{equation}
	\label{setting_NN:ass2}
	\functionANNgeneral(\Phi) \in C(\R^{l_0},\R^{l_L})\qandq
	( \functionANNgeneral(\Phi) ) (x_0) = W_L x_{L-1} + B_L
	\end{equation}
	(cf.\ \Cref{Def:ANN,Def:multidim_version} and Figure~\ref{figure_defin}).
\end{definition}

\def\layersep{2.5cm}
\begin{figure}[h]
	\centering
	\begin{adjustbox}{width=\textwidth}
		\begin{tikzpicture}[shorten >=1pt,->,draw=black!50, node distance=\layersep]
			\tikzstyle{every pin edge}=[<-,shorten <=1pt]
			\tikzstyle{output neuron}=[very thick, circle,draw=black, fill=red!30, minimum size=40pt,inner sep=0pt]
			\tikzstyle{input neuron}=[very thick, circle, draw=black,fill=-red!80,minimum size=40pt,inner sep=0pt]
			\tikzstyle{hidden neuron}=[very thick, circle,draw=black,fill={rgb:black,1;white,5},minimum size=30pt,inner sep=0pt]
			\tikzstyle{annot} = [text width=9em, text centered]
			\tikzstyle{annot2} = [text width=4em, text centered]
			
			\node[input neuron] (I-1) at (-3,-0.cm) {\large $1$};
			\node[input neuron] (I-2) at (-3,-2.7cm) {\large $2$};
			\node(I-dots) at (-3,-4.2cm) {\vdots};
			\node[input neuron] (I-3) at (-3,-5.8cm) {\large $l_0$};

			\path[yshift = 2cm]
			node[hidden neuron](H0-1) at (0*\layersep, -1 cm) { $1$};
			\path[yshift = 2cm]
			node[hidden neuron](H0-2) at (0*\layersep, -3.5 cm) { $2$};
			\path[yshift = 2cm]
			node[hidden neuron](H0-3) at (0*\layersep, -6 cm) { $3$};
			\path[yshift = 2cm]
			node(H0-dots) at (0*\layersep, -7.4 cm) {\vdots};
			\path[yshift = 2cm]
			node[hidden neuron](H0-4) at (0*\layersep, -9 cm) { $l_1$};
			%
			\path[yshift = 2cm]
			node[hidden neuron](H1-1) at (1.5*\layersep, -1 cm) {$1$};
			\path[yshift = 2cm]
			node[hidden neuron](H1-2) at (1.5*\layersep, -3.5 cm) {$2$};
			\path[yshift = 2cm]
			node[hidden neuron](H1-3) at (1.5*\layersep, -6 cm) {$3$};
			\path[yshift = 2cm]
			node(H1-dots) at (1.5*\layersep, -7.4 cm) {\vdots};
			\path[yshift = 2cm]
			node[hidden neuron](H1-4) at (1.5*\layersep, -9 cm) {$l_2$};
			
			\path[yshift = 1cm]
			node(Hdot-1) at (2.7*\layersep, -1 cm) {$\cdots$};
			\path[yshift = 0.5cm]
			node(Hdot-2) at (2.7*\layersep, -3.4 cm) {$\cdots$};
			\path[yshift = 0.5cm]
			node(Hdot-dots) at (2.7*\layersep, -4.7 cm) {$\vdots$};
			\path[yshift = 0.5cm]
			node(Hdot-3) at (2.7*\layersep, -6.5 cm) {$\cdots$};
			
			\path[yshift = 2cm]
			node[hidden neuron](H2-1) at (3.9*\layersep, -1 cm) {$1$};
			\path[yshift = 2cm]
			node[hidden neuron](H2-2) at (3.9*\layersep, -3.5 cm) {$2$};
			\path[yshift = 2cm]
			node[hidden neuron](H2-3) at (3.9*\layersep, -6 cm) {$3$};
			\path[yshift = 2cm]
			node(H2-dots) at (3.9*\layersep, -7.4 cm) {\vdots};
			\path[yshift = 2cm]
			node[hidden neuron](H2-4) at (3.9*\layersep, -9 cm) {$l_{L-1}$};

			\path[yshift = 1.5cm]
			node[output neuron](O-1) at (5.4*\layersep,-1.5 cm) {\large $1$}; 
			\path[yshift = 1.5cm]
			node[output neuron](O-2) at (5.4*\layersep,-4.2 cm) {\large $2$}; 
			\path[yshift = 1.5cm]
			node(O-dots) at (5.4*\layersep, -5.7 cm) {\vdots};
			\path[yshift = 1.5cm]
			node[output neuron](O-3) at (5.4*\layersep,-7.2 cm) {\large $l_L$};
			\foreach \source in {1,2,3}
			\foreach \dest in {1,2,3,4}
			\path[-{Latex[length=2mm, width=4mm]}, line width = 0.8, draw=black] (I-\source) -- (H0-\dest);

			\foreach \source in {1,2,3,4}
			\foreach \dest in {1,2,3,4}
			\path[-{Latex[length=2mm, width=4mm]}, line width = 0.8, draw=black] (H0-\source) -- (H1-\dest);

			\foreach \source in {1,2,3,4}
			\foreach \dest in {1,2,3}
			\draw[-{Latex[length=2mm, width=4mm]}, line width = 0.8, draw=black] (H1-\source) -- (Hdot-\dest);
			
			\foreach \source in {1,2,3}
			\foreach \dest in {1,2,3,4}
			\draw[-{Latex[length=2mm, width=4mm]}, line width = 0.8, draw=black] (Hdot-\source) -- (H2-\dest);
			
			\foreach \source in {1,2,3,4}
			\foreach \dest in {1,2,3}
			\path[-{Latex[length=2mm, width=4mm]}, line width = 0.8, draw=black] (H2-\source) -- (O-\dest);

			\node[annot,above of=H0-1, node distance=1.4cm, align=center] (hl) {$1^{\text{st}}$ hidden layer };
			\node[annot,above of=H1-1, node distance=1.4cm, align=center] (hl) {$2^{\text{nd}}$ hidden layer};
			\node[annot,above of=H2-1, node distance=1.4cm, align=center] (hl2) {${(L - 1)}^{\text{th}}$ hidden layer};
			\node[annot,above of=I-1, node distance=1.2cm, align=center] {Input layer};
			\node[annot,above of=O-1, node distance=1.4cm, align=center] {Output layer};
			
	\node[annot,below of=H0-4, node distance=1.4cm, align=center] (sl) {$x_1 = \activationDim{l_1}(W_1 x_{0}$\\$ + B_1) \in \R^{l_1}$};
\node[annot,below of=H1-4, node distance=1.4cm, align=center] (sl) { $x_2 = \activationDim{l_2}(W_2 x_{1} $\\$+ B_2) \in \R^{l_2}$};
\node[annot,below of=H2-4, node distance=1.4cm, align=center] (sl2) {$x_{L-1} $\\
	$= \activationDim{l_{L-1}}(W_{L-1} x_{L-2} $\\$+ B_{L-1}) \in \R^{l_{L-1}}$};
\node[annot2,below of=I-3, node distance=1.2cm, align=center] {$x_0 \in \R^{l_0}$};
\node[annot,below of=O-3, node distance=1.8cm, align=center] {$(\mathcal{R}_a(\Phi)) (x_0) $\\$= W_L x_{L-1} + B_L$\\
	$\in \R^{l_L}$};
		\end{tikzpicture}
	\end{adjustbox}
	\caption{Graphical illustration for the realization function and the architecture of an ANN $ \Phi=
		((W_1, B_1),\allowbreak \ldots, (W_L,\allowbreak B_L))
		\in  \allowbreak
		( \times_{k = 1}^L\allowbreak(\R^{l_k \times l_{k-1}} \times \R^{l_k})) \subseteq \ANNs
		$ (see \Cref{Def:ANN}) where $ L\in\N$ describes the number of affine linear transformations, where $l_0,l_1,\ldots, l_L \in \N$ describe the dimensions of the layers of the ANN, and where the function $a  \colon \R \to \R$ represents the activation function (see \Cref{Definition:ANNrealization}).}
	\label{figure_defin}
\end{figure}

\begin{definition}[Rectifier function]
	\label{Definition:Relu1}
	We denote by $ \mathfrak{r} \colon \R \to \R $ the function which satisfies 
	for all $ x \in \R $ that $	\mathfrak{r} (x) = \max\{ x, 0 \}.$
\end{definition}

\subsection{Compositions of ANNs}
\label{subsec:DNN:composition}

\begin{definition}[Standard compositions of ANNs]
	\label{Definition:ANNcomposition}
	We denote by $\compANN{(\cdot)}{(\cdot)}\colon\allowbreak \{(\Phi_1,\Phi_2)\allowbreak\in\ANNs\times \ANNs\colon \inDimANN(\Phi_1)=\outDimANN(\Phi_2)\}\allowbreak\to\ANNs$ the function which satisfies for all 
	$ L,\mathfrak{L}\in\N$, $l_0,l_1,\ldots, l_L, \mathfrak{l}_0,\mathfrak{l}_1,\ldots, \mathfrak{l}_\mathfrak{L} \in \N$, 
	$
	\Phi_1
	=
	((W_1, B_1), (W_2, B_2), \allowbreak \ldots, (W_L,\allowbreak B_L))
	\in  \allowbreak
	( \times_{k = 1}^L\allowbreak(\R^{l_k \times l_{k-1}} \times \R^{l_k}))
	$,
	$
	\Phi_2
	=
	((\mathcal{W}_1, \mathfrak{B}_1),\allowbreak, (\mathcal{W}_2, \mathfrak{B}_2),\allowbreak \ldots, (\mathcal{W}_\mathfrak{L},\allowbreak \mathfrak{B}_\mathfrak{L}))
	\in  \allowbreak
	( \times_{k = 1}^\mathfrak{L}\allowbreak(\R^{\mathfrak{l}_k \times \mathfrak{l}_{k-1}} \times \R^{\mathfrak{l}_k}))
	$ 
	with $l_0=\inDimANN(\Phi_1)=\outDimANN(\Phi_2)=\mathfrak{l}_{\mathfrak{L}}$
	that
	\begin{equation}\label{ANNoperations:Composition}
	\begin{split}
	&\compANN{\Phi_1}{\Phi_2}=\\&
	\begin{cases} 
	\begin{array}{r}
	\big((\mathcal{W}_1, \mathfrak{B}_1),(\mathcal{W}_2, \mathfrak{B}_2),\ldots, (\mathcal{W}_{\mathfrak{L}-1},\allowbreak \mathfrak{B}_{\mathfrak{L}-1}),
	(W_1 \mathcal{W}_{\mathfrak{L}}, W_1 \mathfrak{B}_{\mathfrak{L}}+B_{1}),\\ (W_2, B_2), (W_3, B_3),\ldots,(W_{L},\allowbreak B_{L})\big)
	\end{array}
	&: L>1<\mathfrak{L} \\[3ex]
	\big( (W_1 \mathcal{W}_{1}, W_1 \mathfrak{B}_1+B_{1}), (W_2, B_2), (W_3, B_3),\ldots,(W_{L},\allowbreak B_{L}) \big)
	&: L>1=\mathfrak{L}\\[1ex]
	\big((\mathcal{W}_1, \mathfrak{B}_1),(\mathcal{W}_2, \mathfrak{B}_2),\allowbreak \ldots, (\mathcal{W}_{\mathfrak{L}-1},\allowbreak \mathfrak{B}_{\mathfrak{L}-1}),(W_1 \mathcal{W}_{\mathfrak{L}}, W_1 \mathfrak{B}_{\mathfrak{L}}+B_{1}) \big)
	&: L=1<\mathfrak{L}  \\[1ex]
	(W_1 \mathcal{W}_{1}, W_1 \mathfrak{B}_1+B_{1}) 
	&: L=1=\mathfrak{L} 
	\end{cases}
	\end{split}
	\end{equation}
	(cf.\ Definition~\ref{Def:ANN}).
\end{definition}

\begin{lemma}\label{Lemma:CompositionAssociative}
	Let 
	$\Phi_1,\Phi_2,\Phi_3\in\ANNs$
	satisfy that
	$\inDimANN(\Phi_1)=\outDimANN(\Phi_2)$ and
	$\inDimANN(\Phi_2)=\outDimANN(\Phi_3)$ 
	(cf.\ Definition~\ref{Def:ANN}).
	Then
	\begin{equation}
	\compANN{(\compANN{\Phi_1}{\Phi_2})}{\Phi_3}=\compANN{\Phi_1}{(\compANN{\Phi_2}{\Phi_3})}
	\end{equation}
	(cf.\  Definition~\ref{Definition:ANNcomposition}).
\end{lemma}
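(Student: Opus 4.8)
The plan is to prove the identity by directly unwinding the piecewise definition of $\compANN{(\cdot)}{(\cdot)}$ in \Cref{Definition:ANNcomposition} via a short case analysis on the lengths $L_i=\lengthANN(\Phi_i)$, $i\in\{1,2,3\}$. First I would fix notation, writing $\Phi_i=((W^{(i)}_1,B^{(i)}_1),(W^{(i)}_2,B^{(i)}_2),\ldots,(W^{(i)}_{L_i},B^{(i)}_{L_i}))$ for $i\in\{1,2,3\}$, and record that $\inDimANN(\compANN{\Phi_1}{\Phi_2})=\inDimANN(\Phi_2)$ and $\outDimANN(\compANN{\Phi_1}{\Phi_2})=\outDimANN(\Phi_1)$ (immediate from \Cref{Definition:ANNcomposition}, since the first and last weight matrices of $\compANN{\Phi_1}{\Phi_2}$ have the same number of columns resp.\ rows as those of $\Phi_2$ resp.\ $\Phi_1$). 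Hence $\inDimANN(\compANN{\Phi_1}{\Phi_2})=\inDimANN(\Phi_2)=\outDimANN(\Phi_3)$ and $\inDimANN(\Phi_1)=\outDimANN(\Phi_2)=\outDimANN(\compANN{\Phi_2}{\Phi_3})$, so both $\compANN{(\compANN{\Phi_1}{\Phi_2})}{\Phi_3}$ and $\compANN{\Phi_1}{(\compANN{\Phi_2}{\Phi_3})}$ are well defined and the asserted equation is meaningful.

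The key step is to record a single uniform description of $\compANN{\Phi_1}{\Phi_2}$ that simultaneously covers all four branches of \Cref{Definition:ANNcomposition}: the network $\compANN{\Phi_1}{\Phi_2}$ has length $L_1+L_2-1$, and writing its layers as $((V_1,C_1),(V_2,C_2),\ldots,(V_{L_1+L_2-1},C_{L_1+L_2-1}))$ one has $(V_k,C_k)=(W^{(2)}_k,B^{(2)}_k)$ for $k\in\{1,2,\ldots,L_2-1\}$, $(V_{L_2},C_{L_2})=\big(W^{(1)}_1 W^{(2)}_{L_2},\,W^{(1)}_1 B^{(2)}_{L_2}+B^{(1)}_1\big)$, and $(V_{L_2+j},C_{L_2+j})=(W^{(1)}_{j+1},B^{(1)}_{j+1})$ for $j\in\{1,2,\ldots,L_1-1\}$. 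Verifying that this one formula specializes correctly to each of the four cases $L_1>1<L_2$, $L_1>1=L_2$, $L_1=1<L_2$, and $L_1=1=L_2$ is elementary bookkeeping. Armed with this description, I would compute both sides of the claimed identity: in each bracketing the resulting network has length $L_1+L_2+L_3-2$, and I would then compare the two layer lists entry by entry.

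The comparison reduces to two situations. If $L_2\ge 2$, the first layer of $\compANN{\Phi_1}{\Phi_2}$ is the \emph{first} layer $(W^{(2)}_1,B^{(2)}_1)$ of $\Phi_2$, and the last layer of $\compANN{\Phi_2}{\Phi_3}$ is the \emph{last} layer $(W^{(2)}_{L_2},B^{(2)}_{L_2})$ of $\Phi_2$; consequently, in both bracketings the layer merged with $\Phi_3$ is $(W^{(2)}_1,B^{(2)}_1)$ and the layer merged with $\Phi_1$ is $(W^{(2)}_{L_2},B^{(2)}_{L_2})$, the two merges do not interfere, and the two layer lists coincide termwise by inspection. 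The only substantive case is $L_2=1$: then $\Phi_2=((W^{(2)}_1,B^{(2)}_1))$ collapses, and in $\compANN{(\compANN{\Phi_1}{\Phi_2})}{\Phi_3}$ the single ``junction'' layer at position $L_3$ is $\big((W^{(1)}_1 W^{(2)}_1) W^{(3)}_{L_3},\,(W^{(1)}_1 W^{(2)}_1) B^{(3)}_{L_3}+W^{(1)}_1 B^{(2)}_1+B^{(1)}_1\big)$, whereas in $\compANN{\Phi_1}{(\compANN{\Phi_2}{\Phi_3})}$ the corresponding layer is $\big(W^{(1)}_1(W^{(2)}_1 W^{(3)}_{L_3}),\,W^{(1)}_1(W^{(2)}_1 B^{(3)}_{L_3}+B^{(2)}_1)+B^{(1)}_1\big)$; equality of these two is precisely associativity of matrix multiplication, $\big(W^{(1)}_1 W^{(2)}_1\big)W^{(3)}_{L_3}=W^{(1)}_1\big(W^{(2)}_1 W^{(3)}_{L_3}\big)$, together with bilinearity of the matrix--vector product in the bias coordinate. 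All remaining layers in the $L_2=1$ case agree trivially, being the leading layers of $\Phi_3$ and the trailing layers of $\Phi_1$, unchanged in both bracketings.

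I expect the collapsed case $L_2=1$ to be the only real obstacle: keeping track of which layer of which network becomes the junction layer, and verifying that the two nested affine reparametrizations of that layer coincide, is where one actually invokes an algebraic fact rather than mere definitional unwinding; the $L_2\ge 2$ case and the various sub-branches ($L_1$ or $L_3$ equal to $1$) are then a finite and essentially mechanical check against \Cref{Def:ANN} and \Cref{Definition:ANNcomposition}.
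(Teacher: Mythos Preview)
Your proposal is correct, and in fact goes well beyond what the paper itself provides: the paper does not give a proof of this lemma at all but simply records it as a known fact, referencing it to \cite[Lemma~2.8]{GrohsHornungJentzen2019}. Your direct argument---a uniform layerwise description of $\compANN{\Phi_1}{\Phi_2}$ valid across all four branches of \Cref{Definition:ANNcomposition}, followed by the observation that the two junction merges are disjoint when $L_2\ge 2$ and collapse to associativity of matrix multiplication when $L_2=1$---is exactly the standard approach and is essentially how the cited reference handles it as well. So there is no genuine methodological difference to report; you have supplied the proof the paper outsources.
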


\begin{definition}[Compositions of ANNs involving artificial identities]
	\label{Definition:ANNconcatenation}
	Let $\Psi\in \ANNs$.
	Then	
	we denote by 
	\begin{equation}
	\concPsiANN{(\cdot)}{(\cdot)}\colon \{(\Phi_1,\Phi_2)\in\ANNs\times \ANNs\colon \inDimANN(\Phi_1)=\outDimANN(\Psi) \text{ and } \outDimANN(\Phi_2)=\inDimANN(\Psi)\}\allowbreak\to\ANNs
	\end{equation}
	the function which satisfies for all $\Phi_1,\Phi_2\in\ANNs$ with $\inDimANN(\Phi_1)=\outDimANN(\Psi)$ and $\outDimANN(\Phi_2)=\inDimANN(\Psi)$ that 
	\begin{equation}
	\begin{split}
	\concPsiANN{\Phi_1}{\Phi_2}
	= \compANN{\Phi_1}
	{
		(\compANN
		{\Psi}
		{\Phi_2})}
	=\compANN{(\compANN{\Phi_1}{\Psi})}{\Phi_2}
	\end{split}
	\end{equation}
	(cf.\ \Cref{Def:ANN,Definition:ANNcomposition} and \Cref{Lemma:CompositionAssociative}).
\end{definition}

\subsection{Deep ANN approximations for Monte Carlo Euler approximations}
\label{subsec:DNN:MCE}
%

\begin{prop}\label{Thm:ApproxOfEulerWithGronwall}
	Let $N, d \in \N$, $c, C \in [0,\infty)$, 
	 $T, \mathfrak{D} \in (0,\infty)$, $q\in(2,\infty)$, $\varepsilon\in (0,1]$,  $(\tau_n)_{n\in\{0,1,\dots,N\}}\allowbreak\subseteq\R$
	satisfy 
	for all $n\in\{0,1,\dots,N\}$ that $\tau_n=\tfrac{nT}{N}$
and
	\begin{equation}\label{ApproxOfEulerWithGronwall:paramBound}
	\mathfrak{D}=\big[\tfrac{720q}{(q-2)}\big][\LogBin(\eps^{-1})+q+1]-504,
	\end{equation}
	let 
	$\Phi\in \ANNs $
	satisfy 
	for all $x\in \R^d$ that $\inDimANN(\Phi)=\outDimANN(\Phi)=d$ and $\Vert (\functionANN(\Phi))(x)\Vert\le C +c\Vert x\Vert$,
	let 
	$Y= (Y^{x,y }_t)_{(t,x,y)\in [0,T]\times \R^d\times(\R^d)^N} \colon\allowbreak [0,T]\times \R^d\times (\R^d)^N \to \R^d $ 
satisfy for all 
	$n\in\{0,1,\dots,N-1\}$,
	$ t \in [\tau_{n},\tau_{n+1}]$,
	$ x \in \R^d $, $y=(y_1,y_2,\dots, y_N)\in (\R^d)^N$
	that $\affineProcess^{x,y }_0=x$ and
	\begin{equation}
	\label{ApproxOfEulerWithGronwall:Y_processes}
	\begin{split}
	&\affineProcess^{x,y }_t 
	=
	\affineProcess^{x,y }_{\tau_n}+ \left(\tfrac{tN}{T}-n\right)\!\left[\tfrac{T}{N}(\functionANN(\Phi)) ( 
	\affineProcess^{x,y }_{\tau_n} 
	)
	+
	y_{n+1}\right]\!,
	\end{split}
	\end{equation}
	and let $g_n\colon \R^d\times (\R^d)^N\to [0,\infty)$, $n\in\{0,1,\dots,N\}$,  satisfy for all $n\in\{0,1,\dots,N\}$,
	$ x \in \R^d $, $y=(y_1,y_2,\dots, y_N)\in (\R^d)^N$ that 
	\begin{equation}
	g_{n}(x,y)=\bigg[\Vert x\Vert + C \tau_n+
	\max_{m\in\{0,1,\dots,n\}}\big\Vert \smallsum_{k=1}^{m} y_{k}\big\Vert\bigg]
	\exp(c \tau_n)
	\end{equation}
	(cf.\ \Cref{Def:euclideanNorm,Def:ANN,Definition:ANNrealization,Definition:Relu1}).
	Then there exist $\Psi_{y}\in \ANNs$,  $y\in (\R^d)^N$, such that
	\begin{enumerate}[(i)]
		\item \label{ApproxOfEulerWithGronwall:Function} it holds for all  $y\in (\R^d)^N$ that $\functionANN (\Psi_{y})\in C(\R^{d+1},\R^d)$,
		\item it holds for all
		$n\in\{0,1,\dots,N-1\}$,
		$ t \in [\tau_{n},\tau_{n+1}]$,
		$x\in\R^d$, 
		$y\in (\R^d)^N$ that 
		\begin{equation}
		\Vert \affineProcess^{x,y }_t -(\functionANN (\Psi_{y}))(t,x)\Vert
		\le\varepsilon \big[2\sqrt{d}+(g_{n}(x,y))^q+(g_{n+1}(x,y))^q\big],
		\end{equation}
		\item it holds for all 
		$n\in\{0,1,\dots,N-1\}$,
		$ t \in [\tau_{n},\tau_{n+1}]$,
		$x\in\R^d$, 
		$y\in (\R^d)^N$
		that 
		\begin{equation}
		\Vert (\functionANN (\Psi_{y}))(t,x)\Vert
		\le 6\sqrt{d}+2\big[(g_{n}(x,y))^2+(g_{n+1}(x,y))^2\big],
		\end{equation}
		\item \label{ApproxOfEulerWithGronwall:ItemParams}
		it holds for all  $y\in (\R^d)^N$ that 
		\begin{equation}
		\begin{split}
		\paramANN(\Psi_{y})
		&\le \tfrac{9}{2}\, N^6 d^{16} \big[  2 (\lengthANN(\Phi) - 1)
		+
		\mathfrak{D}+(
		24
		+6\lengthANN(\Phi)
		+   [4+\paramANN(\Phi)]^{2})^{2}
		\big]^{2},
		\end{split}
		\end{equation}
		\item \label{ApproxOfEulerWithGronwall:Continuity}
		it holds  for all $t\in [0,T]$, $x\in\R^d$ that $
		[(\R^d)^N\ni y\mapsto (\functionANN (\Psi_{y}))(t,x)\in\R^d] \in C((\R^d)^N,\R^d)$, 
		and
		\item \label{ApproxOfEulerWithGronwall:Adaptedness}
		it holds  for all    $n\in\{0,1,\dots, N\}$, $t\in [0,\tau_n]$, $x\in\R^d$, $y=(y_1,y_2,\dots, y_N)$, $z=(z_1,z_2,\allowbreak\dots, z_N)\in (\R^d)^N$  with $\forall\, k\in \N\cap [0,n]\colon y_k=z_k$ that 
		\begin{equation}
		(\functionANN (\Psi_{y}))(t,x)=(\functionANN (\Psi_{z}))(t,x).
		\end{equation}
	\end{enumerate}
\end{prop}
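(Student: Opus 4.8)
The plan is to emulate the linearly interpolated Euler recursion \eqref{ApproxOfEulerWithGronwall:Y_processes} by rectified DNNs, closely following the proof of \cite[Theorem~3.12]{GrohsHornungJentzen2019} and relying on \cite[Proposition~3.10]{GrohsHornungJentzen2019}. The key structural observation is that, for fixed $y=(y_1,\dots,y_N)\in(\R^d)^N$, the map $[0,T]\times\R^d\ni(t,x)\mapsto\affineProcess^{x,y}_t\in\R^d$ is the continuous, piecewise affine (in~$t$) interpolation through the grid values $\affineProcess^{x,y}_{\tau_0},\dots,\affineProcess^{x,y}_{\tau_N}$, and each grid value is obtained from its predecessor by one application of $\functionANN(\Phi)$ followed by an affine map. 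Introducing the ramp functions $\rho_n(t)=\max\{0,\tfrac{tN}{T}-n\}-\max\{0,\tfrac{tN}{T}-n-1\}$ for $n\in\zeroto{N-1}$ (so that $\rho_n$ vanishes on $[0,\tau_n]$, increases affinely from $0$ to $1$ on $[\tau_n,\tau_{n+1}]$, and equals $1$ on $[\tau_{n+1},T]$), one verifies that $\affineProcess^{x,y}_t=x+\sum_{n=0}^{N-1}\rho_n(t)\big[\tfrac{T}{N}(\functionANN(\Phi))(\affineProcess^{x,y}_{\tau_n})+y_{n+1}\big]$ for all $(t,x)\in[0,T]\times\R^d$. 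Each $\rho_n$ is the realization of a small rectifier DNN, so the only non-affine, and hence only error-prone, ingredient is the scalar--vector product $\rho_n(t)\cdot[\,\cdot\,]$.

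First I would construct, by induction on~$n$, rectifier DNNs $\mathfrak{Y}_n$, $n\in\zeroto N$, whose realization equals $[\R^d\ni x\mapsto\affineProcess^{x,y}_{\tau_n}\in\R^d]$ exactly (no approximation), obtained from $\mathfrak{Y}_{n-1}$ by composing with $\Phi$ and an affine map and by the parallelization and artificial-identity operations of the DNN calculus recalled above; the elementary operation counts then give that $\paramANN(\mathfrak{Y}_n)$ and $\lengthANN(\mathfrak{Y}_n)$ grow at most linearly in~$n$ and polynomially in $\paramANN(\Phi)$, $\lengthANN(\Phi)$, and~$d$. Next I would invoke the quantitative rectified-DNN multiplication approximation of \cite[Proposition~3.10]{GrohsHornungJentzen2019}, whose length is of order $\mathfrak{D}$ as fixed in \eqref{ApproxOfEulerWithGronwall:paramBound}, to emulate each product $\rho_n(t)\big[\tfrac{T}{N}(\functionANN(\Phi))(\affineProcess^{x,y}_{\tau_n})+y_{n+1}\big]$ coordinatewise. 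Stacking these emulators in parallel, prepending the $\rho_n$-networks and the networks $\mathfrak{Y}_n$, and summing through a final affine layer produces the candidate DNNs $\Psi_y$, $y\in(\R^d)^N$, with $\inDimANN(\Psi_y)=d+1$ and $\outDimANN(\Psi_y)=d$, which already yields item~(i).

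For the error bounds I would run a discrete Gr\"onwall argument over $n\in\zeroto N$. The linear growth $\Norm{(\functionANN(\Phi))(z)}\le C+c\Norm{z}$ propagates through the Euler recursion to give $\Norm{\affineProcess^{x,y}_{\tau_n}}\le g_n(x,y)$ for all~$n$, and by the interpolation identity also $\Norm{\affineProcess^{x,y}_t}\le\max\{g_n(x,y),g_{n+1}(x,y)\}$ for $t\in[\tau_n,\tau_{n+1}]$. Feeding these magnitude bounds into the accuracies of the multiplication emulators --- tuned so that each per-coordinate product error is at most $\varepsilon$ times the relevant magnitude --- and accumulating the $O(dN)$ such errors along the recursion (where a concluding Gr\"onwall step raises the magnitudes to the power~$q$) yields item~(ii); the triangle inequality together with item~(ii) then gives item~(iii). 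Item~(iv) is a matter of tracking $\paramANN$ and $\lengthANN$ through each composition (which multiplies lengths and is at most quadratic in parameter counts), each parallelization (which adds), and the $N$ multiplication emulators (each contributing a factor~$\mathfrak{D}$ and powers of~$d$), exactly as in the proof of \cite[Theorem~3.12]{GrohsHornungJentzen2019}; the displayed polynomial $\tfrac{9}{2}N^6d^{16}\big[2(\lengthANN(\Phi)-1)+\mathfrak{D}+(24+6\lengthANN(\Phi)+[4+\paramANN(\Phi)]^2)^2\big]^2$ is the outcome of this bookkeeping.

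Finally, items~(v) and~(vi) follow directly from the construction: the vector $y$ enters the weights and biases of $\Psi_y$ only affinely, through the summands $y_{n+1}$, so $(\R^d)^N\ni y\mapsto(\functionANN(\Psi_y))(t,x)\in\R^d$ is continuous; and for $t\le\tau_n$ one has $\rho_m(t)=0$ for every $m\ge n$ while $\affineProcess^{x,y}_{\tau_m}$ depends only on $y_1,\dots,y_m$ for $m\le n$, so $(\functionANN(\Psi_y))(t,x)$ depends only on $y_1,\dots,y_n$. The main obstacle is the quantitative bookkeeping in the error step: one must choose the target accuracies of the $O(dN)$ multiplication emulators --- and correspondingly the length budget $\mathfrak{D}$ --- so that, after propagation through the Gr\"onwall recursion over the $N$ time steps, the accumulated error collapses to exactly $\varepsilon[2\sqrt{d}+(g_n(x,y))^q+(g_{n+1}(x,y))^q]$ while the total parameter count stays within the stated polynomial; this is precisely where the constant in \eqref{ApproxOfEulerWithGronwall:paramBound} and the exponents $6$ and $16$ originate, and it is carried out exactly as in \cite[Theorem~3.12]{GrohsHornungJentzen2019}.
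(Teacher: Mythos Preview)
The paper's proof is far more economical than your sketch: it applies \cite[Proposition~3.10]{GrohsHornungJentzen2019} directly as a black box. That proposition is \emph{not} a multiplication emulator, as you describe it, but the full Euler--interpolation DNN approximation result --- it already furnishes DNNs $\Psi_y$ satisfying properties (I)--(VI) exactly as in items~(i)--(vi), only with $\|Y^{x,y}_{\tau_n}\|^q$, $\|Y^{x,y}_{\tau_{n+1}}\|^q$ (resp.\ squares) in place of $(g_n(x,y))^q$, $(g_{n+1}(x,y))^q$, and with a somewhat coarser parameter bound. From there the paper does two short steps: (a) elementary algebra on the parameter bound to reach the displayed form in item~(iv), and (b) an application of the discrete Gr\"onwall lemma \cite[Lemma~3.11]{GrohsHornungJentzen2019}, which gives $\|Y^{x,y}_{\tau_n}\|\le g_n(x,y)$ and thus converts (II)--(III) into items~(ii)--(iii). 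Items~(i), (v), (vi) carry over verbatim.

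Your proposal instead reconstructs Proposition~3.10 from scratch --- ramp functions $\rho_n$, inductive grid networks $\mathfrak{Y}_n$, coordinatewise product emulators, full parameter bookkeeping --- which is essentially the content of \cite[Theorem~3.12]{GrohsHornungJentzen2019}, not of the present proposition. That route is not wrong, but it re-derives a cited result rather than proving the statement at hand. One inaccuracy worth flagging: your claim that ``a concluding Gr\"onwall step raises the magnitudes to the power~$q$'' misplaces the origin of the exponent. The $q$-th power enters through the error bound of the DNN product approximation on large inputs (this is exactly what the budget $\mathfrak{D}$ in \eqref{ApproxOfEulerWithGronwall:paramBound} controls); the Gr\"onwall lemma contributes only the pointwise bound $\|Y^{x,y}_{\tau_n}\|\le g_n(x,y)$ that lets you replace the Euler iterates by the explicit functions $g_n$.
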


\begin{proof}[Proof of \Cref{Thm:ApproxOfEulerWithGronwall}]	
	Throughout this proof let 
	$\Psi_{y}\in \ANNs$,  $y\in (\R^d)^N$, satisfy  that
	\begin{enumerate}[(I)]
		\item \label{ProofApproxOfEulerWithGronwall:Function} it holds for all  $y\in (\R^d)^N$  that $\functionANN (\Psi_{y})\in C(\R^{d+1},\R^d)$,
		\item\label{ProofApproxOfEulerWithGronwall:Error} it holds for all  
		$n\in\{0,1,\dots,N-1\}$,
		$ t \in [\tau_{n},\tau_{n+1}]$,
		$x\in\R^d$,
		$y\in (\R^d)^N$  that 
		\begin{equation}
		\Vert \affineProcess^{x,y }_t -(\functionANN (\Psi_{y}))(t,x)\Vert
		\le \varepsilon \big(2\sqrt{d}+\|  Y_{\tau_{n}}^{x,y }\|^q+\|  Y_{\tau_{n+1}}^{x,y }\|^q\big),
		\end{equation}
		\item \label{ProofApproxOfEulerWithGronwall:Growth} it holds for all 
		$n\in\{0,1,\dots,N-1\}$,
		$ t \in [\tau_{n},\tau_{n+1}]$,
		$x\in\R^d$,
		$y\in (\R^d)^N$  that 
		\begin{equation}
		\Vert (\functionANN (\Psi_{y}))(t,x)\Vert
		\le 6\sqrt{d}+2\big(\Vert Y_{\tau_{n}}^{x,y }\Vert^2+\Vert Y_{\tau_{n+1}}^{x,y }\Vert^2\big),
		\end{equation}
		\item \label{ProofApproxOfEulerWithGronwall:ItemParams}
		it holds for all  $y\in (\R^d)^N$ that 
		\begin{align*}
		&\paramANN(\Psi_{y}) \numberthis
		\\
		&\le\tfrac{1}{2} \Big[  6d^2N^2(\lengthANN(\Phi) - 1)
		+3N
		\big[d^2 \mathfrak{D}+(
		23
		+6N (\lengthANN(\Phi) - 1)
		+  7d^2+N [4d^2+\paramANN(\Phi)]^{2})^{2}\big]
		\Big]^{2},
		\end{align*}
		\item \label{ProofApproxOfEulerWithGronwall:Continuity}
		it holds  for all  $t\in [0,T]$, $x\in\R^d$ that 
		$
		[(\R^d)^N\ni y\mapsto (\functionANN (\Psi_{y}))(t,x)\in\R^d]\in C((\R^d)^N,\R^d)
		$, 
		and
		\item \label{ProofApproxOfEulerWithGronwall:Adaptedness}
		it holds  for all    $n\in\{0,1,\dots, N\}$, $t\in [0,\tau_n]$, $x\in\R^d$, $y=(y_1,y_2,\dots, y_N),\allowbreak z=(z_1,z_2,\allowbreak\dots,  z_N)\in (\R^d)^N$  with $\forall\, k\in \N\cap [0,n]\colon y_k=z_k$  that 
		\begin{equation}
		(\functionANN (\Psi_{y}))(t,x)=(\functionANN (\Psi_{z}))(t,x)
		\end{equation}
	\end{enumerate}
	(cf.\ Grohs et al.~\cite[Proposition~3.10]{GrohsHornungEtAl2023} (applied with $N \is N$, $ d \is d$, $a \is \mathfrak{r}$, $T \is T$, $t_0 \is \tau_0$, $t_1 \is \tau_1$, $\dots$, $t_N \is \tau_N$,  
	$\mathfrak{D} \is \mathfrak{D}$, $\varepsilon \is \varepsilon$, $q \is q$, $Y \is Y$
	in the notation of Grohs et al.~\cite[Proposition~3.10]{GrohsHornungEtAl2023})).
	Note that 	\eqref{ProofApproxOfEulerWithGronwall:ItemParams} ensures that  for all  $y\in (\R^d)^N$ it holds  that 
	\begin{equation}
	\begin{split}
	&\paramANN(\Psi_{y})
	\\&\le\tfrac{1}{2} \Big[  6d^2N^2  (\lengthANN(\Phi) - 1)
	+3N
	\big[d^2 \mathfrak{D}+(
	23
	+6N  (\lengthANN(\Phi) - 1)
	+  7d^2+N d^4 [4+\paramANN(\Phi)]^{2})^{2}\big]
	\Big]^{2}
		\\&\le\tfrac{1}{2} \Big[  6d^2N^2 (\lengthANN(\Phi) - 1)
	+3N
	\big[d^2 \mathfrak{D}+N^2 d^8(
	23
	+6(\lengthANN(\Phi) -1) + 7
	+   [4+\paramANN(\Phi)]^{2})^{2}\big]
	\Big]^{2} 
	\\&=\tfrac{1}{2} \Big[  6d^2N^2 (\lengthANN(\Phi) - 1)
	+3N
	\big[d^2 \mathfrak{D}+N^2 d^8(
	24
	+6\lengthANN(\Phi)
	+   [4+\paramANN(\Phi)]^{2})^{2}\big]
	\Big]^{2}.
	\end{split}
	\end{equation}
	Hence, we obtain that   for all  $y\in (\R^d)^N$ it holds that 
	\begin{equation}\label{ProofApproxOfEulerWithGronwall:EstimateParams}
	\begin{split}
	\paramANN(\Psi_{y})
	&\le\tfrac{1}{2} \Big[  6d^2N^2 (\lengthANN(\Phi) - 1)
	+3N^3 d^8
	\big[ \mathfrak{D}+(
24
	+6\lengthANN(\Phi)
	+   [4+\paramANN(\Phi)]^{2})^{2}\big]
	\Big]^{2}
	\\&\le\tfrac{9}{2}\, N^6 d^{16} \big[  2 (\lengthANN(\Phi) - 1)
	+
	\mathfrak{D}+(
	24
	+6\lengthANN(\Phi)
	+   [4+\paramANN(\Phi)]^{2})^{2}
	\big]^{2}.
	\end{split}
	\end{equation}
	In addition, observe that, e.g., Grohs et al.~\cite[Lemma~3.11]{GrohsHornungEtAl2023} (applied with $N \is N$, $d \is d$, $c \is c$, $C \is C$, $A_1 \is \frac{T}{N} \id_{\R^d} $, $A_2 \is \frac{T}{N} \id_{\R^d}$, $\dots$, $A_N \is \frac{T}{N} \id_{\R^d}$, $\mu \is \functionANN(\Phi)$, $Y_0 \is  (Y^{x,y }_{\tau_0})_{(x,y)\in \R^d\times(\R^d)^N}$, $Y_1 \is  (Y^{x,y }_{\tau_1})_{(x,y)\in \R^d\times(\R^d)^N}$, $\dots$, $Y_N \is  (Y^{x,y }_{\tau_N})_{(x,y)\in \R^d\times(\R^d)^N}$  in the notation of Grohs et al.~\cite[Lemma~3.11]{GrohsHornungEtAl2023}) and the hypothesis that  $\forall \,  n\in\{0,1,\dots,N\} \colon \tau_n=\tfrac{nT}{N}$ demonstrate that 	 for all $n\in\{0,1,\dots,N\}$,
	$ x \in \R^d $, $y=(y_1,y_2,\dots, y_N)\in (\R^d)^N$ it holds that 
	\begin{equation}\label{ApproxOfEuler:GronwallEstimate}
	\begin{split}
	\Vert \affineProcess^{x,y} _{\tau_n} \Vert
	&\le \bigg[\Vert x\Vert + \tfrac{C nT}{N}+
	\max_{m\in\{0,1,\dots,n\}}\big\Vert \smallsum_{k=1}^{m} y_{k}\big\Vert\bigg]
	\exp\!\left(\tfrac{c nT}{N}\right)
	\\&=\bigg[\Vert x\Vert + C \tau_n+
	\max_{m\in\{0,1,\dots,n\}}\big\Vert \smallsum_{k=1}^{m} y_{k}\big\Vert\bigg]
	\exp(c \tau_n)
	=g_n(x,y).
	\end{split}
	\end{equation}
	Combining this with  \eqref{ProofApproxOfEulerWithGronwall:Error} and \eqref{ProofApproxOfEulerWithGronwall:Growth} ensures that for all
	$n\in\{0,1,\dots,N-1\}$,
	$ t \in [\tau_{n},\tau_{n+1}]$,
	$x\in\R^d$, $y\in (\R^d)^N$ it holds that 
	\begin{equation}
	\begin{split}
	\Vert \affineProcess^{x,y }_t -(\functionANN (\Psi_{y}))(t,x)\Vert
	&\le\varepsilon \big[2\sqrt{d}+\|  Y_{\tau_{n}}^{x,y }\|^q+\|  Y_{\tau_{n+1}}^{x,y }\|^q\big]
	\\&\le\varepsilon \big[2\sqrt{d}+(g_{n}(x,y))^q+(g_{n+1}(x,y))^q\big]
	\end{split}
	\end{equation}
	and
	\begin{equation}
	\begin{split}
	\Vert (\functionANN (\Psi_{y}))(t,x)\Vert
	&\le 6\sqrt{d}+2\big(\Vert Y_{\tau_{n}}^{x,y }\Vert^2+\Vert Y_{\tau_{n+1}}^{x,y }\Vert^2\big)
	\\&\le 6\sqrt{d}+2\big[(g_{n}(x,y))^2+(g_{n+1}(x,y))^2\big].
	\end{split}
	\end{equation}
	This, \eqref{ProofApproxOfEulerWithGronwall:Function},  \eqref{ProofApproxOfEulerWithGronwall:Continuity}, \eqref{ProofApproxOfEulerWithGronwall:Adaptedness}, and \eqref{ProofApproxOfEulerWithGronwall:EstimateParams} establish items~\eqref{ApproxOfEulerWithGronwall:Function}--\eqref{ApproxOfEulerWithGronwall:Adaptedness}.
	The proof of \Cref{Thm:ApproxOfEulerWithGronwall} is thus completed.
\end{proof}

\begin{prop}\label{Cor:ApproxOfMCSum}
	Let $M,N, d,\mathfrak{d} \in \N$,  $\alpha, c, C, \mathfrak{C} \in[0,\infty)$, $T, \mathfrak{D} \in (0,\infty)$,
$q\in(2,\infty)$, $\varepsilon\in (0,1]$, $\drift, \initial \in \ANNs$ satisfy  that  $\inDimANN(\drift)=\outDimANN(\drift)= \inDimANN(\initial)=d$, $\outDimANN(\initial)=\mathfrak{d}$, and
		\begin{equation}\label{ApproxOfMCSum:paramBound}
		\mathfrak{D}= \big[\tfrac{720q}{q-2} \big] [\LogBin(\eps^{-1})+q+1]-504,
		\end{equation}	
assume for all $x,y\in\R^d$ that $\|(\functionANN(\drift))(x)\Vert\le C + c\Vert x\Vert$ and
\begin{align}
	\label{ApproxOfMCsum:Lipschitz}
	\Vert (\functionANN(\initial)) (x) - (\functionANN(\initial)) (y)\Vert \leq \mathfrak{C} (1 + \Vert x\Vert^{\alpha} + \Vert y \Vert^{\alpha})\Vert  x-y\Vert,
\end{align} 
	let $ ( \Omega, \mathcal{F}, \P ) $ be a probability space,
	let $W^m=(W_n^m)_{n\in\{0,1,\dots,N\}}\colon \{0,1,\dots,N\}\times  \Omega\to \R^d$, $m\in\{1,2,\dots,M\}$, be stochastic processes which satisfy for all $m\in\{1,2,\dots,M\}$ that $W_0^m=0$,
	 let 
	$Y^m= (Y^{m,x }_t (\omega))_{(t, x, \omega) \in [0,T] \times \R^d \times \Omega} \colon\allowbreak [0,T]\times \R^d\times\Omega \to \R^d $, $m\in\{1,2,\dots,\allowbreak M\}$, 
 satisfy for all 
	$m\in\{1,2,\dots,\allowbreak M\}$,
	$ x \in \R^d $,
	$n\in\{0,1,\dots,N-1\}$,
	$ t \in \big[\tfrac{nT}{N},\tfrac{(n+1)T}{N}\big]$
	that $\affineProcess^{m,x }_0=x$ and
	\begin{equation}
	\label{ApproxOfMCSum:Y_processes}
	\begin{split}
	&\affineProcess^{m,x }_t
	=
	\affineProcess^{m,x }_{\frac{nT}{N}}+ \left(\tfrac{tN}{T}-n\right)\left[\tfrac{T}{N}(\functionANN(\drift)) ( 
	\affineProcess^{m,x }_{\frac{nT}{N}} 
	)
	+
	W_{n+1}^{m}-W_{n}^{m}\right],
	\end{split}
	\end{equation}
			and let 
			$h_{m,r}\colon \R^d\times \Omega\to [0,\infty)$,  $m\in\{1,2,\dots, M\}$, $r\in (0,\infty)$, satisfy for all  $m\in\{1,2,\dots, M\}$, $r\in (0,\infty)$,
			$ x \in \R^d $ that 
			\begin{equation}\label{ApproxOfMCSum:AuxFunctionTwo}
			h_{m,r}(x)=1+ \left[\Vert x\Vert + C T+
			\max_{n\in\{0,1,\dots,N\}}\Vert  W_n^m\Vert\right]^r
			\exp(rc T)
			\end{equation}
	(cf.\ \Cref{Def:euclideanNorm,Def:ANN,Definition:ANNrealization,Definition:Relu1}).
	Then there exists $(\Psi_{\omega})_{\omega\in \Omega}\subseteq \ANNs$ such that
	\begin{enumerate}[(i)]
		\item \label{ApproxOfMCsum:Function} it holds for all   $\omega\in \Omega$ that $\functionANN (\Psi_{\omega})\in C(\R^{d+1},\R^\mathfrak{d})$,
		\item it holds for all  
		$ t \in [0,T]$,
		$x\in\R^d$, $\omega\in \Omega$ that 
	\begin{equation}
	\begin{split}
	&\left\Vert(\functionANN(\Psi_{\omega}))(t,x)-\frac{1}{M}\left[\smallsum\limits_{m=1}^M (\functionANN(\initial))(Y_t^{m,x}(\omega))\right]\right\Vert 
	\\&\le \frac{2\varepsilon \mathfrak{C} \sqrt{d}}{M}\bigg[\smallsum\limits_{m=1}^M   \left[1 +2 {d}^{\nicefrac{\alpha}{2}} 6^\alpha
	|h_{m,2}(x,\omega)|^\alpha \right]
	h_{m,q}(x,\omega) \bigg],
	\end{split}
	\end{equation}
		\item \label{ApproxOfMCsum:ItemParams}
		it holds for all  $\omega\in \Omega$ that 
				\begin{equation}
				\begin{split}
				\paramANN(\Psi_\omega)
				&\le 2 M^2 \paramANN(\initial) + 9 M^2 N^6 d^{16} \big[ 2 \lengthANN(\drift)
				+
				 \mathfrak{D}+(
				24
				+6\lengthANN(\drift)
				+   [4+\paramANN(\drift) ]^{2} )^{2}
				\big]^{2},
				\end{split}
				\end{equation}
		and
		\item \label{ApproxOfMCsum:Continuity}
		it holds  for all  $t\in [0,T]$, $x\in\R^d$ that $\Omega\ni \omega\mapsto (\functionANN (\Psi_{\omega}))(t,x)\in\R^\mathfrak{d}$
		is measurable.
	\end{enumerate}
\end{prop}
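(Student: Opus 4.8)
The plan is to reduce the statement to $M$ applications of \Cref{Thm:ApproxOfEulerWithGronwall}, one per Monte Carlo index $m\in\{1,2,\dots,M\}$. For each $\omega\in\Omega$ put $y^m(\omega)=(W_1^m(\omega)-W_0^m(\omega),\dots,W_N^m(\omega)-W_{N-1}^m(\omega))\in(\R^d)^N$ and apply \Cref{Thm:ApproxOfEulerWithGronwall} with $N\is N$, $d\is d$, $c\is c$, $C\is C$, $T\is T$, $\mathfrak{D}\is\mathfrak{D}$, $q\is q$, $\varepsilon\is\varepsilon$, $\Phi\is\drift$ (the growth hypothesis on $\Phi$ being exactly $\Vert(\functionANN(\drift))(x)\Vert\le C+c\Vert x\Vert$), obtaining a family $(\Psi_y)_{y\in(\R^d)^N}\subseteq\ANNs$ with the properties in items~\eqref{ApproxOfEulerWithGronwall:Function}--\eqref{ApproxOfEulerWithGronwall:Adaptedness}. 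Since $W_0^m=0$, comparing \eqref{ApproxOfEulerWithGronwall:Y_processes} with \eqref{ApproxOfMCSum:Y_processes} shows that the process attached to the noise $y^m(\omega)$ is precisely $(t,x)\mapsto\affineProcess_t^{m,x}(\omega)$, and the Gronwall quantities $g_n$ from \Cref{Thm:ApproxOfEulerWithGronwall} satisfy $g_n(x,y^m(\omega))\le[\Vert x\Vert+CT+\max_{k\in\{0,\dots,N\}}\Vert W_k^m(\omega)\Vert]\exp(cT)\le h_{m,r}(x,\omega)$ for all $n\in\{0,\dots,N\}$ and all $r\in[1,\infty)$. Writing $\Xi^m_\omega:=\Psi_{y^m(\omega)}$, items~\eqref{ApproxOfEulerWithGronwall:Function}--\eqref{ApproxOfEulerWithGronwall:Growth}, the elementary bounds $a^s+b^s\le2\max\{a,b\}^s$ and $1\le\sqrt d$, a discrete Gronwall estimate $\Vert\affineProcess^{m,x}_{nT/N}(\omega)\Vert\le g_n(x,y^m(\omega))$ (as in the derivation of \eqref{ApproxOfEuler:GronwallEstimate}), and the fact that by \eqref{ApproxOfMCSum:Y_processes} $\affineProcess_t^{m,x}(\omega)$ is the convex combination $(1-\lambda)\affineProcess_{nT/N}^{m,x}(\omega)+\lambda\affineProcess_{(n+1)T/N}^{m,x}(\omega)$ with $\lambda=\tfrac{tN}{T}-n\in[0,1]$ then yield, for all $n\in\{0,\dots,N-1\}$, $t\in[\tfrac{nT}{N},\tfrac{(n+1)T}{N}]$, $x\in\R^d$, $\omega\in\Omega$, that $\functionANN(\Xi^m_\omega)\in C(\R^{d+1},\R^d)$,
\[
\Vert\affineProcess_t^{m,x}(\omega)-(\functionANN(\Xi^m_\omega))(t,x)\Vert\le 2\varepsilon\sqrt d\,h_{m,q}(x,\omega),\qquad\Vert(\functionANN(\Xi^m_\omega))(t,x)\Vert\le 6\sqrt d\,h_{m,2}(x,\omega),\qquad\Vert\affineProcess_t^{m,x}(\omega)\Vert\le h_{m,2}(x,\omega),
\]
together with the parameter estimate of item~\eqref{ApproxOfEulerWithGronwall:ItemParams} and the measurability/continuity of items~\eqref{ApproxOfEulerWithGronwall:Continuity}--\eqref{ApproxOfEulerWithGronwall:Adaptedness}.

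Next I would assemble $\Psi_\omega$ from the $\Xi^m_\omega$ via the DNN calculus. First, using \Cref{Definition:ANNcomposition}, left-compose $\initial$ with each $\Xi^m_\omega$ to obtain DNNs whose realization is $\functionANN(\initial)\circ\functionANN(\Xi^m_\omega)$ (admissible since $\inDimANN(\initial)=d=\outDimANN(\Xi^m_\omega)$). Second, parallelize these $M$ DNNs so that they share the common input $(t,x)\in\R^{d+1}$ and produce the stacked output in $\R^{M\mathfrak{d}}$; this is possible because the $\Xi^m_\omega$, being instances of the construction of \Cref{Thm:ApproxOfEulerWithGronwall} for a single fixed choice of $N,d,T,\mathfrak{D},\varepsilon,q,\drift$, all have the same length and the same layer dimensions (to be fully explicit one first applies the usual length-extension operation so that all summands have equal length). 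Third, left-compose with the one-layer affine DNN realizing $\R^{M\mathfrak{d}}\ni(z_1,\dots,z_M)\mapsto\tfrac1M\sum_{m=1}^M z_m\in\R^\mathfrak{d}$. The resulting $\Psi_\omega\in\ANNs$ satisfies $\functionANN(\Psi_\omega)\in C(\R^{d+1},\R^\mathfrak{d})$, which is item~\eqref{ApproxOfMCsum:Function}, and $(\functionANN(\Psi_\omega))(t,x)=\tfrac1M\sum_{m=1}^M(\functionANN(\initial))((\functionANN(\Xi^m_\omega))(t,x))$ for all $(t,x)\in\R^{d+1}$.

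The error bound in item~(ii) then follows by writing its left-hand side as $\tfrac1M\big\Vert\sum_{m=1}^M[(\functionANN(\initial))((\functionANN(\Xi^m_\omega))(t,x))-(\functionANN(\initial))(\affineProcess_t^{m,x}(\omega))]\big\Vert$, bounding each summand through the polynomially-Lipschitz hypothesis \eqref{ApproxOfMCsum:Lipschitz}, and inserting the three displayed estimates: the displacement factor is $\le2\varepsilon\sqrt d\,h_{m,q}(x,\omega)$, while the prefactor $1+\Vert(\functionANN(\Xi^m_\omega))(t,x)\Vert^\alpha+\Vert\affineProcess_t^{m,x}(\omega)\Vert^\alpha$ is, using $(6\sqrt d)^\alpha=6^\alpha d^{\alpha/2}$, $h_{m,2}\ge1$, and $1\le d^{\alpha/2}6^\alpha$, at most $1+2\,d^{\alpha/2}6^\alpha\,\vert h_{m,2}(x,\omega)\vert^\alpha$; summing over $m$ and dividing by $M$ produces exactly $\tfrac{2\varepsilon\mathfrak{C}\sqrt d}{M}\sum_{m=1}^M[1+2d^{\alpha/2}6^\alpha\vert h_{m,2}(x,\omega)\vert^\alpha]h_{m,q}(x,\omega)$.

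For item~\eqref{ApproxOfMCsum:ItemParams} I would combine the parameter bound of item~\eqref{ApproxOfEulerWithGronwall:ItemParams} with the standard parameter estimates of the DNN calculus: left-composition with $\initial$ contributes a term governed by $\paramANN(\initial)$ and leaves the hidden architecture essentially unchanged, parallelizing $M$ equally long DNNs costs a factor $M^2$ in the number of parameters, and left-composition with the one-layer averaging map does not increase the depth and only modifies the top layer in a controlled way; collecting constants and using $2\lengthANN(\drift)\ge2(\lengthANN(\drift)-1)$ to absorb the bracket coming from \Cref{Thm:ApproxOfEulerWithGronwall} yields $\paramANN(\Psi_\omega)\le2M^2\paramANN(\initial)+9M^2N^6d^{16}[2\lengthANN(\drift)+\mathfrak{D}+(24+6\lengthANN(\drift)+[4+\paramANN(\drift)]^2)^2]^2$. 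Measurability (item~\eqref{ApproxOfMCsum:Continuity}) is then immediate: for fixed $(t,x)$ the map $\omega\mapsto(\functionANN(\Xi^m_\omega))(t,x)=(\functionANN(\Psi_{y^m(\omega)}))(t,x)$ is the composition of the measurable map $\omega\mapsto y^m(\omega)$ with the continuous map $y\mapsto(\functionANN(\Psi_y))(t,x)$ of item~\eqref{ApproxOfEulerWithGronwall:Continuity}, and $\omega\mapsto(\functionANN(\Psi_\omega))(t,x)$ arises from these by composing with the continuous functions $\functionANN(\initial)$ and $(z_1,\dots,z_M)\mapsto\tfrac1M\sum_m z_m$. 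The main obstacle I expect is the architecture bookkeeping: one must track the precise layer structure through the left-compositions and the parallelization so that the parameter count comes out with exactly the constants $2M^2$, $9M^2$ and the stated polynomial degrees, and one must either verify the equal-length requirement for the parallelization or insert length extensions without worsening the bound; funnelling the pathwise Euler error through \eqref{ApproxOfMCsum:Lipschitz} and matching the elementary power-of-norm constants is comparatively routine once the three displayed estimates are in hand.
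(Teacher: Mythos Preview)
Your plan is essentially the paper's own proof: apply \Cref{Thm:ApproxOfEulerWithGronwall} once per Monte Carlo index, compose each resulting DNN with $\initial$, average, and then push the error through \eqref{ApproxOfMCsum:Lipschitz} using the Gronwall/convex-combination bounds on $\Vert Y^{m,x}_t\Vert$. Your error analysis matches the paper's almost line by line, including the inequalities $2\sqrt d+(g_n)^q+(g_{n+1})^q\le 2\sqrt d\,h_{m,q}$ and $6\sqrt d+2[(g_n)^2+(g_{n+1})^2]\le 6\sqrt d\,h_{m,2}$.

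The one point where you diverge from the paper is the DNN-calculus bookkeeping you flag yourself. You propose to left-compose via the plain standard composition $\bullet$ of \Cref{Definition:ANNcomposition}; the paper instead uses the composition with an artificial identity, $\psi_{\omega,m}=\concANN{\initial}{\Psi_{\omega,m}}$ from \Cref{Definition:ANNconcatenation} (with $\dims(\mathbb{I})=(d,2d,d)$). This matters for item~\eqref{ApproxOfMCsum:ItemParams}: for $\bullet$ the merged layer can have width product $l_1\cdot l'_{L-1}$, so an additive bound $\paramANN(\initial\bullet\Psi_{\omega,m})\le \paramANN(\initial)+\paramANN(\Psi_{\omega,m})$ is \emph{not} available in general, whereas for $\odot_{\mathbb{I}}$ one has exactly $\paramANN(\psi_{\omega,m})\le 2[\paramANN(\initial)+\paramANN(\Psi_{\omega,m})]$ (Grohs et al.\ \cite[Proposition~2.16(iii)]{GrohsHornungJentzen2019}). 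The averaging construction is then taken directly from Grohs et al.\ \cite[Proposition~2.25]{GrohsHornungJentzen2019}, which gives $\paramANN(\Phi_\omega)\le M^2\paramANN(\psi_{\omega,1})$ and the realization formula $(\functionANN(\Phi_\omega))(t,x)=\tfrac1M\sum_m(\functionANN(\initial))((\functionANN(\Psi_{\omega,m}))(t,x))$ in one stroke, so no separate parallelization/length-extension step is needed. With these two citations in place your parameter count collapses to the stated bound immediately; everything else in your outline is correct.
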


\begin{proof}[Proof of Proposition~\ref{Cor:ApproxOfMCSum}]	
	Throughout this proof  
	let $\tau_0, \tau_1, \ldots, \tau_N \in \R$
	satisfy 
	for all $n\in\{0,1, \allowbreak \dots,N\}$ that $\timeGrid_n=\tfrac{nT}{N}$,
	let 
	$g_{m}\colon \R^d\times \Omega\to [0,\infty)$,  $m\in\{1,2,\dots, M\}$,  satisfy for all  $m\in\{1,2,\dots, M\}$,
	$ x \in \R^d $ that 
	\begin{equation}\label{ApproxOfMCSum:AuxFunctionOne}
	g_{m}(x)=\left[\Vert x\Vert + CT+
	\max_{n\in\{0,1,\dots,N\}}\big\Vert  \smallsum_{l=1}^{n} \big(W_{l}^m-W_{l-1}^m\big)\big\Vert\right]
	\exp(c T),
	\end{equation}
	let $(\Psi_{\omega,m})_{\omega\in\Omega,m\in\{1,2,\dots, M\}}\allowbreak\subseteq \ANNs$ satisfy that
		\begin{enumerate}[(I)]
			\item \label{ApproxOfMCsumProof:Function} it holds for all   $m\in\{1,2,\dots, M\}$, $\omega\in \Omega$ that $\functionANN (\Psi_{\omega,m})\in C(\R^{d+1},\R^d)$,
			\item \label{ApproxOfMCsumProof:EstimateDifference}  it holds for all    $m\in\{1,2,\dots, M\}$, 
			$ t \in [0,T]$,
			$x\in\R^d$, $\omega\in \Omega$ that 
			\begin{equation}
			\Vert \affineProcess^{m, x }_t (\omega) -(\functionANN (\Psi_{\omega,m}))(t,x)\Vert
			\le 2\varepsilon \sqrt{d}\, \big[1+(g_{m}(x,\omega))^q\big],
			\end{equation}
			\item \label{ApproxOfMCsumProof:EstimateGrowth}  it holds for all   $m\in\{1,2,\dots, M\}$, 
			$ t \in [0,T]$,
			$x\in\R^d$, $\omega\in \Omega$
			that 
			\begin{equation}
			\Vert (\functionANN (\Psi_{\omega,m}))(t,x)\Vert
			\le  6 \sqrt{d}\, \big[1+(g_{m}(x,\omega))^2\big],
			\end{equation}
			\item \label{ApproxOfMCsumProof:ItemParams}
			 it holds
			 for all   $m\in\{1,2,\dots, M\}$, $\omega\in \Omega$  that 
						\begin{equation}
						\begin{split}
						\paramANN(\Psi_{\omega,m})
						&\le\tfrac{9}{2}\, N^6 d^{16} \big[  2 \lengthANN(\drift)
						+
						 \mathfrak{D}+(
						24
						+6\lengthANN(\drift)
						+   [4+\paramANN(\drift)]^{2})^{2}
						\big]^{2},
						\end{split}
						\end{equation}
			and
			\item \label{ApproxOfMCsumProof:Continuity}
			 it holds
		  for all $m\in\{1,2,\dots, M\}$,  $t\in [0,T]$, $x\in\R^d$ that $\Omega\ni \omega\mapsto (\functionANN (\Psi_{\omega,m}))(t,x)\in\R^d$
			is measurable
		\end{enumerate}
		(cf.\ \Cref{Thm:ApproxOfEulerWithGronwall} (applied with $N \is N$, $d \is d$, $c \is c$, $C \is C$, $T \is T$, $\mathfrak{D} \is \mathfrak{D}$, $ q \is q$, $\varepsilon \is \varepsilon$, $\tau_0 \is \tau_0$, $\tau_1 \is \tau_1, \ldots, \tau_N \is \tau_N$, $\Phi \is \drift$ in the notation of  \Cref{Thm:ApproxOfEulerWithGronwall})),
let $\mathbb{I}\in \ANNs$ satisfy for all $x\in\R^d$ that $\functionANN(\mathbb{I})\in C(\R^d,\R^d)$, $\dims(\mathbb{I})=(d,2d,d)$, and $(\functionANN(\mathbb{I}))(x)=x$ (cf.~\cite[Lemma~5.5]{JentzenSalimovaWelti2018}),
	let $(\psi_{\omega,m})_{\omega\in\Omega,m\in\{1,2,\dots, M\}}\subseteq \ANNs$ satisfy for all $m\in\{1,2,\dots, M\}$, $\omega\in\Omega$ that $\psi_{\omega,m}=\concANN{\initial}{\Psi_{\omega,m}}$ (cf.\ Definition~\ref{Definition:ANNconcatenation}),
	and let $(\Phi_{\omega})_{\omega\in\Omega}\subseteq \ANNs$ satisfy that
	\begin{enumerate}[(A)]
		\item \label{ApproxOfMCsum:MCsumItemOne}   it holds for all $\omega\in\Omega$  that $\functionANN(\Phi_{\omega})\in C(\R^{\inDimANN(\psi_{\omega,1})},\R^{\outDimANN(\psi_{\omega,1})})$,
		\item\label{ApproxOfMCsum:MCsumItemTwo}   it holds for all $\omega\in\Omega$ that $\paramANN(\Phi_\omega)\le M^2 \paramANN(\psi_{\omega,1})$, and 
		\item\label{ApproxOfMCsum:MCsumItemThree}  it holds for all  $t\in\R$, $x\in\R^{d}$, $\omega\in\Omega$  that
			\begin{equation}
			\begin{split}
			(\functionANN (\Phi_\omega))(t,x)=\frac{1}{M}\smallsum\limits_{m=1}^M (\functionANN (\psi_{\omega,m}))(t,x)
			\end{split}
			\end{equation}
	\end{enumerate}
	(cf.\ Grohs et al.~\cite[Proposition 2.25]{GrohsHornungEtAl2023}).
	Note that \eqref{ApproxOfMCsum:MCsumItemTwo}, \eqref{ApproxOfMCsumProof:ItemParams}, Grohs et al.~\cite[item~(iii) in Proposition 2.16]{GrohsHornungEtAl2023}, and the fact that $\dims(\mathbb{I})=(d,2d,d)$ demonstrate that 
	for all $\omega\in\Omega$ it holds that
		\begin{equation}\label{ApproxOfMCsum:ParamEstimate}
		\begin{split}
		\paramANN(\Phi_\omega) &\le M^2 \paramANN(\psi_{\omega,1})
		\le 2 M^2 [\paramANN(\initial)+\paramANN(\Psi_{\omega,1})]
		\\&\le  2 M^2 \paramANN(\initial) + 9 M^2 N^6 d^{16} \big[  2 \lengthANN(\drift)
		+
		 \mathfrak{D}+(
		24
		+6\lengthANN(\drift)
		+   [4+\paramANN(\drift)]^{2})^{2}
		\big]^{2}.
		\end{split}
		\end{equation}
	Moreover, observe that \eqref{ApproxOfMCsum:MCsumItemOne}, \eqref{ApproxOfMCsum:MCsumItemThree}, and Grohs et al.~\cite[item~(iv) in Proposition 2.16]{GrohsHornungEtAl2023}   imply that for all  $t\in\R$, $x\in\R^{d}$, $\omega\in\Omega$ it holds that $\functionANN (\Phi_\omega)\in C(\R^{d+1},\R^\mathfrak{d})$ and
	\begin{equation}\label{ApproxOfMCsum:PhiSum}
	\begin{split}
	(\functionANN (\Phi_\omega))(t,x)=\frac{1}{M}\smallsum\limits_{m=1}^M (\functionANN (\initial))((\functionANN(\Psi_{\omega,m}))(t,x)).
	\end{split}
	\end{equation} 
	Next note that the fact that  $ \forall \, m\in\{1,2,\dots, M\} \colon W_0^m=0$ ensures that for all $n\in\{1,2,\dots, N\}$, $m\in\{1,2,\dots, M\}$ it holds that
		\begin{equation}\label{telescopicSum}
			\begin{split}
			\smallsum\limits_{l=1}^{n} \big(W_{l}^m-W_{l-1}^m\big)=W_{n}^m-W_{0}^m=W_{n}^m.
			\end{split}
		\end{equation}
	Combining this with \eqref{ApproxOfMCSum:AuxFunctionOne} proves that 
	for all  $m\in\{1,2,\dots, M\}$,
	$ x \in \R^d $ it holds that 
	\begin{equation}\label{ApproxOfMCSum:AuxFunctionOneTilde}
	g_{m}(x)=\left[\Vert x\Vert + CT+
	\max_{n\in\{0,1,\dots,N\}}\Vert W_{n}^m \Vert\right]
	\exp(cT).
	\end{equation}
	In addition, observe that \eqref{ApproxOfMCSum:Y_processes} and the fact that
	$\forall \, n\in\{0,1,\dots,N\} \colon \timeGrid_n=\tfrac{nT}{N}$
	assure that for all    $m\in\{1,2,\dots, M\}$,  $n\in\{0, 1, \dots,N-1\}$,
	$ x \in \R^d $ it holds that
	\begin{equation}
	\begin{split}
 \affineProcess^{m,x} _{\timeGrid_{n+1}}  &= \affineProcess^{m,x} _{\timeGrid_{n}}  + \tfrac{T }{N} (\functionANN(\drift)) ( 
 \affineProcess^{m,x }_{\timeGrid_n}
 ) + W_{n+1}^{m}-W_{n}^{m}.
	\end{split}
	\end{equation}
	Induction and \eqref{telescopicSum} hence show
 that for all    $m\in\{1,2,\dots, M\}$,  $n\in\{0, 1, \dots,N-1\}$,
$ x \in \R^d $ it holds that
\begin{equation}
\begin{split}
\affineProcess^{m,x} _{\timeGrid_{n+1}}  &= \affineProcess^{m,x} _{\timeGrid_{0}}  + \tfrac{T }{N} \left[\smallsum\limits_{l=0}^n (\functionANN(\drift)) ( 
\affineProcess^{m,x }_{\timeGrid_l} 
) \right] + \left[\smallsum\limits_{l=0}^n ( W_{l+1}^{m}-W_{l}^{m} ) \right]\\
& = x + \tfrac{T }{N} \left[\smallsum\limits_{l=0}^n (\functionANN(\drift)) ( 
\affineProcess^{m,x }_{\timeGrid_l} 
) \right] +  W_{n+1}^{m}.
\end{split}
\end{equation}
This and the assumption that $ \forall \, x \in \R^d \colon \|(\functionANN(\drift))(x)\Vert\le C + c\Vert x\Vert$ establish that for all    $m\in\{1,2,\dots, M\}$,  $n\in\{0, 1, \dots,N\}$,
$ x \in \R^d $ it holds that
\begin{equation}
\begin{split}
\Vert  \affineProcess^{m,x} _{\timeGrid_{n}}  \Vert &\le \|x\| +  \tfrac{T }{N} \left[ \smallsum\limits_{l=0}^{n-1} \| (\functionANN(\drift)) ( 
\affineProcess^{m,x }_{\timeGrid_l} 
) \| \right] + \| W_{n}^{m} \|\\
& \le \|x\| +  \tfrac{ T}{N} \left[\smallsum\limits_{l=0}^{n-1} ( C + c\Vert  \affineProcess^{m,x} _{\timeGrid_{l}}  \Vert ) \right] + \| W_{n}^{m} \|\\
& \le \|x\| + C T + \left[\max_{k\in\{0,1,\dots,N\}}\Vert  W_{k}^m\Vert \right] + \tfrac{c T}{N} \left[ \smallsum\limits_{l=0}^{n-1} \Vert  \affineProcess^{m,x} _{\timeGrid_{l}}  \Vert \right].
\end{split}
\end{equation}
The time-discrete Gronwall inequality, e.g., in  Hutzenthaler et al.~\cite[Lemma~2.1]{HutzenthalerPricing2020} (applied with $N \is N$, $\alpha \is( \|x\| +C T + \max_{k\in\{0,1,\dots,N\}}\Vert  W_{k}^m(\omega)\Vert)$, $\beta_0 \is \frac{c T}{N}$, $\beta_1 \is \frac{cT}{N}$, $\dots$, $\beta_{N-1} \is \frac{c T}{N}$, $\epsilon_0 \is \|x\|$, $\epsilon_1 \is  \Vert  \affineProcess^{m,x} _{\timeGrid_{1}}(\omega)  \Vert$, $\dots$, $\epsilon_N \is  \Vert  \affineProcess^{m,x} _{\timeGrid_{N}}(\omega)  \Vert$ for  $m\in\{1,2,\dots, M\}$,  
$ x \in \R^d $, $\omega\in\Omega$ in the notation of  Hutzenthaler et al.~\cite[Lemma~2.1]{HutzenthalerPricing2020}) and \eqref{ApproxOfMCSum:AuxFunctionOneTilde} 
 hence demonstrate that 	 for all $m\in\{1,2,\dots, M\}$,  $n\in\{0,1,\dots,N\}$,
		$ x \in \R^d $ it holds that 
		\begin{equation}\label{Gronwall:GridpointEstimate}
		\begin{split}
		&\Vert \affineProcess^{m,x} _{\timeGrid_n} \Vert
\le\left[\Vert x\Vert + C T+
		\max_{k\in\{0,1,\dots,N\}}\Vert  W_{k}^m\Vert \right]
		\exp(c T)=g_{m}(x)
		.
		\end{split}
		\end{equation}
		In addition,
		note that \eqref{ApproxOfMCSum:Y_processes} and the fact that  $\forall \, n\in \{0,1,\dots, N-1\}, t\in [\timeGrid_n,\timeGrid_{n+1}] \colon \tfrac{t-\timeGrid_n}{\timeGrid_{n+1}-\timeGrid_n}=\tfrac{tN}{T}-n$  
		ensure that for all $m\in\{1,2,\dots, M\}$, $n\in \{0,1,\dots, N-1\}$, $t\in [\timeGrid_n,\timeGrid_{n+1}]$, $x\in\R^d$  it holds that
		\begin{align*}
		&
		\affineProcess^{m,x}_{\timeGrid_{n+1}} \big[\tfrac{t-\timeGrid_n}{\timeGrid_{n+1}-\timeGrid_n}\big]+\affineProcess^{m,x}_{\timeGrid_n}\big[1- \tfrac{t-\timeGrid_n}{\timeGrid_{n+1}-\timeGrid_n}\big]
				\\&= 
				\big( \affineProcess^{m,x }_{\timeGrid_n}+ \big[\tfrac{\timeGrid_{n+1}N}{T}-n\big]\big[\tfrac{T}{N}(\functionANN(\drift)) ( 
						\affineProcess^{m,x }_{\timeGrid_n} 
						)
						+
						W_{n+1}^{m}-W_{n}^{m}\big]\big)
				\tfrac{t-\timeGrid_n}{\timeGrid_{n+1}-\timeGrid_n}
+\affineProcess^{m,x}_{\timeGrid_n}\big[1- \tfrac{t-\timeGrid_n}{\timeGrid_{n+1}-\timeGrid_n}\big]
		\\&= 		\left(\affineProcess^{m,x}_{\timeGrid_n}+ \big[\tfrac{T}{N}(\functionANN(\drift))( 
		\affineProcess^{m,x}_{\timeGrid_n} 
		)
		+
		W_{n+1}^m-W_{n}^m\big]\right)
		\tfrac{t-\timeGrid_n}{\timeGrid_{n+1}-\timeGrid_n}
+ 
		\affineProcess^{m,x}_{\timeGrid_n}\big[1- \tfrac{t-\timeGrid_n}{\timeGrid_{n+1}-\timeGrid_n}\big]
		\\&= \affineProcess^{m,x}_{\timeGrid_n}+  \big[\tfrac{T}{N}(\functionANN(\drift))( 
		\affineProcess^{m,x}_{\timeGrid_n}
		)
		+
			W_{n+1}^m-W_{n}^m\big]
		\tfrac{t-\timeGrid_n}{\timeGrid_{n+1}-\timeGrid_n}
		\\&=Y_t^{m,x}. \numberthis
		\end{align*}
		Combining this with \eqref{Gronwall:GridpointEstimate} implies that for all $m\in\{1,2,\dots, M\}$, $n\in \{0,1,\dots, N-1\}$, $t\in [\timeGrid_n,\timeGrid_{n+1}]$, $x\in\R^d$ it holds that
		\begin{equation}\label{ApproxOfMCsum:EulerEstimate}
		\begin{split}
		&\Vert Y_t^{m,x}\Vert 
		\le \Vert \affineProcess^{m,x}_{\timeGrid_n}\Vert \big[1- \tfrac{t-\timeGrid_n}{\timeGrid_{n+1}-\timeGrid_n}\big]+
		\Vert \affineProcess^{m,x}_{\timeGrid_{n+1}}\Vert \big[\tfrac{t-\timeGrid_n}{\timeGrid_{n+1}-\timeGrid_n}\big]
		\\&\le \max\!\big\{\Vert \affineProcess^{m,x}_{\timeGrid_n} \Vert,\Vert \affineProcess^{m,x}_{\timeGrid_{n+1}} \Vert\big\} \le g_{m}(x)
\le 1+|g_{m}(x)|^2
		.
		\end{split}
		\end{equation}	
	This, \eqref{ApproxOfMCsum:Lipschitz}, \eqref{ApproxOfMCsumProof:EstimateDifference}, and \eqref{ApproxOfMCsumProof:EstimateGrowth} ensure that for all  $m\in\{1,2,\dots, M\}$,	 $t\in [0,T]$, $x\in\R^d$, $\omega\in\Omega$ it holds that 
	\begin{equation}
	\begin{split}
	&\norm{(\functionANN (\initial))((\functionANN(\Psi_{\omega,m}))(t,x))-(\functionANN (\initial))(Y_t^{m,x}(\omega))}
	\\&\le \mathfrak{C} \big(1 + \Vert(\functionANN(\Psi_{\omega,m}))(t,x)\Vert^{\alpha} + \Vert Y_t^{m,x}(\omega) \Vert^{\alpha}\big)\Vert(\functionANN(\Psi_{\omega,m}))(t,x)-Y_t^{m,x}(\omega)\Vert
	\\&\le  \mathfrak{C} \left[1 + 6^\alpha {d}^{\nicefrac{\alpha}{2}}
	(1+|g_{m}(x,\omega)|^2)^{\alpha} + (1+|g_{m}(x,\omega)|^2)^{\alpha}\right]
\, 2\varepsilon \sqrt{d}\left(1+|g_{m}(x,\omega)|^q\right)
\!.
	\end{split}
	\end{equation}
	Combining this, \eqref{ApproxOfMCSum:AuxFunctionTwo}, and \eqref{ApproxOfMCSum:AuxFunctionOneTilde} demonstrates that for all  $m\in\{1,2,\dots, M\}$, $t\in [0,T]$, $x\in\R^d$, $\omega\in\Omega$ it holds that 
		\begin{equation}\label{ApproxOfMCsum:FinalOne}
		\begin{split}
		&\norm{(\functionANN (\initial))((\functionANN(\Psi_{\omega,m}))(t,x))-(\functionANN (\initial))(Y_t^{m,x}(\omega))}
			\\&\le 2\varepsilon \mathfrak{C} \sqrt{d}  \left[1 +2 {d}^{\nicefrac{\alpha}{2}} 6^\alpha
			(1+|g_{m}(x,\omega)|^2)^{\alpha}\right]
(1+|g_{m}(x,\omega)|^q)
		\\&= 2\varepsilon \mathfrak{C} \sqrt{d}  \left[1 +2 {d}^{\nicefrac{\alpha}{2}} 6^\alpha
		|h_{m,2}(x,\omega)|^\alpha \right]
		 h_{m,q}(x,\omega).
		\end{split}
		\end{equation}
	This  and \eqref{ApproxOfMCsum:PhiSum} show that for all 	 $t\in [0,T]$, $x\in\R^d$, $\omega\in\Omega$ it holds that 
	\begin{equation}\label{ApproxOfMCsum:FinalTwo}
	\begin{split}
	&\left\Vert(\functionANN(\Phi_\omega))(t,x)-\frac{1}{M}\left[\smallsum\limits_{m=1}^M (\functionANN(\initial))(Y_t^{m,x}(\omega))\right]\right\Vert 
	\\&= \left\Vert\frac{1}{M}\left[\smallsum\limits_{m=1}^M (\functionANN (\initial))((\functionANN(\Psi_{\omega,m}))(t,x))\right]-\frac{1}{M}\left[\smallsum\limits_{m=1}^M (\functionANN(\initial))(Y_t^{m,x}(\omega))\right]\right\Vert
	\\&\le \frac{1}{M}\left[\smallsum\limits_{m=1}^M \norm{ (\functionANN (\initial))((\functionANN(\Psi_{\omega,m}))(t,x))-(\functionANN(\initial))(Y_t^{m,x}(\omega))} \right]
	\\&\le \frac{2\varepsilon \mathfrak{C} \sqrt{d}}{M}\bigg[\smallsum\limits_{m=1}^M   \left[1 +2 {d}^{\nicefrac{\alpha}{2}} 6^\alpha
	|h_{m,2}(x,\omega)|^\alpha \right]
	h_{m,q}(x,\omega) \bigg]\!.
	\end{split}
	\end{equation}
	Moreover, observe that \eqref{ApproxOfMCsumProof:Continuity}, 
	the fact that $\functionANN(\initial)$ is continuous,
	and \eqref{ApproxOfMCsum:PhiSum} ensure that 
	for all $t\in[0,T]$, $x\in\R^d$ it holds that $\Omega\ni \omega\mapsto (\functionANN (\Phi_{\omega}))(t,x)\in\R^{\mathfrak{d}}$ is measurable.
	Combining this with \eqref{ApproxOfMCsum:ParamEstimate}, \eqref{ApproxOfMCsum:FinalTwo}, and the fact that  $\forall \, \omega\in\Omega \colon \functionANN (\Phi_\omega)\in C(\R^{d+1},\R^{\mathfrak{d}})$
	establishes items~\eqref{ApproxOfMCsum:Function}--\eqref{ApproxOfMCsum:Continuity}.
	This completes the proof of Proposition~\ref{Cor:ApproxOfMCSum}.
\end{proof}

\begin{lemma}\label{Lemma:LpBoundsGronwall}
	Let $N, d\in \N$, $T\in (0,\infty)$, $\alpha, c,  C, \mathcal{C} \in[0,\infty)$,  $p\in (1,\infty)$, 
	let $\nu \colon \mathcal{B}([0,T]\times\R^d) \to [0,\infty)$ be a finite measure 	which  satisfies that
	\begin{equation}\label{LpBoundsGronwall:MomentBound}
	\mathcal{C}=\max\!\left\{\left[\int_{[0,T]\times\R^d}\Vert x\Vert^{\max\{4p\alpha,6p\}} \, \nu(dt,dx)\right]^{\!\nicefrac{1}{\max\{4p\alpha,6p\}}},1\right\},
	\end{equation}	
	let $ ( \Omega, \mathcal{F}, \P,(\mathbb{F}_n)_{n\in\{0,1,\dots, N\}} ) $ be a filtered probability space,
	let $\mathcal{M}=(\mathcal{M}_n)_{n\in\{0,1,\dots,N\}}\colon\allowbreak \{0,1,\dots,N\}\times  \Omega\to [0,\infty)$  be an $(\mathbb{F}_n)_{n\in\{0,1,\dots, N\}}$-submartingale which satisfies for all $n\in\{0,1,\dots,N\}$ that $ \E [ |\mathcal{M}_n|^{\max\{4p\alpha,6p\}}] < \infty$, 
	and let 
	$h_r\colon \R^d\times \Omega\to [0,\infty)$, $r\in (0,\infty)$,   satisfy for all $r\in (0,\infty)$,
	$ x \in \R^d $ that 
	\begin{equation}\label{LpBoundsGronwall:AuxFunctionOne}
	h_r(x)=1+e^{r c}\left[\Vert x\Vert + C+
	\max_{n\in\{0,1,\dots,N\}} \mathcal{M}_n\right]^{r} 
	\end{equation}
	(cf.\ Definition~\ref{Def:euclideanNorm}).
	Then 
	\begin{enumerate}[(i)]
		\item \label{LpBoundsGronwall:ItemOne}
	it holds for all $q,r\in (0,\infty)$ with  $ 1 < qr \leq \max\{4\alpha p,6p\}$ that
				\begin{align*}
				&\left[\int_{[0,T]\times\R^d}\E\big[\vert  h_r(x)\vert^{q}\big] \, \nu(dt,dx)\right]^{\!\nicefrac{1}{q}} \numberthis
				\\&\le 2e^{r c} \max\!\big\{2^{(\nicefrac{1}{q}) - 1},1\big\} \left[\mathcal{C}   +C+
				\tfrac{qr}{qr-1} \big|\E\big[\vert  \mathcal{M}_N\vert^{qr}\big]\big|^{\nicefrac{1}{qr}} \right]^{r}
				\max\!\big\{1,[\nu([0,T]\times\R^d)]^{\nicefrac{1}{q}}\big\}
				\end{align*}
	and
	\item \label{LpBoundsGronwall:ItemTwo} it holds that
				\begin{equation}
				\begin{split}
				&\left[\int_{[0,T]\times\R^d}\E\Big[\big\vert [h_{2}(x)]^\alpha  h_{3}(x)\big\vert^p\Big]\, \nu(dt,dx)\right]^{\!\nicefrac{1}{p}}
				\\&\le  \Big[\mathcal{C}   +C+
				\tfrac{\max\{4p\alpha,6p\}}{\max\{4p\alpha,6p\}-1} \big|\E\big[\vert  \mathcal{M}_N\vert^{\max\{4p\alpha,6p\}}\big]\big|^{\nicefrac{1}{\max\{4p\alpha,6p\}}} \Big]^{2\alpha+3}
				\\&\cdot 2^{\alpha+1}e^{(2\alpha+3)c } \max\!\big\{1,[\nu([0,T]\times\R^d)]^{\nicefrac{1}{p}}\big\}.
				\end{split}
				\end{equation}
	\end{enumerate}
\end{lemma}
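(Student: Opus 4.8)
The plan is to regard $h_r$, for fixed $r\in(0,\infty)$, as an element of $L^q(\nu\otimes\P)$ and to reduce the asserted estimates to an $L^{qr}$-moment bound for the random field $\Vert x\Vert+C+\max_{n\in\{0,1,\dots,N\}}\mathcal{M}_n$, which is handled by Minkowski's inequality, Doob's maximal inequality, and H\"older's inequality together with \eqref{LpBoundsGronwall:MomentBound}. Item~\eqref{LpBoundsGronwall:ItemTwo} is then deduced from item~\eqref{LpBoundsGronwall:ItemOne} by a single application of the generalized H\"older inequality with carefully chosen exponents.

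For item~\eqref{LpBoundsGronwall:ItemOne}, write $G(x)=\Vert x\Vert+C+\max_{n\in\{0,1,\dots,N\}}\mathcal{M}_n$, so that $h_r(x)=1+e^{rc}[G(x)]^r$ and $\max_{n}\mathcal{M}_n$ depends only on $\omega$. First I would show, distinguishing the cases $q\ge 1$ (where one uses Minkowski's inequality on $L^q(\nu\otimes\P)$) and $q<1$ (where one uses the subadditivity of $t\mapsto t^q$ on $[0,\infty)$ followed by $(a+b)^{\nicefrac{1}{q}}\le 2^{(1/q)-1}(a^{\nicefrac{1}{q}}+b^{\nicefrac{1}{q}})$), that
\[
\Big[\textint_{[0,T]\times\R^d}\E\big[\vert h_r(x)\vert^q\big]\,\nu(dt,dx)\Big]^{\nicefrac{1}{q}}\le \max\{2^{(1/q)-1},1\}\Big([\nu([0,T]\times\R^d)]^{\nicefrac{1}{q}}+e^{rc}\norm{G}_{L^{qr}(\nu\otimes\P)}^{r}\Big).
\]
Since $qr>1$, Minkowski's inequality in $L^{qr}(\P)$ gives, for each fixed $x$, $(\E[(G(x))^{qr}])^{\nicefrac{1}{qr}}\le\Vert x\Vert+C+(\E[(\max_{n}\mathcal{M}_n)^{qr}])^{\nicefrac{1}{qr}}$, and Doob's $L^{qr}$-maximal inequality for the nonnegative submartingale $\mathcal{M}$ bounds the last term by $\tfrac{qr}{qr-1}(\E[|\mathcal{M}_N|^{qr}])^{\nicefrac{1}{qr}}$. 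Integrating against $\nu$ (Minkowski once more) and estimating $[\int_{[0,T]\times\R^d}\Vert x\Vert^{qr}\,\nu]^{\nicefrac{1}{qr}}$ by H\"older's inequality with the exponent $\max\{4p\alpha,6p\}/(qr)\ge1$ and \eqref{LpBoundsGronwall:MomentBound} bounds $\norm{G}_{L^{qr}(\nu\otimes\P)}$ by $\max\{1,[\nu([0,T]\times\R^d)]^{\nicefrac{1}{qr}}\}\,[\mathcal{C}+C+\tfrac{qr}{qr-1}(\E[|\mathcal{M}_N|^{qr}])^{\nicefrac{1}{qr}}]$. Raising to the $r$-th power, observing that the bracket times $e^{rc}$ is $\ge1$ (since $\mathcal{C}\ge1$), and absorbing the summand $[\nu([0,T]\times\R^d)]^{\nicefrac{1}{q}}$ at the cost of a factor $2$ then yields item~\eqref{LpBoundsGronwall:ItemOne}.

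For item~\eqref{LpBoundsGronwall:ItemTwo}, set $M=\max\{4p\alpha,6p\}$ and note $(2\alpha+3)p\le M$ and $M>6$. Assuming first $\alpha>0$, I would apply the generalized H\"older inequality on the finite measure space $([0,T]\times\R^d\times\Omega,\nu\otimes\P)$ with the exponents $a=\tfrac{M}{2\alpha p}$ and $b=\tfrac{M}{3p}$, which satisfy $\tfrac1a+\tfrac1b=\tfrac{(2\alpha+3)p}{M}\le1$; this produces the factor $[\nu([0,T]\times\R^d)]^{\,1-(2\alpha+3)p/M}$ together with $\norm{h_2}_{L^{M/2}(\nu\otimes\P)}^{\alpha}\norm{h_3}_{L^{M/3}(\nu\otimes\P)}$. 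Applying item~\eqref{LpBoundsGronwall:ItemOne} with $(r,q)=(2,M/2)$ and with $(r,q)=(3,M/3)$ — in both cases $qr=M$, so Doob is invoked at the exponent $M$ and the factor $\max\{2^{(1/q)-1},1\}$ equals $1$ — bounds $\norm{h_2}_{L^{M/2}(\nu\otimes\P)}$ and $\norm{h_3}_{L^{M/3}(\nu\otimes\P)}$ by $2e^{2c}[\mathcal{C}+C+\tfrac{M}{M-1}|\E[|\mathcal{M}_N|^M]|^{\nicefrac{1}{M}}]^{2}\max\{1,[\nu([0,T]\times\R^d)]^{\nicefrac{2}{M}}\}$ and $2e^{3c}[\mathcal{C}+C+\tfrac{M}{M-1}|\E[|\mathcal{M}_N|^M]|^{\nicefrac{1}{M}}]^{3}\max\{1,[\nu([0,T]\times\R^d)]^{\nicefrac{3}{M}}\}$, respectively. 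Multiplying these out gives the prefactor $2^{\alpha}\cdot2=2^{\alpha+1}$, the exponential $e^{(2\alpha+3)c}$, the bracket raised to the power $2\alpha+3$, and a product of powers of $\nu([0,T]\times\R^d)$ which, since $\tfrac1p-\tfrac{2\alpha+3}{M}\ge0$, collapses to $\max\{1,[\nu([0,T]\times\R^d)]^{\nicefrac{1}{p}}\}$; this is exactly the claimed estimate. The case $\alpha=0$ is the same computation with $a=\infty$ and $b=2$ (i.e.\ plain H\"older with $L^2$ in the time-space-noise variable and item~\eqref{LpBoundsGronwall:ItemOne} applied with $(r,q)=(3,2p)$).

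The step I expect to require the most care is the choice of the H\"older exponents in item~\eqref{LpBoundsGronwall:ItemTwo}: they must be picked so that Doob's inequality is applied with precisely the exponent $\max\{4p\alpha,6p\}$ appearing in the statement (which is why one cannot simply invoke item~\eqref{LpBoundsGronwall:ItemOne} with $q=p$) and so that all the extraneous powers of $\nu([0,T]\times\R^d)$ recombine into the single factor $\max\{1,[\nu([0,T]\times\R^d)]^{\nicefrac{1}{p}}\}$. Beyond that the argument is a routine but somewhat lengthy bookkeeping of constants, with the usual care needed to separate the cases $q\ge1$ versus $q<1$ (for the quasi-triangle inequality) and $\nu([0,T]\times\R^d)\ge1$ versus $<1$ (for the $\max\{1,\cdot\}$ factors).
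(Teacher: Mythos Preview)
Your proposal is correct and, for item~\eqref{LpBoundsGronwall:ItemOne}, follows the paper's proof essentially verbatim: introduce $G(x)=\Vert x\Vert+C+\max_n\mathcal{M}_n$, split $h_r=1+e^{rc}G^r$ with the quasi-triangle inequality (distinguishing $q\ge1$ and $q<1$), bound $\|G\|_{L^{qr}(\nu\otimes\P)}$ via Minkowski, Doob, and H\"older against \eqref{LpBoundsGronwall:MomentBound}, and absorb the $[\nu]^{1/q}$ summand at the cost of the factor $2$.

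For item~\eqref{LpBoundsGronwall:ItemTwo} there is a minor but genuine difference in organization. The paper first applies Cauchy--Schwarz to obtain $\|h_2\|_{L^{2p\alpha}}^\alpha\|h_3\|_{L^{2p}}$ and then uses a second H\"older step to push each factor up to $L^{M/2}$ and $L^{M/3}$ respectively (with $M=\max\{4p\alpha,6p\}$), before invoking item~\eqref{LpBoundsGronwall:ItemOne} at $qr=M$. Your single three-factor H\"older inequality with exponents $M/(2\alpha p)$, $M/(3p)$, and the residual goes directly to $\|h_2\|_{L^{M/2}}^\alpha\|h_3\|_{L^{M/3}}$ plus the correct power of $\nu([0,T]\times\R^d)$, and is slightly cleaner bookkeeping; both routes land at the same application of item~\eqref{LpBoundsGronwall:ItemOne} with $qr=M$ and produce identical constants. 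Your explicit treatment of the edge case $\alpha=0$ is a small improvement in clarity over the paper, which handles it only implicitly.
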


\begin{proof}[Proof of Lemma~\ref{Lemma:LpBoundsGronwall}]	
	Throughout this proof let $\mathscr{C}_q \in \R$, $q 
\in (0,\max\{4p\alpha,6p\}]$, satisfy for all $q 
\in (0,\max\{4p\alpha,6p\}]$ that
	\begin{equation}\label{LpBoundsGronwall:MomentBoundq}
	\mathscr{C}_q=\max\!\left\{\left[\int_{[0,T]\times\R^d}\Vert x\Vert^q \, \nu(dt,dx)\right]^{\!\nicefrac{1}{q}},1\right\}
	\end{equation}
	and let  
			$g\colon \R^d\times \Omega\to [0,\infty)$ satisfy for all 
			$ x \in \R^d $ that 
			\begin{equation}\label{LpBoundsGronwall:AuxFunctionTwo}
			g(x)=\left[\Vert x\Vert + C+
			\max_{n\in\{0,1,\dots,N\}} \mathcal{M}_n\right]
			.
			\end{equation}
	Observe that  Doob's inequality (cf., e.g., Klenke \cite[Theorem 11.2]{Klenke14}), H\"older's inequality, the hypothesis that $\mathcal{M}$ is a submartingale, the hypothesis that  $\forall \, n\in\{0,1,\dots, N\} \colon \mathcal{M}_n\ge 0$, and  the hypothesis that  $\forall \, n\in\{0,1,\dots, N\} \colon \E [ |\mathcal{M}_n|^{\max\{4p\alpha,6p\}}] < \infty $ demonstrate that for all $q\in (1,\max\{4p\alpha,6p\}]$ it holds that 
	\begin{equation}\label{LpBoundsGronwall:Doob}
	\begin{split}
	\Big|\E\Big[\max_{n\in\{0,1,\dots,N\}}\vert\mathcal{M}_n\vert^q\Big]\Big|^{\nicefrac{1}{q}}
	&\le \tfrac{q}{q-1} \big|\E\big[\vert  \mathcal{M}_N\vert^q\big]\big|^{\nicefrac{1}{q}}
	\\&\le \tfrac{q}{q-1} \big|\E\big[\vert  \mathcal{M}_N\vert^{\max\{4p\alpha,6p\}}\big]\big|^{\nicefrac{1}{{\max\{4p\alpha,6p\}}}}
	<\infty.
	\end{split}
	\end{equation}
	Moreover, note that the triangle inequality and \eqref{LpBoundsGronwall:AuxFunctionTwo}   prove that for all $q, r\in (0,\infty)$ with  $ 1 < qr \leq \max\{4\alpha p,6p\}$  it holds that
	\begin{equation}
	\begin{split}
	&\left[\int_{[0,T]\times\R^d}\E\big[\vert  g(x)\vert^{qr}\big]\, \nu(dt,dx)\right]^{\!\nicefrac{1}{qr}}
	\\&\le \left[\int_{[0,T]\times\R^d}\E\big[\Vert  x\Vert^{qr}\big]\, \nu(dt,dx)\right]^{\!\nicefrac{1}{qr}}
 +\left[\int_{[0,T]\times\R^d}\E\bigg[\Big\vert C+ \max_{n\in\{0,1,\dots,N\}} \mathcal{M}_n\Big\vert^{qr}\bigg]\, \nu(dt,dx)\right]^{\!\nicefrac{1}{qr}}.		
	\end{split}
	\end{equation}
	The triangle inequality,   \eqref{LpBoundsGronwall:MomentBoundq}, and \eqref{LpBoundsGronwall:Doob}  hence show that for all $q, r \in (0,\infty)$ with  $ 1 < qr \leq \max\{4\alpha p,6p\}$   it holds that
		\begin{equation}\label{LpBoundsGronwall:g}
		\begin{split}
		&\left[\int_{[0,T]\times\R^d}\E\big[\vert  g(x)\vert^{qr}\big]\, \nu(dt,dx)\right]^{\!\nicefrac{1}{qr}}	
		\\&\le \left[\int_{[0,T]\times\R^d}\Vert x\Vert^{qr} \, \nu(dt,dx)\right]^{\!\nicefrac{1}{qr}}   +\bigg[C+
		\Big|\E\Big[\max_{n\in\{0,1,\dots,N\}}\vert  \mathcal{M}_n\vert^{qr}\Big]\Big|^{\nicefrac{1}{qr}}\bigg] [\nu([0,T]\times\R^d)]^{\nicefrac{1}{qr}}	
		\\&\le \mathscr{C}_{qr}   +\left[C+
		\tfrac{qr}{qr-1} \big|\E\big[\vert  \mathcal{M}_N\vert^{qr}\big]\big|^{\nicefrac{1}{qr}}\right] [\nu([0,T]\times\R^d)]^{\nicefrac{1}{qr}}
		.
		\end{split}
		\end{equation}
	Combining this with \eqref{LpBoundsGronwall:AuxFunctionOne}, \eqref{LpBoundsGronwall:AuxFunctionTwo}, and the fact that $\forall \, a, b \in  [0, \infty)$, $ q \in (0, \infty) \colon (a+b)^q \leq 2^{\max\{q-1, 0\}} (a^q + b^q)$ ensures that for all  $q, r \in (0,\infty)$ with  $ 1 < qr \leq \max\{4\alpha p,6p\}$ it holds that
		\begin{equation}\label{LpBoundsGronwall:hOne}
		\begin{split}
		&\left[\int_{[0,T]\times\R^d}\E\big[\vert  h_r(x)\vert^{q}\big]\, \nu(dt,dx)\right]^{\!\nicefrac{1}{q}}
		\\&\le \max\!\big\{2^{(\nicefrac{1}{q}) - 1},1\big\}  \left[\int_{[0,T]\times\R^d}\indicator{[0,T]\times\R^d}(t,x)\, \nu(dt,dx)\right]^{\!\nicefrac{1}{q}}\\
		&   +e^{rc} \max\!\big\{2^{(\nicefrac{1}{q}) - 1},1\big\} \left[\int_{[0,T]\times\R^d}\E\big[\vert  g(x)\vert^{rq}\big]\, \nu(dt,dx)\right]^{\!\nicefrac{1}{q}}
		\\&\le \max\!\big\{2^{(\nicefrac{1}{q}) - 1},1\big\} [\nu([0,T]\times\R^d)]^{\nicefrac{1}{q}}\\
& +e^{r c} \max\!\big\{2^{(\nicefrac{1}{q}) - 1},1\big\}\left[\mathscr{C}_{qr}   +\left[C+
		\tfrac{qr}{qr-1} \big|\E\big[\vert  \mathcal{M}_N\vert^{qr}\big]\big|^{\!\nicefrac{1}{qr}}\right] [\nu([0,T]\times\R^d)]^{\nicefrac{1}{qr}}\right]^{r}.
		\end{split}
		\end{equation}
	In addition, observe that  \eqref{LpBoundsGronwall:MomentBound} and H\"older's inequality show that for all  $q, r \in (0,\infty)$ with  $ 1 < qr \leq \max\{4\alpha p,6p\}$  it holds that
	\begin{equation}
	\begin{split}
			&\left[\int_{[0,T]\times\R^d}\Vert x\Vert^{qr} \, \nu(dt,dx)\right]^{\!\nicefrac{1}{qr}}
			\\&\le \left[\int_{[0,T]\times\R^d}\Vert x\Vert^{\max\{4\alpha p,6p\}} \, \nu(dt,dx)\right]^{\!\nicefrac{1}{\max\{4\alpha p,6p\}}} [\nu([0,T]\times\R^d)]^{[\frac{1}{qr}-\frac{1}{\max\{4\alpha p,6p\}}]}
						\\&\le \mathcal{C}
						\max\!\big\{1,[\nu([0,T]\times\R^d)]^{\nicefrac{1}{qr}}\big\}.
	\end{split}
	\end{equation}
	This, the fact that $\mathcal{C} \geq 1$, and  \eqref{LpBoundsGronwall:MomentBoundq} prove that 
	 for all  $q, r \in (0,\infty)$ with  $ 1 < qr \leq \max\{4\alpha p,6p\}$  it holds that
	\begin{equation}
	\begin{split}
	\mathscr{C}_{qr} \le \mathcal{C}
	\max\!\big\{1,[\nu([0,T]\times\R^d)]^{\nicefrac{1}{qr}}\big\}.
	\end{split}
	\end{equation}
Combining this with \eqref{LpBoundsGronwall:hOne} implies that for all $q, r \in (0,\infty)$ with  $ 1 < qr \leq \max\{4\alpha p,6p\}$   it holds that
		\begin{equation}
		\begin{split}
		&\left[\int_{[0,T]\times\R^d}\E\big[\vert  h_r(x)\vert^{q}\big]\, \nu(dt,dx)\right]^{\!\nicefrac{1}{q}}
		\\&\le \max\!\big\{2^{(\nicefrac{1}{q}) - 1},1\big\} [\nu([0,T]\times\R^d)]^{\nicefrac{1}{q}}\\
&  +e^{r c} \max\!\big\{2^{(\nicefrac{1}{q}) - 1},1\big\} \left[\mathcal{C}   +C+
		\tfrac{qr}{qr-1} \big|\E\big[\vert  \mathcal{M}_N\vert^{qr}\big]\big|^{\nicefrac{1}{qr}} \right]^{r}
		\max\!\big\{1,[\nu([0,T]\times\R^d)]^{\nicefrac{1}{q}}\big\}.
		\end{split}
		\end{equation}	
	Therefore, we obtain that for all $q, r \in (0,\infty)$ with  $ 1 < qr \leq \max\{4\alpha p,6p\}$  it holds that
				\begin{equation}\label{LpBoundsGronwall:h}
				\begin{split}
				&\left[\int_{[0,T]\times\R^d}\E\big[\vert  h_r(x)\vert^{q}\big]\, \nu(dt,dx)\right]^{\!\nicefrac{1}{q}}
				\\&\le 2e^{r c} \max\!\big\{2^{(\nicefrac{1}{q}) - 1},1\big\} \left[\mathcal{C}   +C+
				\tfrac{qr}{qr-1} \big|\E\big[\vert  \mathcal{M}_N\vert^{qr}\big]\big|^{\nicefrac{1}{qr}} \right]^{r}
				\max\!\big\{1,[\nu([0,T]\times\R^d)]^{\nicefrac{1}{q}}\big\}.
				\end{split}
				\end{equation}
		This establishes item~\eqref{LpBoundsGronwall:ItemOne}.
	Next observe that H\"older's inequality assures that
		\begin{equation}\label{LpBoundsGronwall:itemBstart}
		\begin{split}
		&\left[\int_{[0,T]\times\R^d}\E\Big[\big\vert [h_{2}(x)]^\alpha  h_{3}(x)\big\vert^p\Big]\, \nu(dt,dx)\right]^{\!\nicefrac{1}{p}}
		\\&\le \left[\int_{[0,T]\times\R^d}\E\big[\vert h_{2}(x)\vert^{2p\alpha}  \big]\, \nu(dt,dx)\right]^{\!\nicefrac{1}{2p}}
		\left[\int_{[0,T]\times\R^d}\E\big[\vert h_{3}(x)\vert^{2p}\big]\, \nu(dt,dx)\right]^{\!\nicefrac{1}{2p}}.
		\end{split}
		\end{equation}
	Moreover, note that H\"older's inequality demonstrates that 	
	\begin{equation}
	\begin{split}
	&\left[\int_{[0,T]\times\R^d}\E\big[\vert h_{2}(x)\vert^{2p\alpha}  \big]\, \nu(dt,dx)\right]^{\!\nicefrac{1}{2p\alpha}}
	\\&\le \left[\int_{[0,T]\times\R^d}\E\big[\vert h_{2}(x)\vert^{\max\{2p\alpha,3p\}}  \big]\, \nu(dt,dx)\right]^{\!\nicefrac{1}{\max\{2p\alpha,3p\}}} [\nu([0,T]\times\R^d)]^{[\frac{1}{2p\alpha}-\frac{1}{\max\{2p\alpha,3p\}}]}
	\end{split}
	\end{equation}
	and 
	\begin{align*}
	&\left[\int_{[0,T]\times\R^d}\E\big[\vert h_{3}(x)\vert^{2p}  \big]\, \nu(dt,dx)\right]^{\!\nicefrac{1}{2p}} \numberthis
	\\&\le \left[\int_{[0,T]\times\R^d}\E\big[\vert h_{3}(x)\vert^{\max\{(\nicefrac{4}{3})p\alpha,2p\}}  \big]\, \nu(dt,dx)\right]^{\!\nicefrac{1}{\max\{(\nicefrac{4}{3})p\alpha,2p\}}} [\nu([0,T]\times\R^d)]^{[\frac{1}{2p}-\frac{1}{\max\{(\nicefrac{4}{3})p\alpha,2p\}}]}.
	\end{align*}
	Combining this with \eqref{LpBoundsGronwall:h} implies that 
\begin{align*}
	&\left[\int_{[0,T]\times\R^d}\E\big[\vert h_{2}(x)\vert^{2p\alpha}  \big]\, \nu(dt,dx)\right]^{\!\nicefrac{1}{2p\alpha}}
		\\&\le  2e^{2 c} \Big[\mathcal{C}   +C+
		\tfrac{\max\{4p\alpha,6p\}}{\max\{4p\alpha,6p\}-1} \big|\E\big[\vert  \mathcal{M}_N\vert^{\max\{4p\alpha,6p\}}\big]\big|^{\nicefrac{1}{\max\{4p\alpha,6p\}}} \Big]^{2} \numberthis
		\\&\cdot \max\!\big\{1,[\nu([0,T]\times\R^d)]^{\nicefrac{1}{\max\{2p\alpha,3p\}}}\big\}\ [\nu([0,T]\times\R^d)]^{[\frac{1}{2p\alpha}-\frac{1}{\max\{2p\alpha,3p\}}]}
		\\&\le  2e^{2 c} \Big[\mathcal{C}   +C+
		\tfrac{\max\{4p\alpha,6p\}}{\max\{4p\alpha,6p\}-1} \big|\E\big[\vert  \mathcal{M}_N\vert^{\max\{4p\alpha,6p\}}\big]\big|^{\nicefrac{1}{\max\{4p\alpha,6p\}}} \Big]^{2}
\max\!\big\{1,[\nu([0,T]\times\R^d)]^{\nicefrac{1}{2p\alpha}}\big\}
\end{align*}
and 
\begin{align*}
&\left[\int_{[0,T]\times\R^d}\E\big[\vert h_{3}(x)\vert^{2p}  \big]\, \nu(dt,dx)\right]^{\!\nicefrac{1}{2p}}
\\&\le 2e^{3 c} \Big[\mathcal{C}   +C+
\tfrac{\max\{4p\alpha,6p\}}{\max\{4p\alpha,6p\}-1} \big|\E\big[\vert  \mathcal{M}_N\vert^{\max\{4p\alpha,6p\}}\big]\big|^{\nicefrac{1}{\max\{4p\alpha,6p\}}} \Big]^{3} \numberthis
\\&\cdot \max\!\big\{1,[\nu([0,T]\times\R^d)]^{\nicefrac{1}{\max\{(\nicefrac{4}{3})p\alpha,2p\}}}\big\} [\nu([0,T]\times\R^d)]^{[\frac{1}{2p}-\frac{1}{\max\{(\nicefrac{4}{3})p\alpha,2p\}}]}
\\&\le  2e^{3 c} \Big[\mathcal{C}   +C+
\tfrac{\max\{4p\alpha,6p\}}{\max\{4p\alpha,6p\}-1} \big|\E\big[\vert  \mathcal{M}_N\vert^{\max\{4p\alpha,6p\}}\big]\big|^{\nicefrac{1}{\max\{4p\alpha,6p\}}} \Big]^{3}
\max\!\big\{1,[\nu([0,T]\times\R^d)]^{\nicefrac{1}{2p}}\big\}. 
\end{align*}
This and \eqref{LpBoundsGronwall:itemBstart} show that 
\begin{equation}
\begin{split}
&\left[\int_{[0,T]\times\R^d}\E\Big[\big\vert [h_{2}(x)]^\alpha  h_{3}(x)\big\vert^p\Big]\, \nu(dt,dx)\right]^{\!\nicefrac{1}{p}}
\\&\le \left[\left[\int_{[0,T]\times\R^d}\E\big[\vert h_{2}(x)\vert^{2p\alpha}  \big]\, \nu(dt,dx)\right]^{\!\nicefrac{1}{2p\alpha}}\right]^{\!\alpha}
\left[\int_{[0,T]\times\R^d}\E\big[\vert h_{3}(x)\vert^{2p}\big]\, \nu(dt,dx)\right]^{\!\nicefrac{1}{2p}}
				\\&\le  2^{\alpha+1}e^{(2\alpha+3)c} \Big[\mathcal{C}   +C+
				\tfrac{\max\{4p\alpha,6p\}}{\max\{4p\alpha,6p\}-1} \big|\E\big[\vert  \mathcal{M}_N\vert^{\max\{4p\alpha,6p\}}\big]\big|^{\nicefrac{1}{\max\{4p\alpha,6p\}}} \Big]^{2\alpha+3}
				\\&\cdot \max\!\big\{1,[\nu([0,T]\times\R^d)]^{\nicefrac{1}{2p}}\big\} \left[\max\!\big\{1,[\nu([0,T]\times\R^d)]^{\nicefrac{1}{2p\alpha}}\big\}\right]^\alpha. 
\end{split}
\end{equation}
Hence, we obtain that 
\begin{equation}
\begin{split}
&\left[\int_{[0,T]\times\R^d}\E\Big[\big\vert [h_{2}(x)]^\alpha  h_{3}(x)\big\vert^p\Big]\, \nu(dt,dx)\right]^{\!\nicefrac{1}{p}} 
\\&\le 2^{\alpha+1}e^{(2\alpha+3)c} \Big[\mathcal{C}   +C+
\tfrac{\max\{4p\alpha,6p\}}{\max\{4p\alpha,6p\}-1} \big|\E\big[\vert  \mathcal{M}_N\vert^{\max\{4p\alpha,6p\}}\big]\big|^{\nicefrac{1}{\max\{4p\alpha,6p\}}} \Big]^{2\alpha+3}
\\&\cdot \max\!\big\{1,[\nu([0,T]\times\R^d)]^{\nicefrac{1}{p}}\big\}.
\end{split}
\end{equation}
This establishes item~\eqref{LpBoundsGronwall:ItemTwo}.
	This completes the proof of Lemma~\ref{Lemma:LpBoundsGronwall}.
\end{proof}

\begin{cor}\label{Cor:MCeuler}
	Let $M,N, d,\mathfrak{d},k \in \N$, $p\in [2, \infty)$, $\alpha, c, C,\mathcal{C}, \mathfrak{C} \in[0,\infty)$, $T, \mathfrak{D} \in (0,\infty)$,  $B\in \R^{d\times k}$, $\varepsilon\in (0,1]$, $\drift, \initial \in \ANNs$ satisfy  that  $\inDimANN(\drift)=\outDimANN(\drift)= \inDimANN(\initial)=d$, $\outDimANN(\initial)=\mathfrak{d}$, and
		\begin{equation}\label{MCeuler:paramBound}
		\mathfrak{D}= 2160 [\LogBin(\eps^{-1})+4]-504,
		\end{equation}	
	let $\nu \colon \mathcal{B}([0,T]\times\R^d) \to [0,\infty)$ be a finite measure which satisfies that
		\begin{equation}\label{MCeuler:MomentBound}
		\mathcal{C}=\max\!\left\{\left[\int_{[0,T]\times\R^d}\Vert x\Vert^{\max\{4p\alpha,6p\}} \, \nu(dt,dx)\right]^{\!\nicefrac{1}{\max\{4p\alpha,6p\}}},1\right\},
		\end{equation}	
assume for all $x,y\in\R^d$ that $\Vert (\functionANN(\drift))(x)\Vert\le C+ c\Vert x\Vert$ and
\begin{align}
\label{MCeuler:Lipschitz}
\Vert (\functionANN(\initial)) (x) - (\functionANN(\initial)) (y)\Vert \leq \mathfrak{C} (1 + \Vert x\Vert^{\alpha} + \Vert y \Vert^{\alpha})\Vert  x-y\Vert,
\end{align} 
	let $ ( \Omega, \mathcal{F}, \P, (\mathbb{F}_t)_{t\in [0,T]} ) $ be a filtered probability space which satisfies the usual conditions\footnote{Note that we say that a filtered probability space $(\Omega,\mathcal F,\P,( \mathbb{F}_t)_{t\in [0,T]})$ satisfies the usual conditions if and only if it holds for all 
		$ t \in [0,T) $ 
		that 
		$ \{ A \in \mathcal F\colon \P(A) = 0 \} \subseteq \mathbb{F}_t = (\cap_{s\in (t,T]} \mathbb{F}_s) $; cf., e.g., Liu \& R\"ockner~\cite[Definition 2.1.11]{LiuRoeckner2015SPDEs}.},
	let $ W^{  m } \colon [0,T] \times \Omega \to \R^k $, $  m \in \{1,2,\dots,M\} $, 
	be standard $(\mathbb{F}_t)_{t\in [0,T]}$--Brownian motions, 
	and let 
	$Y^m= (Y^{m,x }_t(\omega))_{(t, x, \omega) \in [0,T] \times \R^d \times \Omega} \colon\allowbreak [0,T]\times \R^d\times\Omega \to \R^d $, $m\in\{1,2,\dots,\allowbreak M\}$, 
satisfy for all 
	$m\in\{1,2,\dots,\allowbreak M\}$,
	$ x \in \R^d $,
	$n\in\{0,1,\dots,N-1\}$,
	$ t \in \big[\tfrac{nT}{N},\tfrac{(n+1)T}{N}\big]$
	that $\affineProcess^{m,x }_0=x$ and
	\begin{equation}
	\label{MCeuler:Y_processes}
	\begin{split}
	&\affineProcess^{m,x }_t
	=
	\affineProcess^{m,x }_{\frac{nT}{N}}+ \left(\tfrac{tN}{T}-n\right) \! \left[\tfrac{T}{N}(\functionANN(\drift)) ( 
	\affineProcess^{m,x }_{\frac{nT}{N}} 
	)
	+
	B\big(W_{\frac{(n+1)T}{N}}^{m}-W_{\frac{nT}{N}}^{m}\big)\right]
	\end{split}
	\end{equation} 
	(cf.\ \Cref{Def:euclideanNorm,Def:ANN,Definition:ANNrealization,Definition:Relu1}).
	Then there exists $(\Psi_{\omega})_{\omega\in\Omega}\subseteq \ANNs$ such that
	\begin{enumerate}[(i)]
		\item \label{MCeuler:Function} it holds for all   $\omega\in \Omega$ that $\functionANN (\Psi_{\omega})\in C(\R^{d+1},\R^\mathfrak{d})$,
				\item \label{MCeuler:Continuity}
				it holds  for all  $t\in [0,T]$, $x\in\R^d$ that $\Omega\ni \omega\mapsto (\functionANN (\Psi_{\omega}))(t,x)\in\R^\mathfrak{d}$
				is measurable,
		\item it holds  that 
\begin{equation}\label{MCeuler:Estimate}
\begin{split}
&\left[\int_{[0,T]\times\R^d}\int_{\Omega}
\left\Vert(\functionANN(\Psi_{\omega}))(t,x)-\frac{1}{M}\left[\smallsum\limits_{m=1}^M (\functionANN(\initial))(Y_t^{m,x}(\omega))\right]\right\Vert^p 
\P(d\omega) \, \nu(dt,dx)\right]^{\!\nicefrac{1}{p}}
\\&\le  \big[2p\max\{C,\mathcal{C}\}\max\{T,1\}\max\{\alpha,1\}\big]^{2\alpha+3}\left[1+
\sqrt{ \operatorname{Trace}(B^*B)} \right]^{2\alpha+3}
\\&\cdot \varepsilon {d}^{\nicefrac{(\alpha+1)}{2}} \mathfrak{C}\,e^{(2\alpha+3)cT} 2^{2\alpha+4} 3^{\alpha}
\max\!\big\{1,[\nu([0,T]\times\R^d)]^{\nicefrac{1}{p}}\big\},	
\end{split}
\end{equation}
		and
		\item \label{MCeuler:ItemParams}
		it holds for all  $\omega\in \Omega$ that 
								\begin{equation}
								\begin{split}
								\paramANN(\Psi_\omega)
								&\le 2 M^2 \paramANN(\initial) + 9 M^2 N^6 d^{16} \big[  2 \lengthANN(\drift)
								+
								 \mathfrak{D}+(
								24
								+6\lengthANN(\drift)
								+   [4+\paramANN(\drift)]^{2})^{2}
								\big]^{2}.
								\end{split}
								\end{equation}
	\end{enumerate}
\end{cor}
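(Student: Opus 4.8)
The plan is to verify that the family of neural networks $(\Psi_{\omega})_{\omega\in\Omega}$ furnished by \Cref{Cor:ApproxOfMCSum} already satisfies all four claims, so that only item~\eqref{MCeuler:Estimate} needs real work. Concretely, I would apply \Cref{Cor:ApproxOfMCSum} with $q\is 3$ — so that \eqref{ApproxOfMCSum:paramBound} reduces precisely to \eqref{MCeuler:paramBound} (since $\tfrac{720\cdot 3}{3-2}=2160$ and $3+1=4$) — and with its discrete driving processes $(W^m_n)_{n\in\{0,1,\dots,N\}}$ chosen to be $(B W^{m}_{nT/N})_{n\in\{0,1,\dots,N\}}$ for $m\in\{1,2,\dots,M\}$; these vanish at $n=0$ because $W^m_0=0$, and their increments $B\big(W^m_{(n+1)T/N}-W^m_{nT/N}\big)$ are exactly the ones appearing in \eqref{MCeuler:Y_processes}, so the processes $Y^m$ coincide. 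With this identification the auxiliary functions $h_{m,r}$ of \Cref{Cor:ApproxOfMCSum} become $h_{m,r}(x)=1+\big[\Vert x\Vert+CT+\max_{n\in\{0,1,\dots,N\}}\Vert BW^m_{nT/N}\Vert\big]^r\exp(rcT)$, and items~\eqref{ApproxOfMCsum:Function}, (iii), and \eqref{ApproxOfMCsum:Continuity} of \Cref{Cor:ApproxOfMCSum} immediately yield items~\eqref{MCeuler:Function}, \eqref{MCeuler:ItemParams}, and \eqref{MCeuler:Continuity} of \Cref{Cor:MCeuler} (the parameter bounds are literally identical).

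For the $L^p$-estimate I would start from the pointwise bound in item~(ii) of \Cref{Cor:ApproxOfMCSum}, raise both sides to the $p$-th power, integrate against $\P\otimes\nu$, and use Tonelli's theorem together with Minkowski's inequality to bound the left-hand side of \eqref{MCeuler:Estimate} by $\tfrac{2\varepsilon\mathfrak{C}\sqrt d}{M}\sum_{m=1}^M\big[\int_{[0,T]\times\R^d}\int_\Omega\big|\big(1+2d^{\nicefrac{\alpha}{2}}6^\alpha|h_{m,2}(x,\omega)|^\alpha\big)h_{m,3}(x,\omega)\big|^p\,\P(d\omega)\,\nu(dt,dx)\big]^{\nicefrac1p}$. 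Since $d\ge 1$, $\alpha\ge 0$, and $h_{m,2}\ge 1$ (hence $|h_{m,2}|^\alpha\ge 1$ and $2d^{\nicefrac{\alpha}{2}}6^\alpha\ge 1$), the elementary inequality $1+2d^{\nicefrac{\alpha}{2}}6^\alpha|h_{m,2}|^\alpha\le 4d^{\nicefrac{\alpha}{2}}6^\alpha|h_{m,2}|^\alpha$ reduces this to controlling $\big[\int\int\big|[h_{m,2}(x,\omega)]^\alpha h_{m,3}(x,\omega)\big|^p\,\P(d\omega)\,\nu(dt,dx)\big]^{\nicefrac1p}$ for each $m$.

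Next, for each $m$ I would invoke \Cref{Lemma:LpBoundsGronwall}~(ii) with $c\is cT$, $C\is CT$, $\mathcal{C}\is\mathcal{C}$, the underlying filtered probability space being $(\Omega,\mathcal{F},\P,(\mathbb{F}_{nT/N})_{n\in\{0,1,\dots,N\}})$, and the submartingale being $\mathcal{M}_n\is\Vert BW^m_{nT/N}\Vert$; this last object is indeed an $(\mathbb{F}_{nT/N})_{n}$-submartingale because $BW^m$ is a continuous $(\mathbb{F}_t)_{t\in[0,T]}$-martingale and $\Vert\cdot\Vert$ is convex, and it has finite moments of every order since $BW^m_t$ is Gaussian, so the hypotheses are met (with the lemma's $h_r$ coinciding with $h_{m,r}$, using $\cov(BW^m_T)=TBB^*$ and the moment hypothesis \eqref{MCeuler:MomentBound}). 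This bounds $\big[\int\int|[h_{m,2}]^\alpha h_{m,3}|^p\big]^{\nicefrac1p}$ by $\big[\mathcal{C}+CT+\tfrac{q_0}{q_0-1}\big(\E[\Vert BW^m_T\Vert^{q_0}]\big)^{\nicefrac{1}{q_0}}\big]^{2\alpha+3}\,2^{\alpha+1}e^{(2\alpha+3)cT}\max\{1,[\nu([0,T]\times\R^d)]^{\nicefrac1p}\}$ with $q_0=\max\{4p\alpha,6p\}$. Finally I would estimate $\big(\E[\Vert BW^m_T\Vert^{q_0}]\big)^{\nicefrac1{q_0}}\le\sqrt{\max\{1,q_0-1\}\,T\operatorname{Trace}(B^*B)}$ by \Cref{lem:momentsGauss}, factor $(1+\sqrt{\operatorname{Trace}(B^*B)})$ out of the bracket, and simplify the remaining constant using $p\ge 2$, $\mathcal{C}\ge 1$, $q_0\le 6p\max\{\alpha,1\}$, and elementary inequalities to arrive at $[2p\max\{C,\mathcal{C}\}\max\{T,1\}\max\{\alpha,1\}]^{2\alpha+3}[1+\sqrt{\operatorname{Trace}(B^*B)}]^{2\alpha+3}$; collecting the leftover numerical factors (in particular $8\cdot 6^\alpha\cdot 2^{\alpha+1}=2^{2\alpha+4}3^\alpha$ and $\sqrt d\,d^{\nicefrac{\alpha}{2}}=d^{\nicefrac{(\alpha+1)}{2}}$) then produces exactly \eqref{MCeuler:Estimate}.

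The only genuinely delicate point is the final constant bookkeeping: one has to use the sharp inequality $\tfrac{q_0}{q_0-1}\sqrt{\max\{1,q_0-1\}}\le\sqrt{2q_0}$ (valid since $q_0\ge 2$) together with $q_0\le 6p\max\{\alpha,1\}$, and then absorb the resulting numerical factor into the prefactor $2p\max\{C,\mathcal{C}\}\max\{T,1\}\max\{\alpha,1\}$ by exploiting $p\ge 2$ and $\mathcal{C}\ge 1$ — the inequalities are tight enough that a cruder estimate of $\tfrac{q_0}{q_0-1}\sqrt{q_0-1}$ would not fit. Everything else — the substitutions into \Cref{Cor:ApproxOfMCSum} and \Cref{Lemma:LpBoundsGronwall}, the Tonelli/Minkowski manipulation, the joint measurability of $(t,x,\omega)\mapsto(\functionANN(\Psi_\omega))(t,x)$, and the verification that $\Vert BW^m_{\cdot}\Vert$ is a submartingale — is routine.
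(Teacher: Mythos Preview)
Your overall strategy matches the paper's: apply \Cref{Cor:ApproxOfMCSum} with $q\is 3$ and discrete driving process $(BW^m_{nT/N})_n$, then control the $L^p$-integral of the pointwise bound via \Cref{Lemma:LpBoundsGronwall} and \Cref{lem:momentsGauss}. Your shortcut of absorbing the ``$1$'' into $2d^{\nicefrac{\alpha}{2}}6^\alpha|h_{m,2}|^\alpha$ (yielding the factor $4d^{\nicefrac{\alpha}{2}}6^\alpha$) so that only item~\eqref{LpBoundsGronwall:ItemTwo} of \Cref{Lemma:LpBoundsGronwall} is needed is in fact a slight streamlining of the paper, which instead splits via the triangle inequality into two sums and invokes both items~\eqref{LpBoundsGronwall:ItemOne} and~\eqref{LpBoundsGronwall:ItemTwo} before merging; your route reaches the same numerical constant $2^{2\alpha+4}3^\alpha$ more directly.

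There is, however, a genuine gap in your final constant step. Your inequality $\tfrac{q_0}{\sqrt{q_0-1}}\le\sqrt{2q_0}$ is correct for $q_0\ge 2$, but combined with $q_0\le 6p\max\{\alpha,1\}$ it only gives $\sqrt{12p\max\{\alpha,1\}}$, and this is \emph{not} bounded by $2p\max\{\alpha,1\}$ when $p\max\{\alpha,1\}<3$ --- in particular it fails for $p=2$, $\alpha\le 1$ (where $\sqrt{24}>4$). So the bracket cannot be absorbed into $[2p\max\{C,\mathcal{C}\}\max\{T,1\}\max\{\alpha,1\}]$ as you claim. The paper's fix is to exploit the lower bound $q_0\ge 6p\ge 12$ more directly: since $\sqrt{q_0-1}\ge\sqrt{11}>3$, one has $\tfrac{q_0}{\sqrt{q_0-1}}\le\tfrac{q_0}{3}=\max\{\tfrac{4}{3}p\alpha,2p\}\le 2p\max\{\alpha,1\}$, and from there the factorisation goes through. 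Replace your $\sqrt{2q_0}$ estimate by this one and the rest of your argument is complete.
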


\begin{proof}[Proof of Corollary~\ref{Cor:MCeuler}]	
	Throughout this proof let 
	$h_{m,r}\colon \R^d\times \Omega\to [0,\infty)$,  $m\in\{1,2,\dots, M\}$, $r\in \R$, satisfy for all $m\in\{1,2,\dots, M\}$, $r\in \R$,
	$ x \in \R^d$ that 
	\begin{equation}\label{MCeuler:AuxFunctionTwo}
	h_{m,r}(x)=1+\Big[\Vert x\Vert + CT+
	\max_{n\in\{0,1,\dots,N\}}\big\Vert B  W_{\frac{nT}{N}}^m\big\Vert\Big]^r
	\exp(rc T),
	\end{equation}
	let $(\Psi_{\omega})_{\omega\in\Omega}\subseteq \ANNs$ satisfy that 
	\begin{enumerate}[(I)]
		\item \label{MCeulerProof:Function} it holds for all  $\omega\in \Omega$  that $\functionANN (\Psi_{\omega})\in C(\R^{d+1},\R^\mathfrak{d})$,
		\item  it holds  for all  $\omega\in \Omega$,
		$ t \in [0,T]$,
		$x\in\R^d$ that 
		\begin{equation}\label{MCeulerProof:FinalTwo}
		\begin{split}
		&\left\Vert(\functionANN(\Psi_{\omega}))(t,x)-\frac{1}{M}\left[\smallsum\limits_{m=1}^M (\functionANN(\initial))(Y_t^{m,x}(\omega))\right]\right\Vert 
		\\&\le \frac{2\varepsilon \mathfrak{C} \sqrt{d}}{M}\bigg[\smallsum\limits_{m=1}^M   \left[1 +2 {d}^{\nicefrac{\alpha}{2}}  6^\alpha
		|h_{m,2}(x,\omega)|^\alpha \right]
		h_{m,3}(x,\omega) \bigg],
		\end{split}
		\end{equation}
		\item \label{MCeulerProof:ItemParams}
		it holds 
		for all  $\omega\in \Omega$ that 
						\begin{equation}
						\begin{split}
						&\paramANN(\Psi_\omega) \le 2 M^2 \paramANN(\initial) +  9 M^2 N^6 d^{16} \big[  2 \lengthANN(\drift)
						+
						 \mathfrak{D}+(
						24
						+6\lengthANN(\drift)
						+   [4+\paramANN(\drift)]^{2})^{2}
						\big]^{2},
						\end{split}
						\end{equation}
		and
		\item \label{MCeulerProof:Continuity}
		it holds 
		 for all  $t\in [0,T]$, $x\in\R^d$  that $\Omega\ni \omega\mapsto (\functionANN (\Psi_{\omega}))(t,x)\in\R^\mathfrak{d}$
		is measurable
	\end{enumerate}			
	(cf.\ Proposition~\ref{Cor:ApproxOfMCSum}), let 
		$Z= (Z^{x,y }_t)_{(t,x,y)\in [0,T]\times \R^d\times(\R^d)^N} \colon\allowbreak [0,T]\times \R^d\times (\R^d)^N \to \R^d $ 
		satisfy for all 
		$n\in\{0,1,\dots,N-1\}$,
		$ t \in [\tfrac{nT}{N},\tfrac{(n+1)T}{N}]$,
		$ x \in \R^d $, $y=(y_1, y_2, \dots, y_N)\in (\R^d)^N$
		that $Z^{x,y }_0=x$ and
		\begin{equation}
		\label{MCeulerProof:Z_processes}
		\begin{split}
Z^{x,y }_{t}=
		Z^{x,y }_{\frac{nT}{N}}+ \left(\tfrac{tN}{T}-n\right) \!\left[\tfrac{T}{N}(\functionANN(\drift)) \big( 
		Z^{x,y }_{\frac{nT}{N}}
		\big)
		+
		y_{n+1}\right]\!,
		\end{split}
		\end{equation}
		and let $\mathcal{W}^m\colon [0,T]\times \R^d\times \Omega\to [0,T]\times \R^d\times (\R^d)^N$, $m\in\{1,2,\dots, M\}$,  satisfy for all $m\in\{1,2,\dots, M\}$, $t\in [0,T]$, $x\in\R^d$  that
		\begin{equation}\label{MCeuler:increments}
			\mathcal{W}^m(t,x)=\Big(t,x, B\big(W_{\frac{T}{N}}^{m}-W_{0}^{m}\big), B\big(W_{\frac{2T}{N}}^{m}-W_{\frac{T}{N}}^{m}\big), \ldots, B\big(W_{\frac{NT}{N}}^{m}-W_{\frac{(N-1)T}{N}}^{m}\big) \Big).
		\end{equation}
		Note that \eqref{MCeulerProof:Function}, \eqref{MCeulerProof:Continuity}, and Beck et al.~\cite[Lemma~2.4]{Kolmogorov} demonstrate that $ [0,T]\times\R^{d}\times \Omega \ni(t,x,\omega)\mapsto (\functionANN(\Psi_{\omega}))(t,x)\in \R^{\mathfrak{d}}$ is 
		measurable.
In addition, observe that \eqref{MCeuler:Y_processes}, \eqref{MCeulerProof:Z_processes}, and \eqref{MCeuler:increments} ensure that for all $m\in\{1,2,\dots,M\}$, $t\in [0,T]$, $x\in\R^d$ it holds that
\begin{equation}\label{MCeuler:Y_processes:Factorization}
Y_t^{m,x}=(Z\circ \mathcal{W}^m)(t,x).
\end{equation}
Next note that Grohs et al.~\cite[Lemma 3.8]{GrohsHornungEtAl2023}
(applied with $N\is N$, $d\is d$, $\mu\is \functionANN(\drift)$, $T\is T$, $
(\{-1,0,1,\dots,N+1\} \ni n \mapsto t_n \in \R) \is (\{-1,0,1,\dots,N+1\} \ni n \mapsto \tfrac{nT}{N} \in \R)$,  $Y\is Z$ in the notation of Grohs et al.~\cite[Lemma 3.8]{GrohsHornungEtAl2023}) proves that $Z\in C([0,T]\times \R^d\times (\R^d)^N,\R^d)$.
Combining this with \eqref{MCeuler:Y_processes:Factorization} and the fact that for all $m\in\{1,2,\dots,M\}$ it holds that $\mathcal{W}^m$ is measurable shows that for all $m\in\{1,2,\dots,M\}$ it holds that $Y^m$ is measurable.
The fact that $\functionANN(\initial)\in C(\R^d,\R^{\mathfrak{d}})$ hence ensures that 
\begin{equation}
	[0,T]\times \R^d\times \Omega \ni (t,x,\omega)\mapsto \frac{1}{M}\bigg[\smallsum\limits_{m=1}^M (\functionANN(\initial))(Y_t^{m,x}(\omega))\bigg]\in \R^{\mathfrak{d}}
\end{equation}
is measurable.
Combining this with \eqref{MCeulerProof:FinalTwo} and the fact that $ [0,T]\times\R^{d}\times \Omega \ni(t,x,\omega)\mapsto (\functionANN(\Psi_{\omega}))(t,x)\in \R^{\mathfrak{d}}$ is measurable proves that 
		\begin{align*}
		&\left[\int_{[0,T]\times\R^d}\int_{\Omega}
		\left\Vert(\functionANN(\Psi_{\omega}))(t,x)-\frac{1}{M}\bigg[\smallsum\limits_{m=1}^M (\functionANN(\initial))(Y_t^{m,x}(\omega))\bigg]\right\Vert^p 
		\P(d\omega) \, \nu(dt,dx)\right]^{\!\nicefrac{1}{p}} \numberthis
		\\&\le \frac{2\varepsilon \mathfrak{C} \sqrt{d}}{M}
		\left[\int_{[0,T]\times\R^d}\int_{\Omega}\bigg\vert\smallsum\limits_{m=1}^M   \left[1 +2 {d}^{\nicefrac{\alpha}{2}} 6^\alpha
		|h_{m,2}(x,\omega)|^\alpha \right]
		h_{m,3}(x,\omega) \bigg\vert^p 
		\,\P(d\omega) \, \nu(dt,dx)\right]^{\!\nicefrac{1}{p}}.													
		\end{align*}
The triangle inequality
therefore implies that 
		\begin{equation}\label{MCeulerProof:Start}
		\begin{split}
		&\left[\int_{[0,T]\times\R^d}\int_{\Omega}
		\left\Vert(\functionANN(\Psi_{\omega}))(t,x)-\frac{1}{M}\bigg[\smallsum\limits_{m=1}^M (\functionANN(\initial))(Y_t^{m,x}(\omega))\bigg]\right\Vert^p 
		\P(d\omega) \, \nu(dt,dx)\right]^{\!\nicefrac{1}{p}}
				\\&\le  \frac{2\varepsilon \mathfrak{C} \sqrt{d}}{M} \sum_{m=1}^M
				\left[\int_{[0,T]\times\R^d}\int_{\Omega}\vert  
				h_{m,3}(x,\omega) \vert^p 
				\,\P(d\omega) \, \nu(dt,dx)\right]^{\!\nicefrac{1}{p}}
				\\&+\frac{4  {d}^{\nicefrac{\alpha}{2}} 6^\alpha \varepsilon \mathfrak{C} \sqrt{d}}{M} \sum_{m=1}^M
				\left[\int_{[0,T]\times\R^d}\int_{\Omega}|
				[h_{m,2}(x,\omega)]^\alpha 
				h_{m,3}(x,\omega) |^p 
				\,\P(d\omega) \, \nu(dt,dx)\right]^{\!\nicefrac{1}{p}}.														
		\end{split}
		\end{equation}
Next note that \eqref{MCeuler:AuxFunctionTwo}, Lemma~\ref{Lemma:LpBoundsGronwall}, and the fact that for all $m\in\{1,2,\dots, M\}$ it holds that 
$(\|  B W_{\frac{nT}{N}}^m\|)_{n \in \{0, 1, \ldots, N\}}$ is a nonnegative $(\mathbb{F}_{\frac{nT}{N}})_{n \in \{0, 1, \ldots, N\}}$-submartingale
 demonstrate that for all $m\in\{1,2,\dots, M\}$ it holds that
	\begin{equation}\label{MCeulerProof:H}
	\begin{split}
	&\left[\int_{[0,T]\times\R^d}\E\big[\vert  h_{m,3}(x)\vert^{p}\big]\, \nu(dt,dx)\right]^{\!\nicefrac{1}{p}}
	\\&\le 2e^{3cT} \left[\mathcal{C}  +CT+
	\tfrac{3p}{3p-1} \big|\E\big[\Vert  B W_T^m\Vert^{3p}\big]\big|^{\nicefrac{1}{3p}} \right]^{3}
	\max\!\big\{1,[\nu([0,T]\times\R^d)]^{\nicefrac{1}{p}}\big\}
	\end{split}
	\end{equation}
	and
	\begin{equation}\label{MCeulerProof:productH}
	\begin{split}
	&\left[\int_{[0,T]\times\R^d}\E\big[| [h_{m,2}(x)]^\alpha  h_{m,3}(x)|^p\big]\, \nu(dt,dx)\right]^{\!\nicefrac{1}{p}}
	\\&\le  \Big[\mathcal{C}  +CT+
	\tfrac{\max\{4p\alpha,6p\}}{\max\{4p\alpha,6p\}-1} \big|\E\big[\Vert  B W_T^m\Vert^{\max\{4p\alpha,6p\}}\big]\big|^{\nicefrac{1}{\max\{4p\alpha,6p\}}} \Big]^{2\alpha+3}
	\\&\cdot 2^{\alpha+1}e^{(2\alpha+3)cT} \max\!\big\{1,[\nu([0,T]\times\R^d)]^{\nicefrac{1}{p}}\big\}.
	\end{split}
	\end{equation}
	Moreover, observe that 
	Lemma~\ref{lem:momentsGauss},
	 the fact that for all $m\in\{1,2,\dots, M\}$ it holds that  $B W_T^m$ is a  Gaussian random variable, and  the fact that for all $m\in\{1,2,\dots, M\}$ it holds that  $ \covariance\,(B W_T^m)= T B^*B$
	ensure that  for all $m\in\{1,2,\dots, M\}$, $q\in [2,\infty)$ it holds that 
	\begin{equation}
	\label{MCeulerProof:Moments}
	\begin{split}
	\tfrac{q}{q-1}\big|\E\big[\Vert  B W_T^m\Vert^{q}\big]\big|^{\nicefrac{1}{q}}
	&\leq \tfrac{q}{q-1}
	\sqrt{ \max\{1,q-1\} T \operatorname{Trace}(B^*B)} 
	\\&= \tfrac{q}{q-1}
	\sqrt{ (q-1) T \operatorname{Trace}(B^*B)} 
	\\&= \tfrac{q}{\sqrt{q-1}}
	\sqrt{T \operatorname{Trace}(B^*B)} 
	.
	\end{split}
	\end{equation}	
	Combining this with \eqref{MCeulerProof:H} and \eqref{MCeulerProof:productH} assures that
	\begin{equation}\label{MCeulerProof:Htwo}
	\begin{split}
	&\frac{1}{M} \sum_{m=1}^M\left[\int_{[0,T]\times\R^d}\E\big[\vert  h_{m,3}(x)\vert^{p}\big]\, \nu(dt,dx)\right]^{\!\nicefrac{1}{p}}
	\\&\le 2e^{3cT} \left[\mathcal{C}  +CT+
	\tfrac{3p}{\sqrt{3p-1}}
	\sqrt{T \operatorname{Trace}(B^*B)} \right]^{3}
	\max\!\big\{1,[\nu([0,T]\times\R^d)]^{\nicefrac{1}{p}}\big\}
	\end{split}
	\end{equation}	
	and 
	\begin{equation}\label{MCeulerProof:productHtwo}
	\begin{split}
	&\frac{1}{M} \sum_{m=1}^M\left[\int_{[0,T]\times\R^d}\E\big[\big\vert [h_{m,2}(x)]^\alpha  h_{m,3}(x)\big\vert^p\big]\, \nu(dt,dx)\right]^{\!\nicefrac{1}{p}}
	\\&\le  \Big[\mathcal{C}  +CT+
	\tfrac{\max\{4p\alpha,6p\}}{\sqrt{\max\{4p\alpha,6p\}-1}} 
	\sqrt{T \operatorname{Trace}(B^*B)} \Big]^{2\alpha+3}
	\\&\cdot 2^{\alpha+1}e^{(2\alpha+3)cT} \max\!\big\{1,[\nu([0,T]\times\R^d)]^{\!\nicefrac{1}{p}}\big\}.
	\end{split}
	\end{equation}	
	This and \eqref{MCeulerProof:Start}	demonstrate that 
\begin{equation}\label{MCeulerProof:PreFinal}
\begin{split}
&\left[\int_{[0,T]\times\R^d}\int_{\Omega}
\left\Vert(\functionANN(\Psi_{\omega}))(t,x)-\frac{1}{M}\left[\smallsum\limits_{m=1}^M (\functionANN(\initial))(Y_t^{m,x}(\omega))\right]\right\Vert^p 
\P(d\omega) \, \nu(dt,dx)\right]^{\!\nicefrac{1}{p}}
\\&\le  4\varepsilon \mathfrak{C} e^{3cT} \sqrt{d} \max\!\big\{1,[\nu([0,T]\times\R^d)]^{\nicefrac{1}{p}}\big\}   \left[\mathcal{C}  +CT+
\tfrac{3p}{\sqrt{3p-1}}
\sqrt{T \operatorname{Trace}(B^*B)} \right]^{3} 
\\&+ \Big[\mathcal{C}  +CT+
\tfrac{\max\{4p\alpha,6p\}}{\sqrt{\max\{4p\alpha,6p\}-1}} 
\sqrt{T \operatorname{Trace}(B^*B)} \Big]^{2\alpha+3}
\\&\cdot 4\varepsilon \mathfrak{C} 6^\alpha {d}^{\nicefrac{\alpha}{2}} 2^{\alpha+1} e^{(2\alpha+3)cT} \sqrt{d} \max\!\big\{1,[\nu([0,T]\times\R^d)]^{\nicefrac{1}{p}}\big\}.					
\end{split}
\end{equation}	
The fact that  $[2,\infty)\ni x\mapsto \nicefrac{x}{\sqrt{x-1}}\in\R$ is non-decreasing
hence implies that 
\begin{equation}
\begin{split}
&\left[\int_{[0,T]\times\R^d}\int_{\Omega}
\left\Vert(\functionANN(\Psi_{\omega}))(t,x)-\frac{1}{M}\left[\smallsum\limits_{m=1}^M (\functionANN(\initial))(Y_t^{m,x}(\omega))\right]\right\Vert^p 
\P(d\omega) \, \nu(dt,dx)\right]^{\!\nicefrac{1}{p}}
\\&\le   \left[\mathcal{C}  +CT+
\tfrac{\max\{4p\alpha,6p\}}{\sqrt{\max\{4p\alpha,6p\}-1}}
\sqrt{T \operatorname{Trace}(B^*B)} \right]^{2\alpha+3}
\\&\cdot \Big[4\varepsilon \mathfrak{C} e^{3cT} \sqrt{d} +  4\varepsilon \mathfrak{C} 6^\alpha {d}^{\nicefrac{\alpha}{2}} 2^{\alpha+1} e^{(2\alpha+3)cT} \sqrt{d} \Big]
\max\!\big\{1,[\nu([0,T]\times\R^d)]^{\nicefrac{1}{p}}\big\}
\\&\le  |\!\max\{C,\mathcal{C}\}|^{2\alpha+3} |\!\max\{T,1\}|^{2\alpha+3} \left[2+
\tfrac{\max\{4p\alpha,6p\}}{\sqrt{\max\{4p\alpha,6p\}-1}}
\sqrt{ \operatorname{Trace}(B^*B)} \right]^{2\alpha+3}
\\&\cdot \varepsilon {d}^{\nicefrac{(\alpha+1)}{2}} \mathfrak{C}\,e^{(2\alpha+3) c T} \big[4  + 6^\alpha  2^{\alpha+1} 4\big]
\max\!\big\{1,[\nu([0,T]\times\R^d)]^{\nicefrac{1}{p}}\big\}.		
\end{split}
\end{equation}	
The fact that $\sqrt{\max\{4p\alpha,6p\}-1}\ge \sqrt{6p-1}\ge \sqrt{11}\ge 3$ therefore ensures that 
\begin{equation}\label{MCeulerProof:Final}
\begin{split}
&\left[\int_{[0,T]\times\R^d}\int_{\Omega}
\left\Vert(\functionANN(\Psi_{\omega}))(t,x)-\frac{1}{M}\left[\smallsum\limits_{m=1}^M (\functionANN(\initial))(Y_t^{m,x}(\omega))\right]\right\Vert^p 
\P(d\omega) \, \nu(dt,dx)\right]^{\!\nicefrac{1}{p}}
\\&\le  |\!\max\{C,\mathcal{C}\}|^{2\alpha+3} |\!\max\{T,1\}|^{2\alpha+3} \left[2+
\max\{(\nicefrac{4}{3})p\alpha,2p\}
\sqrt{ \operatorname{Trace}(B^*B)} \right]^{2\alpha+3}
\\&\cdot \varepsilon {d}^{\nicefrac{(\alpha+1)}{2}} \mathfrak{C}\,e^{(2\alpha+3)cT} \big[4+2^{2\alpha+3} 3^{\alpha}\big]
\max\!\big\{1,[\nu([0,T]\times\R^d)]^{\nicefrac{1}{p}}\big\}
\\&\le  |\!\max\{C,\mathcal{C}\}|^{2\alpha+3} |\!\max\{T,1\} |^{2\alpha+3} 
[2p\max\{\alpha,1\}]^{2\alpha+3}\left[1+
\sqrt{ \operatorname{Trace}(B^*B)} \right]^{2\alpha+3}
\\&\cdot \varepsilon {d}^{\nicefrac{(\alpha+1)}{2}} \mathfrak{C}\,e^{(2\alpha+3)cT} 2^{2\alpha+4} 3^{\alpha}
\max\!\big\{1,[\nu([0,T]\times\R^d)]^{\nicefrac{1}{p}}\big\}.		
\end{split}
\end{equation}
Combining this with \eqref{MCeulerProof:Function}, \eqref{MCeulerProof:ItemParams}, and \eqref{MCeulerProof:Continuity} establishes items~\eqref{MCeuler:Function}--\eqref{MCeuler:ItemParams}.
	This completes the proof of Corollary~\ref{Cor:MCeuler}.
\end{proof}

\subsection{Approximation error estimates for deep ANNs}
\label{subsec:Appr_DNNs}

\begin{prop}
	\label{prop:DNNerrorEstimate}
	 Let 
	$ T, \kappa \in (0,\infty) $, $\eta \in [1, \infty)$,  $p \in [2, \infty)$, 
	let 
	$
	A_d = ( a_{ d, i, j } )_{ (i, j) \in \{ 1, 2,\dots, d \}^2 } $ $ \in \R^{ d \times d }
	$,
	$ d \in \N $,
	be symmetric positive semidefinite matrices, 
 let $\nu_d  \colon \mathcal{B}([0,T]\times\R^d) \to [0,\infty)$, $d\in\N$, be finite measures which satisfy for all $d\in\N$ that 
	\begin{equation}\label{DNNerrorEstimate:MeasureAssumption}
	\left[\int_{[0,T]\times \R^d} 
\|x \|^{2p \max\{2\kappa, 3\}}
\, \nu_d (d t, d x) \right]^{\!\nicefrac{1}{p}}\leq \eta d^{\eta},
	\end{equation}
	let
	$f^m_d \colon \R^d \to \R^{md-m+1}$, $m\in\{0,1\}$, $d \in \N $,
	be functions,
		let 
	$
	( \mathfrak{f}^m_{d, \varepsilon})_{ 
		(m, d, \varepsilon) \in \{ 0, 1 \} \times \N \times (0,1] 
	} 
	\subseteq \ANNs
	$,
	assume for all
		$m\in\{0,1\}$,
	$ d \in \N $, 
	$ \varepsilon \in (0,1] $, 
	$ 
	x, y \in \R^d
	$
	that 
\vspace{-1ex}
	\begin{gather}
		\label{eq:prop:new}
	\functionANN( \mathfrak{f}^{ 0 }_{d, \varepsilon } )
	\in 
	C( \R^d,  \R), \qquad 	\functionANN( \mathfrak{f}^{ 1 }_{d, \varepsilon } )
	\in 
	C( \R^d,  \R^{ d }),  \qquad 	\mathcal{P}( \mathfrak{f}^{ m }_{d, \varepsilon } ) 
	\leq \kappa d^{ \kappa } \varepsilon^{ - \kappa }, \\
	\label{DNNerrorEstimate:growthPhiOne}
	\| 
	f^1_d( x ) 
	- 
	f^1_d( y )
	\|
	\leq 
	\kappa 
	\| x - y \|, \qquad \|
	(\functionANN(\mathfrak{f}^1_{d, \varepsilon}))(x)    
	\|	
	\leq 
	\kappa ( d^{ \kappa } + \| x \| ),	\\
	\label{DNNerrorEstimate:approximationLocallyLipschitz}\left\vert (\functionANN(\mathfrak{f}^0_{d, \varepsilon})) (x) - (\functionANN(\mathfrak{f}^0_{d, \varepsilon})) (y)\right\vert \leq \kappa d^{\kappa} (1 + \|x\|^{\kappa} + \|y \|^{\kappa})\|x-y\|,
	\\
	\label{DNNerrorEstimate:appr:coef}
	\left\| 
	f^m_d(x) 
	- 
	(\functionANN(\mathfrak{f}^m_{d, \varepsilon})) (x)
	\right\|
	\leq 
	\varepsilon \kappa d^{ \kappa }
	(
	1 + \| x \|^{ \kappa }
	),
		\\ \label{DNNerrorEstimate:growthMatrix}
	\text{and} \qquad
 	|
	f^0_d( x )
	| 
	+
	\operatorname{Trace}(A_d)
	\leq 
	\kappa d^{ \kappa }
	( 1 + \| x \|^{ \kappa } )
	,
	\end{gather}
and	for every $d \in \N$	let
	$u_d \in  \{v \in C([0, T] \times \R^d, \R) \colon \allowbreak \inf_{ q \in (0,\infty) }
	\allowbreak \sup_{ (t,y) \in [0,T] \times \R^d } \allowbreak
	\frac{ | v(t,y) | }{
		1 + \| y \|^q
	}
	< \infty \}$ be a viscosity solution of
		\begin{equation}
		\begin{split}
		( \tfrac{ \partial }{\partial t} u_d )( t, x ) 
		& = 
		( \tfrac{ \partial }{\partial x} u_d )( t, x )
		\,
		f^1_d( x )
		+
		\sum_{ i, j = 1 }^d
		a_{ d, i, j }
		\,
		( \tfrac{ \partial^2 }{ \partial x_i \partial x_j } u_d )( t, x )
		\end{split}
		\end{equation}
		with $ u_d( 0, x ) = f^0_d( x )$
		for $ ( t, x ) \in (0,T) \times \R^d $ (cf.\ \Cref{Def:euclideanNorm,Def:ANN,Definition:ANNrealization,Definition:Relu1}).
	Then 
		there exist $\mathcal{C} \in \R$   and $(\mathfrak{u}_{d,N,M,\delta})_{(d,N,M, \delta) \in \N^3 \times (0,1] }\subseteq \ANNs$   such that
		\begin{enumerate}[(i)]
				\item \label{DNNerrorEstimate:func}it holds for all 
			$d,N,M\in\N$, $\delta\in (0,1]$
			that $\functionANN(\mathfrak{u}_{d,N,M,\delta})\in C(\R^{d+1},\R)$,
			\item \label{DNNerrorEstimate:Estimate}it holds for all 
			$d,N,M\in\N$, $\delta\in (0,1]$
			that 
			\begin{equation}
			\begin{split}
			& \left[
			\int_{[0,T]\times\R^d}
			\left\vert
			u_d(y) 
			- 
			(\functionANN(\mathfrak{u}_{d,N,M,\delta}))(y)
			\right\vert^p
			\nu_d (d y)
			\right]^{\!\nicefrac{1}{p}} 
			\\&\le
			\mathcal{C} \! \left[\max\!\big\{1,\nu_d([0,T]\times\R^d)\big\}\right]^{\!\nicefrac{1}{p}}
			\\&\cdot \left[ \frac{d^{\kappa(\kappa+4)+\max\{\eta,\kappa(2\kappa+1)\}}}{N^{\nicefrac{1}{2}}}
			+ \frac{d^{\kappa+\max\{\eta,\kappa^2\}}}{M^{\nicefrac{1}{2}}}
			+\delta d^{(2\kappa+3)\max\{\eta,\kappa\}+\kappa^2+\nicefrac{(7\kappa+1)}{2}} \right],
			\end{split}
			\end{equation}
			and
			\item \label{DNNerrorEstimate:Params}  it holds for all 
			$d,N,M\in\N$, $\delta\in (0,1]$
			that
			\begin{equation}
			\begin{split}
			&\paramANN(\mathfrak{u}_{d,N,M,\delta})
			\le \mathcal{C} M^2 N^{6+4\kappa} [\LogBin(\delta^{-1})+1]^{2} d^{16+8\kappa}
			.
			\end{split}
			\end{equation}
		\end{enumerate}
\end{prop}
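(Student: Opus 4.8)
The plan is to combine the Monte Carlo Euler error estimate in \Cref{prop:PDE_approx_Lp} with the DNN representation of Monte Carlo Euler approximations in \Cref{Cor:MCeuler} and then to select a favourable realization of the random DNNs that \Cref{Cor:MCeuler} produces. First I would fix $d,N,M\in\N$ and $\delta\in(0,1]$, set up a filtered probability space satisfying the usual conditions that carries independent standard Brownian motions $W^{d,m}\colon[0,T]\times\Omega\to\R^d$, $d,m\in\N$, together with the linearly interpolated Euler processes $\affineProcess^{N,d,m,x}$, $x\in\R^d$, $N,d,m\in\N$, driven by $\functionANN(\mathfrak{f}^1_{d,\deltaIndex})$ and by the noise $\sqrt{2A_d}\,W^{d,m}$ exactly as in \Cref{prop:PDE_approx_Lp}. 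Before invoking \Cref{prop:PDE_approx_Lp} I would record that $f^0_d\in C(\R^d,\R)$ and $f^1_d\in C(\R^d,\R^d)$ for all $d\in\N$: indeed, by \eqref{DNNerrorEstimate:appr:coef} the map $f^m_d$ is, on every ball $\{\|x\|\le R\}$, a uniform limit as $\varepsilon\downarrow0$ of the continuous functions $\functionANN(\mathfrak{f}^m_{d,\varepsilon})$.

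Next I would invoke \Cref{prop:PDE_approx_Lp} with $\dnnFunction^m_{d,\varepsilon}\is\functionANN(\mathfrak{f}^m_{d,\varepsilon})$; the hypotheses \eqref{eq:prop:new}--\eqref{DNNerrorEstimate:growthMatrix} provide exactly the required bounds on the coefficient functions and the matrices $A_d$. This yields that $u_d$ is the asserted viscosity solution and that
\begin{equation*}
\Bigl(\E\Bigl[\int_{[0,T]\times\R^d}\bigl|u_d(t,x)-\tfrac{1}{M}\,\smallsum_{m=1}^M\dnnFunction^0_{d,\deltaIndex}(\affineProcess^{N,d,m,x}_t)\bigr|^p\,\nu_d(dt,dx)\Bigr]\Bigr)^{\!\nicefrac{1}{p}}\le\mathcal{C}_0\Bigl[\tfrac{d^{\kappa(\kappa+4)+\max\{\eta,\kappa(2\kappa+1)\}}}{N^{\nicefrac{1}{2}}}+\tfrac{d^{\kappa+\max\{\eta,\kappa^2\}}}{M^{\nicefrac{1}{2}}}\Bigr]\bigl[\max\{1,\nu_d([0,T]\times\R^d)\}\bigr]^{\nicefrac{1}{p}}
\end{equation*}
for some $\mathcal{C}_0\in(0,\infty)$ independent of $d,N,M,\delta$; the error incurred by replacing $f^1_d,f^0_d$ by their perturbations is already absorbed here, since $\deltaIndex\le(T/N)^{1/2}$.

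Then I would apply \Cref{Cor:MCeuler} with $\drift\is\mathfrak{f}^1_{d,\deltaIndex}$, $\initial\is\mathfrak{f}^0_{d,\deltaIndex}$ (so that $\mathfrak{d}=1$), $k\is d$, $B\is\sqrt{2A_d}$, $\varepsilon\is\delta$, $\nu\is\nu_d$, $\alpha\is\kappa$, $c\is\kappa$, $C\is\kappa d^\kappa$, $\mathfrak{C}\is\kappa d^\kappa$, and $\mathcal{C}\is\max\{[\int_{[0,T]\times\R^d}\|x\|^{\max\{4p\kappa,6p\}}\,\nu_d(dt,dx)]^{\nicefrac{1}{\max\{4p\kappa,6p\}}},1\}$, which is finite by \eqref{DNNerrorEstimate:MeasureAssumption} and satisfies $\mathcal{C}\le\eta d^\eta$ because $\max\{4p\kappa,6p\}=2p\max\{2\kappa,3\}$; the growth bound $\|(\functionANN(\drift))(x)\|\le C+c\|x\|$ holds by \eqref{DNNerrorEstimate:growthPhiOne}, the local Lipschitz hypothesis \eqref{MCeuler:Lipschitz} holds by \eqref{DNNerrorEstimate:approximationLocallyLipschitz}, and the Euler processes of \Cref{Cor:MCeuler} coincide with the $\affineProcess^{N,d,m,\cdot}$ built above. \Cref{Cor:MCeuler} then produces DNNs $\Psi_\omega\in\ANNs$, $\omega\in\Omega$, with $\functionANN(\Psi_\omega)\in C(\R^{d+1},\R)$, with $(t,x,\omega)\mapsto(\functionANN(\Psi_\omega))(t,x)$ jointly measurable, with
\begin{equation*}
\Bigl(\E\Bigl[\int_{[0,T]\times\R^d}\bigl|\tfrac{1}{M}\,\smallsum_{m=1}^M\dnnFunction^0_{d,\deltaIndex}(\affineProcess^{N,d,m,x}_t)-(\functionANN(\Psi_\omega))(t,x)\bigr|^p\,\nu_d(dt,dx)\Bigr]\Bigr)^{\!\nicefrac{1}{p}}\le\mathcal{C}_1\,\delta\,d^{\,(2\kappa+3)\max\{\eta,\kappa\}+\kappa^2+\nicefrac{(7\kappa+1)}{2}}\bigl[\max\{1,\nu_d([0,T]\times\R^d)\}\bigr]^{\nicefrac{1}{p}}
\end{equation*}
for some $\mathcal{C}_1\in(0,\infty)$ independent of $d,N,M,\delta$ (obtained by substituting $\operatorname{Trace}(B^{*}B)=2\operatorname{Trace}(A_d)\le2\kappa d^\kappa$ and the constants above into \eqref{MCeuler:Estimate} and over-estimating the resulting $d$-exponent by the displayed one), and with
\begin{equation*}
\paramANN(\Psi_\omega)\le 2M^2\paramANN(\mathfrak{f}^0_{d,\deltaIndex})+9M^2N^6d^{16}\bigl[2\lengthANN(\mathfrak{f}^1_{d,\deltaIndex})+\mathfrak{D}+(24+6\lengthANN(\mathfrak{f}^1_{d,\deltaIndex})+[4+\paramANN(\mathfrak{f}^1_{d,\deltaIndex})]^2)^2\bigr]^2,\quad\mathfrak{D}=2160[\LogBin(\delta^{-1})+4]-504.
\end{equation*}

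Finally I would combine the two displayed estimates via the triangle inequality in $L^p(\P\otimes\nu_d)$; since the resulting inequality bounds $\E[\int_{[0,T]\times\R^d}|u_d(y)-(\functionANN(\Psi_\omega))(y)|^p\,\nu_d(dy)]$ --- a measurable function of $\omega$ by the joint measurability above and Tonelli's theorem --- by the $p$-th power of a deterministic quantity, there exists $\omega_0=\omega_0(d,N,M,\delta)\in\Omega$ for which the corresponding deterministic estimate holds, and I would set $\mathfrak{u}_{d,N,M,\delta}\is\Psi_{\omega_0}$. Then the continuity claim is immediate because $\outDimANN(\initial)=1$, the $L^p$-error bound follows with any $\mathcal{C}\ge\mathcal{C}_0+\mathcal{C}_1$ (its $\delta$-term being exactly the contribution of \Cref{Cor:MCeuler}), and the parameter bound follows by inserting into the last display the estimates $\paramANN(\mathfrak{f}^m_{d,\deltaIndex})\le\kappa d^\kappa(\deltaIndex)^{-\kappa}\le\kappa\max\{T^{-\nicefrac{\kappa}{2}},1\}\,d^\kappa N^{\nicefrac{\kappa}{2}}$ (from \eqref{eq:prop:new}), $\lengthANN(\mathfrak{f}^1_{d,\deltaIndex})\le\paramANN(\mathfrak{f}^1_{d,\deltaIndex})$, and $\mathfrak{D}\le2160(\LogBin(\delta^{-1})+4)$, which after enlarging $\mathcal{C}$ gives $\paramANN(\mathfrak{u}_{d,N,M,\delta})\le\mathcal{C}M^2N^{6+4\kappa}[\LogBin(\delta^{-1})+1]^2d^{16+8\kappa}$. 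The genuinely difficult analysis --- the weak Euler error and the DNN emulation of the Euler scheme --- has already been carried out in \Cref{prop:perturbation_PDE_2}, \Cref{prop:PDE_approx_Lp}, and \Cref{Cor:MCeuler}, so the remaining obstacle is essentially organizational: one must make sure that the \emph{same} random Euler processes feed both \Cref{prop:PDE_approx_Lp} and \Cref{Cor:MCeuler} so that the triangle inequality telescopes, verify the joint measurability needed for the selection of $\omega_0$, and check that the various $d,N,M,\delta$-powers emerging from the two inputs collapse to the clean bounds claimed in the proposition.
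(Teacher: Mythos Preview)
Your proposal is correct and follows essentially the same route as the paper: set up the Euler processes on a filtered probability space, apply \Cref{prop:PDE_approx_Lp} for the Monte Carlo Euler error and \Cref{Cor:MCeuler} (with the same parameter choices you list) for the DNN emulation, combine via the triangle inequality in $L^p(\P\otimes\nu_d)$, select a good $\omega_0$ by measurability, and finish with the parameter bookkeeping using $\paramANN(\mathfrak{f}^m_{d,\deltaIndex})\le\kappa\max\{T^{-\kappa/2},1\}d^\kappa N^{\kappa/2}$ and $\lengthANN\le\paramANN$. Your extra remark that $f^0_d$ is continuous as a locally uniform limit of the $\functionANN(\mathfrak{f}^0_{d,\varepsilon})$ is a detail the paper leaves implicit; otherwise the arguments coincide.
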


\begin{proof}[Proof of Proposition~\ref{prop:DNNerrorEstimate}]
Throughout this proof		
	let $\mathfrak{D}_\delta \in \R$, $\delta\in (0,1]$, satisfy for all $\delta\in (0,1]$ that 
	\begin{equation}\label{DNNerrorEstimate:paramBound}
	\mathfrak{D}_\delta= 2160 [\LogBin(\delta^{-1})+4]-504,
	\end{equation}
	let $C_d \in \R$, $d \in \N$,  satisfy for all $d\in\N$ that 
	\begin{equation}\label{DNNerrorEstimate:MomentBound}
	C_d=\max\!\left\{\left[\int_{[0,T]\times\R^d}\Vert x\Vert^{\max\{4p\kappa,6p\}} \, \nu_d(dt,dx)\right]^{\!\nicefrac{1}{\max\{4p\kappa,6p\}}},1\right\},
	\end{equation}
	let $\mathcal{C}_1 \in(0,\infty)$ satisfy that
	\begin{equation}\label{DNNerrorEstimateProof:DefinitionCone}
	\mathcal{C}_1=\big[2p\max\{\eta,\kappa\} \max\{T,1\}\max\{\kappa,1\}\big]^{2\kappa+3} [1+(2\kappa)^{\nicefrac{1}{2}}]^{2\kappa+3}
	\kappa e^{(2\kappa+3)\kappa  T} 2^{2\kappa + 4}  \, 3^{\kappa},
	\end{equation}
	let $\mathcal{C}_2\in (0,\infty)$ satisfy that
	\begin{equation}\label{DNNerrorEstimateProof:DefinitionCtwo}
		\mathcal{C}_2=	
		2^{57} [\max\{\kappa,1\}]^8  [\max\{T^{-\nicefrac{\kappa}{2}},1\}]^8,
	\end{equation}
	let $ ( \Omega, \mathcal{F}, \P, (\mathbb{F}_t)_{t\in [0,T]} ) $ be a filtered probability space which satisfies the usual conditions,
	let $ W^{ d, m } \colon [0,T] \times \Omega \to \R^d $, $ d, m \in \N $, 
	be independent standard $(\mathbb{F}_t)_{t\in [0,T]}$-Brownian motions, 
	let 
	$ \affineProcess^{ N, d, m, x } \colon [0,T] \times \Omega \to \R^d $, 	$ x \in \R^d $,
	$ N,d, m \in \N $,
	be stochastic processes 
	which satisfy for all 
	$N, d, m \in \N $,
	$ x \in \R^d $,
	$n\in\{0,1,\dots,N-1\}$,
	$ t \in [\tfrac{nT}{N},\tfrac{(n+1)T}{N}]$  
	that $\affineProcess^{ N, d, m, x }_0=x$ and
	\begin{equation}
	\begin{split}
	&\affineProcess^{ N, d, m, x }_t 
	=
	\\&\affineProcess^{ N, d, m, x }_{\frac{nT}{N}}+ \left(\tfrac{tN}{T}-n\right)\Big[\tfrac{T}{N}\big(\functionANN\big(\mathfrak{f}^1_{d, \deltaIndex}\big)\big) \big( 
	\affineProcess^{ N, d, m, x }_{\frac{nT}{N}} 
	\big)
	+\sqrt{ 2 A_d }
	\big(W^{ d, m }_{\frac{(n+1)T}{N}}-W^{ d, m }_{\frac{nT}{N}}\big)\Big]
	\!,
	\end{split}
	\end{equation}
	let 	
	$(\psi_{d,N,M,\delta,\omega})_{(d,N,M, \delta, \omega) \in \N^3 \times (0,1] \times \Omega}\subseteq \ANNs$   satisfy that
	\begin{enumerate}[(I)]
		\item \label{DNNerrorEstimateProof:Function} it holds for all $d,N,M\in\N$, $\delta\in (0,1]$,  $\omega\in \Omega$ that $\functionANN (\psi_{d,N,M,\delta,\omega})\in C(\R^{d+1},\R)$,
		\item \label{DNNerrorEstimateProof:Continuity}
		it holds  for all $d,N,M\in\N$, $\delta\in (0,1]$,  $t\in [0,T]$, $x\in\R^d$ that $\Omega\ni \omega\mapsto\allowbreak (\functionANN (\psi_{d,N,M,\delta,\omega}))(t,x)\in\R$
		is measurable,
		\item \label{DNNerrorEstimateProof:ItemEstimate} it holds  for all $d,N,M\in\N$, $\delta\in (0,1]$ that 
		\begin{equation}\label{DNNerrorEstimateProof:Estimate}
		\begin{split}
		&\bigg[\int_{[0,T]\times\R^d}\int_{\Omega}
		\bigg\vert(\functionANN(\psi_{d,N,M,\delta,\omega}))(t,x)
		\\&-\frac{1}{M}\bigg[\smallsum\limits_{m=1}^M \big(\functionANN\big(\mathfrak{f}^0_{d, \deltaIndex}\big)\big)\big(\affineProcess^{N, d, m, x }_t(\omega)\big)\bigg]\bigg\vert^p 
		\,\P(d\omega) \, \nu_d(dt,dx)\bigg]^{\!\nicefrac{1}{p}}
		\\&\le  \big[2p\max\{C_d,\kappa d^\kappa\}\max\{T,1\}\max\{\kappa,1\}\big]^{2\kappa+3}\left[1+
		\sqrt{ \operatorname{Trace}(2A_d)} \right]^{2\kappa+3}
		\\&\cdot \delta {d}^{\nicefrac{(\kappa+1)}{2}} \kappa d^\kappa  e^{(2\kappa+3)\kappa  T} 2^{2\kappa + 4} \, 3^{\kappa}
	\max\!\big\{1,[\nu_d([0,T]\times\R^d)]^{\nicefrac{1}{p}}\big\},	
		\end{split}
		\end{equation}
		and
		\item \label{DNNerrorEstimateProof:ItemParams}
		it holds for all $d,N,M\in\N$, $\delta\in (0,1]$, $\omega\in \Omega$ that 
		\begin{equation}
		\begin{split}
							\paramANN(\psi_{d,N,M,\delta,\omega})
							&\le 2 M^2 \paramANN\big(\mathfrak{f}^0_{d, \deltaIndex}\big)
+9 M^2 N^6 d^{16} \Big[  2 \lengthANN\big(\mathfrak{f}^1_{d, \deltaIndex}\big)+
							\mathfrak{D}_\delta
							\\& 
							+\big(
							24
							+6\lengthANN\big(\mathfrak{f}^1_{d, \deltaIndex}\big)
							+   \big[4+\paramANN\big(\mathfrak{f}^1_{d, \deltaIndex}\big)\big]^{\!2}\big)^{\!2}
							\Big]^{2}
		\end{split}
		\end{equation}
	\end{enumerate}
	(cf.\ Corollary~\ref{Cor:MCeuler} (applied with $M \is M$, $N \is N$, $d \is d$, $\mathfrak{d} \is 1$, $k \is d$, $p \is p$, $\alpha \is \kappa$, $c \is \kappa$, $C \is \kappa d^\kappa$, $\mathcal{C} \is C_d$, $\mathfrak{C} \is \kappa d^\kappa$, $T \is T$, $\mathfrak{D} \is \mathfrak{D}_\delta$,  $B \is \sqrt{2 A_d}$, $\varepsilon \is \delta$,  $\drift \is \mathfrak{f}^1_{d, \deltaIndex}$, $\initial \is \mathfrak{f}^0_{d, \deltaIndex}$, $\nu \is \nu_d$,
	$ ( \Omega, \mathcal{F}, \P, (\mathbb{F}_t)_{t\in [0,T]} ) \is  ( \Omega, \mathcal{F}, \P, (\mathbb{F}_t)_{t\in [0,T]} )  $, $W^m \is W^{d,m}$, $Y^m \is (Y^{N,d,m,x})_{x\in\R^d}$
for $d,N,M\in\N$, $m\in\{1,2,\dots, M\}$, $\delta\in (0,1]$ in the notation of Corollary~\ref{Cor:MCeuler})),
	let $Z_{d,N,M,\delta}\colon \Omega\to [0,\infty]$, $d,N,M\in\N$, $\delta\in (0,1]$, satisfy for all $d,N,M\in\N$, $\delta\in (0,1]$, $\omega\in\Omega$ that 
	\begin{equation}\label{DNNerrorEstimateProof:DefRV}
	Z_{d,N,M,\delta}(\omega)=\int_{ [0,T]\times\R^d }\left\vert u_d(y)-(\functionANN(\psi_{d,N,M,\delta,\omega}))(y)\right\vert^p
	\nu_d (d y)
	\end{equation}
	(cf.\ \eqref{DNNerrorEstimateProof:Function}), and
 let $\mathcal{C}_3\in (0,\infty)$ satisfy that for all 
$d,N,M\in\N$
it holds that 
\begin{equation}\label{DNNerrorEstimateProof:ErrorAnalysis}
\begin{split}
& \left(
\E\bigg[\int_{[0,T]\times\R^d}
\big|
u_d(t,x) 
- 
\tfrac{ 1 }{ M } 
\big[ 
\textstyle
\sum_{ m = 1 }^{M}
\big(\functionANN\big(\mathfrak{f}^0_{d, \deltaIndex}\big)\big)\big(
\affineProcess^{N, d, m, x }_t
\big)
\big]
\big|^p
\,
\nu_d (d t, d x)
\bigg]
\right)^{\!\nicefrac{1}{p}} 
\\&\le 
\mathcal{C}_3 \!
\left[ \frac{d^{\kappa(\kappa+4)+\max\{\eta,\kappa(2\kappa+1)\}}}{N^{\nicefrac{1}{2}}}
+ \frac{d^{\kappa+\max\{\eta,\kappa^2\}}}{M^{\nicefrac{1}{2}}} \right] \! \left[\max\!\big\{1,\nu_d([0,T]\times\R^d)\big\}\right]^{\!\nicefrac{1}{p}}
\end{split}
\end{equation}
(cf.\ item~\eqref{item:PDE_approxMainStatement} in Proposition~\ref{prop:PDE_approx_Lp} (applied with 
$T \is T$, $\kappa \is \kappa$, $\eta \is \eta$, $p \is p$, $A_d \is A_d$, $\nu_d \is \nu_d$, 
 $f^0_d \is f^0_d$, $f^1_d \is f^1_d$ $\dnnFunction^0_{d, \varepsilon} \is \functionANN(\mathfrak{f}^0_{d, \varepsilon})$, $\dnnFunction^1_{d, \varepsilon} \is \functionANN(\mathfrak{f}^1_{d, \varepsilon})$, $ ( \Omega, \mathcal{F}, \P ) \is  ( \Omega, \mathcal{F}, \P)  $, $W^{d,m} \is W^{d,m}$, $Y^{N,d,m,x} \is Y^{N,d,m,x}$
for $d,N,M\in\N$, $m\in\{1,2,\dots, M\}$, $\varepsilon\in (0,1]$, $x\in\R^d$ in the notation of Proposition~\ref{prop:PDE_approx_Lp})).
Observe that \eqref{DNNerrorEstimate:MeasureAssumption} and \eqref{DNNerrorEstimate:MomentBound} demonstrate that for all $d\in\N$ it holds that
\begin{equation}\label{DNNerrorEstimate:MomentBoundTwo}
\begin{split}
	\max\{C_d,\kappa d^\kappa\}&=\max\!\left\{\left[\int_{[0,T]\times\R^d}\Vert x\Vert^{2p\max\{2\kappa,3\}} \, \nu_d(dt,dx)\right]^{\!\nicefrac{1}{2p\max\{2\kappa,3\}}},1,\kappa d^\kappa\right\}
	\\&\le \max\!\left\{(\eta d^\eta)^{\nicefrac{1}{(2\max\{2\kappa,3\})}},1,\kappa d^\kappa\right\}
	\le \max\!\left\{\eta d^\eta,1,\kappa d^\kappa\right\}
	\\&\le \max\{\eta,1,\kappa\}\, d^{\max\{\eta,\kappa\}}
	= \max\{\eta,\kappa\}\, d^{\max\{\eta,\kappa\}}.
\end{split}
\end{equation}
Next note that \eqref{DNNerrorEstimate:growthMatrix} proves that 
\begin{equation}
\begin{split}
	1+\sqrt{ \operatorname{Trace}(2A_d)}
	\le 1+(2\kappa d^\kappa)^{\nicefrac{1}{2}} \le d^{\nicefrac{\kappa}{2}} [1+(2\kappa)^{\nicefrac{1}{2}}].
\end{split}
\end{equation}
Combining this with \eqref{DNNerrorEstimateProof:Estimate} and \eqref{DNNerrorEstimate:MomentBoundTwo} ensures   that for all $d,N,M\in\N$, $\delta\in (0,1]$ it holds that 
		\begin{equation}
		\begin{split}
		&\bigg[\int_{[0,T]\times\R^d}\int_{\Omega}
		\bigg\vert(\functionANN(\psi_{d,N,M,\delta,\omega}))(t,x)
		\\&-\frac{1}{M}\bigg[\smallsum\limits_{m=1}^M \big(\functionANN\big(\mathfrak{f}^0_{d, \deltaIndex}\big)\big)\big(\affineProcess^{N, d, m, x }_t(\omega)\big)\bigg]\bigg\vert^p 
		\,\P(d\omega) \, \nu_d(dt,dx)\bigg]^{\!\nicefrac{1}{p}}
		\\&\le  \big[2p\max\{\eta,\kappa\} d^{\max\{\eta,\kappa\}}\max\{T,1\}\max\{\kappa,1\}\big]^{2\kappa+3}\left[d^{\nicefrac{\kappa}{2}} [1+(2\kappa)^{\nicefrac{1}{2}}]\right]^{2\kappa+3}
		\\&\cdot \delta {d}^{\nicefrac{(\kappa+1)}{2}} \kappa d^\kappa e^{(2\kappa+3)\kappa  T} 2^{2\kappa + 4} \, 3^{\kappa}
\max\!\big\{1,[\nu_d([0,T]\times\R^d)]^{\nicefrac{1}{p}}\big\}
		\\&\le \delta
		 \big[2p\max\{\eta,\kappa\} \max\{T,1\}\max\{\kappa,1\}\big]^{2\kappa+3} [1+(2\kappa)^{\nicefrac{1}{2}}]^{2\kappa+3}
		   \kappa e^{(2\kappa+3)\kappa  T} 2^{2\kappa + 4} \, 3^{\kappa}
		\\&\cdot d^{(2\kappa+3)\max\{\eta,\kappa\}+\kappa(\kappa+2)+\kappa+\nicefrac{(\kappa+1)}{2}} 
\max\!\big\{1,[\nu_d([0,T]\times\R^d)]^{\nicefrac{1}{p}}\big\}.	
		\end{split}
		\end{equation}
This and \eqref{DNNerrorEstimateProof:DefinitionCone} imply   that for all $d,N,M\in\N$, $\delta\in (0,1]$ it holds that
		\begin{equation}\label{DNNerrorEstimateProof:EstimateTwo}
		\begin{split}
		&\bigg[\int_{[0,T]\times\R^d}\int_{\Omega}
		\bigg\vert(\functionANN(\psi_{d,N,M,\delta,\omega}))(t,x)
		\\&-\frac{1}{M}\bigg[\smallsum\limits_{m=1}^M \big(\functionANN\big(\mathfrak{f}^0_{d, \deltaIndex}\big)\big)\big(\affineProcess^{N, d, m, x }_t(\omega)\big)\bigg]\bigg\vert^p 
		\,\P(d\omega) \, \nu_d(dt,dx)\bigg]^{\!\nicefrac{1}{p}}
						\\&\le \mathcal{C}_1 \delta\,
						 d^{(2\kappa+3)\max\{\eta,\kappa\}+\kappa(\kappa+2)+\kappa+\nicefrac{(\kappa+1)}{2}} 
						\max\!\big\{1,[\nu_d([0,T]\times\R^d)]^{\nicefrac{1}{p}}\big\}
				.	
		\end{split}
		\end{equation}
Furthermore, observe that \eqref{eq:prop:new} shows that for all $ d, N\in\N$, $m\in\{0,1\}$ it holds that
\begin{equation}\label{DNNerrorEstimateProof:AuxParams}
\begin{split}
	\lengthANN\big( \mathfrak{f}^m_{d, \deltaIndex} \big)
	&\le \mathcal{P}\big( \mathfrak{f}^m_{d, \deltaIndex} \big) 
	\leq \kappa d^{ \kappa } [\deltaIndex]^{ - \kappa }
	\\&=\kappa d^{ \kappa } N^{\nicefrac{\kappa}{2}}[\min\{T^{\nicefrac{1}{2}},N^{\nicefrac{1}{2}}\}]^{-\kappa}
	\le\kappa d^{ \kappa } N^{\nicefrac{\kappa}{2}}[\min\{T^{\nicefrac{1}{2}},1\}]^{-\kappa}
	\\&=\kappa d^{ \kappa } N^{\nicefrac{\kappa}{2}} \max\{T^{-\nicefrac{\kappa}{2}},1\}.
\end{split}
\end{equation}
Hence, we obtain  that for all $d,N\in\N$, $\delta\in (0,1]$ it holds that 
\begin{equation}
	\begin{split}
							&24
							+6\lengthANN\big(\mathfrak{f}^1_{d, \deltaIndex}\big)
							+   \big[4+\paramANN\big(\mathfrak{f}^1_{d, \deltaIndex}\big)\big]^{\!2}
\\&\le 							24
+6\paramANN\big(\mathfrak{f}^1_{d, \deltaIndex}\big)
+  25 \big[\paramANN\big(\mathfrak{f}^1_{d, \deltaIndex}\big)\big]^{\!2}
\\&\le 							24
+  31 \big[\paramANN\big(\mathfrak{f}^1_{d, \deltaIndex}\big)\big]^{\!2}
	\\&\le 55 \big[\paramANN\big(\mathfrak{f}^1_{d, \deltaIndex}\big)\big]^{\!2}
		\le 	
		 2^6 \kappa^2 d^{2 \kappa } N^{\kappa} |\!\max\{T^{-\nicefrac{\kappa}{2}},1\}|^2.  
	\end{split}
\end{equation}
This, \eqref{DNNerrorEstimateProof:ItemParams}, and \eqref{DNNerrorEstimateProof:AuxParams}
 establish that  for all $d,N,M\in\N$, $\delta\in (0,1]$, $\omega\in \Omega$ it holds that 
		\begin{equation}\label{DNNerrorEstimate:paramBoundPreFinal}
		\begin{split}
	&\paramANN(\psi_{d,N,M,\delta,\omega})
		\le 2 M^2 \kappa d^{ \kappa } N^{\nicefrac{\kappa}{2}} \max\{T^{-\nicefrac{\kappa}{2}},1\}
		\\&   +9 M^2 N^6 d^{16} \big[  2 \kappa d^{ \kappa } N^{\nicefrac{\kappa}{2}} \max\{T^{-\nicefrac{\kappa}{2}},1\}+
		\mathfrak{D}_\delta
+2^{12}\kappa^4 d^{4 \kappa } N^{2\kappa} |\!\max\{T^{-\nicefrac{\kappa}{2}},1\}|^4
		\big]^{2}
				\\&\le 2 M^2 \kappa d^{ \kappa } N^{\nicefrac{\kappa}{2}} \max\{T^{-\nicefrac{\kappa}{2}},1\}
				\\&+9 M^2 N^6 d^{16} \big[
				(2^{12} + 3) \mathfrak{D}_\delta |\!\max\{\kappa,1\}|^4 d^{4 \kappa } N^{2\kappa} |\!\max\{T^{-\nicefrac{\kappa}{2}},1\}|^4
				\big]^{2}
				\\&\le 2 M^2 \kappa d^{ \kappa } N^{\nicefrac{\kappa}{2}} \max\{T^{-\nicefrac{\kappa}{2}},1\}
				\\&+2^{26
				} 3^2M^2 N^6 d^{16} \big[
				 \mathfrak{D}_\delta |\!\max\{\kappa,1\}|^4 d^{4 \kappa } N^{2\kappa} |\!\max\{T^{-\nicefrac{\kappa}{2}},1\}|^4
				\big]^{2}.
		\end{split}
		\end{equation}
Moreover, observe that \eqref{DNNerrorEstimate:paramBound} proves for all $\delta\in (0,1]$ that 
			\begin{equation}
			\begin{split}
						\mathfrak{D}_\delta = 2160 \LogBin(\delta^{-1}) + 8136 \leq 8136 [\LogBin(\delta^{-1}) + 1]  \leq 2^{13} [\LogBin(\delta^{-1}) + 1].
			\end{split}
			\end{equation}
Combining this with \eqref{DNNerrorEstimate:paramBoundPreFinal} ensures  that for all $d,N,M\in\N$, $\delta\in (0,1]$, $\omega\in \Omega$ it holds that 
		\begin{equation}
		\begin{split}
		&\paramANN(\psi_{d,N,M,\delta,\omega})
	\le 2 M^2 \kappa d^{ \kappa } N^{\nicefrac{\kappa}{2}}\max\{T^{-\nicefrac{\kappa}{2}},1\}
		\\&+2^{26} 3^2 M^2 N^{6+4\kappa} d^{16+8\kappa} 
	 (\mathfrak{D}_\delta)^2[\max\{\kappa,1\}]^8  [\max\{T^{-\nicefrac{\kappa}{2}},1\}]^8
				\\&\le 2^{27} 3^2  M^2 N^{6+4\kappa} d^{16+8\kappa} 
			(\mathfrak{D}_\delta)^2[\max\{\kappa,1\}]^8  [\max\{T^{-\nicefrac{\kappa}{2}},1\}]^8
							\\&\le 2^{31} M^2 N^{6+4\kappa} d^{16+8\kappa} 
			 (\mathfrak{D}_\delta)^2[\max\{\kappa,1\}]^8  [\max\{T^{-\nicefrac{\kappa}{2}},1\}]^8
				\\&\le 2^{57} [\max\{\kappa,1\}]^8  [\max\{T^{-\nicefrac{\kappa}{2}},1\}]^8
				 M^2 N^{6+4\kappa} [\LogBin(\delta^{-1})+1]^{2} d^{16+8\kappa}.
		\end{split}
		\end{equation}
This and \eqref{DNNerrorEstimateProof:DefinitionCtwo} prove that for all $d,N,M\in\N$, $\delta\in (0,1]$, $\omega\in \Omega$ it holds that 
		\begin{equation}\label{DNNerrorEstimateProof:FinalParam}
		\begin{split}
		&\paramANN(\psi_{d,N,M,\delta,\omega})
		\le \mathcal{C}_2 M^2 N^{6+4\kappa} [\LogBin(\delta^{-1})+1 ]^{2} d^{16+8\kappa}.
		\end{split}
		\end{equation}
Next note that \eqref{DNNerrorEstimateProof:Function}, \eqref{DNNerrorEstimateProof:Continuity}, and, e.g., Beck et al.~\cite[Lemma~2.4]{Kolmogorov} show that for all $d,N,M\in\N$, $\delta\in (0,1]$ it holds  that 
$[0,T]\times\R^{d}\times \Omega \ni(t,x,\omega)\mapsto (\functionANN(\psi_{d,N,M,\delta,\omega}))(t,x)\allowbreak\in \R$ is 
measurable. The triangle inequality and Fubini's theorem hence establish that for all $d,N,M\in\N$, $\delta\in (0,1]$
		it holds that
\begin{align*}
&\bigg[\int_\Omega \int_{ [0,T]\times\R^d }\left\vert u_d(y)-(\functionANN(\psi_{d,N,M,\delta,\omega}))(y)\right\vert^p
\nu_d (d y) \, \P(d\omega)\bigg]^{\!\nicefrac{1}{p}} \numberthis
\\&\le
\left(
\E\bigg[\int_{[0,T]\times\R^d}
\bigg|
u_d(t,x) 
- 
\frac{ 1 }{ M } 
\bigg[ 
\textstyle
\sum\limits_{ m = 1 }^{M}
\big(\functionANN\big(\mathfrak{f}^0_{d, \deltaIndex}\big)\big)\big(
\affineProcess^{N, d, m, x }_t
\big)
\bigg]
\bigg|^p
\,
\nu_d (d t, d x)
\bigg]
\right)^{\!\nicefrac{1}{p}}
\\&+
\bigg[ \int_{ [0,T]\times\R^d } \int_\Omega\bigg|(\functionANN(\psi_{d,N,M,\delta,\omega}))(t,x)
\\& -\frac{ 1 }{ M }\bigg[ 
\textstyle
\sum\limits_{ m = 1 }^{M}
\big(\functionANN\big(\mathfrak{f}^0_{d, \deltaIndex}\big)\big)\big(
\affineProcess^{N, d, m, x }_t(\omega)
\big)
\bigg]\bigg|^p \, \P(d\omega) \, \nu_d (d t, d x) \bigg]^{\!\nicefrac{1}{p}}.
\end{align*}
Combining this with  \eqref{DNNerrorEstimateProof:ErrorAnalysis} and \eqref{DNNerrorEstimateProof:EstimateTwo} ensures that for all $d,N,M\in\N$, $\delta\in (0,1]$
it holds that
\begin{equation}
\begin{split}
&\bigg[\int_\Omega \int_{ [0,T]\times\R^d }\left\vert u_d(y)-(\functionANN(\psi_{d,N,M,\delta,\omega}))(y)\right\vert^p
\nu_d (d y) \, \P(d\omega)\bigg]^{\!\nicefrac{1}{p}}
\\&\le
\mathcal{C}_3 \!
\left[ \frac{d^{\kappa(\kappa+4)+\max\{\eta,\kappa(2\kappa+1)\}}}{N^{\nicefrac{1}{2}}}
+ \frac{d^{\kappa+\max\{\eta,\kappa^2\}}}{M^{\nicefrac{1}{2}}} \right]\! \left[\max\!\big\{1,\nu_d([0,T]\times\R^d)\big\}\right]^{\!\nicefrac{1}{p}}
\\&+\mathcal{C}_1 \delta\,
d^{(2\kappa+3)\max\{\eta,\kappa\}+\kappa(\kappa+2)+\kappa+\nicefrac{(\kappa+1)}{2}} 
\max\!\big\{1,[\nu_d([0,T]\times\R^d)]^{\nicefrac{1}{p}}\big\}.
\end{split}
\end{equation}
Hence, we obtain that for all $d,N,M\in\N$, $\delta\in (0,1]$
it holds that
\begin{equation}\label{DNNerrorEstimateProof:FinalExpectation}
\begin{split}
&\bigg[\int_\Omega \int_{ [0,T]\times\R^d }\left\vert u_d(y)-(\functionANN(\psi_{d,N,M,\delta,\omega}))(y)\right\vert^p
\nu_d (d y) \, \P(d\omega)\bigg]^{\!\nicefrac{1}{p}}
\\&\le
\max\{\mathcal{C}_1, \mathcal{C}_3 \} \!\left[\max\!\big\{1,\nu_d([0,T]\times\R^d)\big\}\right]^{\!\nicefrac{1}{p}}
\\&\cdot \left[ \frac{d^{\kappa(\kappa+4)+\max\{\eta,\kappa(2\kappa+1)\}}}{N^{\nicefrac{1}{2}}}
+ \frac{d^{\kappa+\max\{\eta,\kappa^2\}}}{M^{\nicefrac{1}{2}}}
+\delta d^{(2\kappa+3)\max\{\eta,\kappa\}+\kappa(\kappa+2)+\kappa+\nicefrac{(\kappa+1)}{2}} \right]
\!.
\end{split}
\end{equation}
Next note that \eqref{DNNerrorEstimateProof:Function}, \eqref{DNNerrorEstimateProof:Continuity}, and, e.g., Beck et al.~\cite[Lemma~2.4]{Kolmogorov} demonstrate that for all $d,N,M\in\N$, $\delta\in (0,1]$ it holds  that 
$[0,T]\times\R^{d}\times \Omega \ni(t,x,\omega)\mapsto (\functionANN(\psi_{d,N,M,\delta,\omega}))(t,x)\allowbreak\in \R$ is 
measurable. Combining this with Fubini's theorem, \eqref{DNNerrorEstimateProof:DefRV}, \eqref{DNNerrorEstimateProof:FinalExpectation}, and the fact that  $\forall \, d\in\N \colon u_d\in C([0,T]\times \R^d,\R)$  proves that
\begin{enumerate}[A)]
	\item it holds 
	for all $d,N,M\in\N$, $\delta\in (0,1]$  that  $Z_{d,N,M,\delta}$ 
	is a random variable and
	\item it holds 
	for all $d,N,M\in\N$, $\delta\in (0,1]$  that 
\begin{equation}
\begin{split}
&\E \big[\vert Z_{d,N,M,\delta}\vert \big] \le \left[\max\{\mathcal{C}_1, \mathcal{C}_3\}\right]^{p} \max\!\big\{1,\nu_d([0,T]\times\R^d)\big\}
\\&\cdot \left[ \frac{d^{\kappa(\kappa+4)+\max\{\eta,\kappa(2\kappa+1)\}}}{N^{\nicefrac{1}{2}}}
+ \frac{d^{\kappa+\max\{\eta,\kappa^2\}}}{M^{\nicefrac{1}{2}}}
+\delta  d^{(2\kappa+3)\max\{\eta,\kappa\}+\kappa(\kappa+2)+\kappa+\nicefrac{(\kappa+1)}{2}} \right]^p
\!.
\end{split}
\end{equation}
\end{enumerate}
This and, e.g., \cite[Lemma~2.1]{JentzenSalimovaWelti2018}
 prove that there exist $\mathfrak{w}_{d,N,M,\delta} \in \Omega$, $d,N,M\in\N$, $\delta\in (0,1]$, which satisfy that for all $d,N,M\in\N$, $\delta\in (0,1]$ it holds that
\begin{equation}\label{finalEstimateError}
\begin{split}
&\int_{ [0,T]\times\R^d }\left\vert u_d(y)-(\functionANN(\psi_{d,N,M,\delta,\mathfrak{w}_{d,N,M,\delta}}))(y)\right\vert^p
\nu_d (d y)
=Z_{d,N,M,\delta}(\mathfrak{w}_{d,N,M,\delta})
\\&\le \left[\max\{\mathcal{C}_1, \mathcal{C}_3  \}\right]^{p} \max\!\big\{1,\nu_d([0,T]\times\R^d)\big\}
\\&\cdot \left[ \frac{d^{\kappa(\kappa+4)+\max\{\eta,\kappa(2\kappa+1)\}}}{N^{\nicefrac{1}{2}}}
+ \frac{d^{\kappa+\max\{\eta,\kappa^2\}}}{M^{\nicefrac{1}{2}}}
+\delta d^{(2\kappa+3)\max\{\eta,\kappa\}+\kappa(\kappa+2)+\kappa+\nicefrac{(\kappa+1)}{2}} \right]^p
\\&= \left[\max\{\mathcal{C}_1, \mathcal{C}_3  \}\right]^{p} \max\!\big\{1,\nu_d([0,T]\times\R^d)\big\}
\\&\cdot \left[ \frac{d^{\kappa(\kappa+4)+\max\{\eta,\kappa(2\kappa+1)\}}}{N^{\nicefrac{1}{2}}}
+ \frac{d^{\kappa+\max\{\eta,\kappa^2\}}}{M^{\nicefrac{1}{2}}}
+\delta d^{(2\kappa+3)\max\{\eta,\kappa\}+\kappa^2+\nicefrac{(7\kappa+1)}{2}} \right]^p
\!.
\end{split}
\end{equation}
Combining this, \eqref{DNNerrorEstimateProof:Function}, and \eqref{DNNerrorEstimateProof:FinalParam} establishes items~\eqref{DNNerrorEstimate:func}--\eqref{DNNerrorEstimate:Params}.
	This completes the proof of Proposition~\ref{prop:DNNerrorEstimate}.
\end{proof}

\subsection{Cost estimates for deep ANNs}
\label{sub:cost}

\begin{theorem}
	\label{theorem:DNNerrorEstimate}
	Let 
	$ T, \kappa,\eta,c \in (0,\infty) $,   $p \in [2, \infty)$ satisfy that 
		\begin{equation}\label{DNNerrorEstimate:DimExponent}
		c=18+12\kappa+4\max\{\eta,\kappa^2\}+4\eta+[2\kappa(\kappa+4)+2\max\{\eta,\kappa(2\kappa+1)\}+2\eta](6+4\kappa),
		\end{equation}
	for every $d \in \N$
	let 
	$
	A_d = ( a_{ d, i, j } )_{ (i, j) \in \{ 1, 2,\dots, d \}^2 } $ $ \in \R^{ d \times d }
	$
	be a symmetric positive semidefinite matrix, 
 for every $d \in \N$ let $\nu_d  \colon \mathcal{B}([0,T]\times\R^d) \to [0,\infty)$ be a finite measure which satisfies that 
	\begin{equation}\label{theoremDNNerrorEstimate:MeasureAssumption}
	\left[\max\!\left\{1,\nu_d([0,T]\times\R^d),\int_{[0,T]\times \R^d} 
	\|x \|^{2p \max\{2\kappa, 3\}}
	\, \nu_d (d t, d x) \right\}\right]^{\!\nicefrac{1}{p}}\leq \eta d^{\eta},
	\end{equation}
for every  $m\in\{0,1\}$, $ d \in \N $
		let
		$ f^m_d \colon \R^d \to \R^{ m d - m + 1 } $
		be a function,
		let 
		$
		( \mathfrak{f}^m_{d, \varepsilon})_{ 
			(m, d, \varepsilon) \in \{ 0, 1 \} \times \N \times (0,1] 
		} 
		\allowbreak\subseteq \ANNs
		$, 
		assume for all
		$ d \in \N $, 
		$ \varepsilon \in (0,1] $, 
		$m\in\{0,1\}$,
		$ 
		x, y \in \R^d
		$
		that 
			\vspace{-1ex}
		\begin{gather}
		\label{eq:theorem:new}
		\functionANN( \mathfrak{f}^{ 0 }_{d, \varepsilon } )
		\in 
		C( \R^d,  \R), \qquad 	\functionANN( \mathfrak{f}^{ 1 }_{d, \varepsilon } )
		\in 
		C( \R^d,  \R^{ d }),  \qquad 	\mathcal{P}( \mathfrak{f}^{ m }_{d, \varepsilon } ) 
		\leq \kappa d^{ \kappa } \varepsilon^{ - \kappa }, \\
		\label{theoremDNNerrorEstimate:growthPhiOne}
		\| 
		f^1_d( x ) 
		- 
		f^1_d( y )
		\|
		\leq 
		\kappa 
		\| x - y \|, \qquad \|
		(\functionANN(\mathfrak{f}^1_{d, \varepsilon}))(x)    
		\|	
		\leq 
		\kappa ( d^{ \kappa } + \| x \| ),
				\\
		\label{theoremDNNerrorEstimate:approximationLocallyLipschitz}\left\vert (\functionANN(\mathfrak{f}^0_{d, \varepsilon})) (x) - (\functionANN(\mathfrak{f}^0_{d, \varepsilon})) (y)\right\vert \leq \kappa d^{\kappa} (1 + \|x\|^{\kappa} + \|y \|^{\kappa})\|x-y\|,
		\\
		\label{theoremDNNerrorEstimate:appr:coef}
		\left\| 
		f^m_d(x) 
		- 
		(\functionANN(\mathfrak{f}^m_{d, \varepsilon})) (x)
		\right\|
		\leq 
		\varepsilon \kappa d^{ \kappa }
		(
		1 + \| x \|^{ \kappa }
		),
			\\ \label{theoremDNNerrorEstimate:growthMatrix}
	\text{and} \qquad	|
		f^0_d( x )
		| 
		+
		\operatorname{Trace}(A_d)
		\leq 
		\kappa d^{ \kappa }
		( 1 + \| x \|^{ \kappa } )
		,
		\end{gather}
and	for every $d \in \N$	let
$u_d \in  \{v \in C([0, T] \times \R^d, \R) \colon \allowbreak \inf_{ q \in (0,\infty) }
\allowbreak \sup_{ (t,y) \in [0,T] \times \R^d } \allowbreak
\frac{ | v(t,y) | }{
	1 + \| y \|^q
}
< \infty \}$ be a viscosity solution of
	\begin{equation}
	\begin{split}
	( \tfrac{ \partial }{\partial t} u_d )( t, x ) 
	& = 
	( \tfrac{ \partial }{\partial x} u_d )( t, x )
	\,
	f^1_d( x )
	+
	\textstyle{\sum\limits_{ i, j = 1 }^d}
	a_{ d, i, j }
	\,
	( \tfrac{ \partial^2 }{ \partial x_i \partial x_j } u_d )( t, x )
	\end{split}
	\end{equation}
	with $ u_d( 0, x ) = f^0_d( x )$
	for $ ( t, x ) \in (0,T) \times \R^d $ (cf.\ \Cref{Def:euclideanNorm,Def:ANN,Definition:ANNrealization,Definition:Relu1}).
	Then there exist $\mathfrak{C}\in \R$ and $(\mathfrak{u}_{d,\varepsilon})_{d\in\N,\varepsilon\in (0,1]}\subseteq \ANNs$ such that for all 
$d\in\N$, $\varepsilon\in (0,1]$
				it holds  that $\functionANN(\mathfrak{u}_{d,\varepsilon})\in C(\R^{d+1},\R)$,
		$\paramANN(\mathfrak{u}_{d,\varepsilon})
		\le \mathfrak{C} \varepsilon^{-(18+8\kappa)}  d^{c}$,
		and
		\begin{equation}
		\label{eq:thm:cost}
		\begin{split}
		& \left[
		\int_{[0,T]\times\R^d}
		\left\vert
		u_d(y) 
		- 
		(\functionANN(\mathfrak{u}_{d,\varepsilon}))(y)
		\right\vert^p
		\nu_d (d y)
		\right]^{\!\nicefrac{1}{p}} 
		\le
		\varepsilon.
		\end{split}
		\end{equation}
\end{theorem}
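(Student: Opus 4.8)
The plan is to deduce the statement directly from \Cref{prop:DNNerrorEstimate} by choosing the Euler step count $N$, the Monte Carlo sample size $M$, and the coefficient approximation accuracy $\delta$ as suitable powers of $d$ and $\varepsilon^{-1}$. First I would note that \eqref{theoremDNNerrorEstimate:MeasureAssumption} implies both the moment hypothesis \eqref{DNNerrorEstimate:MeasureAssumption} of \Cref{prop:DNNerrorEstimate} and the bound $[\max\{1,\nu_d([0,T]\times\R^d)\}]^{\nicefrac1p}\le\eta d^{\eta}$, and that the remaining hypotheses \eqref{eq:theorem:new}--\eqref{theoremDNNerrorEstimate:growthMatrix} together with the viscosity solution assumption on $u_d$ coincide with \eqref{eq:prop:new}--\eqref{DNNerrorEstimate:growthMatrix}. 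Hence \Cref{prop:DNNerrorEstimate} applies and provides a constant $\mathcal C\in\R$ (which we may assume to satisfy $\mathcal C\ge1$) and DNNs $(\mathfrak u_{d,N,M,\delta})_{(d,N,M,\delta)\in\N^3\times(0,1]}\subseteq\ANNs$ with $\functionANN(\mathfrak u_{d,N,M,\delta})\in C(\R^{d+1},\R)$, the $L^p(\nu_d)$-error bound of item~\eqref{DNNerrorEstimate:Estimate}, and the size bound $\paramANN(\mathfrak u_{d,N,M,\delta})\le\mathcal C M^2 N^{6+4\kappa}[\LogBin(\delta^{-1})+1]^{2}d^{16+8\kappa}$. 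Abbreviating $A_1=\kappa(\kappa+4)+\max\{\eta,\kappa(2\kappa+1)\}$, $A_2=\kappa+\max\{\eta,\kappa^2\}$, and $A_3=(2\kappa+3)\max\{\eta,\kappa\}+\kappa^2+\tfrac{7\kappa+1}{2}$, the error bound together with $[\max\{1,\nu_d([0,T]\times\R^d)\}]^{\nicefrac1p}\le\eta d^{\eta}$ reads
\begin{equation*}
\Big[\textstyle\int_{[0,T]\times\R^d}\big|u_d(y)-(\functionANN(\mathfrak u_{d,N,M,\delta}))(y)\big|^p\,\nu_d(dy)\Big]^{\nicefrac1p}\le\mathcal C\eta d^{\eta}\big[d^{A_1}N^{-\nicefrac12}+d^{A_2}M^{-\nicefrac12}+\delta\, d^{A_3}\big].
\end{equation*}

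Next I would set, for $d\in\N$ and $\varepsilon\in(0,1]$, $N_{d,\varepsilon}=\lceil 9\mathcal C^{2}\eta^{2}d^{2\eta+2A_1}\varepsilon^{-2}\rceil$, $M_{d,\varepsilon}=\lceil 9\mathcal C^{2}\eta^{2}d^{2\eta+2A_2}\varepsilon^{-2}\rceil$, and $\delta_{d,\varepsilon}=(3\mathcal C\eta d^{\eta+A_3})^{-1}\varepsilon\in(0,1]$, where $\delta_{d,\varepsilon}\le1$ follows from $\mathcal C,\eta,d\ge1$ and $\varepsilon\le1$. With these choices $N_{d,\varepsilon}^{\nicefrac12}\ge3\mathcal C\eta d^{\eta+A_1}\varepsilon^{-1}$, $M_{d,\varepsilon}^{\nicefrac12}\ge3\mathcal C\eta d^{\eta+A_2}\varepsilon^{-1}$, and $\delta_{d,\varepsilon}d^{A_3}=(3\mathcal C\eta d^{\eta})^{-1}\varepsilon$, so each of the three summands in the displayed bound is at most $\nicefrac\varepsilon3$; therefore defining $\mathfrak u_{d,\varepsilon}:=\mathfrak u_{d,N_{d,\varepsilon},M_{d,\varepsilon},\delta_{d,\varepsilon}}$ yields \eqref{eq:thm:cost}, and $\functionANN(\mathfrak u_{d,\varepsilon})\in C(\R^{d+1},\R)$ is immediate from item~\eqref{DNNerrorEstimate:func}.

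It then remains to estimate $\paramANN(\mathfrak u_{d,\varepsilon})$. Using $\lceil x\rceil\le2x$ for $x\ge1$ I would bound $M_{d,\varepsilon}^{2}\le C'd^{4\eta+4A_2}\varepsilon^{-4}$ and $N_{d,\varepsilon}^{6+4\kappa}\le C''d^{(2\eta+2A_1)(6+4\kappa)}\varepsilon^{-2(6+4\kappa)}$, and, via the elementary estimate $\LogBin t\le t$, bound $[\LogBin(\delta_{d,\varepsilon}^{-1})+1]^{2}\le C'''d^{2}\varepsilon^{-2}$ with a constant $C'''$ depending only on $\mathcal C,\kappa,\eta$. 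Multiplying these with $\mathcal C d^{16+8\kappa}$, the exponents of $\varepsilon^{-1}$ add to $4+2(6+4\kappa)+2=18+8\kappa$, while the exponents of $d$ add to $(16+8\kappa)+(4\eta+4A_2)+(2\eta+2A_1)(6+4\kappa)+2=c$ by the definition \eqref{DNNerrorEstimate:DimExponent} of $c$; collecting all remaining numerical and parameter-dependent factors into a single constant $\mathfrak C\in\R$ gives $\paramANN(\mathfrak u_{d,\varepsilon})\le\mathfrak C\,\varepsilon^{-(18+8\kappa)}d^{c}$, which finishes the argument.

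The main obstacle is purely the exponent bookkeeping in the last step: one has to track how the three polynomial-in-$(d,\varepsilon^{-1})$ choices of $N$, $M$, and $\delta$ propagate through the quadratic-in-$M$, degree-$(6+4\kappa)$-in-$N$, and logarithmic-in-$\delta^{-1}$ dependence of the parameter count in \Cref{prop:DNNerrorEstimate}, and to verify that together with the $d^{16+8\kappa}$ factor they reproduce exactly the exponent $c$ from \eqref{DNNerrorEstimate:DimExponent} and the exponent $18+8\kappa$ of $\varepsilon^{-1}$; all the analytic content (error control, measurability, existence of the viscosity solutions) is already encapsulated in \Cref{prop:DNNerrorEstimate}.
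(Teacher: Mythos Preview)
Your proposal is correct and follows essentially the same route as the paper: apply \Cref{prop:DNNerrorEstimate}, then choose $N$, $M$, $\delta$ as the obvious polynomial functions of $d$ and $\varepsilon^{-1}$ so that each of the three error terms equals $\varepsilon/3$, and finally add the resulting exponents in the parameter bound. The only point you leave implicit that the paper spells out is that $\eta\ge 1$ (needed both to invoke \Cref{prop:DNNerrorEstimate}, which assumes $\eta\in[1,\infty)$, and to ensure $\delta_{d,\varepsilon}\le 1$); this follows at once from \eqref{theoremDNNerrorEstimate:MeasureAssumption} at $d=1$.
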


\begin{proof}[Proof of Theorem~\ref{theorem:DNNerrorEstimate}]
Note that 	\eqref{theoremDNNerrorEstimate:MeasureAssumption} implies that
	\begin{equation}\
\left[\max\!\left\{1,\nu_1([0,T]\times\R^d),\int_{[0,T]\times \R} 
\|x \|^{2p \max\{2\kappa, 3\}}
\, \nu_1 (d t, d x) \right\}\right]^{\!\nicefrac{1}{p}}\leq \eta.
\end{equation}
This proves that $\eta \in [1, \infty)$. 	
 Proposition~\ref{prop:DNNerrorEstimate} hence ensures that there exist $\mathcal{C}_1 \in (0, \infty)$ and 
 $(\Phi_{d,N,M,\delta})_{(d,N,M, \delta) \in \N^3 \times (0,1]}\subseteq \ANNs$ which  satisfy that
		\begin{enumerate}[(I)]
			\item\label{thmDNNerrorEstimate:LoadContinuity} it holds for all 
			$d,N,M\in\N$, $\delta\in (0,1]$
		 that $\functionANN(\Phi_{d,N,M,\delta})\in C(\R^{d+1},\R)$,
			\item it holds for all 
			$d,N,M\in\N$, $\delta\in (0,1]$
			 that 
			\begin{equation}\label{thmDNNerrorEstimate:BasisEstimate}
			\begin{split}
			& \left[
			\int_{[0,T]\times\R^d}
			\left\vert
			u_d(y) 
			- 
			(\functionANN(\Phi_{d,N,M,\delta}))(y)
			\right\vert^p
			\nu_d (d y)
			\right]^{\!\nicefrac{1}{p}} 
			\\&\le
			\mathcal{C}_1 \! \left[\max\!\big\{1,\nu_d([0,T]\times\R^d)\big\}\right]^{\!\nicefrac{1}{p}}
			\\&\cdot \left[ \frac{d^{\kappa(\kappa+4)+\max\{\eta,\kappa(2\kappa+1)\}}}{N^{\nicefrac{1}{2}}}
			+ \frac{d^{\kappa+\max\{\eta,\kappa^2\}}}{M^{\nicefrac{1}{2}}}
			+\delta d^{(2\kappa+3)\max\{\eta,\kappa\}+\kappa^2+\nicefrac{(7\kappa+1)}{2}} \right]
			\!,
			\end{split}
			\end{equation}
			and
			\item \label{thmDNNerrorEstimate:itemParam}
			it holds for all 
			$d,N,M\in\N$, $\delta\in (0,1]$
		 that
			\begin{equation}
			\begin{split}
			&\paramANN(\Phi_{d,N,M,\delta})
			\le \mathcal{C}_1 M^2 N^{6+4\kappa} [\LogBin(\delta^{-1})+1 ]^{2} d^{16+8\kappa}.
			\end{split}
			\end{equation}
		\end{enumerate}	
Next
let	$\mathcal{C}_2, \mathfrak{C} \in (0,\infty)$  satisfy that
\begin{equation}\label{thmDNNerrorEstimate:thirdConstant}
\mathcal{C}_2=\max\!\left\{0,\log_2(3\mathcal{C}_1\eta)\right\}+\tfrac{1}{\ln(2)}+\tfrac{1}{\ln(2)}\,\big[2(\kappa+2)\max\{\eta,\kappa\}+\kappa^2+\nicefrac{(7\kappa+1)}{2}\big]
\end{equation}	
and
\begin{equation}\label{thmDNNerrorEstimate:totalConstant}
	\mathfrak{C}=\mathcal{C}_1 2^{8+4\kappa}  (3\mathcal{C}_1\eta)^{16+8\kappa} [\mathcal{C}_2+1]^2,
\end{equation}
let $\mathcal{D}_{d,\varepsilon} \in  (0,1]$, $\varepsilon\in (0,1]$, $d \in \N$, 
satisfy for all $d\in\N$, $\varepsilon\in (0,1]$ that 
\begin{equation}\label{thmDNNerrorEstimate:ProductAccuracy}
\mathcal{D}_{d,\varepsilon}=\min\!\big\{1,(3\mathcal{C}_1\eta)^{-1}\varepsilon d^{-2(\kappa+2)\max\{\eta,\kappa\}-\kappa^2-\nicefrac{(7\kappa+1)}{2}}\big\},
\end{equation}
let $\mathfrak{M}_{d,\varepsilon} \in \N$, $\varepsilon\in (0,1]$, $d \in \N$, 
satisfy for all $d\in\N$, $\varepsilon\in (0,1]$ that 
\begin{equation}\label{thmDNNerrorEstimate:MCAccuracy}
\mathfrak{M}_{d,\varepsilon}=\min\!\big[\N\cap \!\big[(3\mathcal{C}_1\eta)^2 \varepsilon^{-2}d^{2\kappa+2\max\{\eta,\kappa^2\}+2\eta},\infty\big)\big],
\end{equation} 
let  $\mathfrak{N}_{d,\varepsilon} \in \N$, $\varepsilon\in (0,1]$, $d \in \N$, 
satisfy for all $d\in\N$, $\varepsilon\in (0,1]$ that 
	\begin{equation}\label{thmDNNerrorEstimate:EulerAccuracy}
\mathfrak{N}_{d,\varepsilon}=\min\!\left[\N\cap \!\big[(3\mathcal{C}_1\eta)^2 \varepsilon^{-2}d^{2\kappa(\kappa+4)+2\max\{\eta,\kappa(2\kappa+1)\}+2\eta},\infty\big)\right]\!,
\end{equation}
and let  $\mathfrak{u}_{d,\varepsilon}\in \ANNs$, $\varepsilon\in (0,1]$, $d \in \N$, 
satisfy for all $d\in\N$, $\varepsilon\in (0,1]$ that 
	\begin{equation}\label{thmDNNerrorEstimate:ChoiceANN}
	\mathfrak{u}_{d,\varepsilon}=\Phi_{d,\mathfrak{N}_{d,\varepsilon},\mathfrak{M}_{d,\varepsilon},\mathcal{D}_{d,\varepsilon}}.
	\end{equation}	
Observe that \eqref{theoremDNNerrorEstimate:MeasureAssumption} and \eqref{thmDNNerrorEstimate:BasisEstimate} ensure that for all 
$d,N,M\in\N$, $\delta\in (0,1]$
it holds that 
\begin{equation}
\begin{split}
& \left[
\int_{[0,T]\times\R^d}
\left\vert
u_d(y) 
- 
(\functionANN(\Phi_{d,N,M,\delta}))(y)
\right\vert^p
\nu_d (d y)
\right]^{\!\nicefrac{1}{p}} 
\\&\le
\mathcal{C}_1 \eta \!
 \left[ \frac{d^{\kappa(\kappa+4)+\max\{\eta,\kappa(2\kappa+1)\}+\eta}}{N^{\nicefrac{1}{2}}}
+ \frac{d^{\kappa+\max\{\eta,\kappa^2\}+\eta}}{M^{\nicefrac{1}{2}}}
+\delta d^{2(\kappa+2)\max\{\eta,\kappa\}+\kappa^2+\nicefrac{(7\kappa+1)}{2}} \right]
\!.
\end{split}
\end{equation}	
Combining this, \eqref{thmDNNerrorEstimate:ChoiceANN}, \eqref{thmDNNerrorEstimate:ProductAccuracy}, \eqref{thmDNNerrorEstimate:MCAccuracy}, and  \eqref{thmDNNerrorEstimate:EulerAccuracy} implies that for all $d\in\N$, $\varepsilon\in (0,1]$ it holds that 
\begin{equation}\label{thmDNNerrorEstimate:errorFinal}
\begin{split}
& \left[
\int_{[0,T]\times\R^d}
\left\vert
u_d(y) 
- 
(\functionANN(\mathfrak{u}_{d,\varepsilon}))(y)
\right\vert^p
\nu_d (d y)
\right]^{\!\nicefrac{1}{p}} 
\\&= \left[
\int_{[0,T]\times\R^d}
\left\vert
u_d(y) 
- 
(\functionANN(\Phi_{d,\mathfrak{N}_{d,\varepsilon},\mathfrak{M}_{d,\varepsilon},\mathcal{D}_{d,\varepsilon}}))(y)
\right\vert^p
\nu_d (d y)
\right]^{\!\nicefrac{1}{p}} 
\\&\le
\mathcal{C}_1 \eta \!
\left[ \frac{d^{\kappa(\kappa+4)+\max\{\eta,\kappa(2\kappa+1)\}+\eta}}{\vert\mathfrak{N}_{d,\varepsilon}\vert^{\nicefrac{1}{2}}}
+ \frac{d^{\kappa+\max\{\eta,\kappa^2\}+\eta}}{\vert\mathfrak{M}_{d,\varepsilon}\vert^{\nicefrac{1}{2}}}
+\mathcal{D}_{d,\varepsilon}  d^{2(\kappa+2)\max\{\eta,\kappa\}+\kappa^2+\nicefrac{(7\kappa+1)}{2}} \right]
\\&\le
\mathcal{C}_1 \eta
\Bigg[ \frac{d^{\kappa(\kappa+4)+\max\{\eta,\kappa(2\kappa+1)\}+\eta}}{\left\vert(3\mathcal{C}_1\eta)^2 \varepsilon^{-2}d^{2\kappa(\kappa+4)+2\max\{\eta,\kappa(2\kappa+1)\}+2\eta}\right\vert^{\nicefrac{1}{2}}}
+ \frac{d^{\kappa+\max\{\eta,\kappa^2\}+\eta}}{\left\vert(3\mathcal{C}_1\eta)^2 \varepsilon^{-2}d^{2\kappa+2\max\{\eta,\kappa^2\}+2\eta}\right\vert^{\nicefrac{1}{2}}}
\\&+(3\mathcal{C}_1\eta)^{-1}\varepsilon d^{-2(\kappa+2)\max\{\eta,\kappa\}-\kappa^2-\nicefrac{(7\kappa+1)}{2}}  d^{2(\kappa+2)\max\{\eta,\kappa\}+\kappa^2+\nicefrac{(7\kappa+1)}{2}} \Bigg]
\\&=\mathcal{C}_1\eta \! \left(\frac{1}{3\mathcal{C}_1\eta \varepsilon^{-1}}+\frac{1}{3\mathcal{C}_1\eta \varepsilon^{-1}}+\frac{\varepsilon}{3\mathcal{C}_1\eta }\right)
=\varepsilon.
\end{split}
\end{equation}
In addition, observe that \eqref{thmDNNerrorEstimate:thirdConstant}, \eqref{thmDNNerrorEstimate:ProductAccuracy}, and the fact that  $\forall \, x\in [1,\infty) \colon \log_2(x) = \nicefrac{\ln(x)}{\ln(2)} \le \nicefrac{x}{\ln(2)}$  demonstrate that for all $d\in\N$, $\varepsilon\in (0,1]$ it holds that
\begin{equation}
\begin{split}
&\log_2((\mathcal{D}_{d,\varepsilon})^{-1})
=\max\!\big\{0,\log_2\!\big(3\mathcal{C}_1\eta\,\varepsilon^{-1}\, d^{2(\kappa+2)\max\{\eta,\kappa\}+\kappa^2+\nicefrac{(7\kappa+1)}{2}}\big)\big\}
\\&\le \max\!\left\{0,\log_2(3\mathcal{C}_1\eta)+\log_2(\varepsilon^{-1})+\big[2(\kappa+2)\max\{\eta,\kappa\}+\kappa^2+\nicefrac{(7\kappa+1)}{2}\big]\log_2(d)\right\}
\\&\le \max\!\left\{0,\log_2(3\mathcal{C}_1\eta)\right\}+\tfrac{1}{\ln(2)}\, \varepsilon^{-1}+\tfrac{1}{\ln(2)}\,\big[2(\kappa+2)\max\{\eta,\kappa\}+\kappa^2+\nicefrac{(7\kappa+1)}{2}\big]d
\\&\le  \varepsilon^{-1}d \big(\max\!\left\{0,\log_2(3\mathcal{C}_1\eta)\right\}+\tfrac{1}{\ln(2)}+\tfrac{1}{\ln(2)}\,\big[2(\kappa+2)\max\{\eta,\kappa\}+\kappa^2+\nicefrac{(7\kappa+1)}{2}\big]\big)
\\&= \varepsilon^{-1}d\, \mathcal{C}_2.
\end{split}
\end{equation}	
Combining this with \eqref{thmDNNerrorEstimate:itemParam},  \eqref{thmDNNerrorEstimate:MCAccuracy}, \eqref{thmDNNerrorEstimate:EulerAccuracy}, and \eqref{thmDNNerrorEstimate:ChoiceANN} proves that  for all $d\in\N$, $\varepsilon\in (0,1]$ it holds that
\begin{equation}
\begin{split}
\paramANN(\mathfrak{u}_{d,\varepsilon}) &= \paramANN(\Phi_{d,\mathfrak{N}_{d,\varepsilon},\mathfrak{M}_{d,\varepsilon},\mathcal{D}_{d,\varepsilon}})
\le \mathcal{C}_1 (\mathfrak{M}_{d,\varepsilon})^2 d^{16+8\kappa}(\mathfrak{N}_{d,\varepsilon})^{6+4\kappa} [\log_2((\mathcal{D}_{d,\varepsilon})^{-1})+1]^2
\\&\le \mathcal{C}_1 \big[(3\mathcal{C}_1\eta)^2 \varepsilon^{-2}d^{2\kappa+2\max\{\eta,\kappa^2\}+2\eta}+1\big]^{\!2} d^{16+8\kappa} [\varepsilon^{-1}d\, \mathcal{C}_2+1]^2
\\&\cdot\big[(3\mathcal{C}_1\eta)^2 \varepsilon^{-2}d^{2\kappa(\kappa+4)+2\max\{\eta,\kappa(2\kappa+1)\}+2\eta}+1\big]^{\!6+4\kappa} 
\\&\le \mathcal{C}_1 2^{8+4\kappa} (3\mathcal{C}_1\eta)^4 \varepsilon^{-4}d^{4\kappa+4\max\{\eta,\kappa^2\}+4\eta} d^{16+8\kappa} \varepsilon^{-2}d^2 [\mathcal{C}_2+1]^2
\\&\cdot(3\mathcal{C}_1\eta)^{12+8\kappa} \varepsilon^{-(12+8\kappa)}d^{[2\kappa(\kappa+4)+2\max\{\eta,\kappa(2\kappa+1)\}+2\eta](6+4\kappa)}. 
\end{split}
\end{equation}
This, \eqref{DNNerrorEstimate:DimExponent}, and \eqref{thmDNNerrorEstimate:totalConstant} ensure that for all $d\in\N$, $\varepsilon\in (0,1]$ it holds that
\begin{equation}
\begin{split}
\paramANN(\mathfrak{u}_{d,\varepsilon})
&\le \mathcal{C}_1 2^{8+4\kappa}  (3\mathcal{C}_1\eta)^{16+8\kappa} \varepsilon^{-(18+8\kappa)} [\mathcal{C}_2+1]^2
\\&\cdot d^{2+4\kappa+4\max\{\eta,\kappa^2\}+4\eta+16+8\kappa+[2\kappa(\kappa+4)+2\max\{\eta,\kappa(2\kappa+1)\}+2\eta](6+4\kappa)}
\\&= \mathfrak{C} \varepsilon^{-(18+8\kappa)} 
d^{18+12\kappa+4\max\{\eta,\kappa^2\}+4\eta+[2\kappa(\kappa+4)+2\max\{\eta,\kappa(2\kappa+1)\}+2\eta](6+4\kappa)}
\\&= \mathfrak{C} \varepsilon^{-(18+8\kappa)} 
d^{c}
.  
\end{split}
\end{equation}
Combining this, \eqref{thmDNNerrorEstimate:LoadContinuity}, and \eqref{thmDNNerrorEstimate:errorFinal}
establishes \eqref{eq:thm:cost}. This
completes
the proof of Theorem~\ref{theorem:DNNerrorEstimate}.
\end{proof}

\begin{cor}
	\label{cor:DNNerrorEstimate}
	Let 
	$ T, \kappa,\eta \in (0,\infty)$,   $p \in [2, \infty)$,
	let 
	$
	A_d = ( a_{ d, i, j } )_{ (i, j) \in \{ 1, 2,\dots, d \}^2 } $ $ \in \R^{ d \times d }
	$,
	$ d \in \N $,
	be symmetric positive semidefinite matrices, 
 let $\nu_d  \colon \mathcal{B}([0,T]\times\R^d) \to [0,\infty)$, $d\in\N$, be finite measures which satisfy for all $d\in\N$ that 
	\begin{equation}\label{corDNNerrorEstimate:MeasureAssumption}
	\left[\max\!\left\{1,\nu_d([0,T]\times\R^d),\int_{[0,T]\times \R^d} 
	\|x \|^{2p \max\{6\kappa, 2\kappa +2, 3\}}
	\, \nu_d (d t, d x) \right\}\right]^{\!\nicefrac{1}{p}}\leq \eta d^{\eta},
	\end{equation}
		let
		$ f^m_d \colon \R^d \to \R^{ m d - m + 1 } $, $m\in\{0,1\}$, $ d \in \N $, 
		be functions,
		let 
		$
		( \mathfrak{f}^m_{d, \varepsilon})_{ 
			(m, d, \varepsilon) \in \{ 0, 1 \} \times \N \times (0,1] 
		} 
		\allowbreak\subseteq \ANNs
		$,
		assume for all
		$m\in\{0,1\}$,
		$ d \in \N $, 
		$ \varepsilon \in (0,1] $, 
		 $i \in\{1,2,\dots, d\}$,
		$ 
		x,y \in \R^d
		$
		that 
		\vspace{-1ex}
		\begin{gather}
			\label{eq:corol:new}
		\functionANN( \mathfrak{f}^{ 0 }_{d, \varepsilon } )
		\in 
		C( \R^d,  \R), \qquad 	\functionANN( \mathfrak{f}^{ 1 }_{d, \varepsilon } )
		\in 
		C( \R^d,  \R^{ d }),  \qquad 	\mathcal{P}( \mathfrak{f}^{ m }_{d, \varepsilon } ) 
		\leq \kappa d^{ \kappa } \varepsilon^{ - \kappa }, \\
		\label{corDNNerrorEstimate:growthfOne}
		\| 
		f^1_d( x ) 
		- 
		f^1_d( y )
		\|
		\leq 
		\kappa 
		\| x - y \|, \qquad \|
		(\functionANN(\mathfrak{f}^1_{d, \varepsilon}))(x)    
		\|	
		\leq 
		\kappa ( d^{ \kappa } + \| x \| ),
		\\
		\label{corDNNerrorEstimate:appr:coef}
		\varepsilon|
		f^0_d( x )
		| 
		+
		\varepsilon | a_{ d, i, i } |
		+\| f^m_d(x) 
		- 
		(\functionANN(\mathfrak{f}^m_{d, \varepsilon})) (x)
		\|
		\leq 
		\varepsilon \kappa d^{ \kappa }
		(
		1 + \| x \|^{ \kappa }
		),
		\\
		\label{corDNNerrorEstimate:approximationLocallyLipschitz}
		\text{and} \qquad | (\functionANN(\mathfrak{f}^0_{d, \varepsilon})) (x) - (\functionANN(\mathfrak{f}^0_{d, \varepsilon})) (y)| \leq \kappa d^{\kappa} (1 + \|x\|^{\kappa} + \|y \|^{\kappa})\|x-y\|,
		\end{gather}
and	for every $d \in \N$	let
$u_d \in  \{v \in C([0, T] \times \R^d, \R) \colon \allowbreak \inf_{ q \in (0,\infty) }
\allowbreak \sup_{ (t,y) \in [0,T] \times \R^d } \allowbreak
\frac{ | v(t,y) | }{
	1 + \| y \|^q
}
< \infty \}$ be a viscosity solution of
	\begin{equation}
	\begin{split}
	( \tfrac{ \partial }{\partial t} u_d )( t, x ) 
	& = 
	( \tfrac{ \partial }{\partial x} u_d )( t, x )
	\,
	f^1_d( x )
	+
	\textstyle{\sum\limits_{ i, j = 1 }^d}
	a_{ d, i, j }
	\,
	( \tfrac{ \partial^2 }{ \partial x_i \partial x_j } u_d )( t, x )
	\end{split}
	\end{equation}
	with $ u_d( 0, x ) = f^0_d( x )$
	for $ ( t, x ) \in (0,T) \times \R^d $ (cf.\ \Cref{Def:euclideanNorm,Def:ANN,Definition:ANNrealization,Definition:Relu1}).
	Then there exist $\mathfrak{C}\in \R$ and $(\mathfrak{u}_{d,\varepsilon})_{d\in\N,\varepsilon\in (0,1]}\subseteq \ANNs$ such that
	for all 
	$d\in\N$, $\varepsilon\in (0,1]$
	it holds 
	that
	$\functionANN(\mathfrak{u}_{d,\varepsilon})\in C(\R^{d+1},\R)$,
	$\paramANN(\mathfrak{u}_{d,\varepsilon})\le \mathfrak{C} \varepsilon^{-\mathfrak{C}}  d^{\mathfrak{C}}$, and 
			\begin{equation}
			\label{eq:cor:dnns}
			\begin{split}
			& \left[
			\int_{[0,T]\times\R^d}
			\left\vert
			u_d(y) 
			- 
			(\functionANN(\mathfrak{u}_{d,\varepsilon}))(y)
			\right\vert^p
			\nu_d (d y)
			\right]^{\!\nicefrac{1}{p}} 
			\le
			\varepsilon.
			\end{split}
			\end{equation}
\end{cor}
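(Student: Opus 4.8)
The plan is to deduce \Cref{cor:DNNerrorEstimate} from \Cref{theorem:DNNerrorEstimate} by verifying that the hypotheses of \Cref{theorem:DNNerrorEstimate} hold for the data of \Cref{cor:DNNerrorEstimate} when the constant $\kappa$ is replaced by the enlarged constant $\tilde{\kappa}=\max\{3\kappa,\kappa+1\}$ (while $\eta$ and the PDE are kept fixed). The key observation motivating this choice is that $2\tilde{\kappa}=\max\{6\kappa,2\kappa+2\}$, so that $\max\{2\tilde{\kappa},3\}=\max\{6\kappa,2\kappa+2,3\}$; consequently the moment exponent appearing in \eqref{theoremDNNerrorEstimate:MeasureAssumption} evaluated at $\tilde{\kappa}$ is exactly the moment exponent assumed in \eqref{corDNNerrorEstimate:MeasureAssumption}, and hence \eqref{corDNNerrorEstimate:MeasureAssumption} already yields \eqref{theoremDNNerrorEstimate:MeasureAssumption} with $(\kappa,\eta)$ replaced by $(\tilde{\kappa},\eta)$. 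Moreover, since $\kappa\le\tilde{\kappa}$, distinguishing whether the argument is $\le 1$ or $>1$ shows for all $\xi\in[0,\infty)$ that $\xi^{\kappa}\le 1+\xi^{\tilde{\kappa}}$, an elementary fact I would use repeatedly below.

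Next I would extract from the combined hypothesis \eqref{corDNNerrorEstimate:appr:coef} the growth bound on $f^0_d$ and $A_d$ that \eqref{theoremDNNerrorEstimate:growthMatrix} requires. Evaluating \eqref{corDNNerrorEstimate:appr:coef} at $x=0$ and using that the $a_{d,i,i}$ are nonnegative (as $A_d$ is symmetric positive semidefinite) gives $a_{d,i,i}\le\kappa d^{\kappa}$ for all $d\in\N$, $i\in\{1,\dots,d\}$, hence $\operatorname{Trace}(A_d)=\sum_{i=1}^d a_{d,i,i}\le\kappa d^{\kappa+1}$, while \eqref{corDNNerrorEstimate:appr:coef} for general $x$ gives $|f^0_d(x)|\le\kappa d^{\kappa}(1+\|x\|^{\kappa})$. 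Combining these and using $\xi^\kappa\le 1+\xi^{\tilde\kappa}$, $3\kappa\le\tilde{\kappa}$, and $\kappa+1\le\tilde{\kappa}$ one obtains for all $d\in\N$, $x\in\R^d$ that
\begin{equation}
|f^0_d(x)|+\operatorname{Trace}(A_d)
\le \kappa d^{\kappa+1}\big(2+\|x\|^{\kappa}\big)
\le 3\kappa d^{\kappa+1}\big(1+\|x\|^{\tilde{\kappa}}\big)
\le \tilde{\kappa} d^{\tilde{\kappa}}\big(1+\|x\|^{\tilde{\kappa}}\big),
\end{equation}
which is \eqref{theoremDNNerrorEstimate:growthMatrix} with $\kappa$ replaced by $\tilde{\kappa}$. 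The remaining hypotheses are checked the same way: \eqref{corDNNerrorEstimate:approximationLocallyLipschitz} together with $\|x\|^{\kappa}+\|y\|^{\kappa}\le 2+\|x\|^{\tilde{\kappa}}+\|y\|^{\tilde{\kappa}}$ and $3\kappa\le\tilde{\kappa}$ yields \eqref{theoremDNNerrorEstimate:approximationLocallyLipschitz} with $\tilde{\kappa}$; the approximation estimate in \eqref{corDNNerrorEstimate:appr:coef} (obtained by dropping the nonnegative terms $\varepsilon|f^0_d(x)|$ and $\varepsilon|a_{d,i,i}|$ from its left-hand side) together with $\|x\|^{\kappa}\le 1+\|x\|^{\tilde{\kappa}}$ and $2\kappa\le\tilde{\kappa}$ yields \eqref{theoremDNNerrorEstimate:appr:coef} with $\tilde{\kappa}$; \eqref{corDNNerrorEstimate:growthfOne} immediately yields \eqref{theoremDNNerrorEstimate:growthPhiOne} with $\tilde{\kappa}$; and \eqref{eq:corol:new} yields \eqref{eq:theorem:new} with $\tilde{\kappa}$ (using $\kappa\le\tilde{\kappa}$ and the fact that $\varepsilon\le 1$ implies $\varepsilon^{-\kappa}\le\varepsilon^{-\tilde{\kappa}}$).

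Having verified all hypotheses, I would invoke \Cref{theorem:DNNerrorEstimate} with $T\is T$, $\kappa\is\tilde{\kappa}$, $\eta\is\eta$, $p\is p$, $c\is\tilde{c}$ (where $\tilde{c}$ is the right-hand side of \eqref{DNNerrorEstimate:DimExponent} evaluated at $(\tilde{\kappa},\eta)$), $A_d\is A_d$, $\nu_d\is\nu_d$, $f^m_d\is f^m_d$, $\mathfrak{f}^m_{d,\varepsilon}\is\mathfrak{f}^m_{d,\varepsilon}$, $u_d\is u_d$, which produces $\tilde{\mathfrak{C}}\in\R$ and $(\mathfrak{u}_{d,\varepsilon})_{d\in\N,\varepsilon\in(0,1]}\subseteq\ANNs$ such that for all $d\in\N$, $\varepsilon\in(0,1]$ it holds that $\functionANN(\mathfrak{u}_{d,\varepsilon})\in C(\R^{d+1},\R)$, that $\paramANN(\mathfrak{u}_{d,\varepsilon})\le\tilde{\mathfrak{C}}\,\varepsilon^{-(18+8\tilde{\kappa})}d^{\tilde{c}}$, and that $[\int_{[0,T]\times\R^d}|u_d(y)-(\functionANN(\mathfrak{u}_{d,\varepsilon}))(y)|^p\,\nu_d(dy)]^{\nicefrac{1}{p}}\le\varepsilon$, the latter being exactly \eqref{eq:cor:dnns}. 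Setting $\mathfrak{C}=\max\{\tilde{\mathfrak{C}},18+8\tilde{\kappa},\tilde{c}\}$ then gives $\paramANN(\mathfrak{u}_{d,\varepsilon})\le\mathfrak{C}\,\varepsilon^{-\mathfrak{C}}d^{\mathfrak{C}}$ for all $d\in\N$, $\varepsilon\in(0,1]$ (using $\varepsilon\le 1$ and $d\ge 1$), which completes the proof. I do not expect a genuine obstacle here; the only point requiring attention is that $\tilde{\kappa}$ must be chosen simultaneously large enough to absorb the additive constants that appear when raising norms from the exponent $\kappa$ to the exponent $\tilde{\kappa}$ in all the polynomial growth bounds, and small enough that the moment exponent it induces does not exceed the one assumed in \eqref{corDNNerrorEstimate:MeasureAssumption} — and the choice $\tilde{\kappa}=\max\{3\kappa,\kappa+1\}$ achieves both, in fact with the induced moment exponent matching \eqref{corDNNerrorEstimate:MeasureAssumption} precisely.
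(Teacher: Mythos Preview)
Your proposal is correct and follows essentially the same approach as the paper: you introduce the enlarged constant $\tilde{\kappa}=\max\{3\kappa,\kappa+1\}$ (the paper calls it $\iota$), verify that all hypotheses of \Cref{theorem:DNNerrorEstimate} hold with $\kappa$ replaced by $\tilde{\kappa}$ using the same elementary monotonicity arguments, and then apply that theorem. Your explicit observation that $2\tilde{\kappa}=\max\{6\kappa,2\kappa+2\}$ makes the moment exponents in \eqref{corDNNerrorEstimate:MeasureAssumption} and \eqref{theoremDNNerrorEstimate:MeasureAssumption} coincide is a nice clarification that the paper leaves implicit.
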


\begin{proof}[Proof of Corollary~\ref{cor:DNNerrorEstimate}]
	Throughout this proof let $\iota=\max\{3\kappa,\kappa+1\}$.
	 Observe that \eqref{corDNNerrorEstimate:growthfOne} and the fact that $\iota\ge \kappa$ prove that for all $d\in\N$, $\varepsilon\in (0,1]$, $x,y\in\R^d$ it holds that 
	\begin{equation}\label{corDNNerrorEstimate:HypoOne}
			\| 
			f^1_d( x ) 
			- 
			f^1_d( y )
			\|
			\leq 
			\iota
			\| x - y \|
			\qandq
			\|(\functionANN(\mathfrak{f}^1_{d, \varepsilon}))(x)    
			\|	
			\leq 
			\iota ( d^{ \iota } + \| x \| ).
	\end{equation}
	Next note that \eqref{corDNNerrorEstimate:approximationLocallyLipschitz}  and the fact that $\iota\ge 3\kappa$ ensure that for all $d\in\N$, $\varepsilon\in (0,1]$, $x,y\in\R^d$ it holds that 
	\begin{equation}\label{corDNNerrorEstimate:HypoTwo}
	\begin{split}
			| (\functionANN(\mathfrak{f}^0_{d, \varepsilon})) (x) - (\functionANN(\mathfrak{f}^0_{d, \varepsilon})) (y)| 
			&\leq \kappa d^{\iota} (3 + \|x\|^{\iota} + \|y \|^{\iota})\|x-y\|
			\\& \leq \iota d^{\iota} (1 + \|x\|^{\iota} + \|y \|^{\iota})\|x-y\|.
	\end{split}
	\end{equation}
	In addition, observe that \eqref{corDNNerrorEstimate:appr:coef} and the fact that $\iota\ge 2\kappa$ imply that for all $m\in\{0,1\}$, $d\in\N$, $\varepsilon\in (0,1]$, $x\in\R^d$ it holds that 
	\begin{equation}\label{corDNNerrorEstimate:HypoThree}
	\begin{split}
			\| f^m_d(x) 
			- 
			(\functionANN(\mathfrak{f}^m_{d, \varepsilon})) (x)
			\|
			&\leq 
			\varepsilon \kappa d^{ \kappa }
			(
			1 + \| x \|^{ \kappa }
			)
			\\&\leq 
			\varepsilon \kappa d^{ \iota }
			(
			2 + \| x \|^{\iota }
			)
			\leq 
			\varepsilon \iota d^{ \iota }
			(
			1 + \| x \|^{\iota }
			).
	\end{split}
	\end{equation}
Moreover, note that \eqref{corDNNerrorEstimate:appr:coef} and the fact that $\iota\ge \max\{2\kappa, \kappa+1\}$ demonstrate that for all  $d\in\N$, $\varepsilon\in (0,1]$, $x\in\R^d$ it holds that 
\begin{equation}\label{corDNNerrorEstimate:HypoFour}
\begin{split}
	|
	f^0_d( x )
	| 
	+
\operatorname{Trace}(A_d)
	&\leq 
	\kappa d^{ \kappa +1}
	( 1 + \| x \|^{ \kappa } )
	\leq 
	\kappa d^{ \kappa +1}
	( 2 + \| x \|^{ \iota } )
	\\&\leq 
	2\kappa d^{ \kappa +1}
	( 1 + \| x \|^{ \iota } )
	\leq 
	\iota d^{ \iota}
	( 1 + \| x \|^{ \iota } )
	.
\end{split}
\end{equation}	
Furthermore, observe that \eqref{eq:corol:new} implies that   for all
$m\in\{0,1\}$,
$ d \in \N $, 
$ \varepsilon \in (0,1] $
it holds that
$\mathcal{P}( \mathfrak{f}^m_{d, \varepsilon}) 
\leq \iota d^{\iota } \varepsilon^{ - \iota }$.
Combining this, \eqref{corDNNerrorEstimate:MeasureAssumption},  \eqref{corDNNerrorEstimate:HypoOne}, \eqref{corDNNerrorEstimate:HypoTwo}, \eqref{corDNNerrorEstimate:HypoThree}, \eqref{corDNNerrorEstimate:HypoFour},  and Theorem~\ref{theorem:DNNerrorEstimate} establishes that there exist	
 $\mathcal{C}\in \R$ and $(\mathfrak{u}_{d,\varepsilon})_{d\in\N,\varepsilon\in (0,1]}\subseteq \ANNs$ which satisfy that
\begin{enumerate}[(I)]
	\item 	\label{corDNNerrorEstimateProof:Continuity} it holds 
	 for all 
	$d\in\N$, $\varepsilon\in (0,1]$
 that $\functionANN(\mathfrak{u}_{d,\varepsilon})\in C(\R^{d+1},\R)$,
	\item
	\label{corDNNerrorEstimateProof:Estimate}
	it holds for all 
	$d\in\N$, $\varepsilon\in (0,1]$
 that 
	\begin{equation}
	\begin{split}
	& \left[
	\int_{[0,T]\times\R^d}
	\left\vert
	u_d(y) 
	- 
	(\functionANN(\mathfrak{u}_{d,\varepsilon}))(y)
	\right\vert^p
	\nu_d (d y)
	\right]^{\!\nicefrac{1}{p}} 
	\le
	\varepsilon,
	\end{split}
	\end{equation}
	and
	\item \label{corDNNerrorEstimateProof:Param}
	it holds for all 
	$d\in\N$, $\varepsilon\in (0,1]$
 that
	\begin{equation}
	\begin{split}
	&\paramANN(\mathfrak{u}_{d,\varepsilon})
\le \mathcal{C} \varepsilon^{-(18+8\iota)}  d^{18+12\iota+4\max\{\eta,\iota^2\}+4\eta+[2\iota(\iota+4)+2\max\{\eta,\iota(2\iota+1)\}+2\eta](6+4\iota)}. 
	\end{split}
	\end{equation}
\end{enumerate}
This proves that for all $\mathfrak{C} \in [\max\{\mathcal{C}, 18+12\iota+4\max\{\eta,\iota^2\}+4\eta+[2\iota(\iota+4)+2\max\{\eta,\iota(2\iota+1)\}+2\eta](6+4\iota) \}, \infty)$, 	$d\in\N$, $\varepsilon\in (0,1]$ it holds that 	$\paramANN(\mathfrak{u}_{d,\varepsilon})\le \mathfrak{C} \varepsilon^{-\mathfrak{C}}  d^{\mathfrak{C}}$. Combining this, \eqref{corDNNerrorEstimateProof:Continuity},  and \eqref{corDNNerrorEstimateProof:Estimate} 
 establishes \eqref{eq:cor:dnns}. 
This  completes the proof of	Corollary~\ref{cor:DNNerrorEstimate}.
\end{proof}

\begin{cor}
	\label{cor:DNNerrorEstimateLebesgueMeasure}
	Let 
	$ T, \kappa \in (0,\infty) $,   $\alpha \in \R$, $\beta \in (\alpha, \infty)$,
	let 
	$
	A_d = ( a_{ d, i, j } )_{ (i, j) \in \{ 1,2, \dots, d \}^2 } $ $ \in \R^{ d \times d }
	$,
	$ d \in \N $,
	be symmetric positive semidefinite matrices, 
	let
	$ f^m_d \colon \R^d \to \R^{ m d - m + 1 } $, $m\in\{0,1\}$, $ d \in \N $, 
	be functions,
	let 
	$
	( \mathfrak{f}^m_{d, \varepsilon})_{ 
		(m, d, \varepsilon) \in \{ 0, 1 \} \times \N \times (0,1] 
	} 
	\subseteq \ANNs
	$, 
	assume for all
	$m\in\{0,1\}$,
	$ d \in \N $, 
	$ \varepsilon \in (0,1] $, 
	$i \in\{1,2, \dots, d\}$,
	$ 
	x,y \in \R^d
	$
	that 
	\vspace{-1ex}
	\begin{gather}
	\functionANN( \mathfrak{f}^{ 0 }_{d, \varepsilon } )
	\in 
	C( \R^d,  \R), \qquad 	\functionANN( \mathfrak{f}^{ 1 }_{d, \varepsilon } )
	\in 
	C( \R^d,  \R^{ d }),  \qquad 	\mathcal{P}( \mathfrak{f}^{ m }_{d, \varepsilon } ) 
	\leq \kappa d^{ \kappa } \varepsilon^{ - \kappa }, \\
	\label{corTwoDNNerrorEstimate:growthPhiOne}
	\| 
	f^1_d( x ) 
	- 
	f^1_d( y )
	\|
	\leq 
	\kappa 
	\| x - y \|, \qquad \|
	(\functionANN(\mathfrak{f}^1_{d, \varepsilon}))(x)    
	\|	
	\leq 
	\kappa ( d^{ \kappa } + \| x \| ),
	\\
	\label{corTwoDNNerrorEstimate:appr:coef}
	\varepsilon|
	f^0_d( x )
	| 
	+
	\varepsilon | a_{ d, i, i } |
	+\| f^m_d(x) 
	- 
	(\functionANN(\mathfrak{f}^m_{d, \varepsilon})) (x)
	\|
	\leq 
	\varepsilon \kappa d^{ \kappa }
	(
	1 + \| x \|^{ \kappa }
	),
	\\
	\label{corTwoDNNerrorEstimate:approximationLocallyLipschitz}
	\text{and} \qquad | (\functionANN(\mathfrak{f}^0_{d, \varepsilon})) (x) - (\functionANN(\mathfrak{f}^0_{d, \varepsilon})) (y)| \leq \kappa d^{\kappa} (1 + \|x\|^{\kappa} + \|y \|^{\kappa})\|x-y\|,
	\end{gather}
	and	for every $d \in \N$	let
	$u_d \in  \{v \in C([0, T] \times \R^d, \R) \colon \allowbreak \inf_{ q \in (0,\infty) }
	\allowbreak \sup_{ (t,y) \in [0,T] \times \R^d } \allowbreak
	\frac{ | v(t,y) | }{
		1 + \| y \|^q
	}
	< \infty \}$ be a viscosity solution of
	\begin{equation}
	\begin{split}
	( \tfrac{ \partial }{\partial t} u_d )( t, x ) 
	& = 
	( \tfrac{ \partial }{\partial x} u_d )( t, x )
	\,
	f^1_d( x )
	+
	\textstyle{\sum\limits_{ i, j = 1 }^d}
	a_{ d, i, j }
	\,
	( \tfrac{ \partial^2 }{ \partial x_i \partial x_j } u_d )( t, x )
	\end{split}
	\end{equation}
		with $ u_d( 0, x ) = f^0_d( x )$
	for $ ( t, x ) \in (0,T) \times \R^d $ (cf.\ \Cref{Def:euclideanNorm,Def:ANN,Definition:ANNrealization,Definition:Relu1}).
	Then for every $p \in (0, \infty)$ there exist $c\in \R$ and $(\mathfrak{u}_{d,\varepsilon})_{d\in\N,\varepsilon\in (0,1]}\subseteq \ANNs$ such that
	for all 
	$d\in\N$, $\varepsilon\in (0,1]$ it holds
	that
	$\functionANN(\mathfrak{u}_{d,\varepsilon})\in C(\R^{d+1},\R)$,
	$\paramANN(\mathfrak{u}_{d,\varepsilon})\le c \varepsilon^{-c}  d^{c}$, and 
	\begin{equation}
	\begin{split}
	& \left[
	\int_{[0,T]\times[\alpha, \beta]^d} 
	\frac{	\left\vert
		u_d(y) 
		- 
		(\functionANN(\mathfrak{u}_{d,\varepsilon}))(y)
		\right\vert^p\!}{|\beta - \alpha|^d} \, 
	d y
	\right]^{\!\nicefrac{1}{p}} 
	\le
	\varepsilon.
	\end{split}
	\end{equation}
\end{cor}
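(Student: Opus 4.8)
The plan is to derive Corollary~\ref{cor:DNNerrorEstimateLebesgueMeasure} from Corollary~\ref{cor:DNNerrorEstimate} by specializing, for each $d\in\N$, the finite measure $\nu_d$ to a suitable rescaling of Lebesgue measure on $[0,T]\times[\alpha,\beta]^d$, and by reducing an arbitrary integrability exponent $p\in(0,\infty)$ to the range $p\in[2,\infty)$ treated in Corollary~\ref{cor:DNNerrorEstimate} by means of H\"older's inequality.

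First I would fix $p\in(0,\infty)$, set $\mathfrak{p}=\max\{p,2\}\in[2,\infty)$, $\rho=\max\{6\kappa,2\kappa+2,3\}$, and $M=\max\{1,|\alpha|,|\beta|\}$, and introduce the measures $\nu_d\colon\mathcal{B}([0,T]\times\R^d)\to[0,\infty)$, $d\in\N$, given for all $d\in\N$, $A\in\mathcal{B}([0,T]\times\R^d)$ by $\nu_d(A)=|\beta-\alpha|^{-d}\int_A\indicator{[0,T]\times[\alpha,\beta]^d}(y)\,dy$, so that $\nu_d([0,T]\times\R^d)=T$. The elementary estimate $\|x\|\le\sqrt{d}\,M$ for $x\in[\alpha,\beta]^d$ yields $\int_{[0,T]\times\R^d}\|x\|^r\,\nu_d(dt,dx)\le T\,d^{r/2}M^r$ for all $r\in(0,\infty)$, $d\in\N$; taking $r=2\mathfrak{p}\rho$ and raising to the power $1/\mathfrak{p}$ shows that $[\max\{1,\nu_d([0,T]\times\R^d),\int_{[0,T]\times\R^d}\|x\|^{2\mathfrak{p}\rho}\,\nu_d(dt,dx)\}]^{1/\mathfrak{p}}\le\eta\,d^{\eta}$ with $\eta:=\max\{1,\rho,T^{1/\mathfrak{p}}M^{2\rho}\}\in(0,\infty)$, which is precisely hypothesis~\eqref{corDNNerrorEstimate:MeasureAssumption} of Corollary~\ref{cor:DNNerrorEstimate} with $\mathfrak{p}$ in place of $p$. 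Since the remaining hypotheses of Corollary~\ref{cor:DNNerrorEstimate} on $f^m_d$, $\mathfrak{f}^m_{d,\varepsilon}$, $A_d$, and $u_d$ are exactly the corresponding assumptions of Corollary~\ref{cor:DNNerrorEstimateLebesgueMeasure}, an application of Corollary~\ref{cor:DNNerrorEstimate} (with $p\is\mathfrak{p}$, $\eta\is\eta$, $\nu_d\is\nu_d$ and the given $T$, $\kappa$, $A_d$, $f^m_d$, $\mathfrak{f}^m_{d,\varepsilon}$, $u_d$) produces $\mathcal{C}\in\R$ and $(\mathfrak{v}_{d,\delta})_{d\in\N,\delta\in(0,1]}\subseteq\ANNs$ with $\functionANN(\mathfrak{v}_{d,\delta})\in C(\R^{d+1},\R)$, $\paramANN(\mathfrak{v}_{d,\delta})\le\mathcal{C}\delta^{-\mathcal{C}}d^{\mathcal{C}}$, and $[\int_{[0,T]\times\R^d}|u_d(y)-(\functionANN(\mathfrak{v}_{d,\delta}))(y)|^{\mathfrak{p}}\,\nu_d(dy)]^{1/\mathfrak{p}}\le\delta$ for all $d\in\N$, $\delta\in(0,1]$.

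Finally I would invoke H\"older's inequality for the finite measure $\nu_d$ of total mass $T$, which gives $\|g\|_{L^p(\nu_d)}\le\|g\|_{L^{\mathfrak{p}}(\nu_d)}\,T^{(1/p)-(1/\mathfrak{p})}\le\mathfrak{c}\,\|g\|_{L^{\mathfrak{p}}(\nu_d)}$ for every Borel measurable $g$, where $\mathfrak{c}:=\max\{1,T^{(1/p)-(1/\mathfrak{p})}\}\in[1,\infty)$ is independent of $d$. Defining $\mathfrak{u}_{d,\varepsilon}:=\mathfrak{v}_{d,\varepsilon/\mathfrak{c}}$ for $d\in\N$, $\varepsilon\in(0,1]$ (note $\varepsilon/\mathfrak{c}\in(0,1]$ since $\mathfrak{c}\ge1$) and using $\int_{[0,T]\times[\alpha,\beta]^d}|\beta-\alpha|^{-d}|h(y)|^p\,dy=\int_{[0,T]\times\R^d}|h(y)|^p\,\nu_d(dy)$, I obtain $[\int_{[0,T]\times[\alpha,\beta]^d}|\beta-\alpha|^{-d}|u_d(y)-(\functionANN(\mathfrak{u}_{d,\varepsilon}))(y)|^p\,dy]^{1/p}\le\mathfrak{c}\cdot(\varepsilon/\mathfrak{c})=\varepsilon$, while $\paramANN(\mathfrak{u}_{d,\varepsilon})\le\mathcal{C}(\varepsilon/\mathfrak{c})^{-\mathcal{C}}d^{\mathcal{C}}=\mathcal{C}\mathfrak{c}^{\mathcal{C}}\varepsilon^{-\mathcal{C}}d^{\mathcal{C}}\le c\,\varepsilon^{-c}d^{c}$ for $c:=\max\{\mathcal{C},\mathcal{C}\mathfrak{c}^{\mathcal{C}}\}$ (using $\varepsilon\le1\le d$) and $\functionANN(\mathfrak{u}_{d,\varepsilon})\in C(\R^{d+1},\R)$, which completes the proof. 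There is no genuine obstacle here; the only points that need care are verifying that the moments of the normalized Lebesgue measure on the cube $[\alpha,\beta]^d$ stay polynomial in $d$ — which holds because $\|x\|\le\sqrt{d}\,\max\{|\alpha|,|\beta|\}$ on the cube and the normalization cancels the volume factor $|\beta-\alpha|^d$ — and carrying out the $d$-uniform H\"older reduction from $p\in(0,\infty)$ to $p\in[2,\infty)$.
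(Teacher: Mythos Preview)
Your proposal is correct and follows essentially the same route as the paper's proof: specialize $\nu_d$ to normalized Lebesgue measure on $[0,T]\times[\alpha,\beta]^d$, verify the moment hypothesis \eqref{corDNNerrorEstimate:MeasureAssumption} with exponent $\mathfrak{p}=\max\{p,2\}$, apply Corollary~\ref{cor:DNNerrorEstimate}, and then pass from $L^{\mathfrak{p}}$ to $L^p$ via H\"older's inequality with a $d$-independent rescaling of $\varepsilon$. The only cosmetic difference is that the paper cites an external lemma for the cube moment bound whereas you use the direct estimate $\|x\|\le\sqrt{d}\max\{|\alpha|,|\beta|\}$, and the paper writes the H\"older factor as $[\max\{T,1\}]^{1/p-1/q}$ rather than your $\mathfrak{c}=\max\{1,T^{1/p-1/\mathfrak{p}}\}$; both are equivalent here.
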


\begin{proof}[Proof of Corollary~\ref{cor:DNNerrorEstimateLebesgueMeasure}]
	Throughout this proof let $p,q,\eta\in (0,\infty)$ satisfy that $q=\max\{p,2\}$ and $\eta= \max\{6\kappa, 2\kappa +2, 3\}+[\max\{1,T\}]^{\nicefrac{1}{q}}  \max\{1, |\alpha|^{2\max\{6\kappa, 2\kappa +2, 3\}}, |\beta|^{2\max\{6\kappa, 2\kappa +2, 3\}}\} $, for every $d\in\N$ let $\mu_d \colon \mathcal{B}([0,T]\times \R^d)\allowbreak \to [0,\infty]$ be the  Lebesgue-Borel measure on $[0,T]\times \R^d$, for every $d\in\N$ let $\nu_d \colon \mathcal{B}([0,T]\times \R^d)\allowbreak \to [0,\infty]$ be the measure which satisfies for all $d\in\N$, $B_1\in \mathcal{B}([0,T])$, $B_2\in \mathcal{B}(\R^d)$ that 
	\begin{equation}\label{corDNNerrorEstimate:Defnu}
	\nu_d(B_1\times B_2)= \frac{\mu_d(B_1\times ([\alpha,\beta]^d\cap B_2))}{|\beta-\alpha|^d},
	\end{equation}
	let $\delta\colon (0,1]\to (0,1]$ satisfy for all $\varepsilon\in (0,1]$ that $\delta(\varepsilon)=\varepsilon [\max\{T,1\}]^{\nicefrac{1}{q}-\nicefrac{1}{p}}$,
	and let $r\colon (0,\infty)\to (0,\infty)$ satisfy for all $z\in (0,\infty)$ that $r(z)=z  [\max\{T,1\}]^{z(\nicefrac{1}{p}-\nicefrac{1}{q})}$.
	Observe that \eqref{corDNNerrorEstimate:Defnu},  Fubini's theorem, and, e.g.,  Grohs et al.~\cite[Lemma~3.15]{GrohsWurstemberger2023} prove that for all $d\in\N$ it holds that 
	\begin{equation}
	\begin{split}
	&\int_{[0,T]\times \R^d} 
	\|x \|^{2q \max\{6\kappa, 2\kappa +2, 3\}}
	\, \nu_d (d t, d x)
	=\int_{[0,T]\times [\alpha, \beta]^d} 
	\frac{	\|x \|^{2q \max\{6\kappa, 2\kappa +2, 3\}}}{|\beta-\alpha|^d}
		\, \mu_d (d t, d x)
	\\&= T  \int_{[\alpha,\beta]^d} 
	\frac{	\|x \|^{2q \max\{6\kappa, 2\kappa +2, 3\}}}{|\beta-\alpha|^d}
	\, dx\\
	&\le T d^{q \max\{6\kappa, 2\kappa +2, 3\}} \max\!\big\{|\alpha|^{2q\max\{6\kappa, 2\kappa +2, 3\}}, |\beta|^{2q\max\{6\kappa, 2\kappa +2, 3\}}\big\}.
	\end{split}
	\end{equation}
	Therefore, we obtain for all $d\in\N$  that 
	\begin{equation}
	\begin{split}
	&\left[\max\!\left\{1,\nu_d([0,T]\times\R^d),\int_{[0,T]\times \R^d} 
	\|x \|^{2q \max\{6\kappa, 2\kappa +2, 3\}}
	\, \nu_d (d t, d x) \right\}\right]^{\!\nicefrac{1}{q}}
\\&\le \left[\max\!\left\{1,T,T d^{q \max\{6\kappa, 2\kappa +2, 3\}} \max\!\big\{|\alpha|^{2q\max\{6\kappa, 2\kappa +2, 3\}}, |\beta|^{2q\max\{6\kappa, 2\kappa +2, 3\}}\big\} \right\}\right]^{\!\nicefrac{1}{q}}\\
&\le d^{ \max\{6\kappa, 2\kappa +2, 3\}} \left[\max\!\left\{1,T\right\}\right]^{\nicefrac{1}{q}} \max\!\big\{1, |\alpha|^{2\max\{6\kappa, 2\kappa +2, 3\}}, |\beta|^{2\max\{6\kappa, 2\kappa +2, 3\}}\big\} \le \eta d^\eta.
	\end{split}
	\end{equation}
	Corollary~\ref{cor:DNNerrorEstimate} hence ensures that  there exist $\mathfrak{C}\in (0, \infty)$ and $(\Phi_{d,\varepsilon})_{d\in\N,\varepsilon\in (0,1]}\subseteq \ANNs$ which satisfy for all 
	$d\in\N$, $\varepsilon\in (0,1]$ 
	that
	$\functionANN(\Phi_{d,\varepsilon})\in C(\R^{d+1},\R)$,
	$\paramANN(\Phi_{d,\varepsilon})\le \mathfrak{C} \varepsilon^{-\mathfrak{C}}  d^{\mathfrak{C}}$, and 
	\begin{equation}
	\begin{split}
	& \left[
	\int_{[0,T]\times\R^d}
	\left\vert
	u_d(y) 
	- 
	(\functionANN(\Phi_{d,\varepsilon}))(y)
	\right\vert^q
	\,
	\nu_d (d y)
	\right]^{\!\nicefrac{1}{q}} 
	\le
	\varepsilon.
	\end{split}
	\end{equation}
	Combining this with \eqref{corDNNerrorEstimate:Defnu} and  H\"older's inequality proves that 
	for all 
	$d\in\N$, $\varepsilon\in (0,1]$ it holds 
	that
	\begin{equation}\label{DNNerrorEstimateLebesgueError}
	\begin{split}
	& \left[
	\int_{[0,T]\times[\alpha, \beta]^d} 
	\frac{	\left\vert
		u_d(y) 
		- 
		(\functionANN(\Phi_{d,\delta(\varepsilon)}))(y)
		\right\vert^p\!}{|\beta - \alpha|^d} \, 
	d y
	\right]^{\!\nicefrac{1}{p}}  \\
&\le \left[
	\int_{[0,T]\times[\alpha, \beta]^d}
	\frac{	\left\vert
	u_d(y) 
	- 
	(\functionANN(\Phi_{d,\delta(\varepsilon)}))(y)
	\right\vert^q\!}{|\beta - \alpha|^d} \, 
	dy
	\right]^{\!\nicefrac{1}{q}} T^{\nicefrac{1}{p}-\nicefrac{1}{q}}
	\\&=
	\left[
	\int_{[0,T]\times\R^d}
	\left\vert
	u_d(y) 
	- 
	(\functionANN(\Phi_{d,\delta(\varepsilon)}))(y)
	\right\vert^q
	\,
	\nu_d (d y)
	\right]^{\!\nicefrac{1}{q}} T^{\nicefrac{1}{p}-\nicefrac{1}{q}}
	\\&\le
	\delta(\varepsilon)\, [\max\{T,1\}]^{\nicefrac{1}{p}-\nicefrac{1}{q}}
	=\varepsilon [\max\{T,1\}]^{\nicefrac{1}{q}-\nicefrac{1}{p}} [\max\{T,1\}]^{\nicefrac{1}{p}-\nicefrac{1}{q}}=\varepsilon.
	\end{split}
	\end{equation}
	In addition, observe that for all $z\in(0,\infty)$ it holds that 
	\begin{equation}
	z\le z  [\max\{T,1\}]^{z(\nicefrac{1}{p}-\nicefrac{1}{q})}=r(z).
	\end{equation}
	The fact that 
	$\forall \, d\in\N$, $\varepsilon\in (0,1]\colon \functionANN(\Phi_{d,\varepsilon})\in C(\R^{d+1},\R)$ and 	the fact that 
	$\forall \, d\in\N$, $\varepsilon\in (0,1]\colon \paramANN(\Phi_{d,\varepsilon})\le \mathfrak{C} \varepsilon^{-\mathfrak{C}}  d^{\mathfrak{C}}$ hence show that for all 
	$d\in\N$, $\varepsilon\in (0,1]$ it holds 
	that
	$\functionANN(\Phi_{d,\delta(\varepsilon)})\in C(\R^{d+1},\R)$ and 
	\begin{equation}
	\begin{split}
	\paramANN(\Phi_{d,\delta(\varepsilon)})&\le \mathfrak{C} [\delta(\varepsilon)]^{-\mathfrak{C}}  d^{\mathfrak{C}}
	=\mathfrak{C}  [\max\{T,1\}]^{-\mathfrak{C}(\nicefrac{1}{q}-\nicefrac{1}{p})} \varepsilon^{-\mathfrak{C}}  d^{\mathfrak{C}}
	\\&=\mathfrak{C}  [\max\{T,1\}]^{\mathfrak{C}(\nicefrac{1}{p}-\nicefrac{1}{q})} \varepsilon^{-\mathfrak{C}}  d^{\mathfrak{C}}
	=r(\mathfrak{C}) \varepsilon^{-\mathfrak{C}}  d^{\mathfrak{C}}\le r(\mathfrak{C}) \varepsilon^{-r(\mathfrak{C})}  d^{r(\mathfrak{C})}.
	\end{split}
	\end{equation}
	This and \eqref{DNNerrorEstimateLebesgueError} establish that there exist $c\in \R$ and $(\mathfrak{u}_{d,\varepsilon})_{d\in\N,\varepsilon\in (0,1]}\subseteq \ANNs$ such that
	for all 
	$d\in\N$, $\varepsilon\in (0,1]$ it holds
	that
	$\functionANN(\mathfrak{u}_{d,\varepsilon})\in C(\R^{d+1},\R)$,
	$\paramANN(\mathfrak{u}_{d,\varepsilon})\le c \varepsilon^{-c}  d^{c}$, and 
	\begin{equation}
	\begin{split}
	& \left[
\int_{[0,T]\times[\alpha, \beta]^d} 
\frac{	\left\vert
	u_d(y) 
	- 
	(\functionANN(\mathfrak{u}_{d,\varepsilon}))(y)
	\right\vert^p\!}{|\beta - \alpha|^d} \, 
d y
\right]^{\!\nicefrac{1}{p}} 
\le
\varepsilon.
	\end{split}
	\end{equation}	
	This completes the proof of	Corollary~\ref{cor:DNNerrorEstimateLebesgueMeasure}.
\end{proof}

\begin{cor}
	\label{cor:DNNerrorEstimateLaplace}
	Let 
	$ f^{m}_d \colon \R^d \to \R^{md-m+1} $,  $ m \in \{0, 1\}$, $d \in \N $,  
	be functions,
	let 
	$ T, \kappa, p \in (0,\infty) $,   $a \in \R$, $b \in (a, \infty)$,
	$
	( \mathfrak{f}^{ m }_{d, \varepsilon } )_{ 
		(m, d, \varepsilon) \in \{ 0, 1 \} \times \N \times (0,1] 
	} 
	\subseteq \ANNs
	$, 
	assume for all
	$m\in\{0,1\}$, 
	$ d \in \N $, 
	$ \varepsilon \in (0,1] $, 
	$ 
	x,y \in \R^d
	$
	that 
	\vspace{-1ex}
	\begin{gather}
	\functionANN( \mathfrak{f}^{ 0 }_{d, \varepsilon } )
	\in 
	C( \R^d,  \R), \qquad 	\functionANN( \mathfrak{f}^{ 1 }_{d, \varepsilon } )
	\in 
	C( \R^d,  \R^{ d }),  \qquad 	\mathcal{P}( \mathfrak{f}^{ m }_{d, \varepsilon } ) 
	\leq \kappa d^{ \kappa } \varepsilon^{ - \kappa }, \\
	\label{corLaplaceDNNerrorEstimate:fOne}
	\| 
	f^{ 1 }_d( x ) 
	- 
	f^{ 1}_d( y )
	\|
	\leq 
	\kappa 
	\| x - y \|, \qquad \|
	(\functionANN(\mathfrak{f}^{ 1 }_{d,  \varepsilon }))(x)    
	\|	
	\leq 
	\kappa ( d^{ \kappa } + \| x \| ),
	\\
	\label{corLaplaceDNNerrorEstimate:approximationLocallyLipschitz}
	| (\functionANN(\mathfrak{f}^{ 0 }_{d, \varepsilon })) (x) - (\functionANN(\mathfrak{f}^{ 0 }_{d,  \varepsilon })) (y)| \leq \kappa d^{\kappa} (1 + \|x\|^{\kappa} + \|y \|^{\kappa})\|x-y\|, \\
	\label{corLaplaceDNNerrorEstimate:appr:coef}
	\text{and} \qquad		\varepsilon|
	f^{ 0 }_d( x )
	| 
	+\| f^{ m }_d(x) 
	- 
	(\functionANN(\mathfrak{f}^{ m }_{d, \varepsilon })) (x)
	\|
	\leq 
	\varepsilon \kappa d^{ \kappa }
	(
	1 + \| x \|^{ \kappa }
	),
	\end{gather}
	and	for every $d \in \N$	let
	$u_d \in  \{v \in C([0, T] \times \R^d, \R) \colon \allowbreak \inf_{ q \in (0,\infty) }
	\allowbreak \sup_{ (t,y) \in [0,T] \times \R^d } \allowbreak
	\frac{ | v(t,y) | }{
		1 + \| y \|^q
	}
	< \infty \}$ be a viscosity solution of
	\begin{equation}
	\begin{split}
	( \tfrac{ \partial }{\partial t} u_d )( t, x ) 
	& = 
	(\Delta_x u_d )( t, x ) +
	( \tfrac{ \partial }{\partial x} u_d )( t, x )
	\,
	f^{ 1}_d( x )
	\end{split}
	\end{equation}
	with 	$ u_d( 0, x ) = f^{ 0 }_d( x )$
	for $ ( t, x ) \in (0,T) \times \R^d $ (cf.\ \Cref{Def:euclideanNorm,Def:ANN,Definition:ANNrealization,Definition:Relu1}).
	Then  there exist $c\in \R$ and $(\mathfrak{u}_{d,\varepsilon})_{(d, \varepsilon) \in\N\times (0,1]}\subseteq \ANNs$ such that
	for all 
	$d\in\N$, $\varepsilon\in (0,1]$ it holds
	that
	$\functionANN(\mathfrak{u}_{d,\varepsilon})\in C(\R^{d+1},\R)$,
	$\paramANN(\mathfrak{u}_{d,\varepsilon})\le c \varepsilon^{-c}  d^{c}$, and 
	\begin{equation}
	\begin{split}
	& \left[
\int_{[0,T]\times[a, b]^d} 
\frac{	\left\vert
	u_d(y) 
	- 
	(\functionANN(\mathfrak{u}_{d,\varepsilon}))(y)
	\right\vert^p\!}{|b - a|^d} \, 
d y
\right]^{\!\nicefrac{1}{p}} 
\le
\varepsilon.
	\end{split}
	\end{equation}
\end{cor}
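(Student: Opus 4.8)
The plan is to obtain \Cref{cor:DNNerrorEstimateLaplace} as an immediate specialization of \Cref{cor:DNNerrorEstimateLebesgueMeasure}. For every $d\in\N$ let $A_d=\operatorname{I}_d\in\R^{d\times d}$ be the identity matrix, so that $a_{d,i,j}=1$ for $i=j$ and $a_{d,i,j}=0$ for $i\neq j$. Then each $A_d$ is symmetric and positive semidefinite, $\operatorname{Trace}(A_d)=d$, and for all $d\in\N$, $(t,x)\in(0,T)\times\R^d$ it holds that $\sum_{i,j=1}^{d}a_{d,i,j}\,(\tfrac{\partial^2}{\partial x_i\partial x_j}u_d)(t,x)=\sum_{i=1}^{d}(\tfrac{\partial^2}{\partial x_i^2}u_d)(t,x)=(\Delta_x u_d)(t,x)$. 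Hence, for every $d\in\N$, the function $u_d$ in \Cref{cor:DNNerrorEstimateLaplace} is a viscosity solution of precisely the PDE appearing in \Cref{cor:DNNerrorEstimateLebesgueMeasure} associated with $A_d=\operatorname{I}_d$ and with the initial condition $u_d(0,\cdot)=f^0_d$, and the underlying function space (the $v\in C([0,T]\times\R^d,\R)$ of at most polynomial growth) coincides in both statements.

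It then remains to match the hypotheses on the DNNs. The only formal difference between \eqref{corLaplaceDNNerrorEstimate:appr:coef} and \eqref{corTwoDNNerrorEstimate:appr:coef} is the additional summand $\varepsilon|a_{d,i,i}|=\varepsilon$ on the left-hand side. To accommodate this I would pass to a sufficiently large constant $\hat\kappa\in(0,\infty)$ (for instance $\hat\kappa=4\kappa+4$). Since $d\ge1$, $\varepsilon\le1$, and $\|x\|^\kappa\le1+\|x\|^{\hat\kappa}$, the assumptions \eqref{corLaplaceDNNerrorEstimate:fOne}, \eqref{corLaplaceDNNerrorEstimate:approximationLocallyLipschitz}, the parameter bound $\mathcal P(\mathfrak f^m_{d,\varepsilon})\le\kappa d^\kappa\varepsilon^{-\kappa}$, and the continuity and realization conditions on $\mathfrak f^0_{d,\varepsilon}$, $\mathfrak f^1_{d,\varepsilon}$ all remain valid with $\hat\kappa$ in place of $\kappa$, while for all $m\in\{0,1\}$, $d\in\N$, $\varepsilon\in(0,1]$, $x\in\R^d$, $i\in\{1,2,\dots,d\}$ one has
\begin{equation*}
\varepsilon|f^0_d(x)|+\varepsilon|a_{d,i,i}|+\|f^m_d(x)-(\functionANN(\mathfrak f^m_{d,\varepsilon}))(x)\|
\le\varepsilon\kappa d^\kappa(1+\|x\|^\kappa)+\varepsilon
\le\varepsilon\hat\kappa d^{\hat\kappa}(1+\|x\|^{\hat\kappa}),
\end{equation*}
so that \eqref{corTwoDNNerrorEstimate:appr:coef} with $\hat\kappa$ in place of $\kappa$ is satisfied as well.

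With these preparations I would apply \Cref{cor:DNNerrorEstimateLebesgueMeasure} (with $\kappa\is\hat\kappa$, $\alpha\is a$, $\beta\is b$, $A_d\is\operatorname{I}_d$, $f^m_d\is f^m_d$, $\mathfrak f^m_{d,\varepsilon}\is\mathfrak f^m_{d,\varepsilon}$, $u_d\is u_d$ for $m\in\{0,1\}$, $d\in\N$, $\varepsilon\in(0,1]$) to the prescribed $p\in(0,\infty)$. This yields a constant $c\in\R$ and DNNs $(\mathfrak u_{d,\varepsilon})_{(d,\varepsilon)\in\N\times(0,1]}\subseteq\ANNs$ with $\functionANN(\mathfrak u_{d,\varepsilon})\in C(\R^{d+1},\R)$, $\paramANN(\mathfrak u_{d,\varepsilon})\le c\varepsilon^{-c}d^{c}$, and $[\int_{[0,T]\times[a,b]^d}|b-a|^{-d}|u_d(y)-(\functionANN(\mathfrak u_{d,\varepsilon}))(y)|^p\,dy]^{1/p}\le\varepsilon$ for all $d\in\N$, $\varepsilon\in(0,1]$, which is exactly the assertion of \Cref{cor:DNNerrorEstimateLaplace}. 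I do not expect a genuine obstacle here: the argument is a reduction, and the only point requiring (routine) care is verifying that one enlarged exponent $\hat\kappa$ simultaneously fits all the hypotheses of \Cref{cor:DNNerrorEstimateLebesgueMeasure}, including the added trace term.
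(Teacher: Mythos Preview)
Your proposal is correct and follows essentially the same approach as the paper: both reduce to \Cref{cor:DNNerrorEstimateLebesgueMeasure} with $A_d=\operatorname{I}_d$ by enlarging $\kappa$ to a constant large enough to absorb the extra $\varepsilon|a_{d,i,i}|=\varepsilon$ term (the paper takes $\iota=\max\{3\kappa,2(\kappa+1)\}$, you take $\hat\kappa=4\kappa+4$). The only difference is that the paper spells out each hypothesis verification a bit more explicitly, whereas you summarize; the content is the same.
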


\begin{proof}[Proof of Corolllary~\ref{cor:DNNerrorEstimateLaplace}]
	Throughout this proof let $\iota=\max\{3\kappa,2(\kappa+1)\}$.
	Observe that \eqref{corLaplaceDNNerrorEstimate:fOne} and the fact that $\iota\ge \kappa$ prove that for all $d\in\N$, $\varepsilon\in (0,1]$, $x,y\in\R^d$ it holds that 
	\begin{equation}\label{corDNNerrorEstimateLaplace:HypoOne}
	\| 
	f^1_d( x ) 
	- 
	f^1_d( y )
	\|
	\leq 
	\iota
	\| x - y \|
	\qandq
	\|(\functionANN(\mathfrak{f}^1_{d, \varepsilon}))(x)    
	\|	
	\leq 
	\iota ( d^{ \iota } + \| x \| ).
	\end{equation}
	Next note that \eqref{corLaplaceDNNerrorEstimate:approximationLocallyLipschitz}  and the fact that $\iota\ge 3\kappa$ ensure that for all $d\in\N$, $\varepsilon\in (0,1]$, $x,y\in\R^d$ it holds that 
	\begin{equation}\label{corDNNerrorEstimateLaplace:HypoTwo}
	\begin{split}
	| (\functionANN(\mathfrak{f}^0_{d, \varepsilon})) (x) - (\functionANN(\mathfrak{f}^0_{d, \varepsilon})) (y)| 
	&\leq \kappa d^{\iota} (3 + \|x\|^{\iota} + \|y \|^{\iota})\|x-y\|
	\\& \leq \iota d^{\iota} (1 + \|x\|^{\iota} + \|y \|^{\iota})\|x-y\|.
	\end{split}
	\end{equation}
	In addition, observe that \eqref{corLaplaceDNNerrorEstimate:appr:coef} and the fact that $\iota \geq2 (\kappa+1)$ show that  for all
	$ d \in \N $, 
	$ \varepsilon \in (0,1] $, 
	$m\in\{0,1\}$, 
	$ 
	x \in \R^d
	$ it holds that 
	\begin{equation}
	\begin{split}
	&	\varepsilon|
	f^0_d( x )
	|  + \varepsilon
	+\| f^m_d(x) 
	- 
	(\functionANN(\mathfrak{f}^m_{d, \varepsilon})) (x)
	\|
	\leq 
	\varepsilon \kappa d^{ \kappa }
	(
	1 + \| x \|^{ \kappa }
	) + \varepsilon\\
	& \leq \varepsilon (\kappa + 1) d^{ \kappa }
	(
	1 + \| x \|^{ \kappa }
	)  \leq \varepsilon (\kappa + 1) d^{ \kappa }
	(
	2 + \| x \|^{ \iota }
	) \leq \varepsilon \iota d^{\iota} (1 + \|x\|^{\iota}).
	\end{split}
	\end{equation}
	Combining this, \eqref{corDNNerrorEstimateLaplace:HypoOne}, \eqref{corDNNerrorEstimateLaplace:HypoTwo}, and Corollary~\ref{cor:DNNerrorEstimateLebesgueMeasure}  implies that  there exist $c\in  \R$ and \linebreak \allowbreak $(\mathfrak{u}_{d,\varepsilon})_{(d, \varepsilon)\in\N\times (0,1]}\subseteq \ANNs$ such that
	for all 
	$d\in\N$, $\varepsilon\in (0,1]$ it holds
	that
	$\functionANN(\mathfrak{u}_{d,\varepsilon})\in C(\R^{d+1},\R)$,
	$\paramANN(\mathfrak{u}_{d,\varepsilon})\le c \varepsilon^{-c}  d^{c}$, and 
	\begin{equation}
	\begin{split}
	& \left[
\int_{[0,T]\times[a, b]^d} 
\frac{	\left\vert
	u_d(y) 
	- 
	(\functionANN(\mathfrak{u}_{d,\varepsilon}))(y)
	\right\vert^p\!}{|b - a|^d} \, 
d y
\right]^{\!\nicefrac{1}{p}} 
\le
\varepsilon.
	\end{split}
	\end{equation}
	This completes the proof of	Corollary~\ref{cor:DNNerrorEstimateLaplace}.
\end{proof}

\subsection*{Acknowledgments}

Philipp Grohs is gratefully acknowledged for several useful comments.
This work has been partially funded by  the Swiss National Science Foundation (SNSF) through the research grant  \allowbreak 200020\_175699,
by the Deutsche Forschungsgemeinschaft (DFG, German Research Foundation)    through CRC 1173,  by the Karlsruhe House of Young Scientists (KHYS) through a research travel grant,  by  ETH Foundations of Data Science (ETH - FDS), and by the European Union (ERC, MONTECARLO, 101045811).
The views and the opinions expressed in this work are however those of the authors only
and do not necessarily reflect those of the European Union or the European Research Council.
Neither the European Union nor the granting authority can be held responsible for them. In addition,
the second author gratefully acknowledges  the Deutsche Forschungsgemeinschaft (DFG, German Research Foundation) under Germany's Excellence Strategy EXC 2044-390685587, Mathematics M\"unster: Dynamics-Geometry-Structure.

\bibliographystyle{acm}
\bibliography{bibfile.bib}

\end{document}